\documentclass{article}
\usepackage{authblk}
\usepackage{ytableau}
\usepackage{geometry}

\usepackage{tikz}
\usetikzlibrary{arrows.meta,positioning,calc,fit,decorations.pathreplacing}
\usepackage{a4wide}
\usepackage[english]{babel}
\usepackage[T1]{fontenc}
\usepackage[latin1]{inputenc}
\usepackage{dsfont}
\usepackage{mathrsfs}
\usepackage{amsmath}
\usepackage{bbm}
\usepackage{amssymb}
\usepackage{verbatim}
\usepackage{amsthm}
\usepackage{graphicx}
\usepackage{xcolor}
\usepackage{fancybox}
\usepackage[square,numbers]{natbib}
\usepackage{enumitem}
\usepackage{eurosym}
\usepackage{tabularx,booktabs,array}
\usepackage[colorlinks=true,linkcolor=blue!60!black,citecolor=blue!60!black,urlcolor=blue!60!black]{hyperref}
\usepackage[nameinlink,capitalize,noabbrev]{cleveref}
\allowdisplaybreaks
\newtheorem{theorem}{Theorem}[section]
\newtheorem{prop}[theorem]{Proposition}
\newtheorem{lemma}[theorem]{Lemma}
\newtheorem{assumption}[theorem]{Assumption}
\newtheorem{corollary}[theorem]{Corollary}
\newtheorem{definition}{Definition}
\newtheorem{remark}{Remark}

\newtheoremstyle{named}{}{}{\bfshape}{}{\bfseries}#1{\thmnote{#3 }}
\theoremstyle{named}

\newtheoremstyle{named1}{}{}{\bfshape}{}{\bfseries}#1 {\thmnote{#3 }}
\theoremstyle{named1}

\newcommand{\R}{\mathbb{R}}
\newcommand{\N}{\mathbb{N}}
\newcommand{\E}{\mathbb{E}}
\newcommand{\Pp}{\mathbb{P}}
\newcommand{\Tr}{\operatorname{Tr}}
\newcommand{\Var}{\operatorname{Var}}
\newcommand{\Ree}{\operatorname{Re}}
\newcommand{\Img}{\operatorname{Im}}
\newcommand{\diag}{\operatorname{diag}}

\newcommand{\WSS}{\mathrm{WSS}}
\newcommand{\Mult}{\mathrm{Mult}}

\newcommand{\btheta}{\boldsymbol{\theta}}
\newcommand{\bvartheta}{\boldsymbol{\vartheta}}

\newcommand{\ba}{\boldsymbol{a}}
\newcommand{\bone}{\boldsymbol{1}}
\newcommand{\cE}{\mathcal{E}}
\newcommand{\cI}{\mathcal{I}}

\newcommand{\bm}{\mathbf}

\newcommand{\C}{\mathbb C}

\newcommand{\bbC}{\mathbb C}
\newcommand{\bbR}{\mathbb R}

\newcommand{\ot}{\otimes}

\newcommand{\cF}{\mathcal F}
\newcommand{\cQ}{\mathcal Q}
\newcommand{\cM}{\mathfrak M}

\newcommand{\cR}{\mathcal R}
\newcommand{\cA}{\mathcal A}

\newcommand{\cH}{\mathcal H}
\newcommand{\cK}{\mathcal K}
\newcommand{\cU}{\mathcal U}
\newcommand{\cV}{\mathcal V}
\newcommand{\cS}{\mathcal S}

\newcommand{\BayesRisk}{\mathcal R_{\Pi}}

\newcommand{\Young}{\mathbb Y_{n,d}}
\newcommand{\Haar}{\mu_{\mathrm H}}
\newcommand{\spec}{\operatorname{Spec}}

\newcommand{\cT}{\mathcal{T}}

\usepackage{stmaryrd}
\usepackage{cases}

\newcommand{\argmin}{\mathop{\rm arg~min}\limits}

\makeatother

\theoremstyle{plain}

\theoremstyle{remark}

\providecommand{\notename}{Note}
\providecommand{\theoremname}{Theorem}

\title{Classical Sufficiency in Quantum Statistical Experiments}
\author{
Samriddha Lahiry\\
Department of Statistics and Data Science\\
National University of Singapore\\
\href{mailto:slahiry@nus.edu.sg}{\texttt{slahiry@nus.edu.sg}}
}

\date{}

\begin{document}

\maketitle

\begin{abstract}
A quantum measurement transforms a quantum statistical experiment into a classical one, but different measurements generally yield experiments containing different amounts of statistical information. We introduce quantum-to-classical sufficiency: relative to a prescribed class of admissible measurements, a measurement is sufficient if its induced classical experiment Blackwell-dominates the experiments induced by every other measurement in the class. This framework separates two components of quantum inference: reducing the admissible measurement class through symmetry or decision-theoretic arguments, and identifying a canonical measurement within the reduced class through an experiment-level sufficiency condition.
We establish a factorization criterion under which the parameter enters the group-averaged experiment only through the weights of an orthogonal block decomposition. Measurement of the block label is then sufficient relative to all invariant measurements.

Applied to tensor-product models under unitary conjugation, this result identifies weak Schur sampling as the sufficient measurement for spectral inference. Combined with risk-preserving symmetrization, it yields exact finite-sample reductions of invariant Bayes and minimax problems over all measurements to classical decision problems based on Young diagrams. We use this reduction to derive sharp first-order asymptotic Bayes and minimax risks for estimating smooth spectral functionals.

We establish an analogous reduction for a noncompact group action in a continuous-variable setting. Specifically, we consider displaced thermal states and the estimation of functionals of the thermal parameter in the presence of an unknown displacement. Within the class of displacement-invariant measurements, total residual photon number measurement emerges as the sufficient measurement. Combined with Hunt-Stein reduction, this yields exact finite-sample reductions of Bayes-minimax and minimax problems over all measurements to classical decision problems based on the total residual photon count, which follows a negative binomial distribution.
\end{abstract}

\tableofcontents

\newpage

\section{Introduction}

A distinctive feature of statistical inference for quantum systems is that the
observation itself is part of the statistical procedure. In a classical
experiment, the sampling distribution is specified before a decision rule is
chosen. In a quantum experiment
$\mathcal Q=\{\rho_\theta:\theta\in\Theta\}$, by contrast, the state
$\rho_\theta$ does not determine a unique classical observation: a
measurement must first be selected. If $M$ is a positive operator-valued
measure (POVM), measuring $\rho_\theta$ with $M$ produces a probability
distribution $P_\theta^M$ and hence the classical statistical experiment

$$
    \mathcal E_M
    :=
    \{P_\theta^M:\theta\in\Theta\}.
$$

A quantum decision procedure therefore consists of two components: a
measurement and a classical decision rule applied to its outcome. Optimal
inference may require joint optimization over both components, a basic feature
of quantum statistical decision theory; see, for example,
\cite{Helstrom,Holevo2011}.

The purpose of this paper is to identify situations in which these two
optimizations can be separated. We formulate this question using the
comparison theory of statistical experiments developed by Blackwell and
Le Cam
\cite{Blackwell1951,Blackwell1953,LeCam1964,LeCam1996,LeCam1986,Torgersen1991}.
Recall that a classical experiment $\mathcal E$ Blackwell-dominates another
experiment $\mathcal F$ (in compact notation $\mathcal E
    \succeq_{\mathrm B}
    \mathcal F$) if the observations from $\mathcal F$ can be
generated from those of $\mathcal E$ by a parameter-independent Markov
kernel. Under the classical randomization theorem, this means that every
decision procedure based on $\mathcal F$ can be reproduced from
$\mathcal E$ with the same risk function.

Motivated by this comparison, we formulate a notion of sufficiency in the class of experiments generated by different POVMs. Let $\mathfrak M$ be a prescribed class of
admissible measurements. We call $M^\star\in\mathfrak M$
\emph{quantum-to-classically sufficient} for $\mathcal Q$ relative to
$\mathfrak M$ if

$$
    \mathcal E_{M^\star}
    \succeq_{\mathrm B}
    \mathcal E_M,
    \qquad
    M\in\mathfrak M.
$$

Thus $\mathcal E_{M^\star}$ is a greatest element, up to Blackwell
equivalence, among the classical experiments induced by measurements in
$\mathfrak M$. Equivalently, the outcome distribution of every admissible
measurement can be generated from that of $M^\star$ by a
parameter-independent classical randomization. This notion is stronger than
optimality of $M^\star$ for a particular decision theoretic problem:
once $M^\star$ has been measured, every classical decision procedure
available from any measurement in $\mathfrak M$ can be reproduced by
classical post-processing.

The restriction to a prescribed measurement class is essential. In many
quantum statistical problems, a symmetry argument first
allows optimization over arbitrary quantum procedures to be restricted,
without loss for the decision problem under consideration, to an invariant measurement class. Quantum-to-classical sufficiency addresses
a second and logically distinct question: whether one measurement within
that class dominates all the others at the level of the induced statistical
experiment. The resulting two-stage reduction may be summarized as

$$
\begin{array}{ccc}
\begin{array}{c}
\text{quantum decision}\\[-1mm]
\text{problem}
\end{array}
&
\longrightarrow
&
\begin{array}{c}
\text{decision-theoretically complete}\\[-1mm]
\text{measurement class}
\end{array}
\\[4mm]
&&
\downarrow
\\[4mm]
\begin{array}{c}
\text{classical decision}\\[-1mm]
\text{problem}
\end{array}
&
\longleftarrow
&
\begin{array}{c}
\text{greatest induced}\\[-1mm]
\text{classical experiment}.
\end{array}
\end{array}
$$

The first step can depend on the parameter of interest, the loss, the prior,
and the treatment of nuisance parameters. The second is an experiment-level
statement relative to the reduced measurement class and is independent of
the subsequent classical decision rule.

\subsection{Main contributions}

Our first contribution is a structural criterion for the second reduction.
Suppose that, in an invariant quantum statistical model, symmetrization
yields a family of states $\{\bar\rho_\eta\}$ that reproduces the outcome
distributions of all invariant measurements and admits an orthogonal block
decomposition of the form
\[
    \bar\rho_\eta
    =
    \bigoplus_z p_\eta(z)\tau_z.
\]
Here $\eta$ denotes the invariant parameter, which enters only through the
block probabilities $p_\eta(z)$, while the conditional block states
$\tau_z$ are independent of $\eta$.
 We show that measurement of the block label is then
quantum-to-classically sufficient relative to all invariant POVMs. The
reason is the same as in the classical factorization principle: conditional
on the observed block, all remaining randomness is parameter-free, so every
invariant measurement is obtained by a parameter-independent classical
randomization of the block label.

We also investigate the converse. Sufficiency of the block-label measurement
always implies a corresponding factorization of expectations of certain invariant observables. In finite dimension, under a block-coordinate identifiability
condition, this observable-side factorization can be upgraded to the
parameter-free state decomposition above. Thus, subject to the stated
identifiability condition, the existence of parameter-free conditional block
states characterizes block-label sufficiency. This gives a quantum analogue,
relative to an invariant measurement class, of the classical
characterization of sufficiency through parameter-free conditional
distributions.

Our first application is spectral inference from $n$ identical copies
of a fixed-dimensional quantum state. Under unitary conjugation, the
eigenvectors are nuisance parameters, while the spectrum is invariant.
Schur-Weyl duality supplies a block decomposition indexed by Young
diagrams. We show that the corresponding block-label measurement,
known as \emph{weak Schur sampling (WSS)}, is
quantum-to-classically sufficient relative to unitarily invariant
POVMs. For a unitarily invariant prior and a spectrum-only loss, this
sufficiency result, combined with symmetrization, reduces the Bayes
problem over all measurements exactly to a classical
Bayes problem based on the observed Young diagram. A corresponding
argument gives the exact finite-sample minimax reduction for general
spectrum-only losses.

We then exploit this exact reduction to study the asymptotic risk of smooth
spectral functionals under the law of the weak Schur experiment. For absolutely continuous priors satisfying mild
regularity conditions, we obtain the sharp first-order Bayes risk, with
leading term given by a corresponding multinomial information bound.
Combining this calculation with the exact finite-sample reduction gives the
same first-order expansion for the optimal Bayes risk over all collective
POVMs. We also establish the corresponding minimax result uniformly over
compact subsets of the strictly positive simple-spectrum region and show that the
natural plug-in estimator based on the normalized Young diagram is
asymptotically minimax.
The proof combines an exact randomization from the
multinomial experiment to WSS, a quantitative comparison between the
Schur-Weyl and multinomial distributions on regular spectral sets, and
classical posterior and risk asymptotics. 

Our second application concerns a continuous-variable model with a
noncompact nuisance symmetry. We consider displaced thermal states, where
the mean photon number, or a smooth functional of it, is the parameter of
interest and a common displacement is an unknown nuisance parameter. A unitary transformation concentrates the displacement into a single
collective mode and leaves the remaining relative modes in centered thermal
states. After this transformation, every displacement-invariant measurement acts
trivially on the collective mode, so inference within this class reduces
to the relative modes.

The centered relative-mode experiment has a parameter-free conditional block
decomposition indexed by total residual photon number. Consequently, measurement of
the \emph{total residual photon number} is quantum-to-classically sufficient
relative to all collective-displacement-invariant POVMs. The resulting
classical observation has a negative-binomial distribution. Thus every
decision procedure based on an invariant quantum measurement can
be reproduced by a parameter-independent classical randomization of a single
photon-count observation.

To connect this experiment-level statement with the original nuisance-robust
decision problem, we establish the noncompact symmetrization result needed
for general action-valued procedures.
Averaging over asymptotically invariant
probability measures on the displacement group yields an invariant procedure
without increasing the maximal risk. Combining this reduction with
quantum-to-classical sufficiency gives an exact finite-sample equivalence
between the quantum minimax problem and an ordinary negative-binomial
decision problem. Since the displacement group admits no proper
translation-invariant probability measure, we consider a prior on the thermal parameter while treating the
displacement in the minimax sense; this yields an exact Bayes-minimax
reduction as well. Standard analysis of the resulting one-dimensional
exponential family then gives the sharp first-order Bayes-minimax and
minimax risks for smooth thermal functionals.

The two applications have different physical and mathematical origins but
exhibit the same statistical mechanism. In spectral inference the symmetry
group is compact, the sufficient classical observation is a Young diagram,
and the asymptotic analysis of its distribution is nontrivial. In the
displaced-thermal problem the nuisance group is noncompact, concentration
isolates the nuisance mode, and the sufficient classical experiment is the negative-binomial family. In both cases symmetry first identifies
a decision-theoretically complete class of measurements, after which a
parameter-free conditional factorization identifies a single measurement
that dominates the entire class. The subsequent statistical optimization is
then classical.

\subsection{Relation to existing notions and literature}
\label{subsec:related-literature}

The comparison used in this paper is classical after a measurement has been
performed. Blackwell and Le Cam compare classical statistical experiments
through parameter-independent randomizations and their decision-theoretic
consequences
\cite{Blackwell1951,Blackwell1953,LeCam1964,LeCam1996,LeCam1986,Torgersen1991}.
Quantum comparison theory instead asks whether one quantum statistical
experiment can be transformed into another by a quantum channel, statistical
morphism, or related randomization
\cite{Buscemi2012,Matsumoto2012,Jencova2016}. These are different comparison
problems: our basic objects are the \emph{classical experiments induced by
different measurements on a fixed quantum model}.

Our framework is similarly distinct from established notions of quantum
sufficiency. In the theory initiated by Petz, sufficiency of a quantum
channel or coarse-graining concerns preservation and recovery of the
underlying quantum statistical experiment and is formulated in terms of
sufficient subalgebras
\cite{Petz1986,JencovaPetz2006,JencovaPetzSurvey2006}.
By contrast, quantum-to-classical
sufficiency does not require recovery of the quantum states. It asks whether
one admissible measurement produces enough \emph{classical} information to
simulate the classical experiment generated by every other measurement in a
specified class.

A related instrument-based approach to sufficiency was developed in
\cite[Section~5]{MR2017871}. An instrument is called
\emph{exhaustive} when its posterior quantum state, conditional on the
observed outcome, is parameter-independent, so that no further information
about the parameter remains in the post-measurement system. This is closely
related to the parameter-free conditional-state structure arising from our
block-factorization principle. Our notion is stronger in a different
direction: exhaustivity of one instrument does not imply that its classical
outcome can reproduce the experiments generated by other admissible
measurements, whereas quantum-to-classical sufficiency requires Blackwell
dominance over the entire prescribed measurement class. The authors also
define quantum sufficiency for a coarsening of a fixed instrument, requiring
the retained statistic to be classically sufficient and the corresponding
posterior quantum models to preserve the same inferential content. Thus their
comparison concerns lossless coarsening within a single instrument, whereas
ours compares distinct POVMs on a fixed model in order to identify a greatest
induced classical experiment.

Another closely related concept arises in the theory of
post-processing comparison and minimal sufficiency for POVMs and quantum
statistical experiments \cite{Kuramochi2015,Kuramochi2017}. That theory
seeks a least-redundant representative of the post-processing-equivalence
class of a fixed POVM or statistical experiment. Our order-theoretic problem
points in the opposite direction: relative to a fixed quantum model and a
prescribed class of measurements, we seek a greatest \emph{induced
classical experiment}. Moreover, our comparison is model-relative, whereas
operator-level post-processing comparison of POVMs is uniform over all input
states. We make these distinctions precise in Section~3.

Symmetry reductions form the other main ingredient of the paper. Invariant
and covariant measurements have long played a central role in quantum
statistical decision theory; see, for example, \cite{Holevo2011}. General
minimax measurements and symmetry methods were studied by Bogomolov
\cite{bogomolov1982minimax}, while Kumagai and Hayashi gave a noncompact
quantum Hunt-Stein analysis for Gaussian hypothesis-testing problems with
nuisance parameters and used it, together with further structural
reductions, to construct quantum analogues of classical $\chi^2$, $t$, and
$F$ tests \cite{KumagaiHayashi2013}. In the present paper the role of
symmetry is deliberately separated from sufficiency: symmetry identifies a
decision-theoretically sufficient \emph{class} of measurements, whereas
quantum-to-classical sufficiency asks whether a single measurement dominates
all experiments induced within that class.

For spectral inference, the representation-theoretic approach to
quantum spectrum estimation and the statistical properties of WSS
have been studied extensively
\cite{alicki,keyl_werner,harrow,odonnell2016efficient,odonellspectrum}.
Previous lossless reductions to WSS are formulated in the context of
\emph{testing}: WSS followed by classical post-processing suffices for
the quantum collision problems considered in \cite{harrow} and, more
generally, for testing unitarily invariant properties
\cite{MontanaroDeWolf2016,odonellspectrum}. We formulate the
underlying reduction at the level of statistical experiments, as a
model-relative Blackwell comparison, and place it within the structural
sufficiency theory developed here. This provides the basis for the exact
decision-theoretic reductions and subsequent risk analysis.

The displaced-thermal application has an analogous relationship to the
Gaussian testing literature. Kumagai and Hayashi use displacement symmetry,
concentration transformations, and number-type measurements to solve
specific invariant Gaussian testing problems
\cite{KumagaiHayashi2013}. We formulate the corresponding structural
reduction at the level of statistical experiments: after restriction to
displacement-invariant POVMs, the total residual photon-number
experiment Blackwell-dominates every experiment induced by a measurement in
that class. The resulting reduction is therefore independent of the
particular test or loss and applies directly to the nuisance-robust minimax
and Bayes-minimax decision problems considered here.

Finally, comparison of experiments also plays a central role in quantum
local asymptotic normality and quantum asymptotic equivalence
\cite{GutaJencova2007,Kahn&Guta,YFG13,Fujiwara2020}, and has been used
to construct estimators attaining asymptotically optimal risks
\cite{Fujiwara2023,lowLAN,minimaxpure}. These results compare sequences
of quantum experiments asymptotically, typically by replacing a many-copy
model with a limiting quantum Gaussian experiment. The reductions considered
here are of a different nature: the quantum-to-classical comparison is exact
at each finite sample size, and asymptotic arguments enter only after the
relevant classical experiment has been identified.

\subsection*{Organization of the paper}

The paper begins in Section \ref{sec:prelim} with the quantum statistical notation and background needed for the subsequent developments. In Section \ref{sec:exp_n_suff}, we introduce the model-relative Blackwell order and quantum-to-classical sufficiency, together with their decision-theoretic interpretation and relation to existing notions of sufficiency. The structural theory is developed in Section \ref{sec:symmetry} through a block-factorization criterion and a corresponding converse. We then turn to the two main applications: Section \ref{sec:wss} treats spectral inference and weak Schur sampling, while Section \ref{sec:thermal} considers displaced thermal states and total residual photon-number measurement. In both cases, the exact classical reduction is followed by an analysis of the corresponding asymptotic Bayes and minimax risks. We discuss the extensions in Section \ref{sec:discussion}, while the proofs and technical auxiliary results are collected in the appendices.

\subsection{Notation}
 The vectors of a Hilbert
space $\mathcal{H}$ (assumed separable) are written as ``ket'' $|v\rangle$, $v^*$ (a vector in the dual space $\mathcal{H}^*$) as ``bra'' $\langle v|$ and the
inner product of two vectors as the ``bra-ket'' $\langle u|v\rangle\in
\mathbb{C}$ which is linear with respect to the right entry and anti-linear
with respect to the left entry. Similarly, $M:=|u\rangle\langle v| $ is the
rank one operator acting as $M  : |w\rangle \mapsto M|w \rangle = \langle
v |w\rangle |u\rangle$.  For an operator $A$ the expression $\langle u|Av\rangle$ will sometimes be denoted as $\langle u|A|v\rangle$. The space of
bounded linear operators on $\mathcal{H}$ is denoted by $\mathcal{L}(\mathcal{H})$. We write $\mathcal T_1(\mathcal H)\subset\mathcal L(\mathcal H)$ for
the trace-class operators, equipped with the trace norm
\[
\|A\|_1:=\Tr\!\left[(A^\dagger A)^{1/2}\right].
\]

For any Hilbert space, the usual norm will be denoted by $||.||$ and the identity operator on that space by $\mathbf{1}$ where the particular space will be understood from the context. For probability measures $\mu$ and $\nu$, we use
$ \|\mu-\nu\|_{\mathrm{TV}}
    :=
    \sup_B |\mu(B)-\nu(B)|,
$
and for probability mass functions $p$ and $q$ on a countable space,
$\|p-q\|_1
    :=
    \sum_x |p(x)-q(x)|
    =
    2\|p-q\|_{\mathrm{TV}}.$ By $a\vee b$ and $a\wedge b$ we will denote $\max(a,b)$ and $\min(a,b)$ respectively and $a_+$ will be used to denote $a \vee 0$. By $\lfloor a\rfloor$ and $\lceil a \rceil$, we will denote the largest integer less than or equal to $a$ and the smallest integer greater than or equal to $a$ respectively. We will use the notation $a_n\asymp b_n$ whenever $c<\liminf_n (a_n/b_n) \leq \limsup_n (a_n/b_n) <C$ for some constants $c,C>0$. Throughout the paper, $c$ and $C$ will denote arbitrary constants.

\section{Preliminaries}\label{sec:prelim}

\subsection{States, Measurements and Observables}
A \textit{state of a quantum system} is described by a self-adjoint operator
$\rho$ on a complex Hilbert space {$\mathcal{H}$, which is positive }%
$(\rho\geq0$) and normalized to $\mathrm{Tr}\left(  \rho\right)  =1$ (a
density operator). A state is called \textit{pure} if it is of the form $\rho=|\psi\rangle\langle\psi|$, otherwise it is called a \textit{mixed state}. We denote the set of states by $\mathcal{S}(\mathcal{H})$. It can be shown that $\mathcal{S}(\mathcal{H})\subset \mathcal{T}_1(\mathcal{H})$. 

Data on a quantum system are obtained from
\textit{observables} which are self-adjoint operators in the Hilbert space $\mathcal{H}$. If  $S$ is a self-adjoint operator in {$\mathcal{H}$
with spectral decomposition }$S=\sum_{j}\lambda_{j}\Pi_{j}$ where $\Pi_{j}$
are projectors, then a measurement generates a \textit{discrete random variable} $X_{S}$ taking
values in the set of eigenvalues $\left\{  \lambda_{1},\lambda_{2}%
,\ldots\right\}  $ with probabilities $p_{j}=\mathrm{Tr}(\rho\cdot\Pi_{j})$. Whenever the first absolute moment is finite, the expectation of
\(X_S\) under the state \(\rho\) is given by the
\textit{Born-von Neumann postulate:}%
\begin{equation}
E_{\rho}X_{S}=\sum_{j}\lambda_{j}\mathrm{Tr}\left[  \rho\Pi_{j}\right]
=\mathrm{Tr}\left[  \rho S\right]  . \label{trace-rule}%
\end{equation}

More generally, a measurement with outcomes in a measurable space
$(\Omega,\mathfrak B)$ is described by a positive operator-valued
measure (POVM).
\begin{definition}\label{def.POVM}
A positive operator valued measure (POVM) is a map $M:\mathfrak{B}\to \mathcal{L}(%
\mathcal{H})$ having the following properties
\begin{itemize}
\item[1)] positivity: $M(B) \geq 0$ for all events $B\in\mathfrak{B}$ (hence M(B) is self-adjoint)
\item[2)] $\sigma$-additivity: $M(\cup_i B_i) = \sum_i {M}(B_i)$ for
any countable set of mutually disjoint events $B_i$ (here the convergence is in the weak operator topology of $\mathcal{L}(\mathcal{H})$)
\item[3)] normalization: $M(\Omega) = \mathbf{1}$.
\end{itemize}
\end{definition}

The simplest example of a POVM is given by an \emph{effect} $E$, which is a bounded operator satisfying $0\leq E\leq \mathbf{1}$. Indeed $\{E,\mathbf{1}-E\}$ forms a two-outcome POVM. If the operators $M(B)$ are also orthogonal projections, i.e. $M(A)^2=M(A)$ and $M(B)M(A)=0$ when $A\cap B=\emptyset$, then it is called a \textit{simple measurement}. The collection of projectors $\{\Pi_j\}$ in the spectral decomposition $S=\sum_j \lambda_j\Pi_j$ is an example of a simple measurement. The outcome of the measurement has probability
distribution
\begin{equation}
P_\rho(B) = \mathrm{Tr}(\rho M(B)), \qquad B\in \mathfrak{B}.
\label{prob_measurement}
\end{equation}

The spectral theorem associates with every self-adjoint operator $S$,
with dense domain $D(S)\subset\mathcal H$, a projection-valued spectral
measure $M$ such that
\[
    S=\int_{\sigma(S)}x\,M(dx),
\]
in the sense of the spectral calculus. Here $\sigma(S)$ is the spectrum of $S$ and $M$ is a POVM, also called spectral measure associated with the operator $S$. When $S$ is an observable with a continuous spectrum, it generates a \textit{continuous random variable} $X_S$ with probabilities given by (\ref{prob_measurement}). 
Whenever the first absolute moment is finite,
\[
    E_\rho[X_S]
    =
    \int_{\sigma(S)} x\,\Tr\{\rho M(dx)\},
\]
which we also denote by $\Tr(\rho S)$.

\subsection{Quantum Gaussian states}
\label{subsec:gaussian-states}

Next we discuss quantum Gaussian states, which feature in the sufficiency results given in Section \ref{sec:thermal}. To describe one-mode quantum Gaussian states, consider the position
and momentum observables $Q$ and $P$ on
$\mathcal H=L^2(\mathbb R)$. On a suitable common domain $D$,
their actions are
\[
    (Qf)(x)=xf(x),
    \qquad
    (Pf)(x)=-i\frac{df}{dx}(x),
    \qquad f\in D.
\]
These operators satisfy the Heisenberg commutation relation
$$[Q,P]=i\textbf{1}.$$
It can be shown that $Z_{u}:=u_{1}Q+u_{2}P$, $u\in\mathbb{R}^{2}$ are
observables (called the \textit{canonical observables}). In this context we define the \textit{quantum characteristic function} as $\tilde{W}_{\rho}(u_1,u_2)=\mathrm{Tr}(\rho\exp\left(  iZ_{u}\right))$.
 If the following relation holds
\[
E_{\rho}\exp\left(  iZ_{u}\right)  =\mathrm{Tr}(\rho\exp\left(  iZ_{u}\right))=\exp\left(  iu^{T}\mu-\frac{1}%
{2}u^{T}\Sigma u\right)  \text{, }u\in\mathbb{R}^{2},
\]
 then $\rho$ is called a Gaussian state with mean $\mu$ and covariance matrix $\Sigma$.
 For such quantum Gaussian states in $L^{2}\left(  \mathbb{R}\right)  $ we
adopt a compact notation, resembling the one for the
$2$-variate normal law:
\begin{equation}
\rho=\mathbb{N}_{2}\left(  \mu,\Sigma\right)  . \label{notation-q-Gauss}%
\end{equation}
Here $\Sigma$ is a $2\times 2$ real matrix such that
$$\Sigma\geq \pm \frac{i}{2} \left(\begin{array}{cc}
    0 &  -1\\
     1& 0
\end{array}\right).$$
To define the simplest
Gaussian state, let $\psi_{0}=\sqrt{\varphi_{1/2}}$ be the square root of the
density function of the normal $N\left(  0,1/2\right)  $ distribution and
consider the operator $\rho_0$ acting by $\rho_0 f=\psi_{0}\left\langle \psi
_{0},f\right\rangle $, $\;f\in L^{2}\left(  \mathbb{R}\right)  $. Since
$\psi_{0}$ is a unit vector in $L^{2}\left(  \mathbb{R}\right)  $, the
operator $\rho_0$ (henceforth called the vacuum state) is a projection (written $\rho_0=\left\vert \psi_{0}%
\right\rangle \left\langle \psi_{0}\right\vert $ in Dirac notation) and it can be
shown that $\rho_0=\mathbb{N}_{2}\left(  0,I_2/2\right)$ in the notation described above.

An important class
is the collection of \textit{coherent states} $\mathbb{N}_{2}\left(  \mu,I_2/2\right)  $;
these are pure states which can be interpreted as a vacuum shifted by $\mu
\in\mathbb{R}^{2}$ (similar to the Gaussian shift model in classical statistics).
Consider the operators $a^{\dagger}=(Q-iP)/\sqrt{2}$ (the \textit{creation operator}), $a=(Q+iP)/\sqrt{2}$ (the \textit{annihilation operator}) and $\hat N=a^{\dagger}a$ (the number operator). It is well known that the \textit{Hermite basis} $\{|0\rangle,|1\rangle\,\ldots\}$ forms an eigenbasis of the number operator,  i.e. $\hat N|k\rangle=k|k\rangle$. For any $z \in \mathbb{C}$ define the displacement operator as
$$D(z)=\exp(za^{\dagger}-\bar{z}a)$$ 
It satisfies the Weyl relation
\begin{equation}
    D(z)D(w)
    =
    e^{i\operatorname{Im}(z\overline w)}
    D(z+w).
    \label{eq:weyl-relation}
\end{equation}
The phase in \eqref{eq:weyl-relation} disappears under conjugation, and hence
the additive group
$(\bbC,+)$
acts on states through
\[
    \rho\longmapsto D(z)\rho D(z)^\dagger.
\]

Another important class of Gaussian states is given by thermal states and
their displaced versions. For $N>0$, define
\begin{equation}
    \phi_N
    :=
    \frac{1}{N+1}
    \sum_{k=0}^{\infty}
    \left(
        \frac{N}{N+1}
    \right)^k
    |k\rangle\langle k|.
    \label{eq:thermal-state}
\end{equation}

The displaced thermal family is
\begin{equation}
    \rho_{z,N}
    :=
    D(z)\phi_ND(z)^\dagger,
    \qquad
    z\in\bbC,\quad N>0.
    \label{eq:displaced-thermal}
\end{equation}

We use $N$ as the statistical parameter of the thermal family.  The inverse temperature $\beta$ and
the mean photon number $N$ of the thermal states \eqref{eq:thermal-state} are related by
\[
    N=\frac{1}{e^\beta-1},
    \qquad
    \beta=\log\!\left(\frac{N+1}{N}\right).
\]
Thus $N\mapsto\beta$ is one-to-one on $(0,\infty)$, and any functional
of the temperature or inverse temperature may equivalently be regarded
as a functional of $N$.  For example, if $T=\beta^{-1}$ and
$\xi(T)$ is a temperature functional, then
\[
    \psi(N)
    :=
    \xi\!\left(
        \frac{1}{\log((N+1)/N)}
    \right).
\]
More generally, we shall refer to any functional $\psi(N)$ of the mean
photon number as a thermal functional.

One can show that the quantum characteristic function $\Tr(\rho_{z,N}\exp(iu_1Q+iu_2P))$ of the shifted thermal state is given by
\[
\Tr(\rho_{z,N}\exp(iu_1Q+iu_2P))=\exp(i(u_1\sqrt{2}\Ree(z)+u_2\sqrt{2}\Img(z))-\frac{(N+1/2)}{2}(u_1^2+u_2^2)).
\]
The family is covariant under the displacement group:
\begin{equation}
    D(w)\rho_{z,N}D(w)^\dagger
    =
    \rho_{z+w,N}
    \label{eq:displacement-covariance}.
\end{equation}

\section{Quantum statistical experiments and induced classical experiments}
\label{sec:exp_n_suff}

In this section we fix the statistical framework used throughout the paper. The comparison theory employed below is classical after a measurement has been performed: a measurement maps the original quantum experiment to an ordinary family of probability measures. We therefore distinguish carefully between the quantum model, the measurement, and the classical experiment induced by that measurement.

\subsection{Quantum statistical experiments}

Let $\cH$ be a separable Hilbert space and let $\mathcal S(\cH)$ denote the set of density operators on $\cH$. 

\begin{definition}[Quantum statistical experiment]
A quantum statistical experiment is a triple
\[
    \cQ=(\cH,\Theta,\{\rho_\theta:\theta\in\Theta\}),
\]
where $\Theta$ is the parameter space and $\rho_\theta\in\mathcal S(\cH)$ for each
$\theta\in\Theta$.
\end{definition}

The $n$-copy experiment associated with a one-copy family $\{\rho_\theta\}$ is
\[
    \cQ^{(n)}
    =\left((\cH)^{\otimes n},\Theta,
    \{\rho_\theta^{\otimes n}:\theta\in\Theta\}\right).
\]

Recall that measuring $\rho_\theta$ with $M$ produces the probability measure
\[
    P_\theta^M(B):=\Tr[\rho_\theta M(B)],\qquad B\in\mathfrak B.
\]
The corresponding induced classical experiment is
\[
    \cE_M:=\bigl(\Omega,\mathfrak B,\{P_\theta^M:\theta\in\Theta\}\bigr).
\]

\subsection{Comparison of measurements on a fixed model}
\label{subsec:model-relative-comparison}

Let $M$ and $N$ be POVMs with standard Borel outcome spaces
$(\mathsf X,\mathcal X)$ and $(\mathsf Y,\mathcal Y)$, respectively.

\begin{definition}[Model-relative Blackwell order]
\label{def:model-relative-order}
We write $M\succeq_{\cQ}N$ if there exists a Markov kernel
$K_N:\mathsf X\times\mathcal Y\to[0,1]$ such that
\[
    P_\theta^N(B)
    =
    \int_{\mathsf X}K_N(x,B)\,P_\theta^M(dx),
    \qquad
    \theta\in\Theta,\quad B\in\mathcal Y.
\]
Equivalently, $P_\theta^N=K_NP_\theta^M$ for every $\theta\in\Theta$.
\end{definition}

The order in \cref{def:model-relative-order} depends on the fixed quantum
model $\cQ=\{\rho_\theta:\theta\in\Theta\}$. It requires the
post-processing identity only for the outcome distributions generated by
states in $\cQ$. It is therefore weaker than the usual post-processing order
of POVMs, which requires the operator identity
\[
    N(B)
    =
    \int_{\mathsf X}K_N(x,B)\,M(dx),
    \qquad B\in\mathcal Y,
\]
and hence reproduces the outcome law of $N$ for every state on $\cH$.

\begin{definition}[Quantum-to-classical sufficiency]
\label{def:q2c-sufficiency}
Let $\mathfrak M$ be a prescribed class of POVMs on $\cH$. A measurement
$M^\star\in\mathfrak M$ is \emph{quantum-to-classically sufficient} for
$\cQ$ relative to $\mathfrak M$ if
\[
    M^\star\succeq_{\cQ}N
    \qquad\text{for every }N\in\mathfrak M.
\]
Equivalently, $\cE_{M^\star}$ is a greatest element, up to Blackwell
equivalence, in the family of induced classical experiments
$
    \{\cE_N:N\in\mathfrak M\}.
$
\end{definition}

The terminology in \cref{def:q2c-sufficiency} describes a greatest induced
classical experiment. The pairwise comparison itself is the ordinary
Blackwell comparison of the classical experiments $\cE_M$ and $\cE_N$.

\begin{remark}[Commuting quantum models]
\label{rem:commuting-model}
Suppose that $\cH$ is finite dimensional and the states
$\{\rho_\theta:\theta\in\Theta\}$ commute. They are then simultaneously
diagonalizable, so for a common orthonormal eigenbasis
$\{e_i\}_{i=1}^r$ we may write
\[
    \rho_\theta
    =
    \sum_{i=1}^r p_i(\theta)E_i,
    \qquad
    E_i:=|e_i\rangle\langle e_i|.
\]
Let $E^\star=\{E_i\}_{i=1}^r$ be the corresponding projective
measurement. Note that the outcome distribution for $E^\star$ is given by
$
    P_\theta^{E^\star}(\{i\})=p_i(\theta)
$. For any POVM $N$ on $(\mathsf Y,\mathcal Y)$, define
\[
    K_N(i,B)
    :=
    \Tr\!\left(E_iN(B)\right),
    \qquad
    B\in\mathcal Y.
\]
Since $E_i$ is rank one, $K_N(i,\cdot)$ is a probability measure, and
\[
\begin{aligned}
    P_\theta^N(B)
    &=
    \Tr\!\left(\rho_\theta N(B)\right)\\
    &=
    \sum_{i=1}^r
    p_i(\theta)\Tr\!\left(E_iN(B)\right)\\
    &=
    \sum_{i=1}^r
    K_N(i,B)P_\theta^{E^\star}(\{i\}).
\end{aligned}
\]
Hence
\[
    P_\theta^N=K_NP_\theta^{E^\star},
    \qquad \theta\in\Theta,
\]
so $E^\star$ is quantum-to-classically sufficient relative to the
class of all POVMs.
\end{remark}

\begin{remark}[Uniqueness up to Blackwell equivalence]
\label{rem:q2c-uniqueness}
If $M_1^\star$ and $M_2^\star$ are both quantum-to-classically sufficient
relative to the same class $\mathfrak M$, then
\[
    M_1^\star\succeq_{\cQ}M_2^\star
    \qquad\text{and}\qquad
    M_2^\star\succeq_{\cQ}M_1^\star.
\]
Hence $\cE_{M_1^\star}$ and $\cE_{M_2^\star}$ are Blackwell equivalent.
Accordingly, a greatest induced experiment is unique only up to classical
randomization equivalence.
\end{remark}

Blackwell dominance has an immediate decision-theoretic consequence: every
classical action distribution obtainable from the less informative experiment
can be reproduced from the more informative one. We record this consequence
for measurement-induced experiments.

\subsection{Quantum decision procedures}
\label{subsec:quantum-decision-procedures}

Let $(\mathsf A,\mathcal A)$ be a standard Borel action space.

\begin{definition}[Quantum decision procedure]
A quantum decision procedure consists of a POVM $M$ with outcome space
$(\mathsf Y,\mathcal Y)$ and a Markov kernel
$\delta:\mathsf Y\times\mathcal A\to[0,1]$.
\end{definition}

Equivalently, the measurement and the classical decision rule can be
combined into an \emph{action-valued POVM} $D$ defined by
\[
    D(A)
    :=
    \int_{\mathsf Y}\delta(y,A)\,M(dy),
    \qquad A\in\mathcal A.
\]

Let $L:\Theta\times\mathsf A\to[0,\infty]$ be a measurable loss
function. The risk of the procedure $(M,\delta)$ at
$\theta$ is
\[
    R_\theta(M,\delta)
    :=
    \int_{\mathsf Y}\int_{\mathsf A}
    L(\theta,a)\,\delta(y,da)\,
    \Tr\!\left(\rho_\theta M(dy)\right).
\]
Equivalently, for the corresponding action-valued POVM $D$,
\[
    R_\theta(D)
    =
    \int_{\mathsf A}
    L(\theta,a)\Tr\!\left(\rho_\theta D(da)\right).
\]

Let $\Pi$ be a prior probability measure on $\Theta$. The Bayes risk and optimal Bayes risk are
\[
    \BayesRisk(D)
    :=
    \int_\Theta R_\theta(D)\,\Pi(d\theta),
    \qquad
    r(\Pi)
    :=
    \inf_D\BayesRisk(D),
\]
where the infimum is over all action-valued POVMs on
$(\mathsf A,\mathcal A)$.

\begin{prop}[Risk comparison]
\label{prop:risk-transfer}
Suppose that $M\succeq_{\cQ}N$. For every randomized decision rule
$\delta_N$ based on $N$, there exists a randomized decision rule $\delta_M$
based on $M$ such that the induced action distributions coincide for every
$\theta\in\Theta$. Consequently,
\[
    R_\theta(M,\delta_M)
    =
    R_\theta(N,\delta_N),
    \qquad \theta\in\Theta,
\]
for every nonnegative measurable loss for which the risks are well-defined.
\end{prop}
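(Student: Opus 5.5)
The plan is to define $\delta_M$ as the composition of the Blackwell kernel $K_N$ with $\delta_N$, and then check that this reproduces the action law and hence the risk. Let $K_N:\mathsf X\times\mathcal Y\to[0,1]$ be the Markov kernel from \cref{def:model-relative-order}, so that $P_\theta^N=K_NP_\theta^M$ for all $\theta\in\Theta$. Given $\delta_N:\mathsf Y\times\mathcal A\to[0,1]$, define
\[
    \delta_M(x,A):=\int_{\mathsf Y}\delta_N(y,A)\,K_N(x,dy),
    \qquad x\in\mathsf X,\ A\in\mathcal A.
\]
The first step is to verify that $\delta_M$ is a Markov kernel from $\mathsf X$ to $\mathsf A$. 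For fixed $x$, countable additivity in $A$ follows from monotone convergence applied to $\delta_N(y,\cdot)$, and $\delta_M(x,\mathsf A)=1$. For fixed $A$, measurability of $x\mapsto\delta_M(x,A)$ follows from the standard fact that $x\mapsto\int f(y)\,K_N(x,dy)$ is measurable for every bounded measurable $f$; this is proved first for indicators and then extended by simple functions and monotone limits. The parameter $\theta$ never enters the construction, so $\delta_M$ is a legitimate decision rule.

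The second step is the identity of action distributions. For $A\in\mathcal A$, Fubini--Tonelli for kernels gives
\[
    \int_{\mathsf X}\delta_M(x,A)\,P_\theta^M(dx)
    =\int_{\mathsf Y}\delta_N(y,A)\,(K_NP_\theta^M)(dy)
    =\int_{\mathsf Y}\delta_N(y,A)\,P_\theta^N(dy).
\]
The only point needing care is the middle equality, namely that $\int\!\!\int f(y)\,K_N(x,dy)\,P(dx)=\int f\,d(KP)$ for bounded measurable $f\ge 0$. This holds by definition for indicators $f=\mathbf 1_B$, since that is exactly how $K_NP$ is defined. It then extends to simple functions by linearity and to all nonnegative measurable $f$ by monotone convergence. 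Hence the action laws $Q_\theta^M(A):=\int\delta_M(x,A)\,P_\theta^M(dx)$ and $Q_\theta^N$ coincide for every $\theta$. In the language of action-valued POVMs, the measures $\Tr(\rho_\theta D_M(\cdot))$ and $\Tr(\rho_\theta D_N(\cdot))$ agree for all $\theta$, although the operators $D_M$ and $D_N$ need not agree.

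Finally, the risk identity follows from the same approximation argument. Each risk can be written as
\[
    R_\theta(M,\delta_M)=\int_{\mathsf X}\int_{\mathsf A}L(\theta,a)\,\delta_M(x,da)\,P_\theta^M(dx)=\int_{\mathsf A}L(\theta,a)\,Q_\theta^M(da),
\]
which uses the kernel–measure integration identity once more, now for the nonnegative measurable function $L(\theta,\cdot)$ possibly taking the value $\infty$. The corresponding expression for $(N,\delta_N)$ equals $\int_{\mathsf A}L(\theta,a)\,Q_\theta^N(da)$. Since $Q_\theta^M=Q_\theta^N$, the two risks are equal, with the value $+\infty$ allowed, and nonnegativity of $L$ ensures that no integrability issues arise.

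I expect no genuine obstacle; the only technical content is the measurability and Fubini bookkeeping for compositions of kernels. The standard Borel assumption is not even needed here, since it matters only for disintegration and not for composition.
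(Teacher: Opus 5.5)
Your proposal is correct and matches the paper's proof: you compose the Blackwell kernel with the rule, $\delta_M(x,C)=\int_{\mathsf Y}\delta_N(y,C)\,K_N(x,dy)$, use the kernel--measure integration identity to show that the action laws under $P_\theta^M$ and $P_\theta^N$ coincide for every $\theta$, and then integrate $L(\theta,\cdot)$ against this common law. Your measurability and monotone-convergence checks spell out details that the paper leaves implicit.
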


\begin{proof}
Let $K_N$ be a Markov kernel satisfying
\[
    P_\theta^N=K_NP_\theta^M,
    \qquad \theta\in\Theta,
\]
and define
\[
    \delta_M(x,C)
    :=
    \int_{\mathsf Y}\delta_N(y,C)\,K_N(x,dy),
    \qquad
    x\in\mathsf X,\quad C\in\mathcal A.
\]
Then, for every $C\in\mathcal A$,
\[
\begin{aligned}
    \int_{\mathsf X}\delta_M(x,C)\,P_\theta^M(dx)
    &=
    \int_{\mathsf X}\int_{\mathsf Y}
    \delta_N(y,C)K_N(x,dy)P_\theta^M(dx)\\
    &=
    \int_{\mathsf Y}\delta_N(y,C)\,P_\theta^N(dy).
\end{aligned}
\]
Thus the procedures $(M,\delta_M)$ and $(N,\delta_N)$ induce the same
distribution on the action space under every $\rho_\theta$. Integrating
$L(\theta,\cdot)$ against this common action distribution proves the risk
identity.
\end{proof}

\begin{remark}
\label{rem:randomization-converse}
Under the hypotheses of the classical randomization theorem, inclusion of
the attainable risk sets for all bounded classical decision problems also
implies the existence of a randomization kernel (cf. \cite{LeCam1996}). We use the kernel
formulation as the primary definition because it gives an exact,
decision-independent comparison of the induced experiments.
\end{remark}

\subsection{Comparison with other notions of sufficiency}
\label{subsec:comparison-sufficiency}

We compare Definition~\ref{def:q2c-sufficiency} with classical
sufficiency, quantum-channel sufficiency, and minimal sufficiency
under post-processing of POVMs.

\subsubsection{Classical sufficiency}

Let $\cE=\{P_\theta:\theta\in\Theta\}$ be a classical experiment on
$(\mathsf X,\mathcal X)$, and let
\[
    T:(\mathsf X,\mathcal X)\rightarrow(\mathsf T,\mathcal T)
\]
be a statistic. The induced experiment is
\[
    \cE^T=\{P_\theta^T:\theta\in\Theta\},
    \qquad
    P_\theta^T=P_\theta\circ T^{-1}.
\]

\begin{definition}[Classical sufficiency]
\label{def:classical-sufficiency}
The statistic $T$ is sufficient for $\cE$ if there exists a Markov kernel
$K$ from $(\mathsf T,\mathcal T)$ to $(\mathsf X,\mathcal X)$ such that
\[
    P_\theta(B)
    =
    \int_{\mathsf T}K(t,B)\,P_\theta^T(dt),
    \qquad
    \theta\in\Theta,\quad B\in\mathcal X.
\]
\end{definition}

Since $T$ itself maps $\cE$ to $\cE^T$, the definition is equivalent to
\[
    \cE\simeq_B\cE^T.
\]
It is also equivalent, under the usual regularity conditions, to requiring
that the conditional law of the original observation given $T$ be
independent of $\theta$.

Classical and quantum-to-classical sufficiency use the same Blackwell
comparison but apply it to different families. Classical sufficiency begins
with one distinguished classical experiment and asks whether a statistic
compresses its observation without loss. Quantum-to-classical sufficiency
begins before a measurement has been selected and compares
different experiments induced by different $M\in\mathfrak M$. It therefore
establishes that one measurement is complete for an admissible measurement
class, not merely that a statistic is sufficient for the outcome of a
previously fixed measurement. Once $M^\star$ has been identified, however,
an ordinary sufficient statistic for $\cE_{M^\star}$ may be used to remove
redundancy from its classical outcome without changing its Blackwell
dominance over the other induced experiments.

\subsubsection{Quantum-channel sufficiency}

A quantum channel
\[
    \Phi:\mathcal T_1(\cH)\rightarrow\mathcal T_1(\cK)
\]
maps $\cQ$ to the quantum experiment
\[
    \Phi(\cQ)=\{\Phi(\rho_\theta):\theta\in\Theta\}.
\]

\begin{definition}[Quantum randomization order and channel sufficiency]
\label{def:quantum-channel-sufficiency}
For quantum experiments
\[
    \cQ=\{\rho_\theta:\theta\in\Theta\}
    \quad\text{and}\quad
    \cR=\{\sigma_\theta:\theta\in\Theta\},
\]
we say that $\cQ$ quantum-dominates $\cR$ if there exists a quantum
channel $\Phi$ satisfying
\[
    \Phi(\rho_\theta)=\sigma_\theta,
    \qquad \theta\in\Theta.
\]
The channel $\Phi$ is sufficient for $\cQ$ if there is a recovery channel
\[
    \Psi:\mathcal T_1(\cK)\rightarrow\mathcal T_1(\cH)
\]
such that
\[
    \Psi\circ\Phi(\rho_\theta)=\rho_\theta,
    \qquad \theta\in\Theta.
\]
\end{definition}

Quantum-channel sufficiency preserves the quantum experiment itself. It
therefore preserves quantum decision problems, including procedures with
quantum outputs, and includes Petz sufficiency and equality cases of quantum
data-processing inequalities. By contrast, quantum-to-classical
sufficiency requires no recovery of $\rho_\theta$. It preserves only the
classical experiments obtainable from measurements in $\mathfrak M$.

The distinction remains when a POVM $M$ is viewed as a
quantum-to-classical channel
\[
    \mathsf M:\rho\longmapsto P_\rho^M.
\]
Sufficiency of $\mathsf M$ in the quantum-channel sense would require the
classical output to support a recovery channel reconstructing every
$\rho_\theta$. This condition generally fails for noncommuting models and
is strictly stronger than sufficiency in
\cref{def:q2c-sufficiency}. Indeed, if such a recovery channel exists, one
may recover $\rho_\theta$ and then implement any POVM; hence $M$ is
quantum-to-classically sufficient relative to the class of all POVMs. The
converse fails for restricted measurement classes. Weak Schur sampling, for
example, dominates the invariant POVMs relevant to spectral inference in
\cref{sec:wss}, but discards eigenvector information and cannot recover the
full tensor-power state.

\subsubsection{Minimal sufficient POVMs}

Minimal sufficiency under post-processing equivalence is particularly close
to our formulation because it also compares POVMs through classical
post-processing \cite{Kuramochi2015,Kuramochi2017}. For standard Borel
POVMs, write $A\preceq B$ when there is a Markov kernel $K$ satisfying the
operator identity
\[
    A(E)=\int K(y,E)\,B(dy),
\]
and write $A\simeq B$ when $A\preceq B$ and $B\preceq A$. Because the
relation is imposed at the operator level, it reproduces the outcome laws
uniformly over all input states.

A POVM $A$ is minimal sufficient if, for every $B\simeq A$, there
exists a measurable map $f$ from the outcome space of $B$ to that of
$A$ such that
\[
A(E)=B(f^{-1}(E))
\]
for every measurable outcome event $E$ of $A$.
It is therefore a least-redundant representative of the fixed
post-processing-equivalence class of $A$.

Despite the common post-processing language, this notion differs from
quantum-to-classical sufficiency in three respects. First, minimal POVM
sufficiency is not dependent on the states, whereas the order $\succeq_{\cQ}$ is relative
to the prescribed model and may identify measurements that differ on
states outside $\cQ$. Second, minimal sufficiency compares only POVMs in
one post-processing-equivalence class, whereas
\cref{def:q2c-sufficiency} compares all measurements in $\mathfrak M$,
which need not be mutually equivalent. Third, their order-theoretic
directions are opposite: minimal sufficiency selects a least-redundant
representative within one equivalence class, while quantum-to-classical
sufficiency seeks a greatest induced experiment across an admissible
measurement class.

Neither property implies the other. Appending an independent random label
to a quantum-to-classically sufficient measurement leaves its induced
experiment Blackwell equivalent to the original and hence still greatest,
but introduces removable outcome redundancy. Conversely, a minimal
sufficient POVM need not dominate measurements outside its own
post-processing-equivalence class; for example, a nonredundant qubit
projective measurement cannot reproduce an incompatible projective
measurement on the full qubit model by classical post-processing.

These distinctions also locate quantum-to-classical sufficiency relative
to decision-specific optimality. By \cref{prop:risk-transfer}, a sufficient
$M^\star$ transfers every classical decision procedure based on
$\mathfrak M$, with its entire risk function, to a procedure based on
$M^\star$. Equality of optimal Bayes or minimax risks for one prior and loss
is weaker: it concerns only that decision problem and does not provide the
parameter-independent kernels required to simulate every admissible
measurement experiment.

\section{Symmetry in quantum statistical models: Invariant models and sufficiency}
\label{sec:symmetry}

\subsection{Invariant problems and orbit parameters}
\label{sec:invariant-problems}

Let \(G\) be a compact Hausdorff group with normalized Haar measure \(dg\), acting measurably on the parameter space \(\Theta\). Let \(g\mapsto U_g\) be a strongly continuous unitary representation of \(G\) on the separable Hilbert space \(\cH\). We assume that the quantum statistical model \(\cQ=\{\rho_\theta:\theta\in\Theta\}\) is equivariant, in the sense that

$$
    \rho_{g\theta}=U_g\rho_\theta U_g^\dagger,
    \qquad g\in G,\quad \theta\in\Theta.
$$

We consider statistical problems in which the parameter of interest is invariant under the group action, so that the group orbit represents a nuisance direction. Write \(\theta\sim_G\theta'\) if \(\theta'=g\theta\) for some \(g\in G\), let \(\Theta/G\) denote the corresponding orbit space, and let \(\pi:\Theta\to\Theta/G\) be the quotient map. Thus the invariant parameter may be identified with the orbit \(\eta=\pi(\theta)\). When a measurable structure on \(\Theta/G\) is required, we equip it with the quotient \(\sigma\)-field induced by \(\pi\).

A POVM \(M\) is called \(G\)-invariant if \(U_g^\dagger M(B)U_g=M(B)\) for every \(g\in G\) and every measurable outcome event \(B\). For such a measurement, equivariance of the model immediately gives \(P_{g\theta}^M=P_\theta^M\). Hence the classical experiment induced by \(M\) depends on \(\theta\) only through its orbit \(\eta=\pi(\theta)\).

For a trace-class operator \(A\), define its group average by

$$
    \cT_G(A):=\int_G U_gAU_g^\dagger\,dg,
$$
where the integral is understood in the Bochner sense with respect to the trace norm. Strong continuity of the representation implies that \(g\mapsto U_gAU_g^\dagger\) is trace-norm continuous, so the integral is well defined. For every \(G\)-invariant POVM \(M\),

$$
    \Tr\!\left[\rho_\theta M(B)\right]
    =
    \Tr\!\left[\cT_G(\rho_\theta)M(B)\right].
$$

Moreover, Haar invariance and equivariance imply that \(\cT_G(\rho_\theta)\) is constant on group orbits. We may therefore define
$$
    \bar\rho_\eta
    :=
    \cT_G(\rho_\theta),
    \qquad
    \pi(\theta)=\eta,
$$
without dependence on the chosen representative \(\theta\). Consequently, relative to the class of \(G\)-invariant POVMs, the original experiment \(\cQ\) and the orbit-averaged experiment \(\overline{\cQ}:=\{\bar\rho_\eta:\eta\in\Theta/G\}\) induce exactly the same family of classical experiments.

We now characterize conditions under which this orbit-averaged quantum experiment itself admits an exact reduction to a single classical experiment.

\subsection{Block structure and quantum-to-classical sufficiency}
\label{sec:invariant-factorization}

Let
\[
    \cA_G
    :=
    \{A\in\mathcal L(\cH):[A,U_g]=0
      \text{ for every }g\in G\}
\]
be the invariant algebra. Suppose that $\mathsf Z$ is a finite set and 
\[
    \cH
    =
    \bigoplus_{z\in\mathsf Z}\cH_z
\]
is an orthogonal decomposition with projections
$\{\Pi_z:z\in\mathsf Z\}$ such that the $\Pi_z$ are central in $\cA_G$.
Equivalently,
\[
    \cA_G
    =
    \bigoplus_{z\in\mathsf Z}\cA_z,
    \qquad
    \cA_z
    :=
    \Pi_z\cA_G\Pi_z.
\]
In particular, every $G$-invariant POVM is block diagonal with respect to this
decomposition.

For the orbit experiment define
\[
    p_\eta(z)
    :=
    \Tr(\bar\rho_\eta\Pi_z).
\]
Whenever $p_\eta(z)>0$, write
\[
    \tau_{\eta,z}
    :=
    \frac{\Pi_z\bar\rho_\eta\Pi_z}{p_\eta(z)}.
\]
Thus every invariant state admits the block representation
\[
    \bar\rho_\eta
    =
    \bigoplus_{z\in\mathsf Z}
    p_\eta(z)\tau_{\eta,z}.
\]
If $p_\eta(z)=0$, positivity implies
$\Pi_z\bar\rho_\eta\Pi_z=0$, and $\tau_{\eta,z}$ may be chosen
arbitrarily as a density operator on the nonzero block
$\mathcal H_z$.  

Let
\[
    M^\star(\{z\})=\Pi_z,
    \qquad
    z\in\mathsf Z,
\]
denote the projective measurement of the block label. The next theorem shows that $M^\star$ is quantum-to-classically
sufficient when the conditional block states can be chosen
independently of $\eta$.

\subsubsection{A sufficient condition}

The following theorem is the quantum analogue of the classical characterization
of sufficiency through parameter-free conditional distributions.

\begin{theorem}[Invariant factorization theorem]
\label{thm:invariant-factorization}
Suppose that the orbit-averaged experiment satisfies
\begin{equation}
    \bar\rho_\eta
    =
    \bigoplus_{z\in\mathsf Z}
    p_\eta(z)\tau_z,
    \label{eq:inv-factor}
\end{equation}
where each $\tau_z$ is a density operator on $\cH_z$ independent of $\eta$.
Then the block-label measurement $M^\star$ is quantum-to-classically sufficient
for $\overline{\cQ}$ relative to the class of all $G$-invariant POVMs.
\end{theorem}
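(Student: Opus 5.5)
The plan is to construct the randomization kernel explicitly from the parameter-free block states, exactly as in the commuting-model computation of \cref{rem:commuting-model}. Fix an arbitrary $G$-invariant POVM $N$ with standard Borel outcome space $(\mathsf Y,\mathcal Y)$. For each $z\in\mathsf Z$ we define
\[
    K_N(z,B):=\Tr\!\left(\tau_z N(B)\right),\qquad B\in\mathcal Y,
\]
where $\tau_z$ is regarded as a density operator on $\cH$ supported in $\cH_z$. Since $\mathsf Z$ is finite, measurability in $z$ is automatic. Because $\tau_z$ is a density operator and $N$ is a POVM, $K_N(z,\cdot)$ is a probability measure: positivity follows from $N(B)\ge 0$, normalization from $N(\mathsf Y)=\mathbf 1$ and $\Tr\tau_z=1$, and countable additivity from the weak-operator $\sigma$-additivity of $N$ paired with the trace-class operator $\tau_z$, using normality of the trace against trace-class operators. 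Crucially, $K_N$ does not depend on $\eta$, because the $\tau_z$ do not.

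Next we verify the post-processing identity. By the reduction in \cref{sec:invariant-problems}, it suffices to work with $\bar\rho_\eta$. Using \eqref{eq:inv-factor} and linearity of the trace,
\[
    P^N_\eta(B)=\Tr\!\left(\bar\rho_\eta N(B)\right)=\sum_{z\in\mathsf Z}p_\eta(z)\,\Tr\!\left(\tau_z N(B)\right)=\sum_{z}K_N(z,B)\,P^{M^\star}_\eta(\{z\}).
\]
Here we note that $P^{M^\star}_\eta(\{z\})=\Tr(\bar\rho_\eta\Pi_z)=p_\eta(z)$, because $\tau_z$ lives on $\cH_z$ and has unit trace. The finite sum causes no convergence issue. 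This gives $P^N_\eta=K_N P^{M^\star}_\eta$ for every $\eta$, so $M^\star\succeq_{\overline\cQ}N$.

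It remains to check that $M^\star$ itself lies in the admissible class. Since each $\Pi_z$ is central in $\cA_G$, it belongs to $\cA_G$ and hence commutes with every $U_g$. Therefore $U_g^\dagger\Pi_zU_g=\Pi_z$, so $M^\star$ is a $G$-invariant POVM. Its normalization follows from $\sum_z\Pi_z=\mathbf 1$ by orthogonality of the decomposition.

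I expect no serious obstacle. The only points needing care are measure-theoretic: the $\sigma$-additivity of $K_N(z,\cdot)$, handled via normality of the trace, and the observation that block-diagonality of $N$ is not actually needed for the kernel identity. Block-diagonality would matter only for interpreting $K_N(z,\cdot)$ as the law of $\Pi_zN\Pi_z$ on $\cH_z$. I will also remark that, by the risk-comparison result (\cref{prop:risk-transfer}), the same kernel transfers the conclusion back to the original model $\cQ$ restricted to invariant POVMs.
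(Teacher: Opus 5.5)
Your proposal is correct and follows the paper's proof. The paper uses the kernel $K_N(z,B)=\Tr[\tau_z N_z(B)]$, with $N_z$ the $z$-th diagonal block of $N$; this coincides with your $\Tr(\tau_z N(B))$ because $\tau_z=\Pi_z\tau_z\Pi_z$, and the paper then expands $\Tr[\bar\rho_\eta N(B)]$ by linearity exactly as you do (your extra checks, that $M^\star$ is itself $G$-invariant and that block-diagonality of $N$ is not needed for the identity, are both valid, and the paper leaves the former implicit).
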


\begin{proof}
Let $N$ be a $G$-invariant POVM with outcome space
$(\mathsf Y,\mathcal Y)$. Since $N(B)\in\cA_G$ for every $B\in\mathcal Y$, the POVM is
block diagonal:
\[
    N(B)
    =
    \bigoplus_{z\in\mathsf Z}N_z(B).
\]
Define
\[
    K_N(z,B)
    :=
    \Tr[\tau_zN_z(B)].
\]
For each $z$, $B\mapsto K_N(z,B)$ is a probability measure, so $K_N$ is a
Markov kernel. Using \eqref{eq:inv-factor},
\[
\begin{aligned}
    P_\eta^N(B)
    &=
    \Tr[\bar\rho_\eta N(B)]\\
    &=
    \sum_{z\in\mathsf Z}
    p_\eta(z)\Tr[\tau_zN_z(B)]\\
    &=
    \sum_{z\in\mathsf Z}
    K_N(z,B)p_\eta(z).
\end{aligned}
\]
Since
\(
    P_\eta^{M^\star}(\{z\})
    =
    p_\eta(z),
\)
we obtain
\(
    P_\eta^N
    =
    K_NP_\eta^{M^\star}.
\)
\end{proof}

The interpretation is direct. The block label contains all dependence on the
orbit parameter, whereas the conditional quantum state within each block is
parameter-free. Once the block label has been observed, every further invariant
measurement is therefore a parameter-independent randomization.

\subsubsection{Necessary conditions and the converse}

Next we note that sufficiency itself always implies an observable-side factorization.

\begin{prop}[Observable-side converse]
\label{prop:observable-side-converse}
Suppose that $M^\star$ is quantum-to-classically sufficient for
$\overline{\cQ}$ relative to all $G$-invariant POVMs. Then for every invariant
effect $E\in\cA_G$ there exists a function
\[
    f_E:\mathsf Z\to[0,1]
\]
such that
\begin{equation}
    \Tr(\bar\rho_\eta E)
    =
    \sum_{z\in\mathsf Z}
    p_\eta(z)f_E(z),
    \qquad
    \eta\in\Theta/G.
    \label{eq:observable-factorization}
\end{equation}
\end{prop}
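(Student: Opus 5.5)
The plan is to turn the invariant effect $E$ into a two-outcome $G$-invariant POVM and then read off $f_E$ from the randomization kernel that sufficiency provides.

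\emph{Construction of the POVM.} Given an invariant effect $E\in\cA_G$ with $0\le E\le \mathbf 1$, define $N_E$ on the outcome space $\{0,1\}$ by
\[
N_E(\{1\})=E,\qquad N_E(\{0\})=\mathbf 1-E .
\]
Both operators are positive and sum to $\mathbf 1$. Both commute with every $U_g$: $E$ does by assumption, and $\mathbf 1$ trivially. Hence $U_g^\dagger N_E(B)U_g=N_E(B)$ for every $B$, so $N_E$ is a $G$-invariant POVM.

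\emph{Applying sufficiency.} By hypothesis $M^\star\succeq_{\overline{\cQ}}N_E$. So there is a Markov kernel $K$ from $\mathsf Z$ to $\{0,1\}$ satisfying
\[
P^{N_E}_\eta=K\,P^{M^\star}_\eta\qquad\text{for all }\eta .
\]
Evaluate this at the event $\{1\}$ and set $f_E(z):=K(z,\{1\})$, which lies in $[0,1]$ because $K(z,\cdot)$ is a probability measure. Since $P^{M^\star}_\eta(\{z\})=\Tr(\bar\rho_\eta\Pi_z)=p_\eta(z)$, this gives
\[
\Tr(\bar\rho_\eta E)=P^{N_E}_\eta(\{1\})=\sum_{z}f_E(z)\,p_\eta(z),
\]
which is \eqref{eq:observable-factorization}.

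\emph{Remaining points.} Because $\mathsf Z$ is finite, the integral defining the kernel action reduces to a finite sum, so no measurability issues arise. The argument has no real obstacle. The one point to be careful about is that $f_E$ need not be unique on labels $z$ with $p_\eta(z)=0$ for all $\eta$, and the statement asserts only existence. If linearity of $E\mapsto f_E$ were wanted later, it would not follow from this argument without an identifiability assumption, which is presumably what the subsequent converse adds.
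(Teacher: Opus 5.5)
Your proof is correct and is exactly the paper's argument: apply sufficiency to the binary invariant POVM $(E,\mathbf{1}-E)$ and take $f_E(z)=K(z,\{1\})$ from the resulting Markov kernel. Your closing remark also matches the paper, which imposes Assumption~\ref{ass:block-coordinate-identifiability} afterwards to make these coefficients unique. Note that non-uniqueness can occur whenever the vectors $p_\eta$ fail to span $\mathbb{R}^{|\mathsf Z|}$, not only on labels $z$ with $p_\eta(z)=0$ for every $\eta$.
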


\begin{proof}
Apply sufficiency to the binary invariant POVM $(E,I-E)$. The corresponding
Markov kernel from the outcome of $M^\star$ gives the coefficients
$f_E(z)$.
\end{proof}

Thus observable-side factorization is a necessary condition for sufficiency
without any identifiability assumption. It does not, however, by itself imply
that the coefficients $f_E(z)$ arise from parameter-free conditional quantum
states. For this stronger conclusion one needs the block probabilities to
identify their coordinates.

We assume that the block index set
$\mathsf Z$ is finite. Define
\[
    p_\eta
    :=
    \bigl(p_\eta(z)\bigr)_{z\in\mathsf Z}
    \in\mathbb R^{|\mathsf Z|}.
\]
We impose the following identifiability condition.

\begin{assumption}[Block-coordinate identifiability]
\label{ass:block-coordinate-identifiability}
If $a:\mathsf Z\to\mathbb R$ satisfies
\[
    \sum_{z\in\mathsf Z}p_\eta(z)a(z)=0
    \qquad
    \text{for every }\eta\in\Theta/G,
\]
then $a(z)=0$ for every $z\in\mathsf Z$.
\end{assumption}

\begin{remark}
\label{rem:block-coordinate-identifiability}
Since $\mathsf Z$ is finite,
Assumption~\ref{ass:block-coordinate-identifiability} is equivalent to
\[
    \operatorname{span}
    \left\{
        p_\eta:\eta\in\Theta/G
    \right\}
    =
    \mathbb R^{|\mathsf Z|}.
\]
It is therefore a statistical identifiability condition on the family of
block probabilities, rather than a consequence of the representation-theoretic
block decomposition. Its role in the converse theorem is to ensure uniqueness
of the coefficients in the observable-side factorization
\eqref{eq:observable-factorization}. 
\end{remark}

Under this additional condition, observable-side factorization can be strengthened
to a state-side factorization, which gives us the following theorem.

\begin{theorem}[Converse invariant factorization theorem]
\label{thm:converse-invariant-factorization}
Suppose that $\cH$ is finite dimensional and Assumption~\ref{ass:block-coordinate-identifiability} holds and
that the block-label measurement $M^\star$ is quantum-to-classically sufficient
for $\overline{\cQ}$ relative to all $G$-invariant POVMs. Then there exist
density operators $\tau_z$, supported on $\cH_z$ and independent of $\eta$,
such that
\begin{equation}
    \bar\rho_\eta
    =
    \bigoplus_{z\in\mathsf Z}
    p_\eta(z)\tau_z,
    \qquad
    \eta\in\Theta/G.
   \label{eq:converse-factorization}
\end{equation}
\end{theorem}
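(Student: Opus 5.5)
We construct the conditional block states $\tau_z$ directly as elements of the invariant algebra $\cA_G$. We start from the observable-side factorization of \cref{prop:observable-side-converse}, use Assumption~\ref{ass:block-coordinate-identifiability} to make the coefficients unique, hence linear, and then represent the resulting positive functionals on $\cA_G$ by density operators via the trace pairing on $\cA_G$.

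\textbf{Step 1: coefficients are unique and linear.} By \cref{prop:observable-side-converse}, for every invariant effect $E$ there is $f_E:\mathsf Z\to[0,1]$ satisfying \eqref{eq:observable-factorization}. By Assumption~\ref{ass:block-coordinate-identifiability}, $f_E$ is uniquely determined: two such functions differ by some $a$ with $\sum_z p_\eta(z)a(z)=0$ for all $\eta$, so $a=0$.

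Uniqueness gives additivity. If $E_1,E_2$ and $E_1+E_2$ are all effects, then $f_{E_1+E_2}=f_{E_1}+f_{E_2}$. Likewise $f_{tE}=tf_E$ for $t\in[0,1]$.

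Every self-adjoint $A\in\cA_G$ can be written as $A=\alpha E_1-\beta E_2$ with $E_1,E_2$ invariant effects and $\alpha,\beta\ge0$. For instance, take $E_{1,2}=A_\pm/\|A\|$; the positive and negative parts $A_\pm$ lie in $\cA_G$ because $\cA_G$ is closed under the continuous functional calculus. Complexify to reach all of $\cA_G$. Additivity and uniqueness then define, for each $z$, a linear functional $\varphi_z$ on $\cA_G$ with $\varphi_z(E)=f_E(z)$ on effects, independent of the decomposition chosen. Each $\varphi_z$ is positive, since effects map into $[0,1]$.

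\textbf{Step 2: normalization of the functionals.} Testing $E=\Pi_w$, which is central and hence lies in $\cA_G$, gives $\Tr(\bar\rho_\eta\Pi_w)=p_\eta(w)$. Uniqueness then forces $\varphi_z(\Pi_w)=\delta_{zw}$. In particular $\varphi_z(\mathbf 1)=1$.

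\textbf{Step 3: represent $\varphi_z$ by a density operator in $\cA_G$.} Since $\cH$ is finite dimensional, $\cA_G$ is a finite-dimensional $*$-algebra. The Hilbert–Schmidt pairing $\langle A,B\rangle=\Tr(A^\dagger B)$ restricted to $\cA_G$ is therefore an inner product. By Riesz representation inside $\cA_G$ there is a unique $T_z\in\cA_G$ with $\varphi_z(A)=\Tr(T_zA)$ for all $A\in\cA_G$; $T_z$ is self-adjoint because $\varphi_z$ is Hermitian.

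Positivity of $T_z$ follows by testing $\varphi_z$ on the spectral projection $P$ of $T_z$ onto its negative eigenspace. This $P$ lies in $\cA_G$, so $0\le\varphi_z(P)=\Tr(T_zP)\le0$ forces $T_zP=0$. Step 2 gives $\Tr T_z=1$ and $\Tr(T_z\Pi_z)=1$. Since $T_z\ge0$ and $T_z$ commutes with the central projection $\Pi_z$, this yields $T_z=\Pi_zT_z\Pi_z$. Set $\tau_z:=T_z$, a density operator on $\cH_z$ that does not depend on $\eta$.

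\textbf{Step 4: upgrade to a state identity.} Note that $\bar\rho_\eta=\cT_G(\rho_\theta)$ commutes with every $U_g$ by Haar invariance, so $\bar\rho_\eta\in\cA_G$. The difference $D_\eta:=\bar\rho_\eta-\sum_z p_\eta(z)\tau_z$ therefore lies in $\cA_G$. By construction $\Tr(D_\eta A)=0$ for every effect in $\cA_G$, hence by linearity for every $A\in\cA_G$. Taking $A=D_\eta^\dagger$ gives $D_\eta=0$, which is \eqref{eq:converse-factorization}.

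\textbf{Main obstacle.} I expect the main difficulty to be Steps 1 and 3: passing from the family of coefficients indexed by effects to a genuine positive linear functional, and then representing that functional inside $\cA_G$ rather than on all of $\mathcal L(\cH)$. Both identifiability and the fact that $\bar\rho_\eta\in\cA_G$ are essential here. Without identifiability the $f_E$ need not be additive; without $\bar\rho_\eta\in\cA_G$ the trace pairing would only determine the compression of $\bar\rho_\eta$ to $\cA_G$, not $\bar\rho_\eta$ itself.
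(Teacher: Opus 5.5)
Your proposal is correct and follows essentially the same route as the paper: uniqueness of the coefficients via Assumption~\ref{ass:block-coordinate-identifiability}, extension to a positive unital linear functional $\varphi_z$ on $\cA_G$, representation by the trace pairing with positivity obtained from the negative spectral projection, and finally $D_\eta\in\cA_G$ with $\Tr(D_\eta A)=0$ for all $A\in\cA_G$. The one minor difference is the order of the block localization. You represent $\varphi_z$ on all of $\cA_G$ and then get $\tau_z=\Pi_z\tau_z\Pi_z$ from positivity, $\varphi_z(I-\Pi_z)=0$ and centrality of $\Pi_z$, whereas the paper first localizes $\varphi_z$ to $\cA_z$ with the Cauchy--Schwarz inequality and represents it there.
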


Combining the preceding results gives the following characterization.

\begin{corollary}[Factorization characterization of invariant sufficiency]
\label{cor:factorization-characterization}
Suppose $\mathcal{H}$ is finite dimensional. Under Assumption~\ref{ass:block-coordinate-identifiability}, the following are
equivalent:
\begin{enumerate}[label=\textup{(\roman*)}]
\item the orbit experiment admits the parameter-free conditional block
factorization
\[
    \bar\rho_\eta
    =
    \bigoplus_zp_\eta(z)\tau_z;
\]
\item the block-label measurement $M^\star$ is quantum-to-classically
sufficient relative to all $G$-invariant POVMs.
\end{enumerate}
\end{corollary}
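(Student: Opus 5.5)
The corollary is obtained by combining the two directions already established, so the plan is to verify each implication against the hypotheses of the corresponding earlier result.

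For (i)$\Rightarrow$(ii), I would invoke \cref{thm:invariant-factorization} directly. Its only hypothesis is the block factorization \eqref{eq:inv-factor} with $\eta$-independent density operators $\tau_z$ on $\cH_z$, which is exactly (i). Neither finite dimensionality nor Assumption~\ref{ass:block-coordinate-identifiability} is needed in this direction. The kernel $K_N(z,B)=\Tr[\tau_zN_z(B)]$ constructed there exhibits $M^\star\succeq_{\overline{\cQ}}N$ for every $G$-invariant POVM $N$, which is (ii).

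For (ii)$\Rightarrow$(i), I would invoke \cref{thm:converse-invariant-factorization}. Its hypotheses are finite dimensionality of $\cH$, Assumption~\ref{ass:block-coordinate-identifiability}, and sufficiency of $M^\star$ relative to the invariant POVMs. All three are assumed in the corollary, and its conclusion \eqref{eq:converse-factorization} is literally (i).

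If the converse theorem is instead to be proved here, the main obstacle lies there, and I would argue as follows. By \cref{prop:observable-side-converse}, every invariant effect $E$ has coefficients $f_E(z)$ satisfying \eqref{eq:observable-factorization}. Assumption~\ref{ass:block-coordinate-identifiability} makes these coefficients unique. Uniqueness gives additivity and positive homogeneity in $E$, so $f_\cdot(z)$ extends by linearity to a positive linear functional $\omega_z$ on $\cA_G$.

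Comparing with $\Tr(\bar\rho_\eta \Pi_{z'})=p_\eta(z')$ and using uniqueness again gives $\omega_z(\Pi_{z'})=\delta_{zz'}$. Hence $\omega_z$ is a state on $\cA_z$ vanishing on the other blocks. In finite dimension $\omega_z$ is represented by a density operator, and projecting it with the conditional expectation $\cT_G$ onto the commutant picture gives a density operator $\tau_z$ on $\cH_z$ with $\omega_z(A)=\Tr(\tau_zA)$ for all $A\in\cA_G$.

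It remains to pass from equality on invariant observables to equality of states. The states $\bar\rho_\eta$ and $\bigoplus_z p_\eta(z)\tau_z$ then agree on $\cA_G$. The left side is $G$-invariant, and we may replace $\tau_z$ by $\cT_G(\tau_z)$, which remains supported on $\cH_z$ because $\Pi_z$ commutes with $U_g$. Both sides then lie in $\cA_G$ and agree under the trace pairing on $\cA_G$, which is nondegenerate since $\cA_G$ is a $*$-algebra. This forces the operator identity \eqref{eq:converse-factorization}.
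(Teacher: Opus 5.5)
Your proposal is correct and is exactly how the paper obtains the corollary: (i)$\Rightarrow$(ii) is Theorem~\ref{thm:invariant-factorization}, which uses neither finite dimensionality nor Assumption~\ref{ass:block-coordinate-identifiability}, and (ii)$\Rightarrow$(i) is Theorem~\ref{thm:converse-invariant-factorization}. Your optional sketch of the converse follows the paper's appendix proof in its steps: unique coefficients, affine extension to a positive functional, $\omega_z(\Pi_{z'})=\delta_{zz'}$, and nondegeneracy of the trace pairing on $\cA_G$. The one difference is that you represent $\omega_z$ by extending it to a density operator and twirling with $\cT_G$, whereas the paper uses trace duality on $\cA_z$ and checks positivity via the negative spectral projection.
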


\section{Sufficiency for spectral functionals of qudits}
\label{sec:wss}

We now specialize the preceding framework to spectral inference from
identically prepared finite-dimensional quantum systems.
In this setup, we will consider the Hilbert space $\cH_d=\bbC^d$ and the $d$-dimensional quantum state (called \emph{qudit}) $\rho$ is an element of $\cS(\cH_d)$. Similarly the $n$-fold copy of the same state is an element of $\cS(\cH_d^{\otimes n})$. Throughout this section we distinguish the full quantum state from its
spectral parameter.  
Let \[
    \Theta_{\downarrow}
    :=\left\{
      \btheta=(\theta_1,\ldots,\theta_d)\in[0,1]^d:
      \theta_1\geq\cdots\geq\theta_d,\quad
      \sum_{i=1}^d\theta_i=1
    \right\}.
\]
For a density operator $\rho$, we denote the vector of eigenvalues of $\rho$ as $\spec(\rho)$. We also write
\[
    \btheta=\spec(\rho)\in\Theta_{\downarrow},
    \qquad
    \rho_{\btheta}:=\operatorname{diag}(\theta_1,\ldots,\theta_d),
\]
and, for $U\in U(d)$,
\[
    \rho_{\btheta,U}:=U\rho_{\btheta}U^{\dagger}.
\]
Thus $\rho$ denotes a point in the full state space, whereas
$\btheta$, the spectrum of the state, denotes the corresponding orbit parameter and $U$ the
eigenvector nuisance parameter.  We also write
\[
    \bar\rho_{\btheta}^{(n)}
    :=
    \int_{U(d)}
      (U\rho_{\btheta}U^{\dagger})^{\otimes n}\,\Haar(dU)
\]
for the orbit-averaged $n$-copy state.  The latter is used only as a
symmetry-reduction device: for every $U(d)$-invariant POVM, its outcome
law on $\rho^{\otimes n}$ agrees with that on
$\bar\rho_{\btheta}^{(n)}$.

\subsection{Construction of the sufficient measurement}

Let
\[
    \Lambda=\mathbb Y_{n,d}
    :=
    \left\{
        \lambda=(\lambda_1,\ldots,\lambda_d)\in\mathbb N_0^d:
        \lambda_1\geq\cdots\geq\lambda_d,\quad
        \sum_{i=1}^d\lambda_i=n
    \right\},
\]
the set of ordered partitions of $n$. Equivalently, we denote
\[
    \mathbb Y_{n,d}
    =
    \{\lambda\vdash n:\ell(\lambda)\leq d\},
\]
where partitions with fewer than $d$ nonzero parts are padded with
trailing zeros.
The unitary group $U(d)$
acts through $U\mapsto U^{\otimes n}$, while the symmetric group $S_n$ acts by
permuting tensor factors. Schur-Weyl duality (see Appendix for details) gives
\begin{equation}
    (\bbC^d)^{\otimes n}
    =
    \bigoplus_{\lambda\in\Lambda}
    \cU_\lambda^{(d)}\otimes\cV_\lambda,
\label{eq:Schur_Weyl}
\end{equation}

and
\[
    U^{\otimes n}
    =
    \bigoplus_{\lambda\in\Lambda}
    U_\lambda(U)\otimes I_{\cV_\lambda},
\]
where $U_\lambda$ is the irreducible representation of $U(d)$ on
$\cU_\lambda^{(d)}$.
 The invariant algebra is
\[
    \{U^{\otimes n}:U\in U(d)\}'
    =
    \bigoplus_{\lambda\in\Lambda}
    I_{\cU_\lambda^{(d)}}\otimes\mathcal L(\cV_\lambda).
\]

Its minimal central projections are precisely the Schur-Weyl block projections
$\Pi_\lambda$. Consequently, every $U(d)$-invariant POVM has the form
\[
    N(B)
    =
    \bigoplus_{\lambda\in\Lambda}
    I_{\cU_\lambda^{(d)}}\otimes N_\lambda(B).
\]

Weak Schur sampling is the block-label measurement
\[
    M^{\mathrm{WSS}}
    =
    \{\Pi_\lambda:\lambda\in\Lambda\}.
\]
We next show that weak
Schur sampling is sufficient for the class of invariant POVMs.
For a POVM $M$ on $(\mathbb C^d)^{\otimes n}$, we abbreviate
$P_\rho^M(B):=\operatorname{Tr}\{\rho^{\otimes n}M(B)\}$.

\begin{theorem}[Weak Schur sampling sufficiency]
\label{thm:wss}
For the experiment
\[
    \cQ_{n,d}
    =
    \{\rho^{\otimes n}:
      \rho\in\mathcal S(\bbC^d)\},
\]
weak Schur sampling is quantum-to-classically sufficient relative to the class
of $U(d)$-invariant POVMs. In particular, for every $U(d)$-invariant POVM
$N$ there exists a Markov kernel $K_N$ such that
\[
    P_\rho^N
    =
    K_NP_\rho^{\mathrm{WSS}}
    \qquad
    \text{for every }\rho\in\mathcal S(\bbC^d).
\]
\end{theorem}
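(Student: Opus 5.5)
The plan is to reduce \cref{thm:wss} to the invariant factorization theorem, \cref{thm:invariant-factorization}, with $G=U(d)$ acting on $\cH=(\bbC^d)^{\otimes n}$ through $U\mapsto U^{\otimes n}$. The model is equivariant, since $(U\rho U^\dagger)^{\otimes n}=U^{\otimes n}\rho^{\otimes n}(U^{\otimes n})^\dagger$, and the orbit parameter is $\btheta=\spec(\rho)$. By the discussion in \cref{sec:invariant-problems}, for every $U(d)$-invariant POVM $N$ we have $P_\rho^N=\Tr[\bar\rho^{(n)}_{\btheta}N(\cdot)]$, and likewise for $M^{\mathrm{WSS}}$, whose projections $\Pi_\lambda$ are central in the commutant and hence invariant. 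It therefore suffices to verify the factorization \eqref{eq:inv-factor} for $\bar\rho^{(n)}_{\btheta}$ with respect to the Schur--Weyl block decomposition \eqref{eq:Schur_Weyl}, with $\mathsf Z=\Lambda$ finite. The kernel produced by \cref{thm:invariant-factorization} is $K_N(\lambda,B)=\Tr[\tau_\lambda N_\lambda(B)]$, which depends only on $N$, so the identity holds simultaneously for all $\rho$.

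The key step is to compute $\bar\rho^{(n)}_{\btheta}$. First, $\rho^{\otimes n}_{\btheta}$ commutes with the $S_n$ action, so it lies in the commutant of $S_n$, which by Schur--Weyl duality is $\bigoplus_\lambda \mathcal L(\cU^{(d)}_\lambda)\otimes I_{\cV_\lambda}$. Concretely, since $\rho_{\btheta}\in \mathrm{GL}(d)$ (or by continuity/density in the singular case), $\rho_{\btheta}^{\otimes n}=\bigoplus_\lambda U_\lambda(\rho_{\btheta})\otimes I_{\cV_\lambda}$, where $U_\lambda$ is extended polynomially to all $d\times d$ matrices. Conjugating by $U^{\otimes n}$ gives $\bigoplus_\lambda U_\lambda(U)U_\lambda(\rho_{\btheta})U_\lambda(U)^\dagger\otimes I_{\cV_\lambda}$. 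Haar averaging over $U(d)$ then acts blockwise as the twirl over the irreducible representation $U_\lambda$, and Schur's lemma yields
\[
\int U_\lambda(U)AU_\lambda(U)^\dagger\,\Haar(dU)=\frac{\Tr A}{\dim\cU^{(d)}_\lambda}I_{\cU^{(d)}_\lambda}.
\]
Hence
\[
\bar\rho^{(n)}_{\btheta}=\bigoplus_\lambda p_{\btheta}(\lambda)\,\frac{I_{\cU^{(d)}_\lambda}\otimes I_{\cV_\lambda}}{\dim\cU^{(d)}_\lambda\dim\cV_\lambda},
\]
with $p_{\btheta}(\lambda)=\Tr(\Pi_\lambda\rho^{\otimes n}_{\btheta})=\dim\cV_\lambda\, s_\lambda(\btheta)$. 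The conditional states $\tau_\lambda=\Pi_\lambda/\Tr\Pi_\lambda$ are maximally mixed and independent of $\btheta$, which gives exactly \eqref{eq:inv-factor}.

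The main technical point is the identification of $\rho^{\otimes n}$ with $\bigoplus_\lambda U_\lambda(\rho)\otimes I$ when $\rho$ is not unitary, and in particular when it is singular. I would avoid this entirely by a cleaner route: instead of twirling over $U(d)$ alone, I would also average over $S_n$, which is harmless because $\rho^{\otimes n}$ is already permutation invariant. The resulting operator lies in the commutant of both $\{U^{\otimes n}\}$ and $S_n$, and by Schur--Weyl duality this is $\bigoplus_\lambda \mathcal L(\cU_\lambda)\otimes I\cap I\otimes\mathcal L(\cV_\lambda)$, namely the span of the $\Pi_\lambda$. The coefficient of $\Pi_\lambda$ is then $\Tr(\Pi_\lambda\rho^{\otimes n})/\Tr\Pi_\lambda$, which gives the factorization with no extension argument needed.
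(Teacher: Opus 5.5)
Your proposal is correct and follows the paper's route. You reduce to Theorem~\ref{thm:invariant-factorization}, use membership of $\rho^{\otimes n}$ in the $S_n$-commutant to write it as $\bigoplus_\lambda R_\lambda\otimes I_{\cV_\lambda}$, twirl blockwise, and apply Schur's lemma to get the parameter-free conditional states $\tau_\lambda=\Pi_\lambda/\Tr\Pi_\lambda$.

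The ``technical point'' about identifying $R_\lambda$ with $U_\lambda(\rho)$ for singular $\rho$ never arises. As in the paper, the commutant argument alone gives the block form, without ever identifying $R_\lambda$.

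Your variant, noting that $\bar\rho^{(n)}_{\btheta}$ lies in the intersection of the two commutants (the span of the $\Pi_\lambda$), is an equally valid shortcut. No extra $S_n$-averaging is needed for it, because the $U(d)$-twirl already preserves permutation invariance.
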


\begin{proof}

For spectral inference the nuisance action is
\(
    \rho\mapsto U\rho U^\dagger,
\)
and the orbit parameter, defined in Section \ref{sec:symmetry} is
\(
    \eta=\spec(\rho).
\)
Recall that for $\theta\in\Theta_{\downarrow}$,  the orbit-averaged state is defined as
\begin{equation}
    \overline\rho_{\theta}^{(n)}
    :=
    \int_{U(d)}
    \bigl(U\diag(\theta)U^{\dagger}\bigr)^{\otimes n}
    \,\Haar(dU). \label{averaged-n-qudit}
\end{equation}
For every invariant POVM $M$,
\[
    \Tr\bigl(\rho^{\otimes n}M(B)\bigr)
    =
    \Tr\bigl(
        \overline\rho_{\spec(\rho)}^{(n)}M(B)
    \bigr),
\]
so the orbit-averaged model reproduces all invariant measurement statistics.

The tensor-power state $\rho_\theta^{\otimes n}$ is invariant under
permutations of the tensor factors.  Under the Schur-Weyl decomposition \eqref{eq:Schur_Weyl}
the collective action of $U(d)$ and the permutation action of $S_n$ take
the forms
\begin{equation}
    U^{\otimes n}
    =
    \bigoplus_{\lambda\in\Lambda}
    U_\lambda(U)\otimes I_{\mathcal V_\lambda},
    \qquad
    P_\pi
    =
    \bigoplus_{\lambda\in\Lambda}
    I_{\mathcal U_\lambda^{(d)}}\otimes\pi_\lambda(\pi), \label{sw_action_decomps}
\end{equation}
where $U_\lambda$ and $\pi_\lambda$ are irreducible representations of
$U(d)$ and $S_n$, respectively.

Since $\rho_\theta^{\otimes n}$ commutes with every permutation $P_\pi$,
Schur-Weyl duality implies that it belongs to the commutant of the
$S_n$-action.  Hence there exist positive operators
$R_{\theta,\lambda}$ on $\mathcal U_\lambda^{(d)}$ such that
\begin{equation}
    \rho_\theta^{\otimes n}
    =
    \bigoplus_{\lambda\in\Lambda}
    R_{\theta,\lambda}
    \otimes
    I_{\mathcal V_\lambda}.
    \label{eq:permutation-block-form}
\end{equation}

Using \eqref{averaged-n-qudit}, \eqref{sw_action_decomps}, and \eqref{eq:permutation-block-form}, we have
\[
    \bar\rho_\theta^{(n)}
    =
    \bigoplus_{\lambda\in\Lambda}
    \left\{
        \int_{U(d)}
        U_\lambda(U)
        R_{\theta,\lambda}
        U_\lambda(U)^\dagger\,
        \Haar(dU)
    \right\}
    \otimes
    I_{\mathcal V_\lambda}.
\]
For each $\lambda$, the operator inside braces commutes with
$U_\lambda(U)$ for every $U\in U(d)$.  Since $U_\lambda$ is irreducible,
Schur's lemma implies that it is a scalar multiple of the identity on
$\mathcal U_\lambda^{(d)}$.  Taking traces determines the scalar:
\[
    \int_{U(d)}
    U_\lambda(U)
    R_{\theta,\lambda}
    U_\lambda(U)^\dagger\,
    \Haar(dU)
    =
    \frac{\Tr R_{\theta,\lambda}}
         {\dim\mathcal U_\lambda^{(d)}}
    I_{\mathcal U_\lambda^{(d)}}.
\]

Let $\Pi_\lambda$ denote the projection onto
$\mathcal U_\lambda^{(d)}\otimes\mathcal V_\lambda$, and define
\[
    P_\theta^{(n)}(\lambda)
    :=
    \Tr\!\left(
        \rho_\theta^{\otimes n}\Pi_\lambda
    \right).
\]

From \eqref{eq:permutation-block-form},
\[
    P_\theta^{(n)}(\lambda)
    =
    \dim(\mathcal V_\lambda)\,
    \Tr R_{\theta,\lambda}.
\]
Therefore
\[
    \Tr R_{\theta,\lambda}
    =
    \frac{P_\theta^{(n)}(\lambda)}
         {\dim\mathcal V_\lambda},
\]
and we obtain the normalized block decomposition
\begin{equation}
\label{eq:wss-block-factorization}
    \bar\rho_\theta^{(n)}
    =
    \bigoplus_{\lambda\in\Lambda}
    P_\theta^{(n)}(\lambda)
    \frac{I_{\mathcal U_\lambda^{(d)}}}
         {\dim\mathcal U_\lambda^{(d)}}
    \otimes
    \frac{I_{\mathcal V_\lambda}}
         {\dim\mathcal V_\lambda}.
\end{equation}

Thus the parameter $\theta$ enters the orbit-averaged experiment only
through the block probabilities $P_\theta^{(n)}(\lambda)$, while the
normalized conditional state within each Schur-Weyl block is
\[
    \tau_\lambda
    :=
    \frac{I_{\mathcal U_\lambda^{(d)}}}
         {\dim\mathcal U_\lambda^{(d)}}
    \otimes
    \frac{I_{\mathcal V_\lambda}}
         {\dim\mathcal V_\lambda},
\]
which is independent of $\theta$. The conclusion follows from
Theorem~\ref{thm:invariant-factorization}.
\end{proof}

\begin{remark}
The theorem does not assert
that weak Schur sampling dominates arbitrary  measurements.
Rather, it identifies weak Schur sampling as a sufficient classical
reduction of the quantum experiment relative to the invariant measurement
class appropriate to spectral inference.
\end{remark}

\subsection{Decision-theoretic reductions}

\subsubsection{Bayes risk under invariant prior}

So far, we have considered only the class of invariant POVMs and have shown that WSS is quantum-to-classically sufficient relative to this class. Although this restriction may initially seem arbitrary, we show in this subsection that, for an invariant loss and a unitarily invariant prior, optimization may be restricted to invariant POVMs without changing the optimal Bayes risk.

For the following symmetrization statements, let the compact group
\(G\) also act on the action space \((\mathsf A,\mathcal A)\) through
a measurable group action.

\begin{definition}[Invariant loss and invariant prior]
The loss is $G$-invariant if
\[
    L(g\theta,ga)=L(\theta,a)
    \qquad\text{for all }g\in G.
\]
On the other hand a prior $\Pi$ is $G$-invariant if
\[
    \Pi(gB)=\Pi(B)
\]
for every measurable $B\subseteq\Theta$ and every $g\in G$.
\end{definition}

\begin{definition}[Equivariant procedure]
An action-valued POVM $D$ is equivariant if
\[
    U_gD(A)U_g^{\dagger}=D(gA)
\]
for all $A\in\cA$ and $g\in G$.
\end{definition}

Note that for spectrum inference, we can take
$G=U(d),
    g\rho:=g\rho g^{\dagger}.$
The orbit of $\rho$ is
\[
    [\rho]:=\{U\rho U^{\dagger}:U\in U(d)\}.
\]
The orbit space is naturally identified with $\Theta_{\downarrow}$ through the map
$\rho\mapsto\mathrm{Spec}(\rho)$.

\begin{definition}[Spectrum-only loss]
A loss is spectrum-only if there exists
\[
    \ell:\Theta_{\downarrow}\times\mathsf A\to[0,\infty]
\]
such that
\[
    L(\rho,a)=\ell(\mathrm{Spec}(\rho),a).
\]
\end{definition}

The action space need not itself be $\Theta_{\downarrow}$.  For example:
\begin{itemize}
    \item for spectrum estimation, $\mathsf A=\Theta_{\downarrow}$;
    \item for entropy estimation, $\mathsf A=\R$;
    \item for testing, $\mathsf A=\{0,1\}$;
    \item for estimating several spectral functionals, $\mathsf A=\R^k$.
\end{itemize}

Let $M$ be a POVM on the measurable outcome space
$(\mathsf Y,\mathcal Y)$.  Its unitary twirl is
\[
    \overline M(B)
    :=\int_G U_g^{\dagger} M(B)U_g\,\mu_G(dg).
\]


The following proposition is immediate.
\begin{prop}[Symmetrization of a POVM]\label{prop:symmetrization}
The twirled family $\overline M$ is a POVM and is $G$-invariant.
\end{prop}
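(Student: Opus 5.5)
The plan is to verify the three POVM axioms for $\overline M$ directly, and then to obtain invariance from the invariance of Haar measure. The only real subtlety is the sense in which the integral defining $\overline M(B)$ exists and commutes with positivity and countable sums. I would define $\overline M(B)$ in the weak operator sense. For vectors $u,v\in\cH$, the map $g\mapsto\langle u|U_g^\dagger M(B)U_g|v\rangle$ is continuous by strong continuity of the representation and bounded by $\|u\|\,\|v\|$, since $0\le M(B)\le\mathbf 1$. It is therefore integrable against $\mu_G$. The resulting bounded sesquilinear form has norm at most $\|u\|\,\|v\|$, so by Riesz it defines a unique bounded operator $\overline M(B)$ with $\|\overline M(B)\|\le 1$.

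\textbf{Positivity and normalization.} These are immediate from the weak definition. We have $\langle v|\overline M(B)|v\rangle=\int_G\langle U_gv|M(B)|U_gv\rangle\,\mu_G(dg)\ge 0$. Since $U_g^\dagger\mathbf 1U_g=\mathbf 1$ and $\mu_G$ is a probability measure, $\overline M(\mathsf Y)=\mathbf 1$.

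\textbf{$\sigma$-additivity.} Let $B=\bigcup_iB_i$ be a disjoint union. For fixed $u,v$, the matrix elements $\langle U_gu|M(\cdot)|U_gv\rangle$ form a complex measure for each $g$, by weak-operator $\sigma$-additivity of $M$. Hence $\sum_i\langle U_gu|M(B_i)|U_gv\rangle=\langle U_gu|M(B)|U_gv\rangle$ pointwise in $g$.

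To interchange the sum with the $\mu_G$-integral, I would use dominated convergence. Polarization reduces the question to $u=v$. In that case the partial sums $\sum_{i\le k}\langle U_gv|M(B_i)|U_gv\rangle$ are nonnegative, increasing, and bounded by $\|v\|^2$. Monotone (or dominated) convergence then gives $\sum_i\langle v|\overline M(B_i)|v\rangle=\langle v|\overline M(B)|v\rangle$, and polarization recovers weak-operator convergence for general $u,v$. This interchange is the step I regard as the main point requiring care. It goes through cleanly because of the uniform bound $0\le M(\cdot)\le\mathbf 1$.

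\textbf{Invariance.} For $h\in G$, the representation property $U_{gh}=U_gU_h$ gives
\[
U_h^\dagger\overline M(B)U_h=\int_G U_{gh}^\dagger M(B)U_{gh}\,\mu_G(dg)=\int_G U_{g'}^\dagger M(B)U_{g'}\,\mu_G(dg')=\overline M(B),
\]
where the middle equality uses the substitution $g'=gh$ and right invariance of Haar measure. On a compact group the normalized Haar measure is bi-invariant, so right invariance is available. The identity holds in the weak sense by applying the same substitution to each matrix element. If the representation is only projective, for example the displacement case, the phases cancel under conjugation, so the argument is unchanged.
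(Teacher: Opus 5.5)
Your proof is correct and follows the paper's route. The paper calls the statement immediate. Where it does write out such a twirl (the action-valued symmetrization in the appendix, and the averaged POVMs $D_j$ in the Hunt--Stein proof), it defines the integral weakly, gets positivity, normalization and countable additivity from the original POVM via monotone convergence, and gets invariance from Haar invariance, exactly as you do. One small caveat on your closing aside: the displacement group is noncompact and has no normalized Haar measure to twirl with, which is why the paper uses asymptotically invariant averaging over growing squares instead in that case.
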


The next result shows that, under a $G$-invariant prior and a
$G$-invariant loss, the Bayes optimization may be restricted to
$G$-equivariant decision procedures without changing the Bayes risk.
When the action space carries the trivial $G$-action, equivariance reduces
to invariance, so the optimization may then be restricted to invariant
POVMs. The proof is deferred to the appendix.

\begin{theorem}[Bayes-risk preservation under symmetrization]
\label{thm:symmetrization}
Assume that the experiment is $G$-equivariant, the prior is $G$-invariant,
and the loss is $G$-invariant. Then every decision procedure admits a
$G$-equivariant symmetrization with the same Bayes risk.
\end{theorem}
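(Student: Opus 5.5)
Given an arbitrary action-valued POVM $D$ on $(\mathsf A,\mathcal A)$, I would define its symmetrization by averaging the conjugated, relabelled procedures:
\[
    \overline D(A)
    :=
    \int_G U_g^{\dagger}\,D(gA)\,U_g\,\mu_G(dg),
    \qquad A\in\mathcal A .
\]
The first step is to check that $\overline D$ is an action-valued POVM. Positivity and normalization are immediate, since $gA$ ranges over measurable sets and $g\mathsf A=\mathsf A$. For $\sigma$-additivity I would argue weakly. For fixed vectors, $\langle u|\overline D(\cdot)|u\rangle$ is a Haar average of the finite measures $A\mapsto\langle U_gu|D(gA)|U_gu\rangle$, so monotone convergence gives countable additivity. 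Measurability of $g\mapsto\langle U_gu|D(gA)U_gu\rangle$ follows from joint measurability of the action and strong continuity of $U$. A monotone class argument handles the dependence on $A$. This is the same verification as in Proposition~\ref{prop:symmetrization}, now with the extra relabelling $A\mapsto gA$.

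The second step is equivariance. Substituting $g\mapsto gh^{-1}$ and using left/right invariance of Haar measure on the compact group gives
\[
    U_h\overline D(A)U_h^{\dagger}=\int_G U_{gh^{-1}}^{\dagger}D(gA)U_{gh^{-1}}\,dg .
\]
After the change of variables $g'=gh^{-1}$ this becomes $\int U_{g'}^\dagger D(g'hA)U_{g'}\,dg'=\overline D(hA)$. Hence $\overline D$ is equivariant.

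The third step is the risk identity. By Fubini/Tonelli, which applies because $L\ge 0$,
\[
    R_\theta(\overline D)=\int_G\int_{\mathsf A}L(\theta,a)\,\Tr\bigl(U_g\rho_\theta U_g^\dagger D(g\,da)\bigr)\,dg .
\]
Equivariance of the model turns $U_g\rho_\theta U_g^\dagger$ into $\rho_{g\theta}$. Pushing the action measure forward by $a\mapsto ga$ and using $L(\theta,g^{-1}a')=L(g\theta,a')$ then gives
\[
    R_\theta(\overline D)=\int_G R_{g\theta}(D)\,dg .
\]
Integrating against $\Pi$ and exchanging integrals yields
\[
    \BayesRisk(\overline D)=\int_G\int_\Theta R_{g\theta}(D)\,\Pi(d\theta)\,dg .
\]
Invariance of $\Pi$ makes the inner integral equal to $\BayesRisk(D)$ for every $g$, so the two Bayes risks coincide.

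I expect the main obstacle to be measure-theoretic rather than conceptual. Two points need care: the joint measurability of $(g,a)\mapsto L(\theta,a)$ composed with the action, and the justification of the change of variables inside an operator-valued integral. Both are needed to make the Fubini and pushforward steps rigorous. I would handle them by reducing everything to the scalar measures $\Tr(\rho\,D(\cdot))$ with trace-class $\rho$. Nonnegativity of $L$ then allows Tonelli without integrability assumptions, which also covers the case of infinite risks.
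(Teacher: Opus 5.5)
Your proposal is correct and follows essentially the same route as the paper. The paper defines $\overline D(C)=\int_G U_gD(g^{-1}C)U_g^\dagger\,\mu_G(dg)$, which is your symmetrization after the substitution $g\mapsto g^{-1}$ (inversion-invariance of Haar measure on a compact group); it proves equivariance by a Haar change of variables, derives $R(\theta;\overline D)=\int_G R(g^{-1}\theta;D)\,\mu_G(dg)$ from equivariance of the model and invariance of the loss, and concludes using invariance of the prior, just as you do. Your added care about measurability and about using Tonelli when risks may be infinite goes beyond what the paper writes out, but does not change the argument.
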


\begin{remark}
For the spectrum-only decision problems considered below, the action space
carries the trivial $U(d)$-action, i.e.
\[
    Ua=a,
    \qquad
    U\in U(d),\quad a\in\mathsf A.
\]
Equivalently, $UC=C$ for every measurable $C\subseteq\mathsf A$.
Hence a $U(d)$-equivariant action-valued POVM $D$, which satisfies
\[
    U^{\otimes n}D(C)(U^\dagger)^{\otimes n}
    =
    D(UC),
\]
is simply $U(d)$-invariant:
\[
    U^{\otimes n}D(C)(U^\dagger)^{\otimes n}
    =
    D(C).
\]
This applies, for example, when the decision is an estimate of the spectrum,
a spectral functional such as entropy, or a decision in a testing problem
whose hypotheses depend only on the spectrum.  In these cases,
Theorem~\ref{thm:symmetrization} therefore permits the Bayes optimization
to be restricted to invariant action-valued POVMs.
\end{remark}

Let \(\widetilde\Pi\) be a unitarily invariant prior on
\(\mathcal S(\mathbb C^d)\), and let
\[
\Pi
:=
\widetilde\Pi\circ\spec^{-1}
\]
denote the induced prior on \(\Theta_{\downarrow}\).
Given the weak Schur sampling outcome \(\lambda\), the posterior
distribution of the spectrum is
\[
\Pi(d\theta\mid\lambda)
=
\frac{P_\theta^{(n)}(\lambda)\Pi(d\theta)}
{\int_{\Theta_{\downarrow}}
P_\theta^{(n)}(\lambda)\Pi(d\theta)}.
\]
This formula applies to outcomes \(\lambda\) having positive
prior-predictive probability; on outcomes of prior-predictive
probability zero, the posterior may be defined arbitrarily.

\begin{definition}[Posterior Bayes action]
A posterior Bayes action is any measurable selection
\[
    a_\Pi(\lambda)
    \in\argmin_{a\in\mathsf A}
      \int_{\Theta_{\downarrow}}\ell(\theta,a)
      \Pi(d\theta\mid\lambda).
\]
\end{definition}

\begin{theorem}[Bayes optimality of weak Schur sampling]\label{thm:WSS_bayes-optimal}

Let \(\widetilde\Pi\) be a unitarily invariant prior on the state
space, and suppose that
\[
L(\rho,a)=\ell(\spec(\rho),a).
\]
Then, for every quantum decision procedure \(D\), there
exists a randomized decision rule \(\delta_D\) based on weak Schur
sampling such that
\[
R_{\widetilde\Pi}(D)
=
R_{\widetilde\Pi}(M^{\mathrm{WSS}},\delta_D).
\]
Consequently,
\[
\inf_D R_{\widetilde\Pi}(D)
=
\inf_\delta
R_{\widetilde\Pi}(M^{\mathrm{WSS}},\delta).
\]
If a measurable posterior Bayes action \(a_\Pi(\lambda)\) exists, then
weak Schur sampling followed by \(a_\Pi\) attains this common infimum
and is therefore Bayes optimal among all quantum decision
procedures.
\end{theorem}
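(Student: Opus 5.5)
The proof runs in three stages: symmetrize, transfer to weak Schur sampling, and then optimize classically over the Young-diagram posterior.

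\emph{Step 1: symmetrization.} Fix an arbitrary quantum decision procedure, viewed as an action-valued POVM $D$ on $(\mathsf A,\mathcal A)$. Take $G=U(d)$ acting on $\mathcal S(\bbC^d)$ by conjugation and on $(\bbC^d)^{\otimes n}$ through $U^{\otimes n}$, with the trivial action on $\mathsf A$. The $n$-copy model is $G$-equivariant. The prior $\widetilde\Pi$ is invariant by hypothesis, and the loss $L(\rho,a)=\ell(\spec(\rho),a)$ is invariant because $\spec(U\rho U^\dagger)=\spec(\rho)$. Theorem~\ref{thm:symmetrization} and the subsequent remark therefore give an invariant action-valued POVM $\overline D$ with $R_{\widetilde\Pi}(\overline D)=R_{\widetilde\Pi}(D)$. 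Concretely, $\overline D$ is the twirl of Proposition~\ref{prop:symmetrization} applied to $D$.

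\emph{Step 2: transfer to weak Schur sampling.} Regard $\overline D$ as a $U(d)$-invariant POVM with outcome space $\mathsf A$; it is a measurement together with the identity decision rule. Theorem~\ref{thm:wss} supplies a Markov kernel $K_{\overline D}$ with
\[
P_\rho^{\overline D}=K_{\overline D}P_\rho^{\mathrm{WSS}}\qquad\text{for every }\rho.
\]
Set $\delta_D:=K_{\overline D}$, viewed as a randomized rule from $\Lambda$ to $\mathsf A$. By Proposition~\ref{prop:risk-transfer}, the two procedures have identical action distributions, so $R_\rho(M^{\mathrm{WSS}},\delta_D)=R_\rho(\overline D)$ for every $\rho$. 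Integrating against $\widetilde\Pi$ gives the claimed equality of Bayes risks. All losses are nonnegative, so Tonelli justifies every interchange of integrals here.

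\emph{Step 3: the infima and the posterior Bayes action.} Weak Schur sampling followed by any $\delta$ is itself a quantum procedure, so
\[
\inf_D R_{\widetilde\Pi}(D)\le \inf_\delta R_{\widetilde\Pi}(M^{\mathrm{WSS}},\delta).
\]
Step 2 gives the reverse inequality, hence the two infima are equal. For attainment, note that $P^{\mathrm{WSS}}_\rho(\lambda)=P^{(n)}_{\spec(\rho)}(\lambda)$ and that the loss depends on $\rho$ only through $\spec(\rho)$. The Bayes risk of $(M^{\mathrm{WSS}},\delta)$ can therefore be pushed forward to $\Theta_\downarrow$:
\[
\sum_{\lambda}\int_{\Theta_\downarrow}\int_{\mathsf A}\ell(\theta,a)\,\delta(\lambda,da)\,P_\theta^{(n)}(\lambda)\,\Pi(d\theta).
\]
Since $\Lambda$ is finite, this decomposes as a sum over $\lambda$ of the prior-predictive mass $m(\lambda)$ times the posterior expected loss of $\delta(\lambda,\cdot)$. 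For each $\lambda$ with $m(\lambda)>0$, the posterior expected loss is at least its minimum over $a$, which $a_\Pi(\lambda)$ attains; outcomes with $m(\lambda)=0$ contribute nothing. Hence the rule $a_\Pi$ attains the classical infimum, and so the overall one.

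The main obstacle is measure-theoretic bookkeeping rather than anything conceptual. Step 2 requires an action-valued POVM to be treated as a measurement with standard Borel outcome space, which is guaranteed because $\mathsf A$ is standard Borel. Step 3 requires the pushforward identity relating $\widetilde\Pi$ to $\Pi$ and the posterior decomposition, which is elementary because $\Lambda$ is finite. Infinite risks need no special treatment, since nonnegativity keeps every identity valid in $[0,\infty]$.
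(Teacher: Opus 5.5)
Your proposal is correct and follows the paper's proof essentially step for step. You symmetrize via Theorem~\ref{thm:symmetrization}, using the trivial action on $\mathsf A$; you transfer the resulting invariant procedure to weak Schur sampling via Theorem~\ref{thm:wss} and Proposition~\ref{prop:risk-transfer}; and you then solve the remaining classical Bayes problem. Your Step~3 spells out the posterior decomposition over the finite set $\Lambda$, which the paper only asserts when it says the posterior Bayes action attains the classical infimum.
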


\begin{proof}
Let \(D\) be an arbitrary quantum decision procedure.
Because the action space carries the trivial \(U(d)\)-action,
Theorem~\ref{thm:symmetrization} produces a \(U(d)\)-invariant
action-valued POVM \(\overline D\) satisfying
\[
R_{\widetilde\Pi}(\overline D)
=
R_{\widetilde\Pi}(D).
\]
By Theorem~\ref{thm:wss} and
Proposition~\ref{prop:risk-transfer}, there exists a randomized
decision rule \(\delta_{\overline D}\) based on weak Schur sampling
whose pointwise risk agrees with that of \(\overline D\). Hence
\[
R_{\widetilde\Pi}
(M^{\mathrm{WSS}},\delta_{\overline D})
=
R_{\widetilde\Pi}(D).
\]
It follows that optimizing over all quantum procedures is
equivalent to optimizing over classical decision rules based on the
weak Schur sampling outcome. The latter is an ordinary classical Bayes problem. When a measurable
posterior Bayes action exists, it minimizes that problem and
therefore attains the common infimum.
\end{proof}

\begin{remark}
The weak Schur sampling measurement itself does not depend on the
particular invariant prior or spectrum-only loss; these enter only through
the classical decision rule applied to the observed Young diagram.
Thus Bayes optimality refers to weak Schur sampling followed by an
appropriate posterior Bayes action. In particular, the commonly used
estimator $\lambda/n$ need not be Bayes optimal for a general prior and loss.
\end{remark}

\subsubsection{Minimax reduction}
\label{sec:wss-minimax}

We retain the notation and assumptions of the preceding subsection.
For \(\btheta\in\Theta_{\downarrow}\) and \(U\in U(d)\), write

\[
\rho_{\btheta}:=\diag(\theta_1,\ldots,\theta_d), \quad   \rho_{\btheta,U}:=U\rho_{\btheta}U^{\dagger}.
\]

The spectrum \(\btheta\) is the parameter of interest and \(U\) is a
nuisance parameter. Let \(\mathsf K\subseteq\Theta_{\downarrow}\) be nonempty, let
\((\mathsf A,\mathcal A)\) be a standard Borel action space, and let
\[
    \ell:\Theta_{\downarrow}\times\mathsf A\rightarrow[0,\infty]
\] 
be a measurable spectrum-only loss. For an action-valued POVM \(D_n\),
define
\[
    R_n(\btheta,U;D_n)
    :=\int_{\mathsf A}\ell(\btheta,a)
      \Tr\!\left[\rho_{\btheta,U}^{\otimes n}D_n(da)\right]
\]
and
\[
    \mathfrak R_n^{\mathrm{all}}(\mathsf K;\ell)
    :=\inf_{D_n}\sup_{\btheta\in\mathsf K}\sup_{U\in U(d)}
      R_n(\btheta,U;D_n).
\]

A randomized WSS decision rule is a Markov kernel \(\delta_n\) from
\(\mathbb Y_{n,d}\) to \(\mathsf A\). Define
\[
    R_n^{\WSS}(\btheta;\delta_n)
    :=\sum_{\lambda\in\mathbb Y_{n,d}}
      P_{\btheta}^{(n)}(\lambda)
      \int_{\mathsf A}\ell(\btheta,a)\,\delta_n(\lambda,da)
\]
and
\[
    \mathfrak R_n^{\WSS}(\mathsf K;\ell)
    :=\inf_{\delta_n}\sup_{\btheta\in\mathsf K}
      R_n^{\WSS}(\btheta;\delta_n).
\]

\subsubsection{Exact minimax reduction}

For an action-valued POVM \(D_n\), define its unitary twirl by
\[
    \overline D_n(B)
    :=\int_{U(d)}(V^{\otimes n})^{\dagger}D_n(B)V^{\otimes n}\,\Haar(dV),
    \qquad B\in\mathcal A.
\]
Because the action space carries the trivial \(U(d)\)-action,
\(\overline D_n\) is a \(U(d)\)-invariant POVM.

\begin{prop}[Minimax symmetrization]
\label{prop:minimax-symmetrization}
For every action-valued POVM \(D_n\),
\[
    R_n(\btheta,U;\overline D_n)
    =\int_{U(d)}R_n(\btheta,V;D_n)\,\Haar(dV)
\]
for every \(\btheta\in\Theta_{\downarrow}\) and \(U\in U(d)\).
Consequently,
\[
    \sup_{\btheta\in\mathsf K}\sup_{U\in U(d)}
      R_n(\btheta,U;\overline D_n)
    \leq
    \sup_{\btheta\in\mathsf K}\sup_{U\in U(d)}
      R_n(\btheta,U;D_n),
\]
and
\begin{equation}
    \mathfrak R_n^{\mathrm{all}}(\mathsf K;\ell)
    =\inf_{\substack{D_n:\,D_n\ \mathrm{is}\ U(d)\text{-}\mathrm{invariant}}}
      \sup_{\btheta\in\mathsf K}R_n(\btheta,I_d;D_n).
\label{eq:minimax-invariant-reduction}
\end{equation}
\end{prop}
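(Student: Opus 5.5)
The argument has three steps: a direct risk computation for the twirl, the resulting pointwise bound, and then a two-sided comparison of infima.

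\textbf{Step 1: risk of the twirl.} Fix $\btheta$ and $U$. By definition of $\overline D_n$ and linearity and normality of the trace, for every $B\in\mathcal A$,
\[
    \Tr\!\left[\rho_{\btheta,U}^{\otimes n}\overline D_n(B)\right]
    =\int_{U(d)}\Tr\!\left[(V^{\otimes n}\rho_{\btheta,U}^{\otimes n}(V^{\otimes n})^\dagger) D_n(B)\right]\Haar(dV)
    =\int_{U(d)}\Tr\!\left[\rho_{\btheta,VU}^{\otimes n}D_n(B)\right]\Haar(dV).
\]
Both sides are probability measures in $B$, so the action distribution of $\overline D_n$ at $(\btheta,U)$ is the Haar mixture of the action distributions of $D_n$ at $(\btheta,VU)$. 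Since $\ell\geq 0$ is measurable, we integrate $\ell(\btheta,\cdot)$ against this mixture. Tonelli justifies the interchange, once we check that $V\mapsto \Tr[\rho_{\btheta,V}^{\otimes n}D_n(C)]$ is measurable; this holds because it is continuous in $V$. This gives $R_n(\btheta,U;\overline D_n)=\int R_n(\btheta,VU;D_n)\,\Haar(dV)$. Right invariance of Haar measure, $V\mapsto VU^{-1}$, turns this into $\int R_n(\btheta,V;D_n)\,\Haar(dV)$, which is the claimed identity. In particular $R_n(\btheta,U;\overline D_n)$ does not depend on $U$.

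\textbf{Step 2: the supremum bound.} The Haar average in Step 1 is at most $\sup_{V}R_n(\btheta,V;D_n)$. Taking suprema over $U$ and $\btheta\in\mathsf K$ gives the displayed inequality.

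\textbf{Step 3: the equality \eqref{eq:minimax-invariant-reduction}.} For $U(d)$-invariant $D_n$ we have $\overline D_n=D_n$. Step 1 then shows that $R_n(\btheta,U;D_n)=R_n(\btheta,I_d;D_n)$ for every $U$, so the inner supremum over $U$ collapses. Hence the right-hand side of \eqref{eq:minimax-invariant-reduction} equals the infimum of the worst-case risk over invariant procedures, and this is at least $\mathfrak R_n^{\mathrm{all}}$ because invariant procedures form a subclass. For the reverse inequality, take any $D_n$ and note that $\overline D_n$ is an invariant POVM by Proposition~\ref{prop:symmetrization}. By Step 2, the worst-case risk of $\overline D_n$ is no larger than that of $D_n$. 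Taking the infimum over $D_n$ yields equality.

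The main obstacle is technical: the interchange of integrals for possibly infinite losses, and measurability of $V\mapsto R_n(\btheta,V;D_n)$. Both are handled by Tonelli applied to nonnegative $\ell$, together with continuity of $V\mapsto\Tr[\rho_{\btheta,V}^{\otimes n}D_n(C)]$. That continuity extends to measurability of the kernel on $U(d)\times\mathcal A$, using that $\mathsf A$ is standard Borel.
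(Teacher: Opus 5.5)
Your proposal is correct and follows the paper's proof. Both use trace cyclicity with $V\rho_{\btheta,U}V^{\dagger}=\rho_{\btheta,VU}$, Tonelli, and right invariance of Haar measure to obtain the averaged-risk identity, then deduce the supremum bound and the invariant reduction by comparing the invariant subclass with twirled procedures. Your Step~3 also spells out what the paper leaves implicit: for invariant $D_n$ the risk does not depend on $U$, so the inner supremum reduces to $U=I_d$.
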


\begin{proof}
Tonelli's theorem, cyclicity of the trace, and
\(V\rho_{\btheta,U}V^{\dagger}=\rho_{\btheta,VU}\) give
\[
\begin{aligned}
    R_n(\btheta,U;\overline D_n)
    &=\int_{U(d)}R_n(\btheta,VU;D_n)\,\Haar(dV)\\
    &=\int_{U(d)}R_n(\btheta,V;D_n)\,\Haar(dV),
\end{aligned}
\]
where the second equality follows from right invariance of Haar
measure. The risk inequality follows immediately. Taking the infimum
over all POVMs and over the invariant subclass proves
\eqref{eq:minimax-invariant-reduction}.
\end{proof}

\begin{theorem}[Exact minimax reduction to weak Schur sampling]
\label{thm:wss-exact-minimax}
For every \(n\), every nonempty
\(\mathsf K\subseteq\Theta_{\downarrow}\), and every nonnegative
spectrum-only loss \(\ell\),
\[
    \mathfrak R_n^{\mathrm{all}}(\mathsf K;\ell)
    =\mathfrak R_n^{\WSS}(\mathsf K;\ell).
\]
\end{theorem}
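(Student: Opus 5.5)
We prove the two inequalities separately.

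\emph{The bound $\mathfrak R_n^{\mathrm{all}}\le\mathfrak R_n^{\WSS}$.} Any randomized WSS rule $\delta_n$ defines the action-valued POVM
\[
D_n(C):=\sum_{\lambda}\delta_n(\lambda,C)\,\Pi_\lambda .
\]
Each projection $\Pi_\lambda$ is $U(d)$-invariant, so $D_n$ is $U(d)$-invariant. Its outcome law under $\rho_{\btheta,U}^{\otimes n}$ is $\sum_\lambda P^{(n)}_{\btheta}(\lambda)\,\delta_n(\lambda,\cdot)$. This uses that $\Tr(\rho_{\btheta,U}^{\otimes n}\Pi_\lambda)=P^{(n)}_{\btheta}(\lambda)$, which follows from the invariance of $\Pi_\lambda$. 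Hence $R_n(\btheta,U;D_n)=R_n^{\WSS}(\btheta;\delta_n)$ for every $U$. Taking the supremum over $\btheta\in\mathsf K$ and $U$, and then the infimum over $\delta_n$, gives the first bound.

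\emph{The reverse bound.} By Proposition~\ref{prop:minimax-symmetrization}, specifically \eqref{eq:minimax-invariant-reduction}, it suffices to consider $U(d)$-invariant action-valued POVMs $D_n$ and the risk at $U=I_d$. Fix such a $D_n$ and view it as a POVM $N$ with outcome space $(\mathsf A,\mathcal A)$. This space is standard Borel, as Theorem~\ref{thm:wss} requires. Theorem~\ref{thm:wss} then gives a Markov kernel $K_N$ from $\mathbb Y_{n,d}$ to $\mathsf A$ with $P^N_\rho=K_NP^{\WSS}_\rho$ for every $\rho$.

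Applying this with $\rho=\rho_{\btheta}$ and taking $\delta_n:=K_N$ gives
\[
\Tr\!\left[\rho_{\btheta}^{\otimes n}D_n(C)\right]=\sum_\lambda P^{(n)}_{\btheta}(\lambda)\,\delta_n(\lambda,C).
\]
Integrating the nonnegative measurable function $\ell(\btheta,\cdot)$ against these equal measures yields $R_n(\btheta,I_d;D_n)=R_n^{\WSS}(\btheta;\delta_n)$. This step is exactly Proposition~\ref{prop:risk-transfer} with $\delta_N$ the identity kernel. Consequently
\[
\sup_{\btheta\in\mathsf K}R_n(\btheta,I_d;D_n)\ \ge\ \mathfrak R_n^{\WSS}(\mathsf K;\ell),
\]
and taking the infimum over invariant $D_n$ gives $\mathfrak R_n^{\mathrm{all}}\ge\mathfrak R_n^{\WSS}$.

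\emph{Expected obstacles.} The only delicate points are measure-theoretic. First, we must check that the twirl $\overline D_n$ in Proposition~\ref{prop:minimax-symmetrization} is a genuine POVM. Second, the risks may take the value $+\infty$ since $\ell$ is valued in $[0,\infty]$; Tonelli's theorem handles this because $\ell\ge0$. Neither affects the argument above, which uses only equality of action distributions and so holds even when risks are infinite.
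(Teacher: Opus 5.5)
Your proof is correct and follows essentially the same route as the paper. The easy direction uses the same post-processed POVM $\sum_\lambda\delta_n(\lambda,\cdot)\Pi_\lambda$, and the reverse direction combines twirling with the sufficiency kernel from Theorem~\ref{thm:wss} and Proposition~\ref{prop:risk-transfer}. The only cosmetic difference is that you invoke the equality \eqref{eq:minimax-invariant-reduction} and work at $U=I_d$, whereas the paper applies the kernel to the twirl $\overline D_n$ of an arbitrary $D_n$ and then uses the risk inequality from Proposition~\ref{prop:minimax-symmetrization} directly.
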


\begin{proof}
If \(\delta_n\) is a decision rule based on WSS, then
\[
    D_{n,\delta_n}(B)
    :=\sum_{\lambda\in\mathbb Y_{n,d}}
      \delta_n(\lambda,B)\Pi_\lambda
\]
is an action-valued POVM and
\[
    R_n(\btheta,U;D_{n,\delta_n})
    =R_n^{\WSS}(\btheta;\delta_n)
\]
for every \((\btheta,U)\). Hence
\[
    \mathfrak R_n^{\mathrm{all}}(\mathsf K;\ell)
    \leq\mathfrak R_n^{\WSS}(\mathsf K;\ell).
\]

Conversely, let \(D_n\) be arbitrary and let \(\overline D_n\) be its
twirl. By Theorem~\ref{thm:wss}, there is a parameter-independent
Markov kernel \(\delta_{\overline D_n}\) such that
\[
    P_\rho^{\overline D_n}
    =\delta_{\overline D_n}P_\rho^{\WSS}
\]
for every density operator \(\rho\). Therefore
\[
    R_n^{\WSS}(\btheta;\delta_{\overline D_n})
    =R_n(\btheta,U;\overline D_n)
\]
for every \((\btheta,U)\). Proposition~\ref{prop:minimax-symmetrization}
then gives
\[
\begin{aligned}
    \mathfrak R_n^{\WSS}(\mathsf K;\ell)
    &\leq
    \sup_{\btheta\in\mathsf K}\sup_{U\in U(d)}
      R_n(\btheta,U;\overline D_n)\\
    &\leq
    \sup_{\btheta\in\mathsf K}\sup_{U\in U(d)}
      R_n(\btheta,U;D_n).
\end{aligned}
\]
Taking the infimum over \(D_n\) proves the reverse inequality.
\end{proof}


Let \(\psi:\Theta_{\downarrow}\to\R^q\) be a spectral functional and
consider squared Euclidean loss
\[
    \ell^2_\psi(\btheta,\ba):=\|\psi(\btheta)-\ba\|^2.
\]
For squared loss, write
\[
\begin{aligned}
\mathfrak R_{n,\psi}^{\mathrm{all}}(\mathsf K)
&:=\mathfrak R_n^{\mathrm{all}}(\mathsf K;\ell_\psi^2),\\
\mathfrak R_{n,\psi}^{\WSS}(\mathsf K)
&:=\mathfrak R_n^{\WSS}(\mathsf K;\ell_\psi^2).
\end{aligned}
\]

\subsection{Asymptotic risk of spectral functionals}

Recall that
\[
    \Theta_{\downarrow}
    :=\left\{
      \btheta=(\theta_1,\ldots,\theta_d)\in[0,1]^d:
      \theta_1\geq\cdots\geq\theta_d,\quad
      \sum_{i=1}^d\theta_i=1
    \right\}
\]
is the ordered probability simplex, and let
\[
    \Theta_{\mathrm{reg}}
    :=\left\{
      \btheta\in\Theta_{\downarrow}:
      \theta_1>\cdots>\theta_d>0
    \right\}
\]
be its regular part. Throughout, \(d\) is fixed.

Weak Schur sampling applied to $\rho_{\btheta}^{\otimes n}$ produces
\[
    \Lambda_n=(\Lambda_{n,1},\ldots,\Lambda_{n,d})
    \in\mathbb Y_{n,d}.
\]

Write \(P_{\btheta}^{(n)}\) for the law of \(\Lambda_n\). By
Schur-Weyl duality,
\begin{equation}
    P_{\btheta}^{(n)}(\lambda)
    =\dim(\cV_\lambda)s_\lambda(\btheta),
    \qquad \lambda\in\mathbb Y_{n,d},
\label{eq:wss-law-bayes}
\end{equation}
where \(\cV_\lambda\) is the irreducible \(S_n\)-module indexed by
\(\lambda\), and \(s_\lambda\) is the Schur polynomial. Define the
empirical Young-diagram estimator
\[
    \widehat{\btheta}_n:=\frac{\Lambda_n}{n}.
\]

For a prior \(\Pi\) on \(\Theta_{\downarrow}\), define
\[
    r_{n,\psi}^{\WSS}(\Pi)
    :=\inf_\delta
      \int_{\Theta_{\downarrow}}
      \E_{\btheta}^{\WSS}
      \|\delta(\Lambda_n)-\psi(\btheta)\|^2
      \,\Pi(d\btheta).
\]
Under squared loss, the Bayes rule is the posterior mean, and hence
\[
    r_{n,\psi}^{\WSS}(\Pi)
    =\E\!\left[\Tr\Var\{\psi(\btheta)\mid\Lambda_n\}\right].
\]
Set

\begin{equation}
    \Sigma(\btheta)
      :=\diag(\btheta)-\btheta\btheta^\top, \qquad 
    \cI_\psi(\btheta)
      :=\Tr\!\left[
        D\psi(\btheta)\Sigma(\btheta)D\psi(\btheta)^\top
      \right].\label{eq:wss-risk-constant}
\end{equation}

\subsubsection{Assumptions and main theorem}

\begin{assumption}[Prior]
\label{ass:prior}
The prior \(\Pi\) is absolutely continuous with respect to
\((d-1)\)-dimensional Lebesgue measure on the simplex. Write
\[
    \Pi(d\btheta)=\pi(\btheta)\,d\btheta.
\]
For \(\Pi\)-almost every \(\btheta\in\Theta_{\mathrm{reg}}\) with
\(\pi(\btheta)>0\), the density \(\pi\) is continuous at \(\btheta\).
\end{assumption}

\begin{assumption}[Smooth functional]
\label{ass:psi}
The function \(\psi\) extends to a continuously differentiable map on
an open neighborhood of the closed simplex \(\Theta_{\downarrow}\).
\end{assumption}

Since \(\Theta_{\downarrow}\) is compact and convex,
Assumption~\ref{ass:psi} implies that
\begin{equation}
    L_\psi
    :=\sup_{\btheta\in\Theta_{\downarrow}}
      \|D\psi(\btheta)\|_{\mathrm{op}}<\infty
\label{eq:psi-lipschitz-constant}
\end{equation}
and that \(\psi\) is \(L_\psi\)-Lipschitz on
\(\Theta_{\downarrow}\).

\begin{theorem}[Bayesian risk asymptotics for WSS]
\label{thm:WSS_Bayes_asymp}
Suppose Assumptions~\ref{ass:prior} and~\ref{ass:psi} hold. Then
\begin{equation}
    r_{n,\psi}^{\WSS}(\Pi)
    =\frac1n
      \int_{\Theta_{\downarrow}}
      \cI_\psi(\btheta)\,\Pi(d\btheta)
      +o(n^{-1}).
\label{eq:wss-bayes-asymptotics}
\end{equation}
\end{theorem}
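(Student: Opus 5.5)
Our strategy is to prove matching upper and lower bounds, both equal to $n^{-1}\int\cI_\psi\,d\Pi$, using two facts. The law $P_{\btheta}^{(n)}$ is close to a multinomial law on regular parameters. It is also an exact randomization of that multinomial experiment, a point the introduction flags.

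\emph{Upper bound.} We bound $r_{n,\psi}^{\WSS}(\Pi)$ by the Bayes risk of the plug-in rule $\psi(\widehat\btheta_n)$ with $\widehat\btheta_n=\Lambda_n/n$. By the Lipschitz bound \eqref{eq:psi-lipschitz-constant} and a first-order Taylor expansion, it suffices to prove two statements for $\btheta\in\Theta_{\mathrm{reg}}$:
\[
n\,\E_{\btheta}^{\WSS}\|\widehat\btheta_n-\btheta\|^2\ \text{is bounded uniformly in }\btheta,
\qquad
\sqrt n(\widehat\btheta_n-\btheta)\Rightarrow N(0,\Sigma(\btheta)).
\]
Granting these, we have
\[
n\,\E_{\btheta}\|\psi(\widehat\btheta_n)-\psi(\btheta)\|^2\to\cI_\psi(\btheta).
\]
This follows from uniform integrability, which comes from a fourth-moment bound, together with continuity of $D\psi$. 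Dominated convergence over $\Pi$ then gives the upper bound, because $\Pi(\Theta_\downarrow\setminus\Theta_{\mathrm{reg}})=0$ by absolute continuity.

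For the moment bound we will use the standard sandwich from Schur–Weyl theory: $\lambda$ is majorized in distribution by the sorted multinomial, giving $\E\|\Lambda_n/n-\btheta\|^2\le C d/n$ uniformly. An alternative is the bound $P^{(n)}_{\btheta}(\lambda)\le (n+1)^{d(d-1)/2}\exp(-n D(\lambda/n\|\btheta))$, which gives uniform sub-Gaussian concentration at scale $\sqrt{\log n/n}$. Either route suffices once combined with the CLT. For the CLT on regular $\btheta$, we use the Schur polynomial expansion $s_\lambda(\btheta)=\det(\theta_i^{\lambda_j+d-j})/\det(\theta_i^{d-j})$. On the event $\|\lambda/n-\btheta\|\le n^{-1/2}\log n$, the diagonal term dominates exponentially. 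Hence $P^{(n)}_{\btheta}(\lambda)$ equals the multinomial probability of $\lambda$ times a factor $\prod_{i<j}(\lambda_i-\lambda_j+j-i)/((j-i)\prod_{i<j}(1-\theta_j/\theta_i))\cdot n^{-\binom d2}$-type corrections. We will show that these corrections are $1+o(1)$ uniformly in that window. This is the "quantitative comparison on regular spectral sets", giving total-variation closeness $\|P^{(n)}_{\btheta}-\mathrm{Mult}(n,\btheta)\circ\mathrm{sort}^{-1}\|_{\mathrm{TV}}\to0$ locally uniformly on $\Theta_{\mathrm{reg}}$.

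\emph{Lower bound.} The exact randomization gives $\mathrm{Mult}(n,\btheta)\succeq_{\mathrm B}$ WSS. This holds because WSS on $\rho_{\btheta}^{\otimes n}$ is invariant, while measuring in the eigenbasis is commutative (Remark~\ref{rem:commuting-model}), and the twirled computational measurement composed appropriately reproduces WSS. It yields
\[
r_{n,\psi}^{\WSS}(\Pi)\ge r_{n,\psi}^{\Mult}(\Pi),
\]
the Bayes risk in the (unordered, but restricted to the ordered chamber) multinomial model. The classical multinomial Bayes risk for a $C^1$ functional under a prior with a density that is continuous a.e. satisfies
\[
r_{n,\psi}^{\Mult}(\Pi)=n^{-1}\int\cI_\psi\,d\Pi+o(n^{-1}).
\]
We obtain this from the van Trees inequality applied to a prior smoothed locally, or more directly from the Bernstein–von Mises theorem for the Dirichlet-type posterior together with uniform integrability of $n\Tr\Var(\psi\mid X)$. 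We will localize to small cubes in $\Theta_{\mathrm{reg}}$ on which $\pi$ is nearly constant, and apply van Trees with a smooth bump prior on each cube. This gives $\liminf n\,r_n\ge\int\cI_\psi\,d\Pi-\varepsilon$. If the randomization claim proves awkward, we will use instead a TV-comparison argument (Le Cam) on these regular cubes, combined with the local asymptotic normality of the multinomial.

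\emph{Main obstacle.} The hard step is the uniform comparison of the Schur–Weyl law with the multinomial law near regular points. It requires careful control of the ratio $\dim\cV_\lambda s_\lambda(\btheta)/\mathrm{Mult}$ and of the alternating-sum terms in the determinant. Everything else consists of classical posterior asymptotics and dominated convergence.
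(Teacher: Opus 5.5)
Your plan is sound. The upper bound follows the paper's route: the plug-in $\psi(\Lambda_n/n)$, a WSS--multinomial comparison at regular points, a global $O(1/n)$ bound, and dominated convergence. The only difference there is that you derive the local comparison from the bialternant formula, whereas the paper imports the $\ell_1$ and tail estimates of Kahn--Guta.

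The lower bound differs in both ingredients.
\begin{itemize}
\item \emph{Randomization.} The paper conditions the i.i.d.\ word on its content and keeps the RSK shape; this needs the external identity that the RSK shape law is $\dim(\cV_\lambda)s_\lambda(\btheta)$. You instead apply Remark~\ref{rem:commuting-model} to the commuting family $\{\rho_{\btheta}^{\otimes n}\}$ with fixed eigenbasis. Then $K(x,\lambda)=\langle x|\Pi_\lambda|x\rangle$ maps the computational-basis word to $P^{(n)}_{\btheta}$, and the word is a parameter-free randomization of the counts. This is correct and more self-contained. Drop the twirl, though: a twirled computational measurement is invariant, so it is dominated \emph{by} WSS and is in general strictly less informative.
\item \emph{Multinomial bound.} The paper proves a moment-form Bernstein--von Mises limit for the posterior variance at each regular point and integrates with Fatou. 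Localized van Trees avoids all posterior analysis and gives only the integrated bound, which is all that is needed. Under Assumption~\ref{ass:prior}, however, $\pi$ need not be nearly constant on cubes, so two steps must be made explicit. First, superadditivity: $r(\Pi)\ge\sum_kc_k\,r(\Phi_k)$ whenever $\Pi\ge\sum_kc_k\Phi_k$. Second, a minorant $\pi\ge\sum_kc_k\phi_k$ by flat-topped bumps on a fine grid of cubes in a compact subset of $\Theta_{\mathrm{reg}}$, with $\sum_kc_k$ close to $1$. The minorant exists because lower Darboux sums converge, $\pi$ being continuous at $\Pi$-a.e.\ point. This is how van Trees runs without the stronger prior conditions mentioned in the paper's remark.
\end{itemize}

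The weak point is the uniform bound $\sup_{\btheta}n\,\E_{\btheta}^{\WSS}\|\Lambda_n/n-\btheta\|^2\le C$, on which your dominated-convergence step genuinely depends.
\begin{itemize}
\item Your majorization is reversed. Under the coupling with the counts (RSK plus Greene's theorem), $\Lambda_n$ \emph{majorizes} the sorted counts, and a one-sided majorization does not bound $\|\Lambda_n/n-\btheta\|$. The bound with $C=d$ is the O'Donnell--Wright theorem the paper cites. It follows from that majorization together with the exact identity $\E\sum_i\Lambda_{n,i}(\Lambda_{n,i}-2i+1)=n(n-1)\|\btheta\|^2$ (the character at a transposition) and Chebyshev's sum inequality.
\item Your alternative does not suffice. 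The bound $P_{\btheta}^{(n)}(\lambda)\le(n+1)^{d(d-1)/2}e^{-nD(\lambda/n\|\btheta)}$ only gives $n\,\E\|\Lambda_n/n-\btheta\|^2=O(\log n)$, which is not an $n$-free dominating function.
\item You have no fourth-moment bound for the pointwise limit, but none is needed. Once the WSS/multinomial ratio is $1+o(1)$ on the window $\|\lambda/n-\btheta\|\le n^{-1/2}\log n$, the WSS law is at most twice the multinomial law there. The KL bound makes the complement superpolynomially small, and together these give uniform integrability.
\item Your correction factor is misstated. The correct factor is
\[
\prod_{i<j}\frac{\lambda_i-\lambda_j+j-i}{1-\theta_j/\theta_i}\,\prod_{i=1}^{d}\prod_{r=1}^{d-i}(\lambda_i+r)^{-1}.
\]
It has no factor $(j-i)$, which belongs to $\dim\cU_\lambda$, and it carries $\prod_i\prod_r(\lambda_i+r)^{-1}$ in place of $n^{-\binom{d}{2}}$. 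It is this expression that equals $1+O(n^{-1/2}\log n)$ on the window.
\end{itemize}
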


The lower bound follows from an exact randomization from the multinomial
experiment to WSS. In particular, every decision rule based on WSS can therefore be
implemented in the more informative multinomial experiment. Classical
posterior asymptotics for the multinomial model then yield the required
Bayes lower bound. For the upper bound, we analyze the explicit estimator
$\psi(\Lambda_n/n)$. A quantitative comparison between the WSS and
multinomial laws yields its pointwise squared-risk expansion, together with
a uniform version on compact subsets of $\Theta_{\mathrm{reg}}$. A global
risk bound permits integration against the prior. The same uniform
expansion is subsequently used in the minimax analysis.

Combining Theorem \ref{thm:WSS_bayes-optimal} and Theorem \ref{thm:WSS_Bayes_asymp} we obtain the following theorem.
\begin{theorem}[Bayes risk asymptotics over all POVMs]\label{thm:bayes-optimal}

Let \(\widetilde\Pi\) be a unitarily invariant prior on \(\mathcal S(\mathbb C^d)\), let \(\Pi=\widetilde\Pi\circ\spec^{-1}\), and consider squared loss \(\ell^2_\psi(\rho,a)=\|\psi(\spec\rho)-a\|^2\).
Suppose the assumptions~\ref{ass:prior} and~\ref{ass:psi} hold. Then
\[
    \inf_D R_{\widetilde\Pi}(D)
    =\frac1n
      \int_{\Theta_{\downarrow}}
      \cI_\psi(\btheta)\,\Pi(d\btheta)
      +o(n^{-1}).
\]
Moreover, for every $n$, the Bayes optimum over all quantum
decision procedures is attained by weak Schur sampling followed by the
posterior mean
\[
    \widehat\psi_\Pi(\lambda)
    =
    \E_\Pi[\psi(\btheta)\mid\lambda].
\]
\end{theorem}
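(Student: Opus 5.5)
The plan is to combine the exact finite-sample reduction of Theorem~\ref{thm:WSS_bayes-optimal} with the classical asymptotics of Theorem~\ref{thm:WSS_Bayes_asymp}. The loss $\ell^2_\psi(\rho,a)=\|\psi(\spec\rho)-a\|^2$ is spectrum-only, and $\widetilde\Pi$ is unitarily invariant. Theorem~\ref{thm:WSS_bayes-optimal} therefore gives, for each fixed $n$,
\[
    \inf_D R_{\widetilde\Pi}(D)=\inf_\delta R_{\widetilde\Pi}(M^{\mathrm{WSS}},\delta).
\]
The first step is to identify the right-hand side with $r^{\WSS}_{n,\psi}(\Pi)$. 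For a WSS rule $\delta$, the outcome law under $\rho^{\otimes n}$ depends on $\rho$ only through $\btheta=\spec(\rho)$, namely $P^{(n)}_{\btheta}$ in \eqref{eq:wss-law-bayes}, because $\Pi_\lambda$ is $U(d)$-invariant. The integrand is also a function of $\btheta$ alone. Pushing $\widetilde\Pi$ forward under $\spec$ then gives
\[
    R_{\widetilde\Pi}(M^{\mathrm{WSS}},\delta)=\int_{\Theta_\downarrow}\E^{\WSS}_{\btheta}\|\delta(\Lambda_n)-\psi(\btheta)\|^2\,\Pi(d\btheta).
\]
Taking the infimum over $\delta$ yields $\inf_D R_{\widetilde\Pi}(D)=r^{\WSS}_{n,\psi}(\Pi)$. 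This is exact for every $n$. Theorem~\ref{thm:WSS_Bayes_asymp}, under Assumptions~\ref{ass:prior} and~\ref{ass:psi}, then supplies the expansion $\frac1n\int\cI_\psi\,d\Pi+o(n^{-1})$.

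For the attainment statement, I will invoke the last clause of Theorem~\ref{thm:WSS_bayes-optimal}, for which I must exhibit a measurable posterior Bayes action. The sample space $\mathbb Y_{n,d}$ is finite, so measurability is automatic. Assumption~\ref{ass:psi} makes $\psi$ bounded on the compact set $\Theta_\downarrow$, so posterior second moments are finite. For each $\lambda$ with positive prior-predictive mass, the map $a\mapsto\int\|\psi(\btheta)-a\|^2\Pi(d\btheta\mid\lambda)$ is a strictly convex quadratic. It is uniquely minimized at $\widehat\psi_\Pi(\lambda)=\E_\Pi[\psi(\btheta)\mid\lambda]$, by the usual bias–variance decomposition. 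On outcomes of zero prior-predictive mass, any value is allowed, since they contribute nothing to the Bayes risk.

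The main point requiring care is the identification of $R_{\widetilde\Pi}$ with the integral against the induced prior $\Pi$. This needs measurability of $\spec$, which holds since it is continuous. It also needs the observation that $\Tr(\rho^{\otimes n}\Pi_\lambda)=P^{(n)}_{\spec\rho}(\lambda)$ for every $\rho$ rather than only for diagonal $\rho$. This follows from unitary invariance of $\Pi_\lambda$ under $U^{\otimes n}$.

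Everything else is a direct citation of earlier results. The asymptotic difficulty is entirely contained in Theorem~\ref{thm:WSS_Bayes_asymp}, which is assumed.
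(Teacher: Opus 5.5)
Your proposal is correct and follows the paper's route. The paper obtains this theorem exactly by combining the exact finite-sample reduction of Theorem~\ref{thm:WSS_bayes-optimal} with the WSS Bayes asymptotics of Theorem~\ref{thm:WSS_Bayes_asymp}. Your additional checks only spell out steps the paper leaves implicit: pushing $\widetilde\Pi$ forward under $\spec$ via the unitary invariance of $\Pi_\lambda$, and taking the posterior mean as the measurable posterior Bayes action on the finite outcome set.
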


\subsubsection{Asymptotic minimax risk}

We next derive the first-order minimax risk over compact subsets of the
regular spectrum region.  The restriction to
$\Theta_{\mathrm{reg}}$ is required only for the uniform asymptotic
comparison used below; the exact finite-sample minimax reduction to WSS
continues to hold without this restriction. Let
\[
    \mathsf H:=\{x\in\R^d:\bone^\top x=1\},
\]
and write \(\operatorname{int}_{\mathsf H}\) for interior relative to
\(\mathsf H\).

\begin{theorem}[Asymptotic minimax risk]
\label{thm:wss-global-asymptotic-minimax}
Suppose that Assumption~\ref{ass:psi} holds. Let
$\mathsf K\subset\Theta_{\mathrm{reg}}$ be nonempty and compact,
and suppose that
\begin{equation}
\mathsf K
=\overline{\operatorname{int}_{\mathsf H}(\mathsf K)},
\label{eq:regular-set-interior-condition}
\end{equation}
where the closure is also taken relative to $\mathsf H$. Then
\begin{equation}
    \mathfrak R_{n,\psi}^{\mathrm{all}}(\mathsf K)
    =\mathfrak R_{n,\psi}^{\WSS}(\mathsf K)
    =\frac1n\sup_{\btheta\in\mathsf K}\cI_\psi(\btheta)
      +o(n^{-1}).
\label{eq:wss-global-asymptotic-minimax}
\end{equation}
Moreover, \(\widehat\psi_n=\psi(\Lambda_n/n)\) is asymptotically
minimax on \(\mathsf K\).
\end{theorem}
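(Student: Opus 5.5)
The first equality in \eqref{eq:wss-global-asymptotic-minimax} is Theorem~\ref{thm:wss-exact-minimax} applied with $\ell=\ell^2_\psi$. It therefore suffices to prove the asymptotic statement for $\mathfrak R_{n,\psi}^{\WSS}(\mathsf K)$. I would do this with matching upper and lower bounds, both at order $n^{-1}$ with constant $\sup_{\mathsf K}\cI_\psi$.

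\textbf{Upper bound.} Use the plug-in rule $\widehat\psi_n=\psi(\Lambda_n/n)$. The discussion after Theorem~\ref{thm:WSS_Bayes_asymp} indicates that its proof gives a uniform squared-risk expansion on compact subsets of $\Theta_{\mathrm{reg}}$:
\[
    \sup_{\btheta\in\mathsf K}\Bigl|n\,\E_{\btheta}^{\WSS}\|\psi(\Lambda_n/n)-\psi(\btheta)\|^2-\cI_\psi(\btheta)\Bigr|\to0 .
\]
I would invoke this expansion directly. If it has to be rederived, the route is as follows. On $\mathsf K$ the gaps $\theta_i-\theta_{i+1}$ are bounded below. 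The Schur--Weyl law then differs in total variation from the law of the sorted multinomial counts by an amount that is exponentially small, uniformly in $\btheta\in\mathsf K$. Sorting is harmless, because with overwhelming probability the multinomial counts are already ordered. The multinomial delta method, combined with the Lipschitz bound \eqref{eq:psi-lipschitz-constant} and boundedness of $\psi$ to control the tails, gives the uniform expansion. Taking the supremum over $\mathsf K$ yields $\limsup_n n\,\mathfrak R^{\WSS}_{n,\psi}(\mathsf K)\le\sup_{\mathsf K}\cI_\psi$.

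\textbf{Lower bound.} Minimax risk dominates Bayes risk for any prior supported in $\mathsf K$. Fix $\varepsilon>0$. Since $\cI_\psi$ is continuous, there is $\btheta_0\in\mathsf K$ with $\cI_\psi(\btheta_0)\ge\sup_{\mathsf K}\cI_\psi-\varepsilon$. Condition \eqref{eq:regular-set-interior-condition} lets us replace $\btheta_0$ by a point of $\operatorname{int}_{\mathsf H}(\mathsf K)$ at the cost of a further $\varepsilon$. Take a small relatively open ball $B\subset\operatorname{int}_{\mathsf H}\mathsf K$ around this point on which $\cI_\psi\ge\sup_{\mathsf K}\cI_\psi-3\varepsilon$. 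Let $\Pi_\varepsilon$ be the normalized uniform density on $B$; it satisfies Assumption~\ref{ass:prior}. Theorem~\ref{thm:WSS_Bayes_asymp} then gives
\[
    \liminf_n n\,\mathfrak R^{\WSS}_{n,\psi}(\mathsf K)\ \ge\ \liminf_n n\,r^{\WSS}_{n,\psi}(\Pi_\varepsilon)=\int\cI_\psi\,d\Pi_\varepsilon\ \ge\ \sup_{\mathsf K}\cI_\psi-3\varepsilon .
\]
Letting $\varepsilon\downarrow0$ matches the upper bound. Since $\widehat\psi_n$ attains the upper bound, it is asymptotically minimax.

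\textbf{Main obstacle.} The delicate step is the uniformity in the upper bound: the WSS-versus-multinomial comparison and the control of the delta-method remainder must hold uniformly over $\btheta\in\mathsf K$, not just pointwise. I would derive this from explicit exponential concentration bounds with constants that depend only on the minimal spectral gap and the minimal eigenvalue over $\mathsf K$, both of which are positive by compactness in $\Theta_{\mathrm{reg}}$. The lower bound is routine once Theorem~\ref{thm:WSS_Bayes_asymp} is available. There the only point needing care is the interior condition, which guarantees that $\mathsf K$ contains a set of positive $(d-1)$-dimensional measure near any near-maximizer of $\cI_\psi$.
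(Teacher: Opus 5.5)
Your main line is the paper's proof. The first equality comes from Theorem~\ref{thm:wss-exact-minimax}. The upper bound comes from the uniform plug-in expansion, which is Lemma~\ref{lem:uniform-wss-functional-risk}. The lower bound uses a prior with continuous density on a small relative ball inside $\operatorname{int}_{\mathsf H}(\mathsf K)$ where $\cI_\psi$ is nearly maximal, fed into Theorem~\ref{thm:WSS_Bayes_asymp}. Your uniform density on an open ball satisfies Assumption~\ref{ass:prior} just as the paper's smooth density does.

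The fallback derivation you sketch for the uniform expansion rests on a false claim. The weak Schur law is not exponentially close in total variation to the law of the sorted multinomial counts, even on compact subsets of $\Theta_{\mathrm{reg}}$. Take $d=2$. The first row of the RSK shape is the longest weakly increasing subsequence of the word, so $\Lambda_{n,1}\ge N_{n,1}+1$ whenever the last letter is $2$. This gives $\E\Lambda_{n,1}\ge n\theta_1+\theta_2$, whereas the larger sorted count has mean $n\theta_1+O(ne^{-cn})$. An explicit computation with $f^\lambda s_\lambda(\btheta)$ shows the $\ell_1$ distance is of exact order $n^{-1/2}$, matching Lemma~\ref{lem:uniform-wss-mult-comparison}.

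With only a polynomial comparison, your argument breaks at the transfer step. Set $F(k):=n\|k/n-\btheta\|^2$, which has $\sup|F|\asymp n$. Bounding the discrepancy between the weak Schur and multinomial expectations of $F$ by $\sup|F|\cdot\|P-M\|_1$ then gives an error of order $n^{1/2}$, which does not vanish.

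The paper instead compares the two laws only on the typical set $\mathcal T_{n,\alpha}(\btheta)$, where $|F|\le n^{2\alpha-1}$. The error there is $n^{2\alpha-1}(n^{-1/2}+n^{\alpha-1})=o(1)$ for $\alpha<2/3$. The exponential bounds, with constants depending on the minimal eigenvalue and minimal spectral gap over $\mathsf K$, are used only for the mass outside $\mathcal T_{n,\alpha}(\btheta)$.

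Your diagnosis of the delicate step is right, but the exponential rate belongs to the atypical tails, not to the law comparison itself. Since you primarily invoke the expansion directly, the proof of the theorem stands; your rederivation would need this truncation to work.
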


\begin{proof}
The finite-sample equality follows from
Theorem~\ref{thm:wss-exact-minimax}. For the upper bound, use \(\widehat\psi_n=\psi(\Lambda_n/n)\). By
Lemma~\ref{lem:uniform-wss-functional-risk},
\begin{equation}
    \limsup_{n\to\infty}
      n\,\mathfrak R_{n,\psi}^{\WSS}(\mathsf K)
    \leq
    \limsup_{n\to\infty}
      \sup_{\btheta\in\mathsf K}
      n\,\E_{\btheta}^{\WSS}
      \|\widehat\psi_n-\psi(\btheta)\|^2
    =\sup_{\btheta\in\mathsf K}\cI_\psi(\btheta). \label{eq:global-minimax-upper}
\end{equation}

For the lower bound, set
\[
    J_{\mathsf K}:=\sup_{\btheta\in\mathsf K}\cI_\psi(\btheta)
\]
and fix \(\varepsilon>0\). By continuity of \(\cI_\psi\), compactness
of \(\mathsf K\), and
\eqref{eq:regular-set-interior-condition}, there exist
\(\btheta_\varepsilon\in
\operatorname{int}_{\mathsf H}(\mathsf K)\) and a relative open ball
\(B_{\mathsf H}(\btheta_\varepsilon,r_\varepsilon)\) whose closure is
contained in \(\operatorname{int}_{\mathsf H}(\mathsf K)\), such that
\[
    \cI_\psi(\btheta)\geq J_{\mathsf K}-\varepsilon
\]
throughout that ball. Let \(\Pi_\varepsilon\) have a smooth probability
density, with respect to \((d-1)\)-dimensional Lebesgue measure on
\(\mathsf H\), supported in the ball. Then
\(\Pi_\varepsilon\) satisfies Assumption~\ref{ass:prior}. Since maximum
risk dominates Bayes risk,
\[
    \mathfrak R_{n,\psi}^{\WSS}(\mathsf K)
    \geq r_{n,\psi}^{\WSS}(\Pi_\varepsilon).
\]
Theorem~\ref{thm:WSS_Bayes_asymp} therefore gives
\[
\begin{aligned}
    \liminf_{n\to\infty}
      n\,\mathfrak R_{n,\psi}^{\WSS}(\mathsf K)
    &\geq
    \lim_{n\to\infty}
      n\,r_{n,\psi}^{\WSS}(\Pi_\varepsilon)\\
    &=\int\cI_\psi(\btheta)\,\Pi_\varepsilon(d\btheta)\\
    &\geq J_{\mathsf K}-\varepsilon.
\end{aligned}
\]
Letting \(\varepsilon\downarrow0\) proves the lower bound and hence
the theorem.
\end{proof}

\subsubsection{Consequences and the role of regularity}

\begin{corollary}[Spectrum estimation]
\label{cor:wss-minimax-spectrum}
Let \(\psi(\btheta)=\btheta\).
\begin{enumerate}[label=\textup{(\roman*)}]
\item Under Assumption~\ref{ass:prior},
\[
    r_{n,\btheta}^{\WSS}(\Pi)
    =\frac1n
      \int_{\Theta_{\downarrow}}
      \bigl(1-\|\btheta\|^2\bigr)\,\Pi(d\btheta)
      +o(n^{-1}).
\]
\item If \(\mathsf K\) satisfies the assumptions of
Theorem~\ref{thm:wss-global-asymptotic-minimax}, then
\[
    \mathfrak R_{n,\btheta}^{\mathrm{all}}(\mathsf K)
    =\mathfrak R_{n,\btheta}^{\WSS}(\mathsf K)
    =\frac1n\sup_{\btheta\in\mathsf K}
      \bigl(1-\|\btheta\|^2\bigr)+o(n^{-1}).
\]
\end{enumerate}
\end{corollary}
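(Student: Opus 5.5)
The plan is to specialize Theorem~\ref{thm:WSS_Bayes_asymp} and Theorem~\ref{thm:wss-global-asymptotic-minimax} to the identity functional $\psi(\btheta)=\btheta$. The only work is to check the hypotheses and to compute the constant $\cI_\psi$.

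First, Assumption~\ref{ass:psi} holds trivially. The identity map extends linearly, hence smoothly, to all of $\R^d$, so in particular to an open neighborhood of the closed simplex. Its derivative is $D\psi(\btheta)=I_d$ everywhere.

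Next, we compute the constant in \eqref{eq:wss-risk-constant}:
\[
    \cI_\psi(\btheta)=\Tr\Sigma(\btheta)
    =\Tr\diag(\btheta)-\Tr(\btheta\btheta^\top)
    =\sum_i\theta_i-\|\btheta\|^2
    =1-\|\btheta\|^2,
\]
where the last step uses $\sum_i\theta_i=1$ on $\Theta_{\downarrow}$.

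For part (i), Assumption~\ref{ass:prior} is assumed and Assumption~\ref{ass:psi} has just been verified. Theorem~\ref{thm:WSS_Bayes_asymp} then gives the displayed expansion of $r^{\WSS}_{n,\btheta}(\Pi)$ with integrand $1-\|\btheta\|^2$.

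For part (ii), $\mathsf K$ is assumed to satisfy the compactness, regularity and interior condition \eqref{eq:regular-set-interior-condition}. Together with Assumption~\ref{ass:psi}, this is everything Theorem~\ref{thm:wss-global-asymptotic-minimax} requires. That theorem yields both the exact equality $\mathfrak R^{\mathrm{all}}_{n,\btheta}(\mathsf K)=\mathfrak R^{\WSS}_{n,\btheta}(\mathsf K)$ and the first-order term $n^{-1}\sup_{\mathsf K}(1-\|\btheta\|^2)$.

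There is no genuine obstacle here. The only point needing care is to use the normalization $\sum_i\theta_i=1$ in the trace computation, rather than leaving the constant as $\sum_i\theta_i-\|\btheta\|^2$.
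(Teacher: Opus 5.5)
Your proposal is correct and matches the paper's proof: compute $D\psi(\btheta)=I_d$, hence $\cI_\psi(\btheta)=\Tr\Sigma(\btheta)=1-\|\btheta\|^2$, and then apply Theorems~\ref{thm:WSS_Bayes_asymp} and~\ref{thm:wss-global-asymptotic-minimax}. The paper leaves the verification of Assumption~\ref{ass:psi} implicit, and you carry it out explicitly, which is a harmless addition.
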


\begin{proof}
For the identity functional, \(D\psi(\btheta)=I_d\), and therefore
\[
    \cI_\psi(\btheta)
    =\Tr\Sigma(\btheta)
    =1-\|\btheta\|^2.
\]
Apply Theorems~\ref{thm:WSS_Bayes_asymp} and
\ref{thm:wss-global-asymptotic-minimax}.
\end{proof}

\begin{corollary}[Scalar spectral functional]
Suppose that $\Pi$ satisfies Assumption~\ref{ass:prior}, and let
$f:\Theta_{\downarrow}\to\mathbb R$ satisfy
Assumption~\ref{ass:psi}. Then
\[
r_{n,f}^{\WSS}(\Pi)
=\frac1n\int_{\Theta_{\downarrow}}
\nabla f(\btheta)^\top
\{\diag(\btheta)-\btheta\btheta^\top\}
\nabla f(\btheta)\,\Pi(d\btheta)
+o(n^{-1}).
\]
\end{corollary}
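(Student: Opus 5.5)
This corollary is the special case of Theorem~\ref{thm:WSS_Bayes_asymp} with $q=1$ and $\psi=f$. Assumption~\ref{ass:prior} on $\Pi$ is imposed directly. Assumption~\ref{ass:psi} on $f$ is exactly the hypothesis that $\psi$ extends to a $C^1$ map on a neighbourhood of $\Theta_{\downarrow}$. So the theorem applies verbatim and gives
\[
    r_{n,f}^{\WSS}(\Pi)=\frac1n\int_{\Theta_{\downarrow}}\cI_f(\btheta)\,\Pi(d\btheta)+o(n^{-1}).
\]

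All that remains is to identify the constant $\cI_f$ defined in \eqref{eq:wss-risk-constant}. For scalar $f$, the derivative $Df(\btheta)$ is the $1\times d$ row vector $\nabla f(\btheta)^\top$. Hence $Df(\btheta)\Sigma(\btheta)Df(\btheta)^\top$ is a $1\times 1$ matrix, and its trace is the scalar
\[
    \nabla f(\btheta)^\top\{\diag(\btheta)-\btheta\btheta^\top\}\nabla f(\btheta).
\]
Substituting this into the display above gives the claimed expansion.

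One point needs care. $f$ is really defined only on the simplex (or on a neighbourhood of it), so the gradient is not unique: it is determined only up to adding a multiple of $\bone$. We check that the quadratic form does not depend on this choice. Since $\btheta^\top\bone=1$, we have $\Sigma(\btheta)\bone=\btheta-\btheta=0$, so the form is unchanged when $\bone$ is added to the gradient. Thus $\nabla f$ may be taken as the gradient of any $C^1$ extension, consistent with how $D\psi$ is used in Theorem~\ref{thm:WSS_Bayes_asymp}. This check is the only potential obstacle, and it is immediate; no further analysis is needed.
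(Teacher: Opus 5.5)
Your proposal is correct and matches the paper, which states this corollary without separate proof as the $q=1$ case of Theorem~\ref{thm:WSS_Bayes_asymp}, with $\cI_f(\btheta)$ reducing to the scalar quadratic form $\nabla f(\btheta)^\top\Sigma(\btheta)\nabla f(\btheta)$ (the same computation appears in the purity remark). Your observation that $\Sigma(\btheta)\bone=0$, so the constant does not depend on which $C^1$ extension of $f$ is used, is a small check the paper leaves implicit.
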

\begin{remark}[Purity estimation]
A basic nonlinear spectral functional is the purity
\[
    f(\btheta)
    :=
    \Tr(\rho_{\btheta}^2)
    =
    \sum_{i=1}^d \theta_i^2.
\]
For a $d$-dimensional state,
\[
    \frac{1}{d}\leq f(\btheta)\leq 1,
\]
with $f(\btheta)=1$ if and only if the state is pure, while
$f(\btheta)=1/d$ for the maximally mixed state. Thus purity provides a
natural measure of the degree of mixedness of the state.

Since
$\nabla f(\btheta)=2\btheta,
$
the corresponding asymptotic risk constant is
\begin{align*}
    \cI_f(\btheta)
    &=
    \nabla f(\btheta)^\top
    \Sigma(\btheta)
    \nabla f(\btheta)\\
    &=
    4\btheta^\top
    \bigl\{
        \diag(\btheta)-\btheta\btheta^\top
    \bigr\}
    \btheta\\
    &=
    4\left\{
        \sum_{i=1}^d\theta_i^3
        -
        \left(\sum_{i=1}^d\theta_i^2\right)^2
    \right\}.
\end{align*}
Consequently, under Assumption~\ref{ass:prior},
Theorem~\ref{thm:WSS_Bayes_asymp} gives
\[
    r_{n,f}^{\WSS}(\Pi)
    =
    \frac{4}{n}
    \int_{\Theta_{\downarrow}}
    \left\{
        \sum_{i=1}^d\theta_i^3
        -
        \left(\sum_{i=1}^d\theta_i^2\right)^2
    \right\}
    \Pi(d\btheta)
    +o(n^{-1}).
\]
Likewise, for a compact set $\mathsf K$ satisfying the assumptions of
Theorem~\ref{thm:wss-global-asymptotic-minimax},
\[
    \mathfrak R_{n,f}^{\mathrm{all}}(\mathsf K)
    =
    \frac{4}{n}
    \sup_{\btheta\in\mathsf K}
    \left\{
        \sum_{i=1}^d\theta_i^3
        -
        \left(\sum_{i=1}^d\theta_i^2\right)^2
    \right\}
    +o(n^{-1}).
\]

Moreover, by Lemma~\ref{lem:uniform-wss-functional-risk} and the
dominated-convergence argument used in the proof of
Theorem~\ref{thm:WSS_Bayes_asymp}, the plug-in estimator
\[
    \widehat f_n
    :=
    f(\Lambda_n/n)
    =
    \sum_{i=1}^d
    \left(\frac{\Lambda_{n,i}}{n}\right)^2
\]
attains the optimal Bayes risk up to $o(n^{-1})$ for every prior
satisfying Assumption~\ref{ass:prior}. By
Theorem~\ref{thm:wss-global-asymptotic-minimax}, it is also
asymptotically minimax on $\mathsf K$.
\end{remark}

\begin{remark}[Pointwise, integrated, and uniform uses of regularity]

The asymptotic comparison used above is uniform on compact subsets of
$\Theta_{\mathrm{reg}}$, but its constants deteriorate as an eigenvalue
approaches zero or two eigenvalues collide. This causes no difficulty for
Theorem~\ref{thm:WSS_Bayes_asymp}, since an absolutely continuous prior
assigns zero mass to the singular strata. The minimax problem is different,
because it involves a supremum over the parameter space. We therefore
restrict $\mathsf K$ to a compact subset of $\Theta_{\mathrm{reg}}$ and use
the uniform form of the risk expansion.

A typical admissible parameter set is
\[
    \mathsf K_{\varepsilon,\gamma}
    :=\left\{
      \btheta\in\Theta_{\downarrow}:
      \theta_d\geq\varepsilon,\quad
      \theta_i-\theta_{i+1}\geq\gamma,
      \quad 1\leq i<d
    \right\},
\]
provided it has nonempty interior relative to \(\mathsf H\). It is
then compact, contained in \(\Theta_{\mathrm{reg}}\), and equal to the
closure of its relative interior.
\end{remark}

\begin{remark}[Singular spectra]
The finite-sample equality
\[
    \mathfrak R_n^{\mathrm{all}}(\mathsf K;\ell)
    =
    \mathfrak R_n^{\WSS}(\mathsf K;\ell)
\]
continues to hold when $\mathsf K$ contains repeated or zero eigenvalues.
The regular first-order risk formula need not. At an eigenvalue collision,
the fluctuations of the ordered WSS spectrum generally involve eigenvalues
of Gaussian Hermitian blocks \cite{Meliot12}, rather than an ordinary
Gaussian vector. Such singularities require a separate local limit and
minimax analysis, which is beyond the scope of the present paper.
\end{remark}

\section{Sufficiency for thermal functionals of quantum Gaussian states}
\label{sec:thermal}
In this section, we study a second example in which symmetry reduction
yields a sufficient POVM. Recall the definition of thermal states given in \eqref{eq:thermal-state}.
 We consider $n\geq 2$ identical copies of the thermal state with an unknown common displacement. The corresponding experiment is:
\begin{equation}
    \cQ_n
    :=
    \left\{
        \rho_{z,N}^{\otimes n}
        :
        z\in\bbC,\ N>0
    \right\}.
    \label{eq:ncopy-model}
\end{equation}
The parameter of interest is $N$, or any smooth functional of $N$, while
$z\in\bbC$
is an unknown nuisance displacement.

\subsection{The nuisance parameter and invariance}
\label{subsec:gaussian-symmetry}

The displacement group $(\bbC,+)$ acts collectively on the $n$-copy
experiment according to
\begin{equation}
    \rho_{z,N}^{\otimes n}
    \longmapsto
    D(w)^{\otimes n}
    \rho_{z,N}^{\otimes n}
    \bigl(D(w)^{\otimes n}\bigr)^\dagger
    =
    \rho_{z+w,N}^{\otimes n},
    \qquad
    w\in\bbC.
    \label{eq:collective-displacement-action}
\end{equation}
Thus the group acts only on the displacement parameter $z$, while leaving
the parameter of interest $N$ unchanged. The displacement is therefore a
group-generated nuisance parameter.

This motivates the following invariant measurement class.

\begin{definition}[Collective-displacement invariant POVM]
\label{def:collective-displacement-invariant}
A POVM $M$ on $\cH^{\otimes n}$ is called
\emph{collective-displacement invariant} if
\begin{equation}
    \bigl(D(w)^{\otimes n}\bigr)^\dagger
    M(B)
    D(w)^{\otimes n}
    =
    M(B)
    \label{eq:invariant-povm}
\end{equation}
for every $w\in\bbC$ and every measurable outcome event $B$.
We denote the class of such measurements by
$\cM_{\mathrm{inv}}^{(n)}$.
\end{definition}

For every $M\in\cM_{\mathrm{inv}}^{(n)}$, the induced outcome law is
independent of the nuisance displacement. Indeed, since
\[
    \rho_{z,N}^{\otimes n}
    =
    D(z)^{\otimes n}
    \rho_{0,N}^{\otimes n}
    \bigl(D(z)^{\otimes n}\bigr)^\dagger,
\]
cyclicity of the trace and \eqref{eq:invariant-povm} give
\begin{align}
    P_{z,N}^{M}(B)
    &=
    \Tr\!\left[
        D(z)^{\otimes n}
        \rho_{0,N}^{\otimes n}
        \bigl(D(z)^{\otimes n}\bigr)^\dagger
        M(B)
    \right]
    \nonumber\\
    &=
    \Tr\!\left[
        \rho_{0,N}^{\otimes n}
        \bigl(D(z)^{\otimes n}\bigr)^\dagger
        M(B)
        D(z)^{\otimes n}
    \right]
    \nonumber\\
    &=
    \Tr\!\left[
        \rho_{0,N}^{\otimes n}M(B)
    \right]
    =
    P_{0,N}^{M}(B).
    \label{eq:invariant-law-independent-z}
\end{align}
Hence every measurement in $\cM_{\mathrm{inv}}^{(n)}$ induces a classical
experiment indexed only by the thermal parameter $N$.

We next separate the common displacement from the relative modes.
Kumagai and Hayashi~\cite[Section~2]{KumagaiHayashi2013} construct a
passive unitary transformation, which they call a \emph{concentrating
operator}, that transfers a common displacement of the $n$ modes into a
single collective mode. Their state-concentration relation is precisely
\eqref{eq:concentrated-state} below. We also require the corresponding
transformation of the displacement operators, since this operator identity
will be used to characterize the invariant measurement class.

\begin{lemma}[Concentration of a common displacement]
\label{lem:concentration}
There exists a passive unitary $U_n$ such that, for every
$z\in\bbC$ and $N>0$,
\begin{equation}
    U_n
    \rho_{z,N}^{\otimes n}
    U_n^\dagger
    =
    \rho_{\sqrt n\,z,N}
    \otimes
    \phi_N^{\otimes(n-1)}.
    \label{eq:concentrated-state}
\end{equation}
Moreover, for every $w\in\bbC$,
\begin{equation}
    U_n
    D(w)^{\otimes n}
    U_n^\dagger
    =
    D(\sqrt n\,w)
    \otimes
    I^{\otimes(n-1)}.
    \label{eq:concentrated-action}
\end{equation}
\end{lemma}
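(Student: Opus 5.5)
The plan is to build $U_n$ as the second-quantized (metaplectic/Fock-space) implementation of a real orthogonal $n\times n$ matrix $O$, chosen so that its first row is $(1/\sqrt n,\ldots,1/\sqrt n)$. A discrete Fourier or Helmert matrix, completed to an orthonormal basis, will do. Write $a_1,\ldots,a_n$ for the annihilation operators of the $n$ modes. There is a passive Gaussian unitary $U_n$, namely the Fock-space lift of the unitary $O$ acting on the one-particle space $\bbC^n$, such that
\[
    U_n a_j U_n^\dagger=\sum_{k=1}^n O_{kj}\,b_k,
    \qquad\text{equivalently}\qquad
    U_n^\dagger b_k U_n=\sum_j O_{kj}a_j .
\]
Here $b_k$ denotes the annihilation operator of the $k$th tensor factor on the output side. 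I would take existence of this lift, and the fact that it preserves the vacuum and total photon number, as standard; otherwise one can construct it explicitly as $\exp(-i\sum_{jk}H_{jk}a_j^\dagger a_k)$ with $O=e^{-iH}$.

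First I would prove the operator identity \eqref{eq:concentrated-action}. Write
\[
    D(w)^{\otimes n}=\exp\Bigl(\sum_j (w a_j^\dagger-\bar w a_j)\Bigr),
\]
which is valid because the exponents on different factors commute. Conjugation by $U_n$ acts linearly on the exponent, and by orthogonality $\sum_j O_{kj}=\sqrt n\,\delta_{k1}$. Hence
\[
    \sum_j a_j\ \mapsto\ \sum_k\Bigl(\sum_j O_{kj}\Bigr)b_k=\sqrt n\, b_1 .
\]
The exponent therefore becomes $\sqrt n(w b_1^\dagger-\bar w b_1)$, which gives $D(\sqrt n\,w)\otimes I^{\otimes(n-1)}$. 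Justifying conjugation inside the exponential of these unbounded operators is routine: $U_nXU_n^\dagger$ for a self-adjoint $X$ has functional calculus $U_nf(X)U_n^\dagger$.

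Next I would handle the thermal part. The state $\phi_N^{\otimes n}$ equals $Z^{-1}\exp(-\beta\sum_j a_j^\dagger a_j)$. A passive unitary commutes with the total number operator, because $\sum_j a_j^\dagger a_j$ maps to $\sum_k b_k^\dagger b_k$ by orthogonality of $O$. Hence $U_n\phi_N^{\otimes n}U_n^\dagger=\phi_N^{\otimes n}$.

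Finally, combining the two steps,
\[
    U_n\rho_{z,N}^{\otimes n}U_n^\dagger
    =\bigl(U_nD(z)^{\otimes n}U_n^\dagger\bigr)\,\phi_N^{\otimes n}\,\bigl(U_nD(z)^{\otimes n}U_n^\dagger\bigr)^\dagger .
\]
By the two previous steps this equals $\rho_{\sqrt n z,N}\otimes\phi_N^{\otimes(n-1)}$, which is \eqref{eq:concentrated-state}.

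The main obstacle is the rigorous existence and transformation rule of the Fock-space lift $U_n$ on unbounded operators. The slickest route avoids domain issues entirely by working with Weyl operators. I would define $U_n$ through the Stone–von Neumann theorem: the map $W(\xi)\mapsto W(O\xi)$ on the Weyl operators of $\bbC^n$ is a symplectic, indeed unitary, transformation. It is therefore implemented by a unitary that fixes the vacuum up to phase, and that phase can be normalized. The displacement identity then holds at the level of Weyl operators directly. The thermal invariance can alternatively be checked through characteristic functions, since $\phi_N^{\otimes n}$ has characteristic function $\exp(-(N+1/2)\|u\|^2/2)$. This function is invariant under orthogonal $O$, so the identity follows from the injectivity of the quantum characteristic function.
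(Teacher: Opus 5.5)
Your proposal is correct, but it argues differently from the paper.

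\textbf{The paper's route.} The paper builds $U_n=V_1(t_1)\cdots V_{n-1}(t_{n-1})$ from explicit two-mode beam splitters $e^{itH_{j,j+1}}$ with $\tan t_j=\sqrt{n-j}$. It proves the mode-rotation rule for each factor on the finite-dimensional spaces $\cH_{\leq R}$. It then transfers that rule to displacement operators via Nelson's analytic-vector theorem and a common core, and concentrates $(w,\ldots,w)$ into mode $1$ by backward induction. Thermal invariance comes from $[H_{j,j+1},\widehat N_{\mathrm{tot}}]=0$.

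\textbf{Your route.} You lift a single matrix $O$ with $O\mathbf 1=\sqrt n\,e_1$. The paper's product of Givens rotations is just one factorization of such a Helmert-type matrix. Concentration therefore reduces to the one-line identity $\sum_jO_{kj}=\sqrt n\,\delta_{k1}$, with no induction. Your Weyl-operator version is the stronger of your two routes. Since $O$ preserves $\operatorname{Im}\langle\eta,\xi\rangle$, Stone--von Neumann applied to the irreducible Weyl system $\xi\mapsto W(O\xi)$ gives $U_nW(\xi)U_n^\dagger=W(O\xi)$ at the level of bounded operators. Then \eqref{eq:concentrated-action} is immediate from $D(w)^{\otimes n}=W(w\mathbf 1)$. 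Also \eqref{eq:concentrated-state} follows from rotation invariance of $\exp\{-(N+\tfrac12)\|\xi\|^2\}$ and injectivity of the quantum characteristic function, with no domain questions at all.

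\textbf{Trade-off.} Your route relies on these heavier theorems. The paper's argument is self-contained and exhibits the explicit beam-splitter network of Kumagai--Hayashi.

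\textbf{Three minor points.}
\begin{itemize}
\item In your Fock-lift route, identifying $U_n\overline{X}U_n^\dagger$ with the closure of $\sqrt n(wb_1^\dagger-\bar wb_1)$ is not just functional calculus. It needs exactly the core argument the paper supplies: finite-particle vectors are $U_n$-invariant and form a core by Nelson's theorem.
\item The discrete Fourier matrix is unitary, not real orthogonal. The identity $\sum_jO_{kj}=\sqrt n\sum_jO_{kj}\overline{O_{1j}}=\sqrt n\,\delta_{k1}$ still holds for any unitary $O$ with constant first row, but your transformation rule then needs conjugates.
\item The Stone--von Neumann unitary is only fixed up to phase and is not manifestly passive. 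Passivity does follow from your thermal identity: $U_n$ commutes with $q^{\widehat N_{\mathrm{tot}}}$, $q=N/(N+1)\in(0,1)$, whose eigenspaces are exactly the total-number sectors, so $U_n$ preserves each sector.
\end{itemize}
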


The state identity \eqref{eq:concentrated-state} is the concentration
relation of Kumagai and Hayashi~\cite[Eq.~(2.5)]{KumagaiHayashi2013}.
For completeness, a direct proof of Lemma~\ref{lem:concentration}, based
on their two-mode Hamiltonian construction, is given in
the appendix. In particular, the appendix derives
the displacement covariance \eqref{eq:concentrated-action}, which will be
used explicitly below.

Lemma~\ref{lem:concentration} separates the $n$-copy model into a
collective mode carrying the nuisance displacement and $n-1$ relative modes
that depend only on the thermal parameter:
\begin{equation}
    U_n
    \rho_{z,N}^{\otimes n}
    U_n^\dagger
    =
    \underbrace{
        \rho_{\sqrt n\,z,N}
    }_{\substack{\text{collective mode}\\\text{contains }z}}
    \otimes
    \underbrace{
        \phi_N^{\otimes(n-1)}
    }_{\substack{\text{relative modes}\\\text{depend only on }N}}.
    \label{eq:collective-relative}
\end{equation}

The statistical importance of this decomposition goes beyond the fact that
the outcome law of an invariant measurement does not depend on $z$.
After conjugation by $U_n$, collective-displacement invariance becomes
invariance under arbitrary Weyl displacements acting on the first mode
alone. Irreducibility of the one-mode Weyl representation then forces every
invariant measurement to act trivially on the collective mode. Consequently,
the classical experiment induced by any measurement in
$\cM_{\mathrm{inv}}^{(n)}$ is determined entirely by a measurement on the
$n-1$ centered thermal relative modes. We make this statement precise next.

\begin{prop}[Structure of invariant POVMs]
\label{prop:invariant-structure}
Let $M\in\cM_{\mathrm{inv}}^{(n)}$ and define
\[
    \widetilde M(B)
    :=
    U_nM(B)U_n^\dagger.
\]
Then there exists a POVM $M_0$ on $\cH^{\otimes(n-1)}$ such that
\begin{equation}
    \widetilde M(B)
    =
    I\otimes M_0(B)
    \label{eq:invariant-factor-form}
\end{equation}
for every measurable outcome event $B$.
\end{prop}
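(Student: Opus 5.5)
The plan is to transport the invariance condition through the concentrating unitary, use irreducibility of the one-mode Weyl representation to show each effect is trivial on the collective mode, and then check that the second tensor factor is a POVM.

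\emph{Step 1: transport the invariance.} Fix an outcome event $B$. By \eqref{eq:invariant-povm} and \eqref{eq:concentrated-action}, for every $w\in\bbC$ we have
\[
    \bigl(D(\sqrt n\,w)\otimes I^{\otimes(n-1)}\bigr)^\dagger
    \widetilde M(B)
    \bigl(D(\sqrt n\,w)\otimes I^{\otimes(n-1)}\bigr)
    =\widetilde M(B).
\]
Since $w\mapsto\sqrt n\,w$ is onto $\bbC$, the bounded operator $\widetilde M(B)$ commutes with $D(u)\otimes I$ for all $u\in\bbC$. In other words, $\widetilde M(B)$ lies in the commutant $\{D(u)\otimes I:u\in\bbC\}'$ inside $\mathcal L(\cH\otimes\cH^{\otimes(n-1)})$.

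\emph{Step 2: identify the commutant.} The one-mode Weyl operators $\{D(u)\}$ act irreducibly on $\cH=L^2(\R)$; this is the Stone--von Neumann theorem. Hence their von Neumann algebra is all of $\mathcal L(\cH)$, and
\[
    \{D(u)\otimes I\}'=(\mathcal L(\cH)\otimes I)'=I\otimes\mathcal L(\cH^{\otimes(n-1)})
\]
by the commutation theorem for tensor products. This is the step I expect to be the main obstacle, since it needs a clean justification. I would prove it directly rather than cite it. Fix an orthonormal basis $\{f_j\}$ of $\cH^{\otimes(n-1)}$ and write $\widetilde M(B)$ as an operator matrix $(A_{jk})$ with entries $A_{jk}\in\mathcal L(\cH)$, defined by
\[
    \langle\phi\otimes f_j,\widetilde M(B)\,\psi\otimes f_k\rangle=\langle\phi,A_{jk}\psi\rangle .
\]
Commutation with $D(u)\otimes I$ gives $[A_{jk},D(u)]=0$ for every $u$. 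Schur's lemma for the irreducible Weyl representation then yields $A_{jk}=c_{jk}I$. Define $M_0(B)$ by its matrix entries $\langle f_j,M_0(B)f_k\rangle=c_{jk}$. This is bounded because $|\langle\phi\otimes x,\widetilde M(B)\,\phi\otimes y\rangle|=|\langle x,M_0(B)y\rangle|$ for a unit vector $\phi$, and $\|\widetilde M(B)\|\le 1$. It then follows that $\widetilde M(B)=I\otimes M_0(B)$.

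\emph{Step 3: $M_0$ is a POVM.} Each property is read off from the identity $\langle x,M_0(B)x\rangle=\langle\phi\otimes x,\widetilde M(B)\,\phi\otimes x\rangle$ with a fixed unit vector $\phi\in\cH$.
\begin{itemize}
\item Positivity follows from positivity of $\widetilde M(B)$.
\item Normalization follows from $\widetilde M(\Omega)=U_nU_n^\dagger=I$, which forces $M_0(\Omega)=I$.
\item Weak-operator $\sigma$-additivity transfers from $\widetilde M$, and hence from $M$, because conjugation by $U_n$ preserves weak convergence.
\end{itemize}
Finally, $M_0(B)$ is uniquely determined by $\widetilde M(B)$, so no measurability or selection issues arise. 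This gives \eqref{eq:invariant-factor-form}.
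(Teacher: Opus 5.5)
Your proposal is correct and follows essentially the same route as the paper. You transport the invariance through $U_n$ via \eqref{eq:concentrated-action}, use Schur's lemma for the irreducible one-mode Weyl representation to identify the commutant as $I\otimes\mathcal L(\mathcal H^{\otimes(n-1)})$, and then verify the POVM axioms through matrix elements against product vectors. Your operator-matrix argument just spells out the commutant identification that the paper asserts in a single line (``It follows that'').
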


\begin{proof}
By \eqref{eq:concentrated-action},
\[
    U_nD(w)^{\otimes n}U_n^\dagger
    =
    D(\sqrt n\,w)\otimes I^{\otimes(n-1)}.
\]
Therefore, using the invariance of $M$,
\begin{align*}
\left[
    \widetilde M(B),
    D(\sqrt n\,w)\otimes I^{\otimes(n-1)}
\right]
&=
\left[
    U_nM(B)U_n^\dagger,
    U_nD(w)^{\otimes n}U_n^\dagger
\right]
\\
&=
U_n
\left[
    M(B),
    D(w)^{\otimes n}
\right]
U_n^\dagger
=
0
\end{align*}
for every $w\in\bbC$.

The one-mode Weyl representation
$w\mapsto D(w)$ on $L^2(\mathbb R)$ is irreducible; see
\cite[Chapter~1, Sections~3 and~5]{Folland1989}.
Hence, by Schur's lemma for irreducible unitary representations
\cite[Theorem~3.5]{FollandAbstract},
\[
    \{D(w):w\in\bbC\}'
    =
    \bbC I.
\]
It follows that
\begin{equation}
    \left\{
        D(w)\otimes I^{\otimes(n-1)}
        :
        w\in\bbC
    \right\}'
    =
    I\otimes
    \mathcal L\!\left(\cH^{\otimes(n-1)}\right).
    \label{eq:weyl-tensor-commutant}
\end{equation}

Consequently,
\[
    \widetilde M(B)
    =
    I\otimes M_0(B)
\]
for some bounded positive operator $M_0(B)$ on
$\cH^{\otimes(n-1)}$.

It remains to verify that $B\mapsto M_0(B)$ is a POVM. Since
$
    \widetilde M(\Omega)
    =
    I_{\cH^{\otimes n}}$,  \eqref{eq:invariant-factor-form} gives
$
    I\otimes M_0(\Omega)
    =
    I\otimes I,
$
and hence
$
    M_0(\Omega)=I.
$
Moreover, if $B_1,B_2,\ldots$ are pairwise disjoint, then the weak
operator countable additivity of $\widetilde M$ gives
\[
    I\otimes
    M_0\!\left(\bigcup_{k=1}^{\infty}B_k\right)
    =
    \sum_{k=1}^{\infty}
    I\otimes M_0(B_k)
\]
in the weak operator topology. Taking matrix elements against vectors of
the form $\xi\otimes\eta$, with $\|\xi\|=1$, shows that
\[
    M_0\!\left(\bigcup_{k=1}^{\infty}B_k\right)
    =
    \sum_{k=1}^{\infty}M_0(B_k)
\]
weakly on $\cH^{\otimes(n-1)}$. Thus $M_0$ is a POVM.
\end{proof}

Combining Proposition~\ref{prop:invariant-structure} with
Lemma~\ref{lem:concentration}, for every
$M\in\cM_{\mathrm{inv}}^{(n)}$ we obtain
\begin{align}
    P_{z,N}^{M}(B)
    &=
    \Tr\!\left[
        \rho_{z,N}^{\otimes n}M(B)
    \right]
    \nonumber\\
    &=
    \Tr\!\left[
        \left(
            \rho_{\sqrt n\,z,N}
            \otimes
            \phi_N^{\otimes(n-1)}
        \right)
        \left(
            I\otimes M_0(B)
        \right)
    \right]
    \nonumber\\
    &=
    \Tr\!\left[
        \phi_N^{\otimes(n-1)}
        M_0(B)
    \right].
    \label{eq:reduced-outcome-law}
\end{align}

Thus every collective-displacement invariant measurement on the original
$n$-mode experiment induces exactly the same classical experiment as some
POVM acting on the $n-1$ centered thermal relative modes. Conversely, every
POVM $M_0$ on the relative modes gives an invariant POVM on the original
experiment through
\[
    M(B)
    =
    U_n^\dagger
    \bigl(I\otimes M_0(B)\bigr)
    U_n.
\]
Hence the original quantum decision problem, restricted to the class of invariant measurements, is
exactly equivalent, at the level of induced classical experiments, to the
ordinary measurement problem for the centered thermal family
\[
    \left\{
        \phi_N^{\otimes(n-1)}
        :
        N>0
    \right\}.
\]
The next step is therefore to identify a single measurement on this reduced
family that dominates all other POVMs.

\subsection{Construction of the sufficient measurement}
\label{subsec:number-sufficiency}

Set
$m:=n-1$.
The reduced state is
$\phi_N^{\otimes m} $.
For
\[
    \bm k=(k_1,\ldots,k_m)\in\N_0^m,
\qquad
    |\bm k|
    :=
    \sum_{j=1}^m k_j,
\] write
$ |\bm k\rangle
    =
    |k_1\rangle\otimes\cdots\otimes|k_m\rangle.
$
From \eqref{eq:thermal-state},
\begin{equation}
    \phi_N^{\otimes m}
    =
    \frac{1}{(N+1)^m}
    \sum_{\bm k\in\N_0^m}
    \left(
        \frac{N}{N+1}
    \right)^{|\bm k|}
    |\bm k\rangle\langle\bm k|.
    \label{eq:thermal-product}
\end{equation}

For $s\in\N_0$, define the projection onto the total residual photon number-$s$
subspace by
\begin{equation}
    \Pi_s^{(m)}
    :=
    \sum_{\substack{\bm k\in\N_0^m\\|\bm k|=s}}
    |\bm k\rangle\langle\bm k|.
    \label{eq:total-number-proj}
\end{equation}
Its dimension is
\begin{equation}
    d_{m,s}
    :=
    \Tr\Pi_s^{(m)}
    =
    \binom{s+m-1}{m-1}.
    \label{eq:block-dimension}
\end{equation}
Define
\begin{equation}
    \tau_s^{(m)}
    :=
    \frac{\Pi_s^{(m)}}{d_{m,s}}.
    \label{eq:conditional-block-state}
\end{equation}
Then
$\tau_s^{(m)}
$
does not depend on $N$. Grouping \eqref{eq:thermal-product} according to total residual photon number yields
\begin{equation}
    \phi_N^{\otimes m}
    =
    \bigoplus_{s=0}^{\infty}
    p_N^{(m)}(s)\tau_s^{(m)},
   \label{eq:thermal-factorization}
\end{equation}
where
\begin{equation}
    p_N^{(m)}(s)
    =
    \binom{s+m-1}{m-1}
    \frac{1}{(N+1)^m}
    \left(
        \frac{N}{N+1}
    \right)^s,
    \qquad
    s\in\N_0.
    \label{eq:negative-binomial}
\end{equation}
Thus $p_N^{(m)}$ is a negative-binomial distribution and \eqref{eq:thermal-factorization} is precisely a
parameter-free conditional block factorization: all dependence on $N$ appears
through the classical weights $p_N^{(m)}(s)$.

Define the total-number measurement on the original $n$ modes by
\begin{equation}
    M_n^\star(\{s\})
    :=
    U_n^\dagger
    \left(
        I\otimes\Pi_s^{(n-1)}
    \right)
    U_n,
    \qquad
    s\in\N_0.
    \label{eq:canonical-number-measurement}
\end{equation}

\begin{theorem}[Quantum-to-classical sufficiency for displaced thermal states]
\label{thm:thermal-q2c}
For the experiment
\[
    \cQ_n
    =
    \{
        \rho_{z,N}^{\otimes n}
        :
        z\in\bbC,\ N>0
    \},
\qquad n\ge2,
\]
the measurement $M_n^\star$ in
\eqref{eq:canonical-number-measurement} is quantum-to-classically sufficient
relative to the class
$\cM_{\mathrm{inv}}^{(n)}$ of collective-displacement invariant POVMs.

More precisely, for every
$ M\in\cM_{\mathrm{inv}}^{(n)}$
with outcome space $(\mathsf Y,\mathcal Y)$, there exists a
parameter-independent Markov kernel
$ K_M:
    \N_0\times\mathcal Y
    \rightarrow[0,1]
$
such that
\begin{equation}
    P_{z,N}^{M}
    =
    K_M P_{z,N}^{M_n^\star}
    \label{eq:q2c-kernel}
\end{equation}
for every $z\in\bbC$ and $N>0$.
\end{theorem}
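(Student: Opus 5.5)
The plan is to reduce to the relative modes via Proposition~\ref{prop:invariant-structure}, and then mimic the proof of Theorem~\ref{thm:invariant-factorization} directly on the block factorization \eqref{eq:thermal-factorization}. The index set $\N_0$ is now countable rather than finite, so I will not invoke that theorem verbatim. Instead I will repeat its short argument, taking care with the infinite direct sum.

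\textbf{Step 1: reduce an invariant $M$ to a measurement on the relative modes.} Fix $M\in\cM_{\mathrm{inv}}^{(n)}$ with outcome space $(\mathsf Y,\mathcal Y)$. By Proposition~\ref{prop:invariant-structure} there is a POVM $M_0$ on $\cH^{\otimes m}$, $m=n-1$, with $U_nM(B)U_n^\dagger=I\otimes M_0(B)$. By \eqref{eq:reduced-outcome-law},
\[
P^M_{z,N}(B)=\Tr[\phi_N^{\otimes m}M_0(B)].
\]

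\textbf{Step 2: compute the law of $M_n^\star$.} The same computation, applied to \eqref{eq:canonical-number-measurement} with $M_0(\{s\})=\Pi_s^{(m)}$, gives
\[
P^{M_n^\star}_{z,N}(\{s\})=\Tr[\phi_N^{\otimes m}\Pi_s^{(m)}]=p_N^{(m)}(s).
\]
This is the negative-binomial law \eqref{eq:negative-binomial}, and it does not depend on $z$.

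\textbf{Step 3: define the kernel and verify the randomization identity.} Set
\[
K_M(s,B):=\Tr[\tau_s^{(m)}M_0(B)].
\]
Since $\tau_s^{(m)}$ is a density operator (the block $\Pi_s^{(m)}$ is finite dimensional, with dimension $d_{m,s}$) and $M_0$ is a POVM, $B\mapsto K_M(s,B)$ is a probability measure. Measurability in $s$ is trivial because $\N_0$ is discrete, so $K_M$ is a Markov kernel, and it involves neither $N$ nor $z$. To verify the identity, insert \eqref{eq:thermal-factorization}:
\[
\Tr[\phi_N^{\otimes m}M_0(B)]=\sum_{s\ge0}p_N^{(m)}(s)\,\Tr[\tau_s^{(m)}M_0(B)].
\]
I do not need $M_0$ to be block diagonal. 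The factorization holds at the level of the state: $\phi_N^{\otimes m}=\sum_s p_N^{(m)}(s)\tau_s^{(m)}$, where each $\tau_s^{(m)}$ is a finite-rank operator supported in block $s$. Hence only the diagonal blocks of $M_0(B)$ contribute to each trace term.

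\textbf{Main obstacle: exchanging the trace with the infinite sum.} This is the only genuinely new point relative to the finite-block proof. The series $\sum_s p_N^{(m)}(s)\tau_s^{(m)}$ converges to $\phi_N^{\otimes m}$ in trace norm: all terms are positive with total trace $\sum_s p_N^{(m)}(s)=1$, so the tails have trace norm tending to $0$. The map $A\mapsto\Tr[AM_0(B)]$ is trace-norm continuous, since $\|M_0(B)\|\le1$. Therefore the trace passes through the sum, which yields $P^M_{z,N}=K_MP^{M_n^\star}_{z,N}$ for all $z\in\bbC$ and $N>0$, as claimed in \eqref{eq:q2c-kernel}.
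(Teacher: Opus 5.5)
Your proof follows the paper's argument essentially step for step. You reduce via Proposition~\ref{prop:invariant-structure} to a POVM $M_0$ on the $m=n-1$ relative modes, set $K_M(s,B)=\Tr[\tau_s^{(m)}M_0(B)]$, and expand $\phi_N^{\otimes m}$ along \eqref{eq:thermal-factorization}. Your trace-norm argument for exchanging the trace with the infinite sum supplies a justification that the paper leaves implicit.

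The one thing you leave out is the check that $M_n^\star\in\cM_{\mathrm{inv}}^{(n)}$. Definition~\ref{def:q2c-sufficiency} requires this for the first assertion of the theorem; as written, your argument proves only the kernel statement. The paper verifies membership explicitly, and it follows at once from \eqref{eq:concentrated-action}: $D(\sqrt n\,w)\otimes I^{\otimes(n-1)}$ commutes with $I\otimes\Pi_s^{(n-1)}$, and this extends to arbitrary $B\subseteq\N_0$ by weak-operator countable additivity. Equivalently, you can cite the remark after \eqref{eq:reduced-outcome-law} that every POVM of the form $U_n^\dagger(I\otimes M_0(\cdot))U_n$ is invariant.
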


\begin{proof}
By Proposition~\ref{prop:invariant-structure},
\[
    U_nM(B)U_n^\dagger
    =
    I\otimes M_0(B)
\]
for some POVM $M_0$ on the $m=n-1$ relative modes.

Define
\begin{equation}
    K_M(s,B)
    :=
    \Tr\left[
        \tau_s^{(m)}M_0(B)
    \right].
    \label{eq:thermal-kernel}
\end{equation}
Since $\tau_s^{(m)}$ is a density operator and $M_0$ is a POVM,
$B\longmapsto K_M(s,B)$ 
is a probability measure for each $s$. Hence $K_M$ is a Markov kernel.

Using \eqref{eq:reduced-outcome-law} and
\eqref{eq:thermal-factorization},
\[
\begin{aligned}
    P_{z,N}^{M}(B)
    &=
    \Tr\left[
        \phi_N^{\otimes m}M_0(B)
    \right]
    \\
    &=
    \sum_{s=0}^{\infty}
    p_N^{(m)}(s)
    \Tr\left[
        \tau_s^{(m)}M_0(B)
    \right]
    \\
    &=
    \sum_{s=0}^{\infty}
    K_M(s,B)p_N^{(m)}(s).
\end{aligned}
\]
We also verify that
$M_n^\star\in\cM_{\mathrm{inv}}^{(n)}.$
Recall that, for every $s\in\N_0$,
\[
    M_n^\star(\{s\})
    =
    U_n^\dagger
    \left(
        I\otimes\Pi_s^{(n-1)}
    \right)
    U_n.
\]
By \eqref{eq:concentrated-action}, we observe that
\begin{align*}
&\bigl(D(w)^{\otimes n}\bigr)^\dagger
M_n^\star(\{s\})
D(w)^{\otimes n}
\\
=&
U_n^\dagger
\left(
    D(\sqrt n\,w)^\dagger\otimes I^{\otimes(n-1)}
\right)
\left(
    I\otimes\Pi_s^{(n-1)}
\right)
\left(
    D(\sqrt n\,w)\otimes I^{\otimes(n-1)}
\right)
U_n
\\
=&
U_n^\dagger
\left(
    D(\sqrt n\,w)^\dagger D(\sqrt n\,w)
    \otimes\Pi_s^{(n-1)}
\right)
U_n
\\
= &
U_n^\dagger
\left(
    I\otimes\Pi_s^{(n-1)}
\right)
U_n
\\
= &
M_n^\star(\{s\}).
\end{align*}

Since the outcome space is countable, for every
$B\subseteq\N_0$ we have
\[
    M_n^\star(B)
    =
    \sum_{s\in B}M_n^\star(\{s\}),
\]
where the sum converges in the weak operator topology. Hence
\[
    \bigl(D(w)^{\otimes n}\bigr)^\dagger
    M_n^\star(B)
    D(w)^{\otimes n}
    =
    M_n^\star(B)
\]
for every $w\in\bbC$ and every $B\subseteq\N_0$. Therefore,
$M_n^\star\in\cM_{\mathrm{inv}}^{(n)}.$ Next we note that
\begin{align*}
    P_{z,N}^{M_n^\star}(\{s\})&= \Tr[\rho_{z,N}^{\otimes n}U_n^\dagger
    \left(
        I\otimes\Pi_s^{(n-1)}
    \right)
    U_n]\\
    &=\Tr[ \left(\rho_{\sqrt n z,N}
    \otimes
    \phi_N^{\otimes(n-1)}\right)
    \left(
        I\otimes\Pi_s^{(n-1)}
    \right)
    ]\\
    &= \Tr[
    \phi_N^{\otimes(n-1)}
    \Pi_s^{(n-1)}
    ]\\
    &=p_N^{(m)}(s),
\end{align*}
independently of $z$. Therefore
\[
    P_{z,N}^{M}
    =
    K_MP_{z,N}^{M_n^\star}.
\]
\end{proof}

\begin{remark}[The classical experiment]
The sufficient classical experiment generated by
$M_n^\star$ is therefore
\[
    \cE_{n}^{\mathrm{th}}
    =
    \left\{
        p_N^{(n-1)}
        :
        N>0
    \right\},
\]
where
$p_N^{(n-1)}$ is the law of $S
    \sim
    \operatorname{NB}
    \left(
        n-1,\frac{1}{N+1}
    \right).
$
\end{remark}

\subsection{Decision-theoretic reduction}
\label{subsec:thermal-decision}

The preceding theorem concerns the class of displacement-invariant
measurements. We now show that, under the regularity conditions stated
below, restricting attention to this class does not increase either the
nuisance-robust minimax risk or the Bayes-minimax risk.

Let \(\mathsf A\) be a compact metric action space equipped with its Borel
\(\sigma\)-field \(\mathcal A\), and let
\[
    L:K\times\mathsf A\rightarrow[0,\infty)
\]
be a bounded continuous loss, where
$ K\subset(0,\infty)
$
is compact. The loss depends on the state only through the number parameter
\(N\); the displacement \(z\) is a nuisance parameter and does not enter the
loss.

For an action-valued POVM \(D\) on \(\mathsf A\), define
\begin{equation}
    R_{z,N}(D)
    :=
    \int_{\mathsf A}
        L(N,a)\,
        \Tr\!\left[
            \rho_{z,N}^{\otimes n}D(da)
        \right].
    \label{eq:risk}
\end{equation}

The next theorem reduces the optimization over all POVMs to the class of
displacement-invariant POVMs. In contrast with the unitary symmetry
considered in Section~\ref{sec:wss}, the displacement group
$G=(\bbC,+)$ is noncompact, so normalized Haar averaging is unavailable
and a noncompact Hunt-Stein argument is required.

The general noncompact Hunt-Stein reduction for minimax quantum
decision problems goes back to Bogomolov
\cite{bogomolov1982minimax}.  Kumagai and Hayashi
\cite{KumagaiHayashi2013} give a particularly transparent implementation
of the corresponding asymptotically invariant averaging argument for
binary hypothesis testing.  In the present setting, however, we also
require a Bayes-minimax formulation in which a proper prior is placed on
the thermal parameter while the displacement is retained as a nuisance
parameter and treated in the minimax sense.  We therefore give a
self-contained argument for general action-valued POVMs.

The proof combines two standard ingredients. First, we use sequential
compactness of the class of POVMs for the topology of pointwise ultraweak
convergence of their associated positive unital maps. Second, we use
asymptotically invariant averaging over the displacement group. The
resulting argument yields both the usual minimax reduction, which in the
present model is a specialization of Bogomolov's general theorem, and the
Bayes-minimax reduction needed for the thermal analysis.

\begin{theorem}[Hunt-Stein reduction for displacement nuisance]
\label{thm:hunt-stein-thermal}
Let \(K\subset(0,\infty)\) be compact, let \(\mathsf A\) be a compact metric
space, and suppose that
\[
    L:K\times\mathsf A\rightarrow[0,\infty)
\]
is bounded and continuous. Then
\begin{align}
    \inf_D
    \sup_{\substack{z\in\bbC\\N\in K}}
    R_{z,N}(D)
    &=
    \inf_{D\in\cM_{\mathrm{inv}}^{(n)}}
    \sup_{\substack{z\in\bbC\\N\in K}}
    R_{z,N}(D)
    \nonumber\\
    &=
    \inf_{D\in\cM_{\mathrm{inv}}^{(n)}}
    \sup_{N\in K}
    R_{0,N}(D).
    \label{eq:minimax-symmetry}
\end{align}

Further, let \(\Pi\) be a probability measure supported on \(K\), and
define the Bayes-minimax criterion
\begin{equation}
    r_n^{\mathrm{BM}}(\Pi)
    :=
    \inf_D
    \sup_{z\in\bbC}
    \int_K
        R_{z,N}(D)\,
        \Pi(dN).
    \label{eq:bayes-minimax}
\end{equation}
Then
\begin{equation}
    r_n^{\mathrm{BM}}(\Pi)
    =
    \inf_{D\in\cM_{\mathrm{inv}}^{(n)}}
    \int_K
        R_{0,N}(D)\,
        \Pi(dN).
    \label{eq:bayes-minimax-invariant}
\end{equation}
\end{theorem}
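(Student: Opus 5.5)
The easy direction of both identities comes from inclusion. Since $\cM_{\mathrm{inv}}^{(n)}$ is a subclass of all action-valued POVMs, the left side of \eqref{eq:minimax-symmetry} is at most the middle expression, and likewise for \eqref{eq:bayes-minimax}. For $D\in\cM_{\mathrm{inv}}^{(n)}$, the computation \eqref{eq:invariant-law-independent-z} shows $R_{z,N}(D)=R_{0,N}(D)$. This gives the second equality in \eqref{eq:minimax-symmetry}, and shows that for invariant $D$ the Bayes-minimax objective equals $\int_K R_{0,N}(D)\,\Pi(dN)$. So everything reduces to the following claim: for an arbitrary action-valued POVM $D$ there is an invariant $D_\infty$ with
\[
R_{0,N}(D_\infty)\le \liminf_k \int R_{w,N}(D)\,\mu_k(dw)
\]
for every $N$, where $(\mu_k)$ is a fixed asymptotically invariant sequence of probability measures on $\bbC$.

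To build $D_\infty$, I take $\mu_k$ to be the uniform distribution on the disc $\{|w|\le k\}$. This is a Følner sequence: $\|\mu_k*\delta_v-\mu_k\|_{\mathrm{TV}}\to0$ for every fixed $v\in\bbC$. I then define the averaged POVMs
\[
D_k(C):=\int_{\bbC}\bigl(D(w)^{\otimes n}\bigr)^\dagger D(C)\,D(w)^{\otimes n}\,\mu_k(dw).
\]
By cyclicity and \eqref{eq:collective-displacement-action}, $R_{z,N}(D_k)=\int R_{z+w,N}(D)\,\mu_k(dw)$. Consequently $\sup_{z,N}R_{z,N}(D_k)\le\sup_{z,N}R_{z,N}(D)$. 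Integrating over $\Pi(dN)$ first gives $\int_K R_{z,N}(D_k)\,\Pi(dN)\le\sup_{z}\int_K R_{z,N}(D)\,\Pi(dN)$ (Tonelli).

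Next I pass to a limit. I identify each $D_k$ with the positive unital map $\Phi_k:C(\mathsf A)\to\mathcal L(\cH^{\otimes n})$, $\Phi_k(f)=\int f\,dD_k$. Three facts drive the compactness step: $C(\mathsf A)$ is separable because $\mathsf A$ is compact metric, the maps $\Phi_k$ are uniformly bounded, and $\mathcal L(\cH^{\otimes n})$ is the dual of the separable space $\mathcal T_1$. A diagonal Banach--Alaoglu argument therefore gives a subsequence with $\Phi_k(f)\to\Phi(f)$ ultraweakly for every $f\in C(\mathsf A)$, and $\Phi$ is again positive and unital. For each vector $\xi$, the functional $f\mapsto\langle\xi,\Phi(f)\xi\rangle$ is a positive functional on $C(\mathsf A)$. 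By the Riesz representation theorem together with polarization, $\Phi$ is given by a regular POVM $D_\infty$; countable additivity is automatic from the Riesz measures.

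Invariance of $D_\infty$ follows from the Følner property. Conjugating $D_k$ by $D(v)^{\otimes n}$ replaces $\mu_k$ by $\mu_k*\delta_v$, so
\[
\bigl\|\bigl(D(v)^{\otimes n}\bigr)^\dagger\Phi_k(f)\,D(v)^{\otimes n}-\Phi_k(f)\bigr\|\le\|f\|_\infty\,\|\mu_k*\delta_v-\mu_k\|_{\mathrm{TV}}\to0.
\]
Passing to the ultraweak limit, $\Phi(f)$ commutes with $D(v)^{\otimes n}$ for all $v$ and $f$. By regularity this extends to all Borel sets, so $D_\infty\in\cM_{\mathrm{inv}}^{(n)}$.

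Finally I handle the risks. Because $L(N,\cdot)$ is continuous and $\rho_{0,N}^{\otimes n}$ is trace class,
\[
R_{0,N}(D_\infty)=\Tr\bigl[\rho_{0,N}^{\otimes n}\Phi\bigl(L(N,\cdot)\bigr)\bigr]=\lim_k R_{0,N}(D_k)\le\sup_{z\in\bbC,\,N\in K}R_{z,N}(D),
\]
which proves \eqref{eq:minimax-symmetry}. For the Bayes-minimax statement, boundedness of $L$ allows dominated convergence in $N$:
\[
\int_K R_{0,N}(D_\infty)\,\Pi(dN)=\lim_k\int_K R_{0,N}(D_k)\,\Pi(dN)\le\sup_z\int_K R_{z,N}(D)\,\Pi(dN),
\]
which gives \eqref{eq:bayes-minimax-invariant}.

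I expect the main obstacle to be the compactness step: showing that a pointwise-ultraweak limit of action-valued POVMs is still a genuine countably additive POVM, and that invariance survives the limit. Compactness of $\mathsf A$ (via the Riesz representation) and separability are exactly what make this work, which is why the theorem assumes a compact metric action space and a continuous loss.
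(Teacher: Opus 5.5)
Your proposal is correct and follows essentially the same route as the paper. Both arguments average $D$ over displacements against a Følner sequence, extract a convergent subsequence of the positive unital maps $C(\mathsf A)\to\mathcal L(\cH^{\otimes n})$ using separability and a diagonal argument, recover a limiting invariant POVM by a Riesz-type representation, and then use convergence of the risks (dominated convergence in the Bayes-minimax case). The only differences are cosmetic: you average over discs where the paper uses squares, you prove invariance through an operator-norm estimate where the paper works weakly against density operators (your constant may be off by a harmless factor of $2$, depending on the total-variation convention), and you build the POVM from scalar Riesz measures by polarization rather than citing the operator-valued representation theorem.
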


For a randomized decision rule $\delta$, we use the shorthand
\[
\E_N L(N,\delta(S))
:=
\sum_{s=0}^{\infty}
p_N^{(n-1)}(s)
\int_{\mathsf A}L(N,a)\,\delta(s,da).
\]
Combining Theorem~\ref{thm:hunt-stein-thermal} with
Theorem~\ref{thm:thermal-q2c} yields an exact reduction to the
negative-binomial experiment.

\begin{corollary}[Exact minimax reduction]
\label{cor:thermal-minimax}
Under the assumptions of Theorem~\ref{thm:hunt-stein-thermal},
\begin{equation}
    \inf_D
    \sup_{\substack{z\in\bbC\\N\in K}}
    R_{z,N}(D)
    =
    \inf_{\delta}
    \sup_{N\in K}
    \E_N
    L\!\left(N,\delta(S)\right),
    \label{eq:exact-minimax-reduction}
\end{equation}
where the infimum on the right is over randomized decision rules based on
$S\sim p_N^{(n-1)}$
with \(p_N^{(n-1)}\) given by
\eqref{eq:negative-binomial}.

Likewise, for every probability measure \(\Pi\) supported on \(K\),
\begin{equation}
    r_n^{\mathrm{BM}}(\Pi)
    =
    \inf_\delta
    \int_K
        \E_N
        L\!\left(N,\delta(S)\right)
        \Pi(dN).
    \label{eq:exact-bayes-reduction}
\end{equation}
Thus the nuisance-robust quantum decision problem is exactly an ordinary
classical decision problem based on the total residual photon count.
\end{corollary}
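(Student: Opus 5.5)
The plan is to obtain both identities by chaining Theorem~\ref{thm:hunt-stein-thermal} with Theorem~\ref{thm:thermal-q2c}, and to prove each as two inequalities. First I reduce the quantum side to invariant procedures. By \eqref{eq:minimax-symmetry} the left-hand side of \eqref{eq:exact-minimax-reduction} equals $\inf_{D\in\cM_{\mathrm{inv}}^{(n)}}\sup_{N\in K}R_{0,N}(D)$. By \eqref{eq:bayes-minimax-invariant}, $r_n^{\mathrm{BM}}(\Pi)$ equals $\inf_{D\in\cM_{\mathrm{inv}}^{(n)}}\int_K R_{0,N}(D)\,\Pi(dN)$. It therefore suffices to show that the set of risk functions $N\mapsto R_{0,N}(D)$, $D\in\cM_{\mathrm{inv}}^{(n)}$, coincides with the set of risk functions $N\mapsto\E_N L(N,\delta(S))$ as $\delta$ ranges over randomized rules on $\N_0$. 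Both the sup criterion and the $\Pi$-integral criterion are functionals of the risk function alone, so equality of these two sets gives both identities at once.

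For the inclusion ``invariant $\subseteq$ classical'', take an invariant action-valued POVM $D$ on $(\mathsf A,\mathcal A)$. Since $\mathsf A$ is compact metric, it is standard Borel. Theorem~\ref{thm:thermal-q2c} then supplies a kernel $K_D$ with $P_{0,N}^D=K_DP_{0,N}^{M_n^\star}=K_Dp_N^{(n-1)}$ for all $N$. I set $\delta:=K_D$, viewed as a randomized rule based on $S$. Integrating the bounded measurable $L(N,\cdot)$ against the common action law gives $R_{0,N}(D)=\E_NL(N,\delta(S))$. This is just Proposition~\ref{prop:risk-transfer}, with $D$ regarded as the procedure (identity decision rule).

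For the reverse inclusion, take a rule $\delta$ on $\N_0$ and define
\[
D_\delta(C):=\sum_{s\ge0}\delta(s,C)\,M_n^\star(\{s\}),\qquad C\in\mathcal A.
\]
This is an action-valued POVM. Positivity and normalization are immediate. The one point needing care is $\sigma$-additivity in the weak operator topology: the $M_n^\star(\{s\})$ are mutually orthogonal projections summing to $I$, so matrix elements reduce to a dominated double series and monotone convergence applies. Each $M_n^\star(\{s\})$ is invariant, as verified in the proof of Theorem~\ref{thm:thermal-q2c}. Taking the weak-operator sum shows that $D_\delta\in\cM_{\mathrm{inv}}^{(n)}$. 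Its outcome law under $\rho_{0,N}^{\otimes n}$ is $\sum_s p_N^{(n-1)}(s)\delta(s,\cdot)$, using $P_{0,N}^{M_n^\star}(\{s\})=p_N^{(n-1)}(s)$ from that same proof. Hence $R_{0,N}(D_\delta)=\E_NL(N,\delta(S))$.

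Combining the two inclusions with Theorem~\ref{thm:hunt-stein-thermal} yields both \eqref{eq:exact-minimax-reduction} and \eqref{eq:exact-bayes-reduction}. No step is a serious obstacle, since the noncompact difficulty is already absorbed into Theorem~\ref{thm:hunt-stein-thermal}. The only points to state explicitly are the weak-operator countable additivity of $D_\delta$ and the measurability of $N\mapsto R_{0,N}(D)$, which is needed for the $\Pi$-integral to make sense. The latter follows from the explicit negative-binomial form of $p_N^{(n-1)}$ together with the continuity of $L$.
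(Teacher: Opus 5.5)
Your proposal is correct and follows the paper's route. You reduce to $\cM_{\mathrm{inv}}^{(n)}$ via Theorem~\ref{thm:hunt-stein-thermal}, transfer invariant procedures to rules based on $S$ via Theorem~\ref{thm:thermal-q2c} and Proposition~\ref{prop:risk-transfer}, and realize each rule $\delta$ as the invariant post-processing $\sum_s\delta(s,\cdot)M_n^\star(\{s\})$; your explicit checks of weak-operator $\sigma$-additivity, invariance of $D_\delta$, and measurability of $N\mapsto R_{0,N}(D)$ fill in details the paper leaves implicit.
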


\begin{proof}
By Theorem~\ref{thm:hunt-stein-thermal}, the nuisance-robust minimax
problem may be restricted to displacement-invariant POVMs:
\begin{equation}
    \inf_D
    \sup_{\substack{z\in\bbC\\N\in K}}
    R_{z,N}(D)
    =
    \inf_{D\in\cM_{\mathrm{inv}}^{(n)}}
    \sup_{N\in K}
    R_{0,N}(D).
    \label{eq:corollary-minimax-invariant-stage}
\end{equation}

By Theorem~\ref{thm:thermal-q2c}, \(M_n^\star\) is
quantum-to-classically sufficient for the displaced thermal experiment
relative to \(\cM_{\mathrm{inv}}^{(n)}\). More precisely, for every
invariant action-valued POVM \(D\), there is a parameter-independent
Markov kernel \(K_D\) such that
\begin{equation}
    P_{z,N}^{D}
    =
    K_D P_{z,N}^{M_n^\star}
    \label{eq:thermal-risk-randomization}
\end{equation}
for every \(z\in\bbC\) and \(N>0\). Consequently, by Proposition \ref{prop:risk-transfer}, every decision
procedure based on an invariant POVM has the same risk function as a
randomized classical decision rule based on the outcome of \(M_n^\star\).
The distribution of this outcome is independent of \(z\) and is given by
$S\sim p_N^{(n-1)}.$

Conversely, every randomized classical decision rule based on \(S\) can
be implemented by classical post-processing of \(M_n^\star\), and the
resulting POVM is displacement invariant. Therefore,
\begin{equation}
    \inf_{D\in\cM_{\mathrm{inv}}^{(n)}}
    \sup_{N\in K}
    R_{0,N}(D)
    =
    \inf_{\delta}
    \sup_{N\in K}
    \E_N
    L\!\left(N,\delta(S)\right),
    \label{eq:invariant-to-negative-binomial-minimax}
\end{equation}
where the infimum on the right is over randomized decision rules based on
\(S\). Combining
\eqref{eq:corollary-minimax-invariant-stage} and
\eqref{eq:invariant-to-negative-binomial-minimax} proves
\eqref{eq:exact-minimax-reduction}.

The Bayes-minimax identity follows in the same way. Indeed,
Theorem~\ref{thm:hunt-stein-thermal} gives
\begin{equation}
    r_n^{\mathrm{BM}}(\Pi)
    =
    \inf_{D\in\cM_{\mathrm{inv}}^{(n)}}
    \int_K
        R_{0,N}(D)\,
        \Pi(dN).
    \label{eq:corollary-bayes-invariant-stage}
\end{equation}
Quantum-to-classical sufficiency identifies the risk functions generated
by invariant quantum procedures with those generated by randomized
classical decision rules based on \(S\). Hence
\begin{equation}
    r_n^{\mathrm{BM}}(\Pi)
    =
    \inf_{\delta}
    \int_K
        \E_N
        L\!\left(N,\delta(S)\right)
        \Pi(dN),
\end{equation}
which proves \eqref{eq:exact-bayes-reduction}.
\end{proof}

\begin{remark}[Application to squared loss]
\label{rem:thermal-squared-loss-compactification}
The compact-action assumption causes no restriction for the squared-loss
problems considered below. Let \(K\subset(0,\infty)\) be compact, let
\[
    \psi:K\rightarrow\bbR^q
\]
be continuous, and consider
\[
    \ell^2_\psi(N,a)
    :=
    \|\psi(N)-a\|^2.
\]
Set
\[
    C_K
    :=
    \operatorname{conv}
    \{\psi(N):N\in K\}.
\]
Since \(\psi(K)\) is compact in the finite-dimensional space \(\bbR^q\),
its convex hull \(C_K\) is compact and convex.

Let
\[
    \pi_{C_K}:\bbR^q\rightarrow C_K
\]
denote the metric projection onto \(C_K\). Since \(\psi(N)\in C_K\), the
projection property gives
\begin{equation}
    \|\psi(N)-\pi_{C_K}(a)\|
    \leq
    \|\psi(N)-a\|
    \label{eq:projection-risk-improvement}
\end{equation}
for every \(N\in K\) and \(a\in\bbR^q\). Consequently, post-processing any
decision procedure by
\[
    a\longmapsto\pi_{C_K}(a)
\]
cannot increase its risk. The action space may therefore be restricted to
the compact set \(C_K\).

On \(K\times C_K\), the squared loss
\[
    (N,a)\longmapsto\|\psi(N)-a\|^2
\]
is bounded and continuous. Hence Theorem~\ref{thm:hunt-stein-thermal} and
Corollary~\ref{cor:thermal-minimax} apply.
\end{remark}

\begin{remark}[Why a Bayes-minimax formulation is used]
There is no proper translation-invariant probability measure on
$\bbC.$
Consequently, unlike the compact-group spectral problem, one cannot place a
proper displacement-invariant prior on the nuisance parameter \(z\) and
obtain an ordinary Bayes symmetrization theorem.

The proper-prior statement above therefore places a prior on the parameter
of interest \(N\) and treats the displacement parameter in the minimax
sense. Equivalently, one may regard the displacement averaging as a
generalized Bayes construction based formally on the improper Haar measure
on \(\bbC\), but no improper prior is required for
Theorem~\ref{thm:hunt-stein-thermal}.

\end{remark}

\subsection{Smooth thermal functionals and asymptotic risks}
\label{subsec:thermal-asymptotic-risks}

The exact reduction in Corollary~\ref{cor:thermal-minimax} leaves an
ordinary one-dimensional exponential family. We now derive its
first-order Bayes and minimax risks.

Set
$
    m:=n-1.
$
Under \(N>0\), the sufficient observation has distribution
$S\sim p_N^{(m)},$
where
\begin{equation}
    p_N^{(m)}(s)
    =
    \binom{s+m-1}{m-1}
    \frac{1}{(N+1)^m}
    \left(\frac{N}{N+1}\right)^s,
    \qquad
    s\in\mathbb N_0.
    \label{eq:thermal-negative-binomial-asymptotic}
\end{equation}
Equivalently,
$S=X_1+\cdots+X_m,$
where \(X_1,\ldots,X_m\) are independent geometric random variables with
\begin{equation}
    \Pr_N(X_j=k)
    =
    \frac{1}{N+1}
    \left(\frac{N}{N+1}\right)^k,
    \qquad
    k\in\mathbb N_0.
    \label{eq:thermal-geometric-law}
\end{equation}
It can be easily verified that
$\E_NS=mN,
    \Var_N(S)=mN(N+1)$, 
and hence the sample mean  
$\widehat N_m:=\frac{S}{m}$ 
is the natural estimator which is unbiased, and satisfies
\begin{equation}
    \Var_N(\widehat N_m)
    =
    \frac{N(N+1)}{m}.
    \label{eq:natural-thermal-estimator-variance}
\end{equation}
The Fisher information of the negative-binomial experiment is
\begin{equation}
    I_m(N)
    =
    mI(N)
    =
    \frac{m}{N(N+1)}.
    \label{eq:negative-binomial-fisher-information}
\end{equation}

Let
$\psi:[0,\infty)\rightarrow\bbR$
be a thermal functional and consider squared loss
\begin{equation}
    \ell^2_\psi(N,a)
    :=
    (\psi(N)-a)^2.
    \label{eq:thermal-functional-squared-loss}
\end{equation}
Standard computation shows that for \(N>0\), the efficient asymptotic variance for estimating
\(\psi(N)\) is
\begin{equation}
    V_\psi(N)
    :=
    \frac{\{\psi'(N)\}^2}{I(N)}
    =
    \{\psi'(N)\}^2N(N+1).
    \label{eq:thermal-efficient-variance}
\end{equation}

We impose the following smoothness condition.

\begin{assumption}[Smooth thermal functional]
\label{ass:thermal-smooth-functional}
The function
\[
    \psi:[0,\infty)\rightarrow\bbR
\]
is twice continuously differentiable, with the derivative at zero
understood as a right derivative, and
\begin{equation}
    \sup_{x\geq0}|\psi''(x)|<\infty.
    \label{eq:thermal-bounded-second-derivative}
\end{equation}

\end{assumption}

Let $
    \psi(\widehat N_m)
    =
    \psi\!\left(\frac{S}{m}\right)
$ be the plug-in estimator.
We first establish a uniform risk expansion for the plug-in estimator.

\begin{lemma}[Uniform plug-in risk expansion]
\label{lem:thermal-uniform-plug-in-risk}
Let \(K\subset(0,\infty)\) be compact and suppose that
Assumption~\ref{ass:thermal-smooth-functional} holds. Then
\begin{equation}
    \sup_{N\in K}
    \left|
        m\E_N
        \left[
            \{\psi(\widehat N_m)-\psi(N)\}^2
        \right]
        -
        V_\psi(N)
    \right|
    \rightarrow0.
    \label{eq:uniform-thermal-plug-in-risk}
\end{equation}
Consequently, \(\psi(\widehat N_m)\) is uniformly first-order efficient
on \(K\).
\end{lemma}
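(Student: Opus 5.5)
The plan is a second-order Taylor expansion of $\psi$ around $N$, with remainders controlled by explicit negative-binomial moments that are uniform over the compact set $K$. Write $\Delta:=\widehat N_m-N$, which is an average of $m$ i.i.d.\ centered geometric variables. Since $\widehat N_m\ge0$ and $\psi$ is $C^2$ on $[0,\infty)$ (using right derivatives at zero), Taylor's theorem with Lagrange remainder along the segment between $N$ and $\widehat N_m$ gives
\[
    \psi(\widehat N_m)-\psi(N)=\psi'(N)\Delta+R_m,
    \qquad
    |R_m|\le \tfrac12 C_2\Delta^2,
\]
where $C_2:=\sup_{x\ge0}|\psi''(x)|<\infty$ by \eqref{eq:thermal-bounded-second-derivative}. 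Expanding the square then yields
\[
    m\E_N\{\psi(\widehat N_m)-\psi(N)\}^2
    =\psi'(N)^2\, m\E_N\Delta^2+2\psi'(N)\,m\E_N[\Delta R_m]+m\E_N R_m^2 .
\]

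The leading term is exact. By \eqref{eq:natural-thermal-estimator-variance}, $m\E_N\Delta^2=N(N+1)$, so $\psi'(N)^2\,m\E_N\Delta^2=V_\psi(N)$ for every $m$. It therefore suffices to show that the two remainder terms tend to zero uniformly over $N\in K$. Their bounds are
\[
    |m\E_N[\Delta R_m]|\le \tfrac12 C_2\, m\E_N|\Delta|^3,
    \qquad
    m\E_N R_m^2\le \tfrac14 C_2^2\, m\E_N\Delta^4 .
\]
For the fourth moment of an average of $m$ i.i.d.\ centered variables we use $\E_N\Delta^4=m^{-3}\kappa_4(N)+3m^{-2}\sigma^4(N)$. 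Here $\sigma^2(N)=N(N+1)$, and $\kappa_4(N)$ is the fourth cumulant of a geometric variable, a polynomial in $N$. This gives $m\E_N\Delta^4=O(m^{-1})$. Lyapunov's inequality $\E|\Delta|^3\le(\E\Delta^4)^{3/4}=O(m^{-3/2})$ then gives $m\E_N|\Delta|^3=O(m^{-1/2})$.

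Since $K$ is compact, $\sup_{N\in K}|\psi'(N)|<\infty$ and the polynomial moment constants are bounded on $K$. Hence all $O(\cdot)$ bounds hold uniformly in $N\in K$, which proves \eqref{eq:uniform-thermal-plug-in-risk}. Uniform first-order efficiency follows by comparison with the Cram\'er-Rao/van Trees benchmark $V_\psi(N)/m$.

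The main point of care is the remainder control: $\psi$ is only assumed $C^2$ with bounded $\psi''$, not with bounded $\psi'$, so the argument must avoid any global Lipschitz bound on $\psi$. This is achieved by expanding around $N\in K$, where $\psi'(N)$ is bounded, and letting the bounded second derivative absorb all behaviour of $\widehat N_m$ far from $N$, including values near $0$. The fourth-moment formula is routine, but I would state the geometric cumulant $\kappa_4(N)=N(N+1)(6N^2+6N+1)$ explicitly so that its uniform boundedness on $K$ is transparent.
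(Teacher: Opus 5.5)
Your proposal is correct and follows essentially the same route as the paper. Both use a second-order Taylor expansion around $N$ with remainder bounded by $\tfrac12\|\psi''\|_\infty\Delta^2$, the exact identity $m\E_N\Delta^2=N(N+1)$, and a fourth-moment bound $\E_N\Delta^4=O(m^{-2})$ that is uniform over $K$. The only difference is that you handle the cross term with Lyapunov's bound on $\E_N|\Delta|^3$, whereas the paper applies Cauchy--Schwarz against the second moment of the remainder; this is cosmetic.
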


We next record the posterior-variance consequence of the regular
exponential-family structure.

\begin{lemma}[Posterior variance in the negative-binomial experiment]
\label{lem:nb-posterior-variance}
Let \(J=[a,b]\subset(0,\infty)\), where \(a<b\), and let
\(\Pi\) be a probability measure supported on \(J\), with density
\(\pi\) satisfying
\[
    \pi\in C^1(J),
    \qquad
    0<c_\pi\leq \pi(N)\leq C_\pi<\infty,
    \qquad N\in J.
\]
Suppose that
\[
    S\sim p_N^{(m)},
    \qquad
    p_N^{(m)}(s)
    =
    \binom{s+m-1}{s}
    \frac{N^s}{(N+1)^{s+m}},
    \qquad s\in\mathbb N_0.
\]
Let \(\psi:[0,\infty)\to\mathbb R\) satisfy Assumption~\ref{ass:thermal-smooth-functional}.
Write \(N'\) for the variable distributed according to the posterior
given \(S\), and define
\[
    V_\psi(N)
    :=
    N(N+1)\{\psi'(N)\}^2.
\]
Then, for every \(N_0\in(a,b)\),
\begin{equation}
    \mathbb E_{N_0}
    \left|
        m\operatorname{Var}\bigl(\psi(N')\mid S\bigr)
        -
        V_\psi(N_0)
    \right|
    \rightarrow 0.
    \label{eq:nb-pointwise-posterior-functional-variance}
\end{equation}
Moreover,
\begin{equation}
    m\int_J
    \mathbb E_N
    \left[
        \operatorname{Var}\bigl(\psi(N')\mid S\bigr)
    \right]
    \Pi(dN)
    \rightarrow
    \int_J V_\psi(N)\,\Pi(dN).
    \label{eq:nb-integrated-posterior-functional-variance}
\end{equation}
\end{lemma}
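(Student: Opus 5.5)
We prove Lemma~\ref{lem:nb-posterior-variance} with a Bernstein--von Mises style argument for the one-parameter exponential family $\{p_N^{(m)}\}$, using $\widehat N_m=S/m$ as the centering statistic.

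The first step is to write the posterior explicitly. It has density proportional to $\pi(N)\exp\{m[\widehat N_m\log N-(\widehat N_m+1)\log(N+1)]\}$ on $J$. The exponent $m\,\ell(N;\widehat N_m)$ is strictly concave in the natural parameter. Its maximizer is $N=\widehat N_m$, and its curvature there is $-m/\{\widehat N_m(\widehat N_m+1)\}=-mI(\widehat N_m)$.

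Next, on the event $E_m:=\{|\widehat N_m-N_0|\le\varepsilon\}$, with $[N_0-2\varepsilon,N_0+2\varepsilon]\subset(a,b)$, we Taylor-expand $\ell$ to second order around $\widehat N_m$. The third derivative is bounded uniformly on compact subsets of $(0,\infty)$. Since $\pi$ is continuous and bounded below near $N_0$, the usual Laplace-method computation then gives
\[
m\operatorname{Var}(N'\mid S)=\widehat N_m(\widehat N_m+1)+o(1)
\quad\text{and}\quad
m\,\E\bigl[(N'-\widehat N_m)^4\mid S\bigr]^{1/2}=O(1)
\]
uniformly on $E_m$. The tail of $J$ away from $\widehat N_m$ contributes $O(e^{-cm})$ because of strict concavity.

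For the functional $\psi$, we write $\psi(N')=\psi(\widehat N_m)+\psi'(\widehat N_m)(N'-\widehat N_m)+R$ with $|R|\le\tfrac12\|\psi''\|_\infty(N'-\widehat N_m)^2$. Then $m\operatorname{Var}(\psi(N')\mid S)=\psi'(\widehat N_m)^2\,m\operatorname{Var}(N'\mid S)+O(m^{-1/2})$ on $E_m$, using Cauchy--Schwarz together with the fourth-moment bound. Continuity of $\psi'$ and $\widehat N_m\to N_0$ therefore give convergence to $V_\psi(N_0)$ on $E_m$.

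Off $E_m$ we use a crude bound. Since $N'\in J$, we have $m\operatorname{Var}(\psi(N')\mid S)\le m\sup_J|\psi'|^2(b-a)^2$. By Chernoff bounds for sums of geometric variables, $\Pr_{N_0}(E_m^c)\le e^{-cm}$, so this part contributes $m e^{-cm}\to0$ to the expectation. Together these give \eqref{eq:nb-pointwise-posterior-functional-variance}.

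For \eqref{eq:nb-integrated-posterior-functional-variance}, the quantity $m\E_N[\operatorname{Var}(\psi(N')\mid S)]$ converges pointwise for every $N\in(a,b)$ by the first part. We also need a uniform bound: $m\E_N\operatorname{Var}(\psi(N')\mid S)\le m\E_N(\psi(N')-\psi(\widehat N_m))^2$, since the conditional variance is at most the conditional mean squared deviation from any point. We will bound the right-hand side uniformly in $N\in J$ by
\[
2\sup_J|\psi'|^2\Bigl\{m\,\E\bigl[(N'-N)^2\bigr]+m\,\E_N(N-\widehat N_m)^2\Bigr\},
\]
where $\psi$ is extended to $[0,\infty)$ so that Lipschitz bounds apply. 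The second term equals $N(N+1)$. For the first term, we use that the posterior variance is at most the Bayes risk of $\widehat N_m\wedge b\vee a$. Because the joint law of $(N,N')$ is exchangeable under $\Pi\otimes$data, we will need a pointwise bound. Instead we first integrate: $\int m\E_N\operatorname{Var}(N'\mid S)\,\Pi(dN)\le\int m\E_N(\mathrm{clip}(\widehat N_m)-N)^2\,\Pi(dN)\le \sup_J N(N+1)$. This gives uniform integrability in the sense required by a generalized dominated convergence argument, via Fatou applied in both directions with the matching limit of the upper bound. Concretely, we set $g_m(N):=m\E_N(\psi(\mathrm{clip}\,\widehat N_m)-\psi(N))^2$, which dominates the integrand in $\Pi$-integral after the posterior-optimality inequality. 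The plug-in expansion, Lemma~\ref{lem:thermal-uniform-plug-in-risk}, gives $g_m\to V_\psi$ uniformly on compacts of $(a,b)$ and boundedly on $J$, with clipping only helping. Then $\limsup\le\int V_\psi\,d\Pi$ and Fatou gives $\liminf\ge\int V_\psi\,d\Pi$.

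The main obstacle is the Laplace step: making the $o(1)$ error uniform over the random center $\widehat N_m\in E_m$ and handling posterior mass near the endpoints of $J$. We will handle this by keeping $\varepsilon$ small relative to $\operatorname{dist}(N_0,\{a,b\})$, so that the Gaussian approximation window $|N-\widehat N_m|\le m^{-1/2}\log m$ lies inside $J$ for large $m$.
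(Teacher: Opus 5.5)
Your proof is correct. For the pointwise statement \eqref{eq:nb-pointwise-posterior-functional-variance} it takes a different route from the paper.

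The paper passes to $\vartheta=N/(N+1)$. There the likelihood ratio is exactly $\exp\{-(S+m)D_{\mathrm{Ber}}(\widehat\vartheta_m\Vert\vartheta)\}$, so Pinsker's inequality gives a global Gaussian envelope $e^{-2h^2}$ for the local posterior at once. It then proves a weighted-$L^1$ Bernstein--von Mises statement in probability and upgrades it to $L^1(\mathbb P_{N_0})$ by uniform integrability. It handles $\psi$ through the two-draw identity, Taylor-expanding about the fixed point $\vartheta_0$. That step needs $\mathbb E_{N_0}\mathbb E[|\vartheta'-\vartheta_0|^4\mid S]=O(m^{-2})$, and hence fourth moments of $\widehat\vartheta_m-\vartheta_0$.

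You stay in the $N$-coordinate and use Laplace asymptotics about the random centre $\widehat N_m$, with deterministic error bounds uniform over the good event. You also linearize $\psi$ about $\widehat N_m$. As a result you need only the posterior fourth moment about $\widehat N_m$, and the last step is bounded convergence of $\psi'(\widehat N_m)^2\widehat N_m(\widehat N_m+1)$.

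The cost is that the tail control the paper gets from the KL identity must be done by hand, in three regions:
\begin{enumerate}
\item the Gaussian window;
\item an intermediate region, controlled by the local quadratic lower bound on $\ell(\widehat N_m)-\ell(N)$;
\item a far region, controlled by positivity and continuity of the geometric KL divergence over the compact set of centres.
\end{enumerate}
Note that $\ell$ is only unimodal in $N$, not concave. Unimodality is what strict concavity in the natural parameter gives you, and it is enough here.

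For \eqref{eq:nb-integrated-posterior-functional-variance}, your final argument matches the paper's: Fatou for the lower bound, Bayes optimality against a plug-in estimator for the upper bound, and dominated convergence. Two caveats:
\begin{enumerate}
\item Clipping is unnecessary. Lemma~\ref{lem:thermal-uniform-plug-in-risk} applies with $K=J$ and gives uniform convergence on all of $J$ for the unclipped $\psi(S/m)$.
\item The claim that clipping ``only helps'' is false for $\psi\circ\mathrm{clip}$ when $\psi$ is not monotone. What actually gives domination is $|\mathrm{clip}(\widehat N_m)-N|\le|\widehat N_m-N|$ together with the Lipschitz constant of $\psi$ on $J$. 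Pointwise convergence for interior $N$ then uses that clipping occurs with exponentially small probability.
\end{enumerate}
Your abandoned first bound, which uses $\sup_J|\psi'|$ to control $|\psi(\widehat N_m)-\psi(N)|$, would fail because $\widehat N_m$ can leave $J$. Nothing in your final version depends on it.
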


We can now derive the Bayes-minimax risk.
For the squared-loss problems below, it is convenient to absorb the
measurement and the estimator into a single estimate-valued POVM.
Thus, for an action-valued POVM
\[
    D:\mathcal B(\bbR)\rightarrow
    \mathcal L(\cH^{\otimes n}),
\]
define
\begin{equation}
    R_{z,N}^{\psi}(D)
    :=
    \int_{\bbR}
        \{a-\psi(N)\}^2
        \Tr\!\left[
            \rho_{z,N}^{\otimes n}D(da)
        \right].
    \label{eq:thermal-functional-risk}
\end{equation}
Accordingly, in what follows an infimum over \(D\) is understood to be
over all estimate-valued POVMs on \(\cH^{\otimes n}\).

Recall that \(M_n^\star\) denotes the total residual photon-number
measurement, with outcome space \(\mathbb N_0\). Let
\[
    \delta_n:
    \mathbb N_0\times\mathcal B(\bbR)
    \rightarrow[0,1]
\]
be a Markov kernel. Its classical post-processing of \(M_n^\star\)
is the  POVM
\begin{equation}
    D_{n,\delta_n}(B)
    :=
    \sum_{s=0}^{\infty}
        \delta_n(s,B)\,
        M_n^\star(\{s\}),
    \qquad
    B\in\mathcal B(\bbR).
    \label{eq:thermal-postprocessed-povm}
\end{equation}
Since the outcome \(S\) of \(M_n^\star\) has law \(p_N^{(n-1)}\),
independently of \(z\),
\begin{equation}
\begin{aligned}
    R_{z,N}^{\psi}(D_{n,\delta_n})
    &=
    \sum_{s=0}^{\infty}
        p_N^{(n-1)}(s)
        \int_{\bbR}
            \{a-\psi(N)\}^2
            \delta_n(s,da).
\end{aligned}
    \label{eq:thermal-postprocessed-risk}
\end{equation}
If the decision rule is deterministic, say
\(\delta_n(s,da)=\delta_{t_n(s)}(da)\) for a measurable function
\(t_n:\mathbb N_0\to\bbR\), then
\begin{equation}
    R_{z,N}^{\psi}(D_{n,\delta_n})
    =
    \E_N
    \left[
        \{t_n(S)-\psi(N)\}^2
    \right].
    \label{eq:thermal-deterministic-postprocessed-risk}
\end{equation}

\begin{theorem}[Asymptotic Bayes-minimax risk]
\label{thm:thermal-asymptotic-bayes-risk}
Let \(J\), \(\Pi\), and \(\psi\) satisfy the assumptions of
Lemma~\ref{lem:nb-posterior-variance}. Define
\begin{equation}
    r_{n,\psi}^{\mathrm{BM}}(\Pi)
    :=
    \inf_D
    \sup_{z\in\bbC}
    \int_J
        R_{z,N}^{\psi}(D)\,
        \Pi(dN),
    \label{eq:thermal-functional-bayes-minimax-risk}
\end{equation}
Then

\begin{equation}
    r_{n,\psi}^{\mathrm{BM}}(\Pi)
    =
    \frac{1}{n-1}
    \int_J
        \{\psi'(N)\}^2N(N+1)\,
        \Pi(dN)
    +
    o(n^{-1}).
    \label{eq:thermal-asymptotic-bayes-risk}
\end{equation}

More precisely, for every \(n\), the Bayes-minimax optimum is attained
by total residual photon-number measurement followed by the posterior
mean
\begin{equation}
    \widehat\psi_{\Pi,n}(s)
    :=
    \E_\Pi[\psi(N)\mid S=s],
    \qquad s\in\mathbb N_0.
    \label{eq:thermal-posterior-mean-estimator}
\end{equation}
\end{theorem}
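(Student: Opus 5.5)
The argument has two parts: an exact finite-sample reduction to a classical Bayes problem, followed by the posterior-variance asymptotics of Lemma~\ref{lem:nb-posterior-variance}.

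\emph{Reduction.} First, I would put the problem in the compact-action setting of Theorem~\ref{thm:hunt-stein-thermal}. Take \(K=J\) and \(C_J=\operatorname{conv}\psi(J)\), which is a compact interval. By Remark~\ref{rem:thermal-squared-loss-compactification}, composing any estimate-valued POVM with the metric projection \(\pi_{C_J}\) does not increase \(R^\psi_{z,N}\) at any \((z,N)\). Hence both the infimum defining \(r_{n,\psi}^{\mathrm{BM}}(\Pi)\) and the classical infimum may be taken over actions in \(C_J\). On \(J\times C_J\) the loss \((\psi(N)-a)^2\) is bounded and continuous, and \(\Pi\) is supported on \(J\). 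Corollary~\ref{cor:thermal-minimax}, equation \eqref{eq:exact-bayes-reduction}, then gives
\[
r_{n,\psi}^{\mathrm{BM}}(\Pi)=\inf_\delta\int_J\E_N\{\psi(N)-\delta(S)\}^2\,\Pi(dN).
\]
The right-hand side is an ordinary classical Bayes problem with \(S\sim p_N^{(n-1)}\). Under squared loss its minimizer is the posterior mean \(\widehat\psi_{\Pi,n}\) from \eqref{eq:thermal-posterior-mean-estimator}. Since \(\psi\) is bounded on \(J\), this posterior mean exists for every \(s\) and takes values in \(C_J\).

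\emph{Attainment.} Post-processing \(M_n^\star\) by \(\widehat\psi_{\Pi,n}\) as in \eqref{eq:thermal-postprocessed-povm} produces a POVM whose risk, by \eqref{eq:thermal-deterministic-postprocessed-risk}, does not depend on \(z\). Its Bayes-minimax value therefore equals the classical Bayes risk, so this procedure attains \(r_{n,\psi}^{\mathrm{BM}}(\Pi)\) for every \(n\). Writing the Bayes risk of the posterior mean as an expected posterior variance, with \(m=n-1\),
\[
r_{n,\psi}^{\mathrm{BM}}(\Pi)=\int_J\E_N\bigl[\Var(\psi(N')\mid S)\bigr]\Pi(dN).
\]

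\emph{Asymptotics.} Multiplying by \(m\) and applying \eqref{eq:nb-integrated-posterior-functional-variance} gives
\[
m\,r_{n,\psi}^{\mathrm{BM}}(\Pi)\to\int_J V_\psi\,d\Pi,
\]
where \(V_\psi(N)=\{\psi'(N)\}^2N(N+1)\). Since \(1/(n-1)=O(1/n)\), this is exactly \eqref{eq:thermal-asymptotic-bayes-risk}.

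The main point needing care is the compactification step. Corollary~\ref{cor:thermal-minimax} is stated for compact action spaces, whereas the theorem takes the infimum over all estimate-valued POVMs on \(\R\). One must therefore check that the projection argument preserves the \(\sup_z\) inside the Bayes-minimax criterion. It does, because the projection improves the risk pointwise in \((z,N)\). One must also confirm that the classical infimum over \(\R\)-valued rules coincides with the infimum over \(C_J\)-valued rules, which holds by the same projection inequality. Everything else is routine given the earlier results.
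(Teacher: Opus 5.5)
Your proposal is correct and follows the paper's proof essentially step for step: you reduce via Corollary~\ref{cor:thermal-minimax} to the classical negative-binomial Bayes problem, identify the posterior mean as the Bayes rule (so $M_n^\star$ followed by $\widehat\psi_{\Pi,n}$ attains the optimum with $z$-free risk), and then apply \eqref{eq:nb-integrated-posterior-functional-variance} from Lemma~\ref{lem:nb-posterior-variance}. The one difference is that you verify in the proof itself that projecting onto $\operatorname{conv}\psi(J)$ lets the compact-action corollary cover estimate-valued POVMs on $\mathbb R$, whereas the paper leaves this to Remark~\ref{rem:thermal-squared-loss-compactification}.
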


\begin{proof}
By Corollary~\ref{cor:thermal-minimax}, the quantum Bayes-minimax
problem is exactly the classical Bayes problem based on
$S\sim p_N^{(n-1)}$.
Under squared loss, the Bayes rule is the posterior mean
\[
    \widehat\psi_{\Pi,n}(S)
    =
    \E_\Pi[\psi(N)\mid S].
\]
Hence the finite-sample Bayes-minimax optimum is attained by total
residual photon-number measurement followed by
\(\widehat\psi_{\Pi,n}\), and its risk is
\begin{equation}
    r_{n,\psi}^{\mathrm{BM}}(\Pi)
    =
    \int_J
        \E_N
        \left[
            \Var_\Pi\{\psi(N')\mid S\}
        \right]
        \Pi(dN),
    \label{eq:thermal-bayes-risk-posterior-variance}
\end{equation}
where \(N'\) denotes a draw from the posterior distribution given \(S\).

Lemma~\ref{lem:nb-posterior-variance} therefore gives
\[
    (n-1)\,
    r_{n,\psi}^{\mathrm{BM}}(\Pi)
    \rightarrow
    \int_J
        V_\psi(N)\,
        \Pi(dN).
\]
Since
\[
    V_\psi(N)
    =
    \{\psi'(N)\}^2N(N+1),
\]
the claimed expansion follows.
\end{proof}

We next consider the minimax risk.

\begin{theorem}[Asymptotic minimax risk]
\label{thm:thermal-global-minimax-risk}
Let \(K\subset(0,\infty)\) be nonempty and compact, and suppose that
\begin{equation}
    K=\overline{\operatorname{int}(K)}.
    \label{eq:thermal-regular-compact-set}
\end{equation}
Suppose that Assumption~\ref{ass:thermal-smooth-functional} holds. Define
\begin{equation}
    R_{n,\psi}^{\mathrm{all}}(K)
    :=
    \inf_D
    \sup_{\substack{z\in\bbC\\N\in K}}
        R_{z,N}^{\psi}(D).
    \label{eq:thermal-global-minimax-risk-definition}
\end{equation}
Then
\begin{equation}
    R_{n,\psi}^{\mathrm{all}}(K)
    =
    \frac{1}{n-1}
    \sup_{N\in K}
    \{\psi'(N)\}^2N(N+1)
    +
    o(n^{-1}).
    \label{eq:thermal-global-minimax-risk}
\end{equation}
Moreover, total residual photon-number measurement followed by the
plug-in estimator
\begin{equation}
    \widehat\psi_n
    :=
    \psi\!\left(\frac{S}{n-1}\right)
    \label{eq:thermal-asymptotically-minimax-estimator}
\end{equation}
is asymptotically minimax on \(K\).
\end{theorem}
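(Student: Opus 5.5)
The plan mirrors the proof of Theorem~\ref{thm:wss-global-asymptotic-minimax}: reduce to a classical problem, take the upper bound from the plug-in estimator, and take the lower bound from Bayes risks of localized priors.

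First, the quantum problem is replaced by a negative-binomial one. The squared loss \(\{\psi(N)-a\}^2\) is not bounded on \(\bbR\), so we first apply Remark~\ref{rem:thermal-squared-loss-compactification}. Projecting onto \(C_K=\operatorname{conv}\psi(K)\) does not increase risk, so the infimum in \eqref{eq:thermal-global-minimax-risk-definition} may be taken over POVMs valued in the compact set \(C_K\). On \(K\times C_K\) the loss is bounded and continuous, and Corollary~\ref{cor:thermal-minimax} then gives
\[
R_{n,\psi}^{\mathrm{all}}(K)=\inf_\delta\sup_{N\in K}\E_N\{\delta(S)-\psi(N)\}^2,
\qquad S\sim p_N^{(m)},\quad m=n-1.
\]
In this display \(\delta\) may be taken \(\bbR\)-valued, because projection also reduces classical risk.

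For the upper bound, use \(\delta(S)=\psi(S/m)\). This is realized quantumly by \(D_{n,\delta_n}\) in \eqref{eq:thermal-postprocessed-povm}, and its risk is independent of \(z\) by \eqref{eq:thermal-deterministic-postprocessed-risk}. Lemma~\ref{lem:thermal-uniform-plug-in-risk} gives
\[
\limsup_m m\sup_{N\in K}\E_N\{\psi(S/m)-\psi(N)\}^2\le \sup_{N\in K}V_\psi(N).
\]
Since \(m=n-1\) and \(1/(n-1)-1/n=O(n^{-2})\), the normalization \(1/(n-1)\) can be used interchangeably with \(1/n\) at this order. The same bound will show that the plug-in estimator is asymptotically minimax once the lower bound is in place.

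For the lower bound, fix \(\varepsilon>0\) and set \(J_K=\sup_K V_\psi\), which is finite and attained because \(V_\psi\) is continuous on the compact set \(K\). By \eqref{eq:thermal-regular-compact-set} there is a point of \(\operatorname{int}K\) with \(V_\psi>J_K-\varepsilon\). By continuity we can then choose a closed interval \(J=[a,b]\subset\operatorname{int}K\) with \(a<b\) on which \(V_\psi\ge J_K-\varepsilon\).

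On \(J\) we choose the prior \(\Pi_\varepsilon\) with density proportional to a constant, so that \(\pi\in C^1(J)\) and \(\pi\) is bounded above and below. This satisfies the hypotheses of Lemma~\ref{lem:nb-posterior-variance}. The maximum risk over \(K\) dominates the Bayes risk under \(\Pi_\varepsilon\), because \(\Pi_\varepsilon\) is supported in \(K\). The Bayes risk of the negative-binomial problem equals the integrated posterior variance, as in \eqref{eq:thermal-bayes-risk-posterior-variance}. Lemma~\ref{lem:nb-posterior-variance} then gives
\[
\liminf_m m\,R_{n,\psi}^{\mathrm{all}}(K)\ge\int_J V_\psi\,d\Pi_\varepsilon\ge J_K-\varepsilon.
\]
Alternatively, Theorem~\ref{thm:thermal-asymptotic-bayes-risk} can be invoked directly, since the Bayes-minimax risk is at most the minimax risk. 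Letting \(\varepsilon\downarrow0\) closes the argument.

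The main potential obstacle is the compactification step. The Bayes-minimax formulation uses \(\sup_z\) inside the integral over \(N\), while the minimax formulation uses \(\sup_{z,N}\). I expect to handle it simply by the inequality \(\sup_z\int R_{z,N}\,d\Pi\le\sup_{z,N}R_{z,N}\), together with the squared-loss compactification used in the proof of Theorem~\ref{thm:thermal-asymptotic-bayes-risk}. All the substantive analysis is contained in the two lemmas.
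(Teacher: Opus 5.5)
Your proposal is correct and follows essentially the same route as the paper. The steps match: the exact reduction via Corollary~\ref{cor:thermal-minimax}, the upper bound from Lemma~\ref{lem:thermal-uniform-plug-in-risk} applied to $\psi(S/(n-1))$, and the lower bound from a prior localized on a closed interval in $\operatorname{int}(K)$ where $V_\psi$ is nearly maximal, passed through Theorem~\ref{thm:thermal-asymptotic-bayes-risk} or equivalently Lemma~\ref{lem:nb-posterior-variance}. Your explicit appeal to Remark~\ref{rem:thermal-squared-loss-compactification}, to justify applying the corollary to the unbounded squared loss, spells out a step the paper leaves implicit.
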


\begin{proof}
By Corollary~\ref{cor:thermal-minimax}, the quantum minimax problem is
exactly the classical minimax problem based on
$S\sim p_N^{(n-1)}.$
For the upper bound, Lemma~\ref{lem:thermal-uniform-plug-in-risk},
applied to
$\widehat\psi_n
    =
    \psi\!\left(S/(n-1)\right),
$
gives
\[
    \limsup_{n\to\infty}
    (n-1)R_{n,\psi}^{\mathrm{all}}(K)
    \leq
    \sup_{N\in K}V_\psi(N).
\]
In particular, the total residual photon-number measurement followed
by \(\widehat\psi_n\) attains this asymptotic upper bound.

For the lower bound, fix \(\varepsilon>0\). Since \(V_\psi\) is
continuous, \(K\) is compact, and
$K=\overline{\operatorname{int}(K)}$,
there exists a closed interval
\(J_\varepsilon\subset\operatorname{int}(K)\) such that
\[
    V_\psi(N)
    \geq
    \sup_{M\in K}V_\psi(M)-\varepsilon,
    \qquad N\in J_\varepsilon.
\]
Choose a prior \(\Pi_\varepsilon\) supported on \(J_\varepsilon\)
satisfying the assumptions of
Theorem~\ref{thm:thermal-asymptotic-bayes-risk}.
Since maximal risk dominates Bayes-minimax risk,
\[
    R_{n,\psi}^{\mathrm{all}}(K)
    \geq
    r_{n,\psi}^{\mathrm{BM}}(\Pi_\varepsilon).
\]
Therefore, by Theorem~\ref{thm:thermal-asymptotic-bayes-risk},
\[
\begin{aligned}
    \liminf_{n\to\infty}
    (n-1)R_{n,\psi}^{\mathrm{all}}(K)
    &\geq
    \int_{J_\varepsilon}
        V_\psi(N)\,
        \Pi_\varepsilon(dN)
    \\
    &\geq
    \sup_{N\in K}V_\psi(N)-\varepsilon.
\end{aligned}
\]
Letting \(\varepsilon\downarrow0\) gives the lower bound.
\end{proof}

\section{Discussion}
\label{sec:discussion}

We have used the comparison theory of statistical experiments to identify
exact classical reductions of quantum statistical models. The central
question is whether a physically or statistically motivated class of
measurements has a greatest induced classical experiment and, when it does,
what structure selects a representative measurement. This differs from
proving the optimality of a measurement for one loss function: a greatest
experiment supports, through parameter-independent classical
post-processing, every decision procedure available from the admissible
measurement class.

In the two examples studied here, symmetry first justifies or motivates the
admissible class, while a parameter-free conditional block decomposition
identifies its greatest induced experiment. For invariant spectral
inference, the block label is the Schur-Weyl representation label, and weak
Schur sampling is sufficient relative to the class of unitarily invariant
POVMs. For displaced thermal states, concentration separates the nuisance
displacement from the relative modes, and total residual photon number measurement
on those modes is sufficient relative to the displacement-invariant class.
These results reduce the corresponding quantum decision problems to
classical experiments based, respectively, on a Young diagram and a
negative-binomial statistic based on photon count. Optimization over estimators therefore reduces to a classical Bayes or minimax problem.

The examples also indicate several directions for further work. Both
reductions arise from symmetry, but symmetry is not intrinsic to the
definition of quantum-to-classical sufficiency. It would be useful to find
other structural conditions that produce a greatest induced experiment or
to determine when no such experiment can exist. A related question is how
the answer depends on the admissible measurement class. Besides invariant
POVMs, natural candidates include separable, local, sequential, or
otherwise experimentally constrained measurements. Such classes need not
possess a greatest element, so both existence and nonexistence criteria are
relevant.

A further direction is an approximate and asymptotic version of the
theory. For POVMs $M$ and $N$ on standard Borel outcome spaces, define the
model-relative post-processing deficiency
\[
    \delta_{\cQ}(M,N)
    :=
    \inf_K\sup_{\theta\in\Theta}
    \left\lVert
        K P_\theta^M-P_\theta^N
    \right\rVert_{\mathrm{TV}},
\]
where the infimum is over Markov kernels from the outcome space of $M$ to
that of $N$. For a class $\mathfrak M$, set
\[
    \Delta_{\cQ}(M;\mathfrak M)
    :=
    \sup_{N\in\mathfrak M}\delta_{\cQ}(M,N).
\]
Exact quantum-to-classical sufficiency implies
$\Delta_{\cQ}(M;\mathfrak M)=0$. For a sequence of experiments $\cQ_n$ and
measurement classes $\mathfrak M_n$, it is therefore natural to call
$M_n$ asymptotically sufficient relative to $\mathfrak M_n$ if
\[
    \Delta_{\cQ_n}(M_n;\mathfrak M_n)\rightarrow0.
\]
Under bounded losses, these deficiencies also control uniformly the loss
incurred when procedures based on admissible measurements are transferred
to $M_n$.

Quantum local asymptotic normality suggests one possible route to proving
such results. If a sequence of quantum experiments is asymptotically
equivalent, uniformly on the relevant local parameter sets, to a quantum
Gaussian experiment, and if the limiting Gaussian experiment admits a
measurement that approximately dominates the relevant limiting
measurement class, then the corresponding measurement may be transported
back to the original experiments. Making this argument rigorous requires
control of both directions of the q-LAN approximation and of the induced
measurement classes. Since formulations of q-LAN apply to general
sequences of quantum statistical models and are not confined to i.i.d.
experiments \cite{MR3320317}, they may also provide tools for studying
dependent models when the required approximation channels can be
constructed.
Recent work establishes asymptotic equivalence between a stationary quantum Gaussian time-series experiment and a classical nonlinear regression experiment \cite{nussbaum2025asymptotic}. It would be interesting to determine whether the resulting channel construction implies asymptotic sufficiency in the sense described above.

\section*{Acknowledgements.}

The author would like to thank Rathindra Nath Karmakar and Rohan Sarkar for helpful discussions. GPT was used to assist with reviewing the manuscript, checking mathematical arguments, and copyediting. The author has independently verified the content and takes full responsibility for its correctness.

\bibliographystyle{abbrv}
\bibliography{qsuf}

\appendix

\section{Young diagrams and Schur-Weyl duality}

Let $\cH_d:=\C^d$.  We first define the  representations of the group $U(d)$ and $S(n)$ (the group of unitary matrices and permutations respectively) on $(\mathbb{C}^d)^{\otimes n}$. Let $f_{\mathbf{a}}=f_{{\mathbf{a}_1}}\otimes\ldots\otimes f_{{\mathbf{a}_n}}$, where $f_1,\ldots,f_d$ are basis elements of $\mathbb{C}^d$ and $\mathbf{a}_i\in\{1,\ldots,d\}$ (note that $f_{\mathbf{a}}$'s span $(\mathbb{C}^d)^{\otimes n}$) and consider the following actions:
\begin{align}
    \pi_n(T): f_{\mathbf{a}_1}\otimes\ldots\otimes f_{\mathbf{a}_n}&\rightarrow Tf_{\mathbf{a}_1}\otimes\ldots\otimes Tf_{\mathbf{a}_n}, \quad T\in U(d)\label{T_action}\\
    \tilde{\pi}_n(\sigma): f_{\mathbf{a}_1}\otimes\ldots\otimes f_{\mathbf{a}_n}&\rightarrow f_{\mathbf{a}_{\sigma^{-1}(1)}}\otimes \ldots\otimes f_{\mathbf{a}_{\sigma^{-1}(n)}}, \quad \sigma\in S(n)\label{sigma_action}.
\end{align}
It can be shown that the representation space can be decomposed into direct sum of subspaces each of which is a tensor product of irreducible representations of $U(d)$ and $S(n)$. This decomposition is called Schur-Weyl duality in the literature (for a detailed account see \cite{fulton,goodman}).

Define a partition of $n$ as a vector
$\lambda=(\lambda_1,\ldots,\lambda_d)$
with
\[
    \lambda_1\geq\cdots\geq\lambda_d\geq0,
    \qquad
    \sum_{i=1}^d\lambda_i=n.
\]

A Young diagram is defined by an ordered tuple of integers $\lambda=(\lambda_1,\ldots,\lambda_d)$ with $\lambda_1\geq\ldots\geq\lambda_d\geq0$ and can be represented graphically by a diagram with $d$ lines each containing $\lambda_i$ boxes. For example a typical Young diagram looks like:
\begin{center}
\begin{ytableau}
 $\text{ }$&& \cr
 & \cr
 \cr
\end{ytableau}
\end{center}
with $\lambda=(3,2,1)$.

Thus, a partition of $n$ can be identified with a Young diagram with
$n$ boxes. Recall that
\[
    \Young
    =
    \{\lambda\vdash n:\ell(\lambda)\leq d\}
\]
is the set of partitions of $n$ with at most $d$ nonzero parts,
equivalently Young diagrams with at most $d$ rows.

Schur-Weyl duality gives a unitary decomposition
\[
    \cH_d^{\ot n}
    \cong
    \bigoplus_{\lambda\in\Young}
    \cU_\lambda\ot\cV_\lambda,
\]
where 
$\cU_\lambda$ carries an irreducible representation of $U(d)$ and $\cV_\lambda$ carries an irreducible representation of $S_n$.

In this decomposition,
\[
\widetilde\pi_n(\sigma)
=
\bigoplus_{\lambda\in\Young}
I_{\cU_\lambda}\otimes\pi_\lambda(\sigma),
\qquad \sigma\in S_n,
\]
while
\[
    U^{\ot n}
    \longmapsto
    \bigoplus_{\lambda\in\Young}
     U_\lambda(U)\otimes I_{\cV_\lambda}. 
\]

The commutant of the collective unitary action is
\[
    \{U^{\ot n}:U\in U(d)\}'
    =
    \bigoplus_{\lambda\in\Young}
    I_{\cU_\lambda} \ot \mathcal L(\cV_\lambda),
\]
whereas the commutant of the permutation action is
\[
    \{\widetilde\pi_n(\sigma):\sigma\in S_n\}'
    =
    \bigoplus_{\lambda\in\Young}
    \mathcal L(\cU_\lambda) \ot I_{\cV_\lambda}.
\]

Because $\rho^{\otimes n}$ commutes with every permutation,
\[
\rho^{\otimes n}
=\bigoplus_{\lambda\in\mathbb Y_{n,d}}
R_\lambda(\rho)\otimes I_{\mathcal V_\lambda},
\]
where $R_\lambda(\rho)$ is a positive operator on
$\mathcal U_\lambda$.

\section{Weak Schur sampling and the Schur-Weyl distribution}

\subsection{Weak Schur sampling}
Let $\Pi_\lambda$ denote the orthogonal projection onto the
isotypic component $\mathcal U_\lambda\otimes\mathcal V_\lambda$.

\begin{definition}[Weak Schur sampling]
Weak Schur sampling is the projective measurement
\[
    \WSS_n:=\{\Pi_\lambda:\lambda\in\Young\}.
\]
Its outcome is the Young diagram $\lambda$.
\end{definition}

The induced classical experiment is
\[
    \cF_n
    :=
    \left\{
        P_\rho^{\WSS_n}:
        \rho\in\mathcal S(\mathbb C^d)
    \right\},
\]
where
\[
    P_\rho^{\WSS_n}(\lambda)
    =\Tr\bigl(\rho^{\ot n}\Pi_\lambda\bigr).
\]

\begin{prop}[Commutativity of the Schur-Weyl projectors]
Let
\[
(\mathbb C^d)^{\otimes n}
\cong
\bigoplus_{\mu\in\mathbb Y_{n,d}}
\mathcal U_\mu\otimes\mathcal V_\mu
\]
be the Schur-Weyl decomposition, and let \(\Pi_\lambda\) denote the
orthogonal projection onto the \(\lambda\)-isotypic component
\[
\mathcal H_\lambda
:=
\mathcal U_\lambda\otimes\mathcal V_\lambda.
\]
Then, for every \(U\in U(d)\),
\[
[U^{\otimes n},\Pi_\lambda]=0.
\]
\end{prop}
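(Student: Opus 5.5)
The plan is to read the commutation off directly from the block form of the collective unitary action under the Schur-Weyl isomorphism. Fix the unitary isomorphism
\[
W:(\mathbb C^d)^{\otimes n}\to\bigoplus_{\mu\in\mathbb Y_{n,d}}\mathcal U_\mu\otimes\mathcal V_\mu .
\]
Everything will be computed on the right-hand side and transported back by $W$. Since the isomorphism is unitary, commutation relations are preserved under this conjugation. On the direct sum, the projection $\Pi_\lambda$ corresponds to the block operator
\[
\widetilde\Pi_\lambda=\bigoplus_{\mu}\delta_{\lambda\mu}\,I_{\mathcal U_\mu}\otimes I_{\mathcal V_\mu}.
\]
By the Schur-Weyl decomposition recalled in the appendix, $U^{\otimes n}$ corresponds to
\[
\bigoplus_{\mu}U_\mu(U)\otimes I_{\mathcal V_\mu}.
\]

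The key step is then a blockwise multiplication. Both operators are block diagonal with respect to the same orthogonal decomposition, so their products in either order are computed summand by summand. In each order, the summand $\mu=\lambda$ equals $U_\lambda(U)\otimes I_{\mathcal V_\lambda}$, and every other summand vanishes. Hence the two products coincide, and the commutator $[U^{\otimes n},\Pi_\lambda]$ is zero after conjugating back by $W$.

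An alternative, representation-free route would use the observation that $\Pi_\lambda$ is a central projection in the commutant $\{U^{\otimes n}\}'=\bigoplus_\mu I_{\mathcal U_\mu}\otimes\mathcal L(\mathcal V_\mu)$. One could also write $\Pi_\lambda$ as the $S_n$-central idempotent $\frac{\dim\mathcal V_\lambda}{n!}\sum_\sigma\chi_\lambda(\sigma)\widetilde\pi_n(\sigma)$, which commutes with $U^{\otimes n}$ because each permutation operator does. I will present the block computation as the main argument and mention the character formula only as a remark.

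There is no genuine obstacle here. The one point needing care is that $\Pi_\lambda$ and $U^{\otimes n}$ must be block diagonal with respect to the same isomorphism $W$. This is exactly the content of the stated Schur-Weyl decomposition of the $U(d)$ action, so I will state it explicitly.
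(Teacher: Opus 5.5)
Your proof is correct and follows essentially the same route as the paper: both arguments rest on $U^{\otimes n}$ and $\Pi_\lambda$ being block diagonal with respect to the same Schur--Weyl decomposition. The paper carries out the blockwise computation on a vector $v=\bigoplus_\mu v_\mu$, showing that both orders give $(U_\lambda(U)\otimes I_{\mathcal V_\lambda})v_\lambda$, whereas you do the same multiplication at the operator level; the content is identical.
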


\begin{proof}
Under the Schur-Weyl decomposition, the collective unitary
representation has the block-diagonal form
\[
U^{\otimes n}
=
\bigoplus_{\mu\in\mathbb Y_{n,d}}
U_\mu(U)\otimes I_{\mathcal V_\mu},
\]
where \(U_\mu(U)\) denotes the representation of \(U(d)\) acting on
\(\mathcal U_\mu\).

The projector \(\Pi_\lambda\) acts as the identity on
\(\mathcal U_\lambda\otimes\mathcal V_\lambda\) and as zero on every
other isotypic component. Thus, for a vector
\[
v=\bigoplus_{\mu\in\mathbb Y_{n,d}}v_\mu,
\qquad
v_\mu\in\mathcal U_\mu\otimes\mathcal V_\mu,
\]
we have
\[
\Pi_\lambda v=v_\lambda.
\]

Applying \(U^{\otimes n}\) after \(\Pi_\lambda\) gives
\[
U^{\otimes n}\Pi_\lambda v
=
\bigl(U_\lambda(U)\otimes I_{\mathcal V_\lambda}\bigr)v_\lambda.
\]
On the other hand,
\[
U^{\otimes n}v
=
\bigoplus_{\mu\in\mathbb Y_{n,d}}
\bigl(U_\mu(U)\otimes I_{\mathcal V_\mu}\bigr)v_\mu.
\]
Projecting this vector onto the \(\lambda\)-isotypic component yields
\[
\Pi_\lambda U^{\otimes n}v
=
\bigl(U_\lambda(U)\otimes I_{\mathcal V_\lambda}\bigr)v_\lambda.
\]
Therefore,
\[
U^{\otimes n}\Pi_\lambda v
=
\Pi_\lambda U^{\otimes n}v
\]
for every \(v\in(\mathbb C^d)^{\otimes n}\). And hence
$
[U^{\otimes n},\Pi_\lambda]=0.
$
\end{proof}

\begin{corollary}[Unitary invariance of the weak Schur distribution]
Let \(\rho,\sigma\in\mathcal S(\mathbb C^d)\) be unitarily
conjugate:
\[
\sigma=U\rho U^\dagger
\]
for some \(U\in U(d)\). Then
\[
\operatorname{Tr}
\left(
\sigma^{\otimes n}\Pi_\lambda
\right)
=
\operatorname{Tr}
\left(
\rho^{\otimes n}\Pi_\lambda
\right)
\]
for every \(\lambda\in\mathbb Y_{n,d}\).
\end{corollary}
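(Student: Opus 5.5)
The statement reduces to the invariance of the WSS projectors under collective conjugation, which the preceding proposition already gives. We will use the identity $\sigma^{\otimes n}=(U\rho U^\dagger)^{\otimes n}=U^{\otimes n}\rho^{\otimes n}(U^{\otimes n})^\dagger$. This holds because the tensor product of operators is multiplicative and $(U^\dagger)^{\otimes n}=(U^{\otimes n})^\dagger$.

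The computation is then
\[
\operatorname{Tr}\bigl(\sigma^{\otimes n}\Pi_\lambda\bigr)
=\operatorname{Tr}\bigl(U^{\otimes n}\rho^{\otimes n}(U^{\otimes n})^\dagger\Pi_\lambda\bigr)
=\operatorname{Tr}\bigl(\rho^{\otimes n}(U^{\otimes n})^\dagger\Pi_\lambda U^{\otimes n}\bigr).
\]
The second equality uses cyclicity of the trace, which is unproblematic since everything is finite dimensional.

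By the proposition on commutativity of the Schur--Weyl projectors, $[U^{\otimes n},\Pi_\lambda]=0$. Together with unitarity of $U^{\otimes n}$, this gives $(U^{\otimes n})^\dagger\Pi_\lambda U^{\otimes n}=\Pi_\lambda$. Substituting this into the display yields $\operatorname{Tr}(\rho^{\otimes n}\Pi_\lambda)$, which is the claim.

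There is no genuine obstacle here. The only point needing care is converting the commutation relation into the conjugation identity, which is done by multiplying $U^{\otimes n}\Pi_\lambda=\Pi_\lambda U^{\otimes n}$ on the left by $(U^{\otimes n})^\dagger$.
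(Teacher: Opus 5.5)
Your proof is correct and follows the paper's route: write $\sigma^{\otimes n}$ as $U^{\otimes n}\rho^{\otimes n}(U^{\otimes n})^\dagger$, move the unitaries onto $\Pi_\lambda$ by cyclicity of the trace, and invoke the commutation proposition to get $(U^{\otimes n})^\dagger\Pi_\lambda U^{\otimes n}=\Pi_\lambda$. The only addition is that you spell out how the commutator identity becomes the conjugation identity, a step the paper leaves implicit.
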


\begin{proof}
Since
\[
\sigma^{\otimes n}
=
U^{\otimes n}\rho^{\otimes n}(U^\dagger)^{\otimes n},
\]
cyclicity of the trace gives
\begin{align*}
\operatorname{Tr}
\left(
\sigma^{\otimes n}\Pi_\lambda
\right)
&=
\operatorname{Tr}
\left[
U^{\otimes n}\rho^{\otimes n}
(U^\dagger)^{\otimes n}\Pi_\lambda
\right]\\
&=
\operatorname{Tr}
\left[
\rho^{\otimes n}
(U^\dagger)^{\otimes n}
\Pi_\lambda U^{\otimes n}
\right].
\end{align*}
By the preceding proposition,
$(U^\dagger)^{\otimes n}\Pi_\lambda U^{\otimes n}
=\Pi_\lambda$.
It follows that
\[
\operatorname{Tr}
\left(
\sigma^{\otimes n}\Pi_\lambda
\right)
=
\operatorname{Tr}
\left(
\rho^{\otimes n}\Pi_\lambda
\right).
\]
Thus the weak Schur-sampling distribution is constant on unitary
orbits and therefore depends on \(\rho\) only through its spectrum.
\end{proof}
\subsection{Schur-Weyl distribution}
Since conjugate states have the same Schur distribution, this family depends on $\rho$
only through $\theta=\spec(\rho)$.  We may therefore write
\[
    P_\theta^{(n)}(\lambda)
    :=P_\rho^{\WSS_n}(\lambda)
    \quad\text{whenever }\spec(\rho)=\theta.
\]

We describe the Schur-Weyl distribution using standard and semistandard
Young tableaux.

For a cell \(u=(i,j)\) of \(\lambda\), let \(\lambda'_j\) be the
number of rows containing at least \(j\) cells. Its hook length is
\[
h_\lambda(i,j)=\lambda_i-j+\lambda'_j-i+1.
\]
The number of standard Young tableaux of shape \(\lambda\) is
\[
f^\lambda
=
\frac{n!}{\displaystyle\prod_{u\in\lambda}h_\lambda(u)}.
\]

Let \(\operatorname{SSYT}_d(\lambda)\) denote the semistandard Young
tableaux of shape \(\lambda\) with entries in \(\{1,\ldots,d\}\).
Entries increase weakly along rows and strictly down columns. If
\(m_i(T)\) counts the entries equal to \(i\), define the Schur polynomial by
\[
s_\lambda(\theta)
=
\sum_{T\in\operatorname{SSYT}_d(\lambda)}
\prod_{i=1}^{d}\theta_i^{m_i(T)}.
\]

The Schur-Weyl distribution is
\[ P_\theta^{(n)}(\lambda)
=
f^\lambda s_\lambda(\theta),
\qquad \lambda\in\Young.
\]

\section{Proof of the main theorems}

\begin{proof}[Proof of Theorem \ref{thm:converse-invariant-factorization}]
By Proposition~\ref{prop:observable-side-converse}, for every
invariant effect $E\in\cA_G$, $0\leq E\leq I$, there exists
a function $f_E:\mathsf Z\to[0,1]$ satisfying
\eqref{eq:observable-factorization}, namely
\[
    \Tr(\bar\rho_\eta E)
    =
    \sum_{z\in\mathsf Z}p_\eta(z)f_E(z),
    \qquad \eta\in\Theta/G.
\]
Under Assumption~\ref{ass:block-coordinate-identifiability},
the coefficient vector
$\bigl(f_E(z)\bigr)_{z\in\mathsf Z}$ is unique.
For each $z\in\mathsf Z$, define
\[
    \varphi_z(E):=f_E(z).
\]
The following lemma, proved in the appendix, extends these
coefficients from invariant effects to the whole invariant
algebra.

\begin{lemma}
\label{phi_z_ext}
For each $z\in\mathsf Z$, the map $\varphi_z$ is affine
on the set of invariant effects and extends uniquely to
a positive linear functional on $\cA_G$.
\end{lemma}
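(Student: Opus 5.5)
We prove the lemma by playing uniqueness of coefficients (Assumption~\ref{ass:block-coordinate-identifiability}) against linearity of $E\mapsto\Tr(\bar\rho_\eta E)$. The plan has three steps: show $\varphi_z$ is affine on invariant effects, show it is additive and positively homogeneous on the positive cone of $\cA_G$, and then extend by linearity.

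\emph{Affinity.} Let $E_1,E_2$ be invariant effects and $t\in[0,1]$. Then $E=tE_1+(1-t)E_2$ is again an invariant effect, because $\cA_G$ is a linear space and the order interval $[0,I]$ is convex. By linearity of the trace,
\[
\Tr(\bar\rho_\eta E)=\sum_z p_\eta(z)\{t f_{E_1}(z)+(1-t)f_{E_2}(z)\}.
\]
The coefficients $f_E$ satisfy the same identity. Subtracting the two representations and applying Assumption~\ref{ass:block-coordinate-identifiability} gives $f_E=tf_{E_1}+(1-t)f_{E_2}$ pointwise in $z$. The same uniqueness argument yields two further facts. First, $\varphi_z(0)=0$, since $0=\sum_z p_\eta(z)f_0(z)$ for all $\eta$ forces $f_0=0$. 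Second, $\varphi_z(E_1+E_2)=\varphi_z(E_1)+\varphi_z(E_2)$ whenever $E_1+E_2\le I$.

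\emph{Extension to the positive cone.} Every positive $A\in\cA_G$ can be written as $A=cE$ with $c=\|A\|$ (or any $c\ge\|A\|$) and $E=A/c$ an invariant effect. We define $\tilde\varphi_z(A):=c\,\varphi_z(A/c)$. This is independent of $c$: for $c'\ge c$, affinity together with $\varphi_z(0)=0$ gives $\varphi_z(sE)=s\varphi_z(E)$ for $s\in[0,1]$, and we apply this with $s=c/c'$. Additivity on positives follows by rescaling both summands by a common large constant and using the additivity of $\varphi_z$ established above. Positive homogeneity holds by construction, and $\tilde\varphi_z\ge0$ because $f_E$ takes values in $[0,1]$.

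\emph{Linear extension.} Since $\cA_G$ is a $*$-algebra (it is the commutant of a unitary group), every self-adjoint $A\in\cA_G$ splits as $A=A_+-A_-$ with $A_\pm\in\cA_G$ positive, for instance $A_\pm=(\|A\|I\pm A)/2$ up to a shift. We set $\varphi_z(A):=\tilde\varphi_z(A_+)-\tilde\varphi_z(A_-)$. This is independent of the decomposition by the usual argument: $P-Q=P'-Q'$ implies $P+Q'=P'+Q$, and additivity on positives then gives equal values. Next we complexify via $A=\Re A+i\,\Im A$, noting that real and imaginary parts remain in $\cA_G$ because the algebra is closed under adjoints. This produces a positive linear functional on $\cA_G$.

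\emph{Uniqueness.} A linear functional on $\cA_G$ is determined by its values on invariant effects, since these span $\cA_G$. Hence the extension is unique.

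The step requiring the most care is the positive-cone extension. There one must verify additivity of $\tilde\varphi_z$ even when $A_1+A_2$ has norm exceeding that of each summand. I would handle this by always rescaling by a single constant $c\ge\|A_1\|+\|A_2\|$, so that all three operators involved are effects simultaneously.
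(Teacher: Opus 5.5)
Your proof is correct and follows the paper's own argument essentially step by step. Affinity, additivity and $\varphi_z(0)=0$ come from uniqueness of the coefficients, and $\varphi_z$ is extended to the positive cone by a scale-independent rescaling, then to self-adjoint elements by differences, then by complexification. Your two departures are minor refinements of the same route: you obtain additivity directly from uniqueness rather than through the paper's midpoint identity, and you justify the uniqueness of the extension explicitly by noting that invariant effects span $\cA_G$, which the paper's proof of this lemma leaves implicit.
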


We henceforth denote this positive linear extension again
by $\varphi_z$. Since $\Pi_w$ is a central projection in
$\cA_G$, it is an invariant effect. Taking $E=\Pi_w$ in
\eqref{eq:observable-factorization} and using
$p_\eta(w)=\Tr(\bar\rho_\eta\Pi_w)$ gives
\[
    p_\eta(w)
    =
    \sum_{z\in\mathsf Z}
    p_\eta(z)\varphi_z(\Pi_w).
\]
On the other hand,
\[
    p_\eta(w)
    =
    \sum_{z\in\mathsf Z}
    p_\eta(z)\delta_{zw}.
\]
By Assumption~\ref{ass:block-coordinate-identifiability},
\begin{equation}
    \varphi_z(\Pi_w)
    =
    \delta_{zw},
    \qquad z,w\in\mathsf Z.
    \label{eq:phi-central-projections}
\end{equation}
In particular, since $\sum_w\Pi_w=I$, linearity gives
$\varphi_z(I)=1$.

We now show that $\varphi_z$ is supported on the $z$th
block. Since
\[
    I-\Pi_z
    =
    \sum_{w\neq z}\Pi_w,
\]
equation~\eqref{eq:phi-central-projections} and linearity
imply
\[
    \varphi_z(I-\Pi_z)=0.
\]
Because $\varphi_z$ is positive, it satisfies the
Cauchy-Schwarz inequality
\[
    |\varphi_z(B^\dagger C)|^2
    \leq
    \varphi_z(B^\dagger B)\,
    \varphi_z(C^\dagger C),
    \qquad B,C\in\cA_G.
\]
Taking $B=I-\Pi_z$ and $C=A$, and using
$(I-\Pi_z)^2=I-\Pi_z$, gives
\[
    \left|\varphi_z\bigl((I-\Pi_z)A\bigr)\right|^2
    \leq
    \varphi_z(I-\Pi_z)\varphi_z(A^\dagger A)
    =
    0.
\]
Thus
\[
    \varphi_z\bigl((I-\Pi_z)A\bigr)=0,
    \qquad A\in\cA_G.
\]
Similarly, taking $B=A^\dagger$ and $C=I-\Pi_z$ gives
\[
    \left|\varphi_z\bigl(A(I-\Pi_z)\bigr)\right|^2
    \leq
    \varphi_z(AA^\dagger)\varphi_z(I-\Pi_z)
    =
    0.
\]
Consequently,
\[
    \varphi_z\bigl(A(I-\Pi_z)\bigr)=0,
    \qquad A\in\cA_G.
\]
Since
\[
    A-\Pi_zA\Pi_z
    =
    (I-\Pi_z)A+\Pi_zA(I-\Pi_z)
\]
and $\Pi_zA\in\cA_G$, these identities imply
\begin{equation}
    \varphi_z(A)
    =
    \varphi_z(\Pi_zA\Pi_z),
    \qquad A\in\cA_G.
    \label{eq:phi-block-support}
\end{equation}
Thus $\varphi_z$ is completely determined by its
restriction to
\[
    \cA_z:=\Pi_z\cA_G\Pi_z.
\]

Since $\cH$ is finite dimensional, $\cA_z$ is a
finite-dimensional $C^*$-algebra. The trace pairing on
$\cA_z$ is nondegenerate.
Hence there exists
a unique $\tau_z\in\cA_z$ such that
\[
    \varphi_z(A)
    =
    \Tr(\tau_zA),
    \qquad A\in\cA_z.
\]
Here $\Tr$ denotes the ordinary operator trace on
$\cH_z$, equivalently the trace on $\cH$ after
extending the operators by zero.

We verify that $\tau_z\geq0$. A positive linear
functional is real-valued on self-adjoint elements,
since each such element is a difference of positive
elements. It therefore satisfies
\[
    \varphi_z(A^\dagger)
    =
    \overline{\varphi_z(A)},
    \qquad A\in\cA_z.
\]
Consequently,
\[
    \Tr(\tau_z^\dagger A)
    =
    \overline{\Tr(\tau_zA^\dagger)}
    =
    \overline{\varphi_z(A^\dagger)}
    =
    \varphi_z(A),
    \qquad A\in\cA_z.
\]
Uniqueness of the trace representation gives
$\tau_z^\dagger=\tau_z$. If $\tau_z$ had a negative
eigenvalue, its spectral projection $P_-$ onto the
negative eigenspaces would belong to $\cA_z$ by functional calculus and would satisfy
\[
    \varphi_z(P_-)
    =
    \Tr(\tau_zP_-)
    <
    0,
\]
contradicting positivity of $\varphi_z$. Thus
$\tau_z\geq0$.

Moreover, $\Pi_z$ is the identity element of $\cA_z$,
and $\tau_z=\Pi_z\tau_z\Pi_z$. By
\eqref{eq:phi-central-projections},
\[
    \Tr(\tau_z)
    =
    \Tr(\tau_z\Pi_z)
    =
    \varphi_z(\Pi_z)
    =
    1.
\]
Therefore $\tau_z$ is a density operator supported on
$\cH_z$, independent of $\eta$.

Before applying the observable-side factorization to
an arbitrary $A\in\cA_G$, we note that invariant
effects linearly span $\cA_G$. Indeed, for a
self-adjoint $A\in\cA_G$, choose $r>0$ with
$r\geq\|A\|$. Then
\[
    E:=\frac{A+rI}{2r}
\]
is an invariant effect and
\[
    A=r(2E-I).
\]
Every element of $\cA_G$ is a complex linear
combination of self-adjoint elements. Hence
\eqref{eq:observable-factorization}, together with
linearity of the extended functionals, implies
\[
    \Tr(\bar\rho_\eta A)
    =
    \sum_{z\in\mathsf Z}
    p_\eta(z)\varphi_z(A),
    \qquad A\in\cA_G.
\]

Now let $A\in\cA_G$. Using this identity,
\eqref{eq:phi-block-support}, and the trace
representation above, we obtain
\begin{align*}
    \Tr(\bar\rho_\eta A)
    &=
    \sum_{z\in\mathsf Z}
    p_\eta(z)\varphi_z(A)\\
    &=
    \sum_{z\in\mathsf Z}
    p_\eta(z)
    \Tr\!\left(\tau_z\Pi_zA\Pi_z\right)\\
    &=
    \Tr\!\left[
        \left(
            \sum_{z\in\mathsf Z}p_\eta(z)\tau_z
        \right)A
    \right].
\end{align*}
Hence defining $D_\eta
    :=
    \bar\rho_\eta
    -
    \sum_{z\in\mathsf Z}p_\eta(z)\tau_z$, we have
    
\[
    \Tr(D_\eta A)=0,
    \qquad A\in\cA_G.
\]

Since $\bar\rho_\eta$ is $G$-invariant,
$\bar\rho_\eta\in\cA_G$. Also,
$\tau_z\in\cA_z\subseteq\cA_G$ for every $z$.
Thus both terms defining $D_\eta$ belong to $\cA_G$,
and consequently $D_\eta\in\cA_G$.
Since $D_\eta$ is self-adjoint, we may take
$A=D_\eta$ to obtain
\[
    0
    =
    \Tr(D_\eta^2)
    =
    \|D_\eta\|_{\mathrm{HS}}^2.
\]
Therefore $D_\eta=0$. Finally, since the $\tau_z$
are supported on the mutually orthogonal subspaces
$\cH_z$, their sum has the direct-sum form
\[
    \bar\rho_\eta
    =
    \bigoplus_{z\in\mathsf Z}p_\eta(z)\tau_z.
\]
This proves \eqref{eq:converse-factorization}.
\end{proof}

\begin{proof}[Proof of Theorem \ref{thm:symmetrization}]
Represent the original decision procedure by an action-valued POVM
\(D\) on \((\mathsf A,\mathcal A)\). Define its symmetrization by
\[
\overline D(C)
:=
\int_G U_g D(g^{-1}C)U_g^\dagger\,\mu_G(dg),
\qquad C\in\mathcal A,
\]
where \(\mu_G\) denotes the normalized Haar measure on \(G\). The
integral is understood in the weak operator sense. Positivity,
normalization, and countable additivity follow from the corresponding
properties of \(D\); hence \(\overline D\) is an action-valued POVM.

We first verify that \(\overline D\) is equivariant. For \(h\in G\),
the left invariance of Haar measure gives
\[
\begin{aligned}
U_h\overline D(C)U_h^\dagger
&=
\int_G U_{hg}D(g^{-1}C)U_{hg}^\dagger\,\mu_G(dg)\\
&=
\int_G U_kD(k^{-1}hC)U_k^\dagger\,\mu_G(dk)\\
&=
\overline D(hC),
\end{aligned}
\]
where we used the change of variables \(k=hg\). Thus
\(\overline D\) is an equivariant procedure.

We next compute its risk. Since the loss is nonnegative, Tonelli's
theorem permits interchange of the relevant integrals. Using the
equivariance relation
\[
U_g^\dagger\rho_\theta U_g=\rho_{g^{-1}\theta},
\]
we obtain
\[
\begin{aligned}
R(\theta;\overline D)
&=
\int_{\mathsf A}
L(\theta,a)\,
\operatorname{Tr}\!\left[
    \rho_\theta\overline D(da)
\right]
\\
&=
\int_G\int_{\mathsf A}
L(\theta,a)\,
\operatorname{Tr}\!\left[
    U_g^\dagger\rho_\theta U_g\,D(g^{-1}da)
\right]
\,\mu_G(dg)
\\
&=
\int_G\int_{\mathsf A}
L(\theta,gb)\,
\operatorname{Tr}\!\left[
    \rho_{g^{-1}\theta}D(db)
\right]
\,\mu_G(dg),
\end{aligned}
\]
where \(b=g^{-1}a\). By invariance of the loss,
\[
L(\theta,gb)
=
L(g^{-1}\theta,b).
\]
Consequently,
\[
R(\theta;\overline D)
=
\int_G R(g^{-1}\theta;D)\,\mu_G(dg).
\]

Integrating with respect to the prior \(\Pi\), we find
\[
\begin{aligned}
R_\Pi(\overline D)
&=
\int_\Theta R(\theta;\overline D)\,\Pi(d\theta)\\
&=
\int_G\int_\Theta
R(g^{-1}\theta;D)\,\Pi(d\theta)\,\mu_G(dg).
\end{aligned}
\]
Because \(\Pi\) is \(G\)-invariant, the pushforward of \(\Pi\) under
\(\theta\mapsto g^{-1}\theta\) is again \(\Pi\). Therefore, for every
\(g\in G\),
\[
\int_\Theta R(g^{-1}\theta;D)\,\Pi(d\theta)
=
\int_\Theta R(\vartheta;D)\,\Pi(d\vartheta)
=
R_\Pi(D).
\]
It follows that
\[
R_\Pi(\overline D)
=
\int_G R_\Pi(D)\,\mu_G(dg)
=
R_\Pi(D).
\]
Hence every decision procedure admits an equivariant symmetrization
with the same Bayes risk.
\end{proof}

\begin{proof}[Proof of Theorem~\ref{thm:WSS_Bayes_asymp}]

\textbf{Lower bound}\\

Let
\[
    N_n=(N_{n,1},\ldots,N_{n,d})\sim\Mult_d(n,\btheta)
\]
be the multinomial count vector based on \(n\) independent observations
with cell probabilities \(\btheta\). Its Bayes risk is
\[
    r_{n,\psi}^{\Mult}(\Pi)
    :=\inf_\delta
      \int_{\Theta_{\downarrow}}
      \E_{\btheta}^{\Mult}
      \|\delta(N_n)-\psi(\btheta)\|^2
      \,\Pi(d\btheta).
\]

Next we show that for every \(n\), there exists a parameter-independent Markov kernel
\(K_n\) from multinomial count vectors to Young diagrams such that
\begin{equation}
    P_{\btheta}^{(n)}
    =K_n\Mult_d(n,\btheta),
    \qquad \btheta\in\Theta_{\downarrow}.
\label{eq:mult-wss-randomization}
\end{equation}
Let \(X_1,\ldots,X_n\) be independent random variables taking values
in \(\{1,\ldots,d\}\), with
\(\Pp_{\btheta}(X_j=i)=\theta_i\). Their count vector is \(N_n\).
Conditional on \(N_n=k\), all words with content \(k\) have the same
probability; consequently, the conditional law of the word given its
content is independent of \(\btheta\).

Apply the Robinson-Schensted-Knuth correspondence (cf. \cite{odonnell2016efficient}) to the word and
retain its shape. Define
\[
    K_n(k,\lambda)
    :=\Pp\!\left(
      \operatorname{shape}(\operatorname{RSK}(X_1,\ldots,X_n))
      =\lambda\mid N_n=k
    \right).
\]

The kernel is parameter-independent. The RSK shape distribution of an
i.i.d.\ word with letter probabilities \(\btheta\) satisfies
\[
    \Pp_{\btheta}\!\left(
      \operatorname{shape}(\operatorname{RSK}(X_1,\ldots,X_n))
      =\lambda
    \right)
    =\dim(\cV_\lambda)s_\lambda(\btheta);
\]
see \cite[Section~2, Eq.~(8)]{odonnell2016efficient}.
By \eqref{eq:wss-law-bayes}, this is exactly the weak Schur sampling
law, which proves \eqref{eq:mult-wss-randomization}. Composing any WSS decision rule
with \(K_n\) produces a multinomial decision rule having exactly the
same action distribution and risk.
Consequently,
\begin{equation}
    r_{n,\psi}^{\Mult}(\Pi)
    \leq r_{n,\psi}^{\WSS}(\Pi),
\label{eq:mult-wss-bayes-order}
\end{equation}
where \(r_{n,\psi}^{\Mult}(\Pi)\) is the Bayes risk in the
multinomial experiment.

For the local multinomial calculation, use the intrinsic parameter
\[
    \bvartheta=(\theta_1,\ldots,\theta_{d-1})^\top,
    \qquad
    \theta_d=1-\sum_{i=1}^{d-1}\vartheta_i,
\]
and define
\[
    A:=
    \begin{pmatrix}
      I_{d-1}\\
      -\bone_{d-1}^\top
    \end{pmatrix}.
\]
The inverse multinomial Fisher information in these coordinates is
\[
    V_{\btheta}
    :=\diag(\theta_1,\ldots,\theta_{d-1})
      -\btheta_{[-d]}\btheta_{[-d]}^\top,
    \qquad
    \btheta_{[-d]}=(\theta_1,\ldots,\theta_{d-1})^\top.
\]
A direct calculation gives
\begin{equation}
    AV_{\btheta}A^\top=\Sigma(\btheta).
\label{eq:intrinsic-ambient-covariance}
\end{equation}

The next lemma uses these intrinsic coordinates to establish the
following pointwise posterior-variance limit.

\begin{lemma}[Pointwise multinomial posterior variance]
\label{lem:mult-pointwise}
Fix \(\btheta_0\in\Theta_{\mathrm{reg}}\) such that
\(\pi(\btheta_0)>0\) and \(\pi\) is continuous at \(\btheta_0\).
Then
\[
    n\,\E_{\btheta_0}^{\Mult}
    \left[\Tr\Var\{\psi(\btheta)\mid N_n\}\right]
    \rightarrow\cI_\psi(\btheta_0).
\]
\end{lemma}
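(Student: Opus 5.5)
We use the classical Bernstein--von Mises theorem for the multinomial model in the intrinsic coordinates $\bvartheta$, followed by a delta-method argument for the functional $\psi$. Fix $\btheta_0\in\Theta_{\mathrm{reg}}$ with $\pi(\btheta_0)>0$ and $\pi$ continuous at $\btheta_0$.

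The multinomial family $\Mult_d(n,\btheta)$ in the coordinates $\bvartheta$ is a regular exponential family. Its Fisher information is $V_{\btheta}^{-1}$, which is nonsingular at $\btheta_0$ because every $\theta_{0,i}>0$. The induced prior density on the $\bvartheta$-domain is positive and continuous at $\bvartheta_0$.

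\textbf{Step 1.} Apply the Bernstein--von Mises theorem in total variation. Under $\btheta_0$, the posterior of $\sqrt n(\bvartheta-\bar{\bvartheta}_n)$, where $\bar{\bvartheta}_n$ is the empirical frequency vector $N_{n,[-d]}/n$, converges to $\mathcal N(0,V_{\btheta_0})$ in probability. By the convexity of the simplex and the exponential-family form of the likelihood, this holds without further conditions.

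\textbf{Step 2.} Upgrade total-variation convergence to convergence of second moments. We need
\[
n\,\Var(\bvartheta\mid N_n)\to V_{\btheta_0}
\]
in probability, together with the linearized statement for $\psi$. Write
\[
\psi(\btheta)-\psi(\bar\btheta_n)=D\psi(\bar\btheta_n)A(\bvartheta-\bar{\bvartheta}_n)+r_n,
\]
where $\|r_n\|\le\omega(\|\btheta-\bar\btheta_n\|)\,\|\btheta-\bar\btheta_n\|$ and $\omega$ is the modulus of continuity of $D\psi$ on the compact simplex (Assumption~\ref{ass:psi}). Then $n\,\Tr\Var\{\psi(\btheta)\mid N_n\}$ equals $\Tr[D\psi A\,(n\Var(\bvartheta\mid N_n))\,A^\top D\psi^\top]$ up to cross terms. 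The cross terms vanish once we show uniform integrability of $n\|\btheta-\bar\btheta_n\|^2$ under the posterior. By \eqref{eq:intrinsic-ambient-covariance}, the main term tends to $\cI_\psi(\btheta_0)$.

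\textbf{Step 3.} Pass from convergence in probability to convergence of $\E_{\btheta_0}$. The simplex has diameter at most $\sqrt2$, so posterior variances are bounded by a constant. This alone gives only $n\Tr\Var\le Cn$, which is not enough. We therefore show
\[
\E_{\btheta_0}\bigl[\,n\,\E(\|\btheta-\bar\btheta_n\|^2\mid N_n)\,\mathbf 1_{\Omega_n}\bigr]\le C
\]
together with a small-probability bound on the complement. Here $\Omega_n=\{\|\bar\btheta_n-\btheta_0\|\le\epsilon\}$, whose complement has probability $e^{-cn}$ by Hoeffding's inequality. On $\Omega_n$, the likelihood satisfies the local quadratic bound
\[
\log\frac{\prod_i\theta_i^{N_i}}{\prod_i\bar\theta_i^{N_i}}=-n\,\mathrm{KL}(\bar\btheta_n\|\btheta)\le -cn\|\btheta-\bar\btheta_n\|^2,
\]
using Pinsker's inequality. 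Combining this with a lower bound on the posterior normalizer from the positive, continuous prior near $\btheta_0$ yields a Gaussian tail for the posterior at scale $n^{-1/2}$. That tail gives the uniform integrability needed in Step 2 and bounds the posterior fourth moment. On $\Omega_n^c$ we use the trivial bound $Cn\,e^{-cn}\to0$. Dominated convergence then completes the proof.

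The main obstacle is Step 3, specifically the posterior tail bound. The prior density is only assumed continuous at $\btheta_0$, not bounded above. We will handle this by bounding the posterior mass outside a shrinking ball via
\[
\int \|\btheta-\bar\btheta_n\|^2 e^{-cn\|\btheta-\bar\btheta_n\|^2}\,\Pi(d\btheta)\le\sup_{t}\bigl(t^2e^{-cnt^2}\bigr),
\]
which needs no upper bound on $\pi$ away from $\btheta_0$. For the normalizer we use the lower bound $\gtrsim n^{-(d-1)/2}$, which comes from $\pi\ge\pi(\btheta_0)/2$ near $\btheta_0$ and the upper bound $\mathrm{KL}\le C\|\cdot\|^2$ near interior points. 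Away from the $n^{-1/2}$-scale ball, the ratio of these two bounds is $n^{(d-1)/2}e^{-cn\delta^2}$, which is negligible.
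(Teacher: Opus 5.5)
Your proposal is correct and follows essentially the paper's argument in Lemmas~\ref{lem:multinomial-posterior-covariance} and~\ref{lem:multinomial-functional-posterior-variance}: a Hoeffding good event, the exact $-n\,\mathrm{KL}$ form of the likelihood ratio with a Pinsker Gaussian envelope, a local lower bound on the normalizer, total prior mass to control the region away from $\btheta_0$, uniform integrability from boundedness on the good event together with the $Cn\,e^{-cn}$ bound off it, and the identity $AV_{\btheta}A^\top=\Sigma(\btheta)$. The differences are minor: you cite Bernstein--von Mises as a black box where the paper derives the weighted $L^1$ convergence directly, and you linearize $\psi$ at $\bar\btheta_n$ rather than at $\bvartheta_0$ via the two-draw variance identity.

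One wording point needs fixing. The tail split must be at a fixed radius $\delta$, with the local bound $\pi\le 2\pi(\btheta_0)$ (from continuity) used inside that ball. A split at scale $n^{-1/2}$, using only the total-mass bound outside an $Mn^{-1/2}$ ball, leaves a factor $n^{(d-1)/2}M^2e^{-cM^2}$, which does not vanish as $n\to\infty$.
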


The lemma is the moment form of the Bernstein-von Mises theorem for
the intrinsic parameter \(\bvartheta\), followed by the delta method
and \eqref{eq:intrinsic-ambient-covariance}. The proof  follows from Lemma \ref{lem:multinomial-posterior-covariance} and \ref{lem:multinomial-functional-posterior-variance} proved in Appendix \ref{sec:tech_lem}.

Next, note that under squared loss,
\[
    r_{n,\psi}^{\Mult}(\Pi)
    =\int_{\Theta_{\downarrow}}
      \E_{\btheta}^{\Mult}
      \left[\Tr\Var\{\psi(\btheta')\mid N_n\}\right]
      \Pi(d\btheta),
\]
where \(\btheta'\) denotes the posterior variable and \(\btheta\) the
true parameter under the outer expectation. The singular set
\[
    \Theta_{\downarrow}\setminus\Theta_{\mathrm{reg}}
    =\{\theta_d=0\}
      \cup\bigcup_{i=1}^{d-1}\{\theta_i=\theta_{i+1}\}
\]
has \(\Pi\)-measure zero. Lemma~\ref{lem:mult-pointwise} therefore
applies for \(\Pi\)-almost every \(\btheta\). Since the integrands are
nonnegative, Fatou's lemma yields

\begin{align}
\liminf_{n\to\infty}n\,r_{n,\psi}^{\Mult}(\Pi)
    &\geq
    \int_{\Theta_{\downarrow}}
      \liminf_{n\to\infty}
      n\,\E_{\btheta}^{\Mult}
      \left[\Tr\Var\{\psi(\btheta')\mid N_n\}\right]
      \Pi(d\btheta)\nonumber\\
    &=\int_{\Theta_{\downarrow}}
      \cI_\psi(\btheta)\,\Pi(d\btheta). \label{eq:mult_int_lower}
\end{align}

By \eqref{eq:mult-wss-bayes-order} and
\eqref{eq:mult_int_lower},
\begin{equation}
    \liminf_{n\to\infty}
    n\,r_{n,\psi}^{\WSS}(\Pi)
    \geq
    \int_{\Theta_{\downarrow}}
      \cI_\psi(\btheta)\,\Pi(d\btheta).
\label{eq:wss-bayes-lower-bound}
\end{equation}

\begin{remark}
    We note that a direct lower bound to the Bayes risk is possible under stronger assumptions on the prior. The proof uses van Trees inequality \cite{Gill&Levit}. However, to keep our prior choice general we adopt the current method.
\end{remark}

\textbf{Upper bound}\\

The upper bounds in both the Bayes and minimax problems are based on a
single uniform risk result. We first state the distributional estimate
needed to control its second moments.

Let
\[
    \mathbb L_{n,d}
    :=
    \left\{
        k\in\mathbb N_0^d:
        \sum_{i=1}^d k_i=n
    \right\}.
\]

Regard \(P_{\btheta}^{(n)}\) as a probability measure on
\(\mathbb L_{n,d}\) by assigning mass zero outside
\(\mathbb Y_{n,d}\), and let \(M_{\btheta}^{(n)}\) denote the law on
\(\mathbb L_{n,d}\) of \(N_n\sim\Mult_d(n,\btheta)\). For
\(\alpha\in(1/2,1)\), define
\[
    \mathcal T_{n,\alpha}(\btheta)
    :=\left\{k\in\mathbb L_{n,d}:
      \max_{1\leq i\leq d}|k_i-n\theta_i|\leq n^\alpha
    \right\}.
\]

The next lemma compares the weak Schur and multinomial distributions
uniformly over compact subsets of $\Theta_{\mathrm{reg}}$ and
controls their probabilities outside the typical set above.
\begin{lemma}[Uniform WSS-multinomial comparison]
\label{lem:uniform-wss-mult-comparison}
Let $\mathsf K\subset\Theta_{\mathrm{reg}}$ be compact and fix
$\alpha\in(1/2,1)$. There exist constants
$C_{\mathsf K},c_{\mathsf K}>0$ and
$n_{\mathsf K}\in\mathbb N$ such that, for every
$n\geq n_{\mathsf K}$,
\begin{equation}
\label{eq:uniform-l1-comparison}
\sup_{\btheta\in\mathsf K}
\left\|
    P_{\btheta}^{(n)}-M_{\btheta}^{(n)}
\right\|_1
\leq
C_{\mathsf K}
\left(
    n^{-1/2}+n^{\alpha-1}
\right),
\end{equation}
and
\begin{equation}
\label{eq:uniform-typical-tail}
\sup_{\btheta\in\mathsf K}
\left\{
    P_{\btheta}^{(n)}
    \bigl(\mathcal T_{n,\alpha}(\btheta)^c\bigr)
    +
    M_{\btheta}^{(n)}
    \bigl(\mathcal T_{n,\alpha}(\btheta)^c\bigr)
\right\}
\leq
C_{\mathsf K}n^{d/2}
\exp\!\left\{
    -c_{\mathsf K}n^{2\alpha-1}
\right\}.
\end{equation}
\end{lemma}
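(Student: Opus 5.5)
The plan is to compare the two laws through the explicit determinantal form of the Schur-Weyl distribution $P_{\btheta}^{(n)}(\lambda)=f^\lambda s_\lambda(\btheta)$ from the appendix. Write $\delta=(d-1,d-2,\ldots,0)$, $\ell:=\lambda+\delta$, $n':=n+\binom d2$, and let $\Delta(x)=\prod_{i<j}(x_i-x_j)$ denote the Vandermonde product. Two classical formulas will be used. The hook-length formula in Frobenius form reads $f^\lambda=n!\,\Delta(\ell)/\prod_i\ell_i!$. The bialternant formula reads $s_\lambda(\btheta)=\det(\theta_j^{\ell_i})/\Delta(\btheta)$, which is valid because $\Delta(\btheta)>0$ on $\Theta_{\mathrm{reg}}$. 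Multiplying them gives the exact identity
\[
P_{\btheta}^{(n)}(\lambda)
=\frac{n!}{n'!}\,\frac{\Delta(\ell)}{\Delta(\btheta)}
\sum_{\sigma\in S_d}\operatorname{sgn}(\sigma)\,
M_{\btheta\circ\sigma}^{(n')}(\ell),
\]
where $M^{(n')}_{\btheta\circ\sigma}$ is the multinomial law with $n'$ trials and permuted cell probabilities. Compactness of $\mathsf K\subset\Theta_{\mathrm{reg}}$ supplies a gap $\gamma_{\mathsf K}>0$ with $\theta_i-\theta_{i+1}\geq\gamma_{\mathsf K}$ and $\theta_d\geq\gamma_{\mathsf K}$ for all $\btheta\in\mathsf K$. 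All constants below are to be made uniform through $\gamma_{\mathsf K}$.

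\textbf{Tails, i.e. \eqref{eq:uniform-typical-tail}.} For the multinomial part I would apply Hoeffding's inequality coordinatewise with a union bound, which gives $2d\,e^{-2n^{2\alpha-1}}$. For the WSS part I would bound $|P^{(n)}_{\btheta}(\lambda)|$ by the absolute value of the identity above. The prefactor $\frac{n!}{n'!}\Delta(\ell)/\Delta(\btheta)$ is at most $C_{\mathsf K}$, since $\Delta(\ell)\leq (n+d)^{\binom d2}$, $n!/n'!\leq n^{-\binom d2}$, and $\Delta(\btheta)\geq\gamma_{\mathsf K}^{\binom d2}$. Each $M^{(n')}_{\btheta\circ\sigma}(\ell)$ is a single multinomial mass. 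If $\lambda\notin\mathcal T_{n,\alpha}(\btheta)$, then $\ell$ lies outside the $n^\alpha-d$ window around $n'\btheta$. I also need to handle $\ell$ lying near $n'(\btheta\circ\sigma)$ for $\sigma\neq\mathrm{id}$. That case is impossible up to exponentially small mass: $\lambda$ is nonincreasing, so $\ell$ is strictly decreasing, while $\btheta\circ\sigma$ is not decreasing, with some pair of coordinates out of order by at least $\gamma_{\mathsf K}$. A pointwise Chernoff bound on the multinomial masses, summed over the at most $(n+1)^d$ diagrams, then gives the stated $C_{\mathsf K}n^{d/2}e^{-c_{\mathsf K}n^{2\alpha-1}}$ form; any polynomial factor can be absorbed by shrinking $c_{\mathsf K}$.

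\textbf{$\ell_1$ comparison, i.e. \eqref{eq:uniform-l1-comparison}.} The tails reduce the problem to bounding $\sum_{k\in\mathcal T_{n,\alpha}}|P^{(n)}_{\btheta}(k)-M^{(n)}_{\btheta}(k)|$. On $\mathcal T_{n,\alpha}(\btheta)$ every point $k$ has strictly decreasing coordinates for large $n$, because the gap $n\gamma_{\mathsf K}$ exceeds $2n^\alpha$, so every such $k$ is a genuine Young diagram. On this set I would make three estimates.
\begin{enumerate}[label=\textup{(\roman*)}]
\item The terms with $\sigma\neq\mathrm{id}$ are bounded uniformly by $e^{-cn}$, by the Chernoff bound above.
\item The ratio $\frac{n!}{n'!}\Delta(\ell)/\Delta(\btheta)$ equals $\prod_{i<j}\frac{(\ell_i-\ell_j)/n}{\theta_i-\theta_j}\,(1+O(1/n))$, and this is $1+O_{\mathsf K}(n^{\alpha-1})$ because $|\ell_i/n-\theta_i|\leq 2n^{\alpha-1}$.
\item I must compare $M^{(n')}_{\btheta}(k+\delta)$ with $M^{(n)}_{\btheta}(k)$. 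This is an $\ell_1$ distance between a multinomial and a version shifted by the fixed vector $\delta$ with $\binom d2$ extra trials. Writing $M^{(n')}$ as the convolution of $M^{(n)}$ with $M^{(\binom d2)}$, it reduces to bounding $\|M^{(n)}_{\btheta}(\cdot)-M^{(n)}_{\btheta}(\cdot-e_i+e_j)\|_1$ for unit shifts. Each such shift costs $O_{\mathsf K}(n^{-1/2})$, which follows from the ratio $M(k-e_i+e_j)/M(k)=\frac{k_i\theta_j}{(k_j+1)\theta_i}$ together with the central limit scaling. The number of shifts is bounded, so the total cost is $O_{\mathsf K}(n^{-1/2})$.
\end{enumerate}
Summing (i)--(iii) with the tail bound gives $C_{\mathsf K}(n^{-1/2}+n^{\alpha-1})$.

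The main obstacle I expect is step (iii), together with keeping every estimate uniform over $\mathsf K$. In particular, the unit-shift $\ell_1$ bound has to be done carefully, using $\sum_k M(k)\,\bigl|1-\frac{k_i\theta_j}{(k_j+1)\theta_i}\bigr|\lesssim \E|N_i-n\theta_i|/n+1/n$, with constants depending only on $\gamma_{\mathsf K}$. The sign cancellations in the alternating sum are not needed, since the non-identity terms are exponentially small on the typical set.
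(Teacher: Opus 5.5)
Your argument is correct, but it follows a different route from the paper. The paper derives nothing itself. It specializes the estimate (7.36) of Kahn and Guta~\cite{Kahn&Guta}, with $\boldsymbol\mu=\btheta$, $\mathbf u=0$, $\gamma=0$, together with their Lemma~6.2 and (7.34) for the atypical set. It then gets uniformity over $\mathsf K$ from two observations: the cited constant has the form $C/\delta(\btheta)$ with $C$ depending only on $d$, and the cited sample-size thresholds depend on $\btheta$ only through $\delta(\btheta)\geq\delta_{\mathsf K}$.

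You instead prove the comparison directly from the exact identity $P_{\btheta}^{(n)}(\lambda)=\frac{n!}{n'!}\frac{\Delta(\ell)}{\Delta(\btheta)}\sum_\sigma\operatorname{sgn}(\sigma)M^{(n')}_{\btheta\circ\sigma}(\ell)$. This identity is correctly obtained from the Frobenius hook formula and the bialternant formula. Your three error sources reproduce the two terms of the cited bound:
\begin{enumerate}[label=\textup{(\roman*)}]
\item the non-identity permutations contribute $e^{-cn}$ on strictly decreasing $\ell$;
\item the Vandermonde prefactor ratio produces the $n^{\alpha-1}$ term;
\item the shift by $\delta$ with $\binom d2$ extra trials produces the $n^{-1/2}$ term, via convolution and unit-shift bounds.
\end{enumerate}

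The benefit of your route is that it is self-contained and the uniformity is transparent. Every constant is visibly a function of $d$ and the gap $\gamma_{\mathsf K}$, so you need not inspect another paper's proof to confirm how its constants and thresholds depend on the central spectrum. The paper's route buys brevity.

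Two small points to tidy up when you write it out.
\begin{itemize}
\item The window around $n'\btheta$ is $n^\alpha-C_d$, with $C_d$ of order $\binom d2$ rather than $d$, because of the term $\binom d2\theta_i$.
\item In the unit-shift estimate, the ratio $k_i\theta_j/\{(k_j+1)\theta_i\}$ is of order one only where $k_j\gtrsim n\theta_j$. On $\{k_j<n\theta_j/2\}$ it can be as large as $n/\gamma_{\mathsf K}$, but that event has mass $e^{-cn}$, so split it off. On the complement the sum is bounded by $2\bigl(\E|N_i-n\theta_i|+\E|N_j-n\theta_j|+1\bigr)/(n\gamma_{\mathsf K}^2)=O(n^{-1/2})$; note that both coordinates $i$ and $j$ enter. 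You must also add the boundary terms with $k_j=-1$, whose total mass is $(1-\theta_j)^n$.
\end{itemize}
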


The next lemma uses Lemma \ref{lem:uniform-wss-mult-comparison} to show that the risk of the Young diagram estimator can be approximated by the multinomial risk uniformly.

\begin{lemma}[Uniform risk of the empirical Young diagram]
\label{lem:uniform-wss-functional-risk}
Let \(\mathsf K\subset\Theta_{\mathrm{reg}}\) be compact and suppose
Assumption~\ref{ass:psi} holds. Then
\[
    \sup_{\btheta\in\mathsf K}
    \left|
        n\,\E_{\btheta}^{\WSS}
        \left\|
            \psi\!\left(\frac{\Lambda_n}{n}\right)
            -\psi(\btheta)
        \right\|^2
        -\cI_\psi(\btheta)
    \right|
    \rightarrow0.
\]
\end{lemma}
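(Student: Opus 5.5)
Fix a compact $\mathsf K\subset\Theta_{\mathrm{reg}}$. The plan is to transfer the risk of $\psi(\Lambda_n/n)$ to the multinomial risk of $\psi(N_n/n)$ via Lemma~\ref{lem:uniform-wss-mult-comparison}, and then to use a uniform delta-method expansion in the multinomial model. Write $g_{\btheta}(k):=n\|\psi(k/n)-\psi(\btheta)\|^2$ for $k\in\mathbb L_{n,d}$. Then
\[
n\,\E_{\btheta}^{\WSS}\|\psi(\Lambda_n/n)-\psi(\btheta)\|^2=\sum_k g_{\btheta}(k)P_{\btheta}^{(n)}(k),
\]
and the analogous sum with $M_{\btheta}^{(n)}$ gives the multinomial risk. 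The proof has two steps: (a) show that $\sup_{\btheta\in\mathsf K}|\sum_k g_{\btheta}(k)(P_{\btheta}^{(n)}-M_{\btheta}^{(n)})(k)|\to0$; (b) show that $\sup_{\btheta\in\mathsf K}|n\E^{\Mult}_{\btheta}\|\psi(N_n/n)-\psi(\btheta)\|^2-\cI_\psi(\btheta)|\to0$.

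For (a), split the sum over $\mathcal T_{n,\alpha}(\btheta)$ and its complement.

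\emph{On the typical set.} The Lipschitz bound \eqref{eq:psi-lipschitz-constant} gives $g_{\btheta}(k)\le nL_\psi^2\|k/n-\btheta\|^2\le dL_\psi^2n^{2\alpha-1}$. Hence the typical contribution is at most $dL_\psi^2n^{2\alpha-1}\,C_{\mathsf K}(n^{-1/2}+n^{\alpha-1})$ by \eqref{eq:uniform-l1-comparison}.

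\emph{Off the typical set.} Since the simplex has diameter at most $\sqrt2$, we have $g_{\btheta}\le 2nL_\psi^2$. By \eqref{eq:uniform-typical-tail} the atypical contribution is then $O(n^{1+d/2}e^{-c_{\mathsf K}n^{2\alpha-1}})\to0$.

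The typical-set term vanishes only if $n^{2\alpha-1}(n^{-1/2}+n^{\alpha-1})\to0$, i.e.\ $2\alpha-3/2<0$ and $3\alpha-2<0$. Both hold once $\alpha\in(1/2,3/4)$, so we fix such an $\alpha$. This exponent bookkeeping is the main obstacle: the crude Lipschitz bound on the typical set must be beaten by the $\ell_1$ rate. If the $n^{\alpha-1}$ term in \eqref{eq:uniform-l1-comparison} turns out to be too weak for some other reason, the fallback is a sharper local comparison on $\mathcal T_{n,\alpha}$. With the stated rates, however, $\alpha<3/4$ suffices.

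For (b), use the uniform delta method. Write
\[
\psi(N_n/n)-\psi(\btheta)=D\psi(\btheta)(N_n/n-\btheta)+r_n,
\qquad
\|r_n\|\le\omega(\|N_n/n-\btheta\|)\,\|N_n/n-\btheta\|,
\]
where $\omega$ is the modulus of continuity of $D\psi$ on a neighbourhood of the simplex; $\omega$ is uniform because $D\psi$ is continuous on a compact set. The linear term contributes exactly $\Tr[D\psi\,\Sigma(\btheta)D\psi^\top]=\cI_\psi(\btheta)$ after multiplying by $n$, because $n\Cov(N_n/n)=\Sigma(\btheta)$.

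For the remainder, we need $n\E\|r_n\|^2\to0$ uniformly in $\btheta$. Bound $n\E\|r_n\|^2\le \omega(\epsilon)^2 n\E\|N_n/n-\btheta\|^2+4L_\psi^2 n\Pp(\|N_n/n-\btheta\|>\epsilon)$. The first term is at most $\omega(\epsilon)^2$, since $n\E\|N_n/n-\btheta\|^2=1-\|\btheta\|^2\le1$. The second term is exponentially small uniformly in $\btheta$ by Hoeffding. The cross term is then controlled by Cauchy--Schwarz. Letting $n\to\infty$ and then $\epsilon\to0$ gives (b).

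Combining (a) and (b) by the triangle inequality proves the lemma.
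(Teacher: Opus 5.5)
Your route is close to the paper's, but the step you flag as the main obstacle is resolved incorrectly. On the typical set your bound is $dL_\psi^2C_{\mathsf K}\,n^{2\alpha-1}(n^{-1/2}+n^{\alpha-1})$. The second piece is $n^{2\alpha-1}\cdot n^{\alpha-1}=n^{3\alpha-2}$, which vanishes only when $\alpha<2/3$. Of the two conditions you wrote down, $2\alpha-3/2<0$ gives $\alpha<3/4$, but $3\alpha-2<0$ gives $\alpha<2/3$. So for $\alpha\in[2/3,3/4)$ your typical-set bound does not tend to zero, and for $\alpha>2/3$ it diverges. The fix is to take $\alpha\in(1/2,2/3)$, which is the range the paper uses. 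Lemma~\ref{lem:uniform-wss-mult-comparison} permits any $\alpha\in(1/2,1)$, so nothing else changes.

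There is a second, smaller issue. Under $M^{(n)}_{\btheta}$ the frequency vector $N_n/n$ need not be ordered. Assumption~\ref{ass:psi}, however, defines $\psi$ and the constant $L_\psi$ only on a neighbourhood of $\Theta_{\downarrow}$. Three steps therefore use $\psi$ where it is not given:
\begin{enumerate}
\item $g_{\btheta}(k)$ at atypical unordered $k$;
\item the bound $g_{\btheta}\le 2nL_\psi^2$ under the multinomial law;
\item the bound $\|r_n\|\le 2L_\psi\|N_n/n-\btheta\|$ in step (b).
\end{enumerate}
This is easy to repair in either of two ways. For $n$ large, uniformly over $\mathsf K$, every $k\in\mathcal T_{n,\alpha}(\btheta)$ has $k/n\in\Theta_{\mathrm{reg}}$. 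So you can compare the two laws only on the typical set and bound the atypical WSS mass directly, since $\Lambda_n/n\in\Theta_{\downarrow}$ always. Alternatively, replace $\psi$ by $\psi\circ\mathrm{sort}$. Sorting is $1$-Lipschitz in $\ell_2$, so this map is $L_\psi$-Lipschitz on the whole probability simplex; it coincides with $\psi$ on $\Theta_{\downarrow}$ and is $C^1$ near $\mathsf K$.

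The paper's order of operations avoids the domain question entirely. It transfers only second moments, through the polynomial test functions $F^{\btheta}_{n,ij}(k)=n(k_i/n-\theta_i)(k_j/n-\theta_j)$. These are defined on all of $\mathbb L_{n,d}$ and have multinomial mean exactly $\Sigma(\btheta)_{ij}$. This yields $n\,\E^{\WSS}_{\btheta}[HH^\top]\to\Sigma(\btheta)$ uniformly on $\mathsf K$, where $H:=\Lambda_n/n-\btheta$. The paper then runs the Taylor and modulus-of-continuity argument directly under the WSS law, where $\Lambda_n/n\in\Theta_{\downarrow}$ automatically and the tail of $\|H\|$ is controlled by \eqref{eq:uniform-typical-tail}. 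Your version instead transfers the full nonlinear risk and linearizes in the multinomial model, using Hoeffding for the tail. Once the two points above are corrected, it is an equally short and valid alternative.
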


The preceding lemma is uniform only on regular compact sets. The proof is deferred to Appendix~\ref{sec:tech_lem}. To
integrate its pointwise consequence against priors that may approach
the singular strata, we use the following global domination, which holds for every \(n\) and every \(\btheta\in\Theta_{\downarrow}\),
\begin{equation}
    \E_{\btheta}^{\WSS}
      \|\widehat{\btheta}_n-\btheta\|^2
    \leq\frac dn.
\label{eq:ow-spectrum-bound}
\end{equation}
The assertion is Theorem~1.1 of
\cite{odonnell2016efficient}.
Consequently, from the global
Lipschitz bound in \eqref{eq:psi-lipschitz-constant}
\begin{equation}
    n\,\E_{\btheta}^{\WSS}
    \|\psi(\widehat{\btheta}_n)-\psi(\btheta)\|^2
    \leq dL_\psi^2
\label{eq:ow-functional-bound}
\end{equation}
for every \(n\) and \(\btheta\in\Theta_{\downarrow}\).

For the upper bound, consider the following plug-in estimator based on WSS
\[
    \widehat\psi_n:=\psi\!\left(\frac{\Lambda_n}{n}\right).
\]
By optimality of the Bayes rule,
\[
    r_{n,\psi}^{\WSS}(\Pi)
    \leq
    \int_{\Theta_{\downarrow}}
      \E_{\btheta}^{\WSS}
      \|\widehat\psi_n-\psi(\btheta)\|^2
      \,\Pi(d\btheta).
\]
Define
\[
    g_n(\btheta)
    :=n\,\E_{\btheta}^{\WSS}
      \|\widehat\psi_n-\psi(\btheta)\|^2.
\]
For every fixed \(\btheta\in\Theta_{\mathrm{reg}}\),
Lemma~\ref{lem:uniform-wss-functional-risk}, applied to
\(\mathsf K=\{\btheta\}\), gives
\[
    g_n(\btheta)\rightarrow\cI_\psi(\btheta).
\]
Assumption~\ref{ass:prior} implies
\(\Pi(\Theta_{\mathrm{reg}})=1\), while \eqref{eq:ow-functional-bound} gives
\[
    0\leq g_n(\btheta)\leq dL_\psi^2
\]
for every \(n\) and \(\btheta\). Dominated convergence therefore
yields
\[
    \limsup_{n\to\infty}n\,r_{n,\psi}^{\WSS}(\Pi)
    \leq
    \int_{\Theta_{\downarrow}}
      \cI_\psi(\btheta)\,\Pi(d\btheta).
\]
Together with \eqref{eq:wss-bayes-lower-bound}, this proves
\eqref{eq:wss-bayes-asymptotics}.
\end{proof}

\begin{proof}[Proof of Theorem \ref{thm:hunt-stein-thermal}]
Write
$V_w:=D(w)^{\otimes n},
    w\in\bbC.$
Recall that the covariance relation for the \(n\)-copy displaced thermal model is
\begin{equation}
    V_w\rho_{z,N}^{\otimes n}V_w^\dagger
    =
    \rho_{z+w,N}^{\otimes n},
    \qquad
    z,w\in\bbC,\quad N>0.
    \label{eq:n-copy-displacement-covariance}
\end{equation}
For a decision POVM \(D\), define the displacement action by
\begin{equation}
    (w\cdot D)(B)
    :=
    V_w^\dagger D(B)V_w,
    \qquad
    B\in\mathcal A.
    \label{eq:displacement-action-decision-povm}
\end{equation}
Accordingly,
$D\in\cM_{\mathrm{inv}}^{(n)}$
if and only if
\begin{equation}
    V_w^\dagger D(B)V_w=D(B)
    \qquad
    \text{for every }w\in\bbC
    \text{ and }B\in\mathcal A.
    \label{eq:decision-povm-invariance}
\end{equation}
First we consider asymptotically invariant probability measures in $\bbC$ similar to the setup considered in \cite{KumagaiHayashi2013}.

Identify \(\bbC\) with \(\bbR^2\). For \(r>0\), set
$F_r:=[-r,r]^2$, 
and let \(\mu_r\) be normalized Lebesgue measure on \(F_r\):
\begin{equation}
    \mu_r(B)
    :=
    \frac{\operatorname{Leb}(B\cap F_r)}
         {\operatorname{Leb}(F_r)}
    =
    \frac{\operatorname{Leb}(B\cap F_r)}
         {4r^2}.
    \label{eq:normalized-folner-measure}
\end{equation}

For every fixed \(v\in\bbC\) observe that
\begin{equation}
    \left\|
        \mu_r(\,\cdot-v)-\mu_r
    \right\|_{\mathrm{TV}}
    \rightarrow0
    \qquad
    \text{as }r\rightarrow\infty.
    \label{eq:folner-property}
\end{equation}
Indeed, both measures have constant density \(1/(4r^2)\) on translates of
the square \(F_r\), and hence
\begin{equation}
    \left\|
        \mu_r(\,\cdot-v)-\mu_r
    \right\|_{\mathrm{TV}}
    \leq
    \frac{
        \operatorname{Leb}
        \bigl(
            F_r\triangle(F_r-v)
        \bigr)
    }{4r^2}.
    \label{eq:folner-symmetric-difference}
\end{equation}
For fixed \(v\), the symmetric difference in the numerator has area of
order \(r\), whereas the denominator has order \(r^2\). This proves
\eqref{eq:folner-property}.

In particular, if \(r_j\to\infty\), set
\(\nu_j:=\mu_{r_j}\). For fixed \(v\in\bbC\), define the translated
measure
\[
    \nu_j^{\,v}(B):=\nu_j(B-v),
    \qquad B\in\mathcal B(\bbC).
\]
By \eqref{eq:folner-property},
\[
    \|\nu_j^{\,v}-\nu_j\|_{\mathrm{TV}}
    \rightarrow0.
\]

Consequently, for every Borel measurable
\(q:\bbC\to[0,1]\),
\begin{align}
&\left|
    \int_{\bbC}q(w+v)\,\nu_j(dw)
    -
    \int_{\bbC}q(w)\,\nu_j(dw)
\right|
\nonumber\\
=&
\left|
    \int_{\bbC}q(w)\,\nu_j^{\,v}(dw)
    -
    \int_{\bbC}q(w)\,\nu_j(dw)
\right|
\nonumber\\
\leq &
\|\nu_j^{\,v}-\nu_j\|_{\mathrm{TV}}
\rightarrow0.
\label{eq:bounded-function-asymptotic-invariance}
\end{align}

By an affine rescaling, the same conclusion holds for every bounded
real-valued Borel function \(q\). Hence it holds for every bounded
complex-valued Borel function as well. This property will later be used to show invariance of a limiting POVM.

Next, let
 $\mathcal K_n:=\cH^{\otimes n}.$
For a POVM \(D\) on \(\mathsf A\), define
\begin{equation}
    \Phi_D(f)
    :=
    \int_{\mathsf A}f(a)\,D(da),
    \qquad
    f\in C(\mathsf A).
    \label{eq:povm-positive-unital-map}
\end{equation}
Then
$\Phi_D:C(\mathsf A)\rightarrow\mathcal L(\mathcal K_n)$
is a positive unital linear map. In particular the following holds:
\begin{equation}
    f\geq0
    \quad\implies\quad
    \Phi_D(f)\geq0,
    \qquad
    \Phi_D(1)=I,  \qquad \|\Phi_D(f)\|
    \leq
    \|f\|_\infty.
    \label{eq:positive-unital-properties}
\end{equation}

We equip the space of POVMs with the topology of pointwise ultraweak
convergence of their associated positive unital maps. Thus
$ D_j\rightarrow D$
means that
\begin{equation}
    \Tr\!\left[
        X\Phi_{D_j}(f)
    \right]
    \rightarrow
    \Tr\!\left[
        X\Phi_D(f)
    \right]
    \label{eq:povm-topology}
\end{equation}
for every \(f\in C(\mathsf A)\) and every
\(X\in\mathcal T_1(\mathcal K_n)\).

Now let $D$ be an arbitrary decision POVM on $\mathsf A$.
Before defining its displacement averages, we note that the required
operator-valued integrals are well defined in the ultraweak sense.
Indeed, the Weyl representation $w\mapsto D(w)$ is strongly continuous
(see, e.g., \cite[Chapter~1, \S1.3]{Folland1989})
and hence so is
\[
    w\mapsto V_w=D(w)^{\otimes n}.
\]
For every $X\in\mathcal T_1(\mathcal K_n)$,
\[
    \|V_wXV_w^\dagger-V_{w_0}XV_{w_0}^\dagger\|_1
    \rightarrow0
    \qquad (w\to w_0).
\]
Indeed, this follows first for rank-one operators from
$\||u\rangle\langle v|\|_1=\|u\|\,\|v\|$, then for finite-rank
operators by linearity, and finally for arbitrary trace-class operators
by trace-norm approximation and invariance of the trace norm under
unitary conjugation. Therefore, for every
$A\in\mathcal L(\mathcal K_n)$,

\[
\left|
    \Tr[XV_w^\dagger A V_w]
    -
    \Tr[XV_{w_0}^\dagger A V_{w_0}]
\right| \leq
\|V_wXV_w^\dagger-V_{w_0}XV_{w_0}^\dagger\|_1\,\|A\|
\rightarrow0.
\]
Thus $w\mapsto V_w^\dagger A V_w$ is ultraweakly continuous, and hence
ultraweakly Borel measurable.

Define
\begin{equation}
    D_j(B)
    :=
    \int_{\bbC}
        V_w^\dagger D(B)V_w\,\nu_j(dw),
    \qquad
    B\in\mathcal A,
    \label{eq:asymptotically-averaged-povm}
\end{equation}
where the integral is understood in the ultraweak sense.

The following lemma establishes the existence of a convergent subsequence of $D_{j}$; this will be crucial for the invariant-risk reductions that follow.

\begin{lemma}\label{seq_conv_POVM}
There exists a subsequence of $D_j$ that converges to an invariant POVM $\bar{D}$.
\end{lemma}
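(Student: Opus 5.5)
We view each $D_j$ through its positive unital map $\Phi_{D_j}:C(\mathsf A)\to\mathcal L(\mathcal K_n)$, extract a limit by compactness, and check that the limit is again a POVM and is invariant.

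\emph{Step 1: compactness.} Since $\mathsf A$ is compact metric, $C(\mathsf A)$ is separable; fix a countable dense set $\{f_k\}$. Each $\Phi_{D_j}(f_k)$ lies in the ball of radius $\|f_k\|_\infty$ of $\mathcal L(\mathcal K_n)$, by \eqref{eq:positive-unital-properties}. That ball is ultraweakly compact and metrizable, because $\mathcal T_1(\mathcal K_n)$ is separable. A diagonal argument gives a subsequence $j_l$ such that $\Phi_{D_{j_l}}(f_k)$ converges ultraweakly for every $k$. The uniform bound $\|\Phi_{D_j}(f)-\Phi_{D_j}(g)\|\le\|f-g\|_\infty$ then yields ultraweak convergence for all $f\in C(\mathsf A)$. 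Call the limit $\Phi(f)$. It is linear, positive and unital, since these properties survive ultraweak limits.

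\emph{Step 2: representation as a POVM.} Fix a density operator $X$. The map $f\mapsto\Tr[X\Phi(f)]$ is a positive normalized functional on $C(\mathsf A)$, so by Riesz it equals integration against a probability measure $\mu_X$. Polarization extends this to a complex measure $\mu_{X}$ for every trace-class $X$, linear in $X$ and of total variation at most $4\|X\|_1$. For each Borel $B$, the map $X\mapsto\mu_X(B)$ is then a bounded linear functional on $\mathcal T_1$, hence given by a unique operator $\bar D(B)\in\mathcal L(\mathcal K_n)$. Positivity, normalization and weak $\sigma$-additivity of $\bar D$ follow from the corresponding properties of $\mu_X$. By construction $\Phi_{\bar D}=\Phi$, so $D_{j_l}\to\bar D$ in the topology \eqref{eq:povm-topology}. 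I expect this step to be the main technical obstacle, because the limit must be an honest POVM and not merely a finitely additive object. Compactness of $\mathsf A$ is what makes Riesz applicable here.

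\emph{Step 3: invariance.} Fix $v\in\bbC$, $f\in C(\mathsf A)$ and trace-class $X$. By \eqref{eq:asymptotically-averaged-povm} together with $V_v^\dagger V_w^\dagger=V_{w+v}^\dagger$ (the Weyl phases cancel under conjugation),
\[
\Tr[XV_v^\dagger\Phi_{D_j}(f)V_v]=\int_{\bbC} q(w+v)\,\nu_j(dw),\qquad q(w):=\Tr[XV_w^\dagger\Phi_D(f)V_w].
\]
The function $q$ is bounded by $\|X\|_1\|f\|_\infty$ and is continuous, hence Borel, by the ultraweak continuity established above. By \eqref{eq:bounded-function-asymptotic-invariance}, the difference between this quantity and $\Tr[X\Phi_{D_j}(f)]$ tends to zero.

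Along the subsequence, $\Tr[X\Phi_{D_{j_l}}(f)]\to\Tr[X\Phi(f)]$. Moreover, $\Tr[XV_v^\dagger\Phi_{D_{j_l}}(f)V_v]=\Tr[V_vXV_v^\dagger\,\Phi_{D_{j_l}}(f)]\to\Tr[XV_v^\dagger\Phi(f)V_v]$. Hence $V_v^\dagger\Phi(f)V_v=\Phi(f)$ for all $f$. By uniqueness of the measures $\mu_X$ in Step 2, this gives $V_v^\dagger\bar D(B)V_v=\bar D(B)$ for every Borel $B$. So $\bar D\in\cM_{\mathrm{inv}}^{(n)}$.
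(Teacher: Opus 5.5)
Your proposal is correct and follows essentially the paper's argument: a diagonal-subsequence compactness argument for the positive unital maps $\Phi_{D_j}$, identification of the limit map with a POVM $\bar D$, and invariance obtained by applying \eqref{eq:bounded-function-asymptotic-invariance} to $q(w)=\Tr[XV_w^\dagger\Phi_D(f)V_w]$ and then using uniqueness of the representing measures. The only differences are minor. You build $\bar D$ by hand from the scalar Riesz theorem and trace-class duality, where the paper cites the operator-valued Riesz--Markov--Kakutani theorem for existence and uniqueness. You also take for granted that each $D_j$ is itself a POVM, which the paper checks explicitly using monotone convergence.
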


\medskip
\noindent
\textbf{Reduction for the minimax risk}\\
\medskip
\noindent

From \eqref{eq:n-copy-displacement-covariance},
\begin{equation}
    V_w
    \rho_{z,N}^{\otimes n}
    V_w^\dagger
    =
    \rho_{z+w,N}^{\otimes n}.
    \label{eq:risk-covariance}
\end{equation}
Using the definition of \(D_j\), Tonelli's theorem, and cyclicity of the
trace, we obtain
\begin{align}
    R_{z,N}(D_j)
    &=
    \int_{\mathsf A}
        L(N,a)\,
        \Tr\!\left[
            \rho_{z,N}^{\otimes n}
            D_j(da)
        \right]
    \nonumber\\
    &=
    \int_{\mathsf A}
        L(N,a)
        \int_{\bbC}
            \Tr\!\left[
                \rho_{z,N}^{\otimes n}
                V_w^\dagger D(da)V_w
            \right]
            \nu_j(dw)
    \nonumber\\
    &=
    \int_{\bbC}
        \int_{\mathsf A}
            L(N,a)\,
            \Tr\!\left[
                V_w
                \rho_{z,N}^{\otimes n}
                V_w^\dagger
                D(da)
            \right]
        \nu_j(dw)
    \nonumber\\
    &=
    \int_{\bbC}
        \int_{\mathsf A}
            L(N,a)\,
            \Tr\!\left[
                \rho_{z+w,N}^{\otimes n}
                D(da)
            \right]
        \nu_j(dw)
    \nonumber\\
    &=
    \int_{\bbC}
        R_{z+w,N}(D)\,
        \nu_j(dw).
    \label{eq:averaged-risk-identity}
\end{align}

Therefore, for every \(z\in\bbC\) and \(N\in K\),
\begin{equation}
    R_{z,N}(D_j)
    \leq
    \sup_{u\in\bbC}R_{u,N}(D)
    \leq
    \sup_{\substack{u\in\bbC\\M\in K}}
    R_{u,M}(D).
    \label{eq:averaged-risk-bound}
\end{equation}
Taking the supremum over \(z\in\bbC\) and \(N\in K\) gives
\begin{equation}
    \sup_{\substack{z\in\bbC\\N\in K}}
    R_{z,N}(D_j)
    \leq
    \sup_{\substack{z\in\bbC\\N\in K}}
    R_{z,N}(D).
    \label{eq:averaged-maximal-risk}
\end{equation}
Fix \(z\in\bbC\) and \(N\in K\). Since \(L\) is continuous on
\(K\times\mathsf A\), the function
$f_N(a):=L(N,a)$
belongs to \(C(\mathsf A)\). By
\eqref{eq:averaged-povm-convergence}, from the proof of Lemma \ref{seq_conv_POVM}, applied with
$X=\rho_{z,N}^{\otimes n}$ and 
    $f=f_N$
we obtain
\begin{equation}
    R_{z,N}(D_{j_\ell})
    =
    \Tr\!\left[
        \rho_{z,N}^{\otimes n}
        \Phi_{D_{j_\ell}}(f_N)
    \right]
    \rightarrow
    \Tr\!\left[
        \rho_{z,N}^{\otimes n}
        \Phi_{\overline D}(f_N)
    \right]
    =
    R_{z,N}(\overline D).
    \label{eq:direct-risk-convergence}
\end{equation}
Passing to the limit in \eqref{eq:averaged-risk-bound} therefore yields
\begin{equation}
    R_{z,N}(\overline D)
    \leq
    \sup_{\substack{u\in\bbC\\M\in K}}
    R_{u,M}(D)
    \label{eq:pointwise-limit-risk-bound}
\end{equation}
for every \(z\in\bbC\) and \(N\in K\). Taking the supremum over
\((z,N)\in\bbC\times K\), we obtain
\begin{equation}
    \sup_{\substack{z\in\bbC\\N\in K}}
    R_{z,N}(\overline D)
    \leq
    \sup_{\substack{z\in\bbC\\N\in K}}
    R_{z,N}(D).
    \label{eq:invariant-risk-improvement}
\end{equation}

Thus every action-valued POVM \(D\) admits a displacement-invariant
replacement \(\overline D\) whose maximal risk is no larger.

It follows that
\begin{equation}
    \inf_{D\in\cM_{\mathrm{inv}}^{(n)}}
    \sup_{\substack{z\in\bbC\\N\in K}}
    R_{z,N}(D)
    \leq
    \inf_D
    \sup_{\substack{z\in\bbC\\N\in K}}
    R_{z,N}(D).
    \label{eq:first-minimax-direction}
\end{equation}
The reverse inequality is immediate because
$\cM_{\mathrm{inv}}^{(n)}$
is a subclass of the class of all action-valued POVMs:
\begin{equation}
    \inf_D
    \sup_{\substack{z\in\bbC\\N\in K}}
    R_{z,N}(D)
    \leq
    \inf_{D\in\cM_{\mathrm{inv}}^{(n)}}
    \sup_{\substack{z\in\bbC\\N\in K}}
    R_{z,N}(D).
    \label{eq:second-minimax-direction}
\end{equation}
Combining \eqref{eq:first-minimax-direction} and
\eqref{eq:second-minimax-direction} proves
\begin{equation}
    \inf_D
    \sup_{\substack{z\in\bbC\\N\in K}}
    R_{z,N}(D)
    =
    \inf_{D\in\cM_{\mathrm{inv}}^{(n)}}
    \sup_{\substack{z\in\bbC\\N\in K}}
    R_{z,N}(D).
    \label{eq:minimax-first-equality-proved}
\end{equation}

Finally let \(D\in\cM_{\mathrm{inv}}^{(n)}\). Then
$
    V_z^\dagger D(B)V_z=D(B)
$ and 
using
$\rho_{z,N}^{\otimes n}
    =
    V_z\rho_{0,N}^{\otimes n}V_z^\dagger,
$ we obtain
\begin{align}
    R_{z,N}(D)
    &=
    \int_{\mathsf A}
        L(N,a)\,
        \Tr\!\left[
            V_z\rho_{0,N}^{\otimes n}V_z^\dagger
            D(da)
        \right]
    \nonumber\\
    &=
    \int_{\mathsf A}
        L(N,a)\,
        \Tr\!\left[
            \rho_{0,N}^{\otimes n}
            V_z^\dagger D(da)V_z
        \right]
    \nonumber\\
    &=
    \int_{\mathsf A}
        L(N,a)\,
        \Tr\!\left[
            \rho_{0,N}^{\otimes n}D(da)
        \right]
    \nonumber\\
    &=
    R_{0,N}(D).
    \label{eq:invariant-risk-independent-displacement}
\end{align}
Hence
\begin{equation}
    \sup_{\substack{z\in\bbC\\N\in K}}
    R_{z,N}(D)
    =
    \sup_{N\in K}R_{0,N}(D)
    \label{eq:invariant-maximal-risk}
\end{equation}
for every invariant \(D\). Combining
\eqref{eq:minimax-first-equality-proved} and
\eqref{eq:invariant-maximal-risk} proves
\eqref{eq:minimax-symmetry}.

\medskip
\noindent
\textbf{Bayes-minimax reduction.}
\medskip
\noindent

Let \(\Pi\) be a probability measure supported on \(K\), and define
\begin{equation}
    B_z(D)
    :=
    \int_K
        R_{z,N}(D)\,
        \Pi(dN).
    \label{eq:integrated-risk-at-z}
\end{equation}
Thus
\begin{equation}
    r_n^{\mathrm{BM}}(\Pi)
    =
    \inf_D\sup_{z\in\bbC}B_z(D).
    \label{eq:bayes-minimax-integrated-risk}
\end{equation}

By \eqref{eq:averaged-risk-identity} and Tonelli's theorem,
\begin{align}
    B_z(D_j)
    &=
    \int_K
        R_{z,N}(D_j)\,
        \Pi(dN)
    \nonumber\\
    &=
    \int_K
        \int_{\bbC}
            R_{z+w,N}(D)\,
            \nu_j(dw)\,
        \Pi(dN)
    \nonumber\\
    &=
    \int_{\bbC}
        \left\{
            \int_K
                R_{z+w,N}(D)\,
                \Pi(dN)
        \right\}
        \nu_j(dw)
    \nonumber\\
    &=
    \int_{\bbC}
        B_{z+w}(D)\,
        \nu_j(dw).
    \label{eq:averaged-integrated-risk}
\end{align}
Consequently,
\begin{equation}
    B_z(D_j)
    \leq
    \sup_{u\in\bbC}B_u(D)
    \label{eq:averaged-integrated-risk-bound}
\end{equation}
for every \(z\in\bbC\).

Since \(K\times\mathsf A\) is compact and \(L\) is continuous, there is a
finite constant
\begin{equation}
    C_L
    :=
    \sup_{\substack{N\in K\\a\in\mathsf A}}
    L(N,a)
    <
    \infty.
    \label{eq:uniform-loss-bound}
\end{equation}
Therefore,
\begin{equation}
    0
    \leq
    R_{z,N}(D_{j_\ell})
    \leq
    C_L
    \label{eq:uniform-risk-bound}
\end{equation}
for every \(z\in\bbC\), \(N\in K\), and \(\ell\).

For every fixed \(z\) and \(N\),
\eqref{eq:direct-risk-convergence} gives
\[
    R_{z,N}(D_{j_\ell})
    \rightarrow
    R_{z,N}(\overline D).
\]
The uniform bound \eqref{eq:uniform-risk-bound} permits the use of dominated
convergence under the \(\Pi\)-integral. Hence
\begin{align}
    B_z(D_{j_\ell})
    &=
    \int_K
        R_{z,N}(D_{j_\ell})\,
        \Pi(dN)
    \nonumber\\
    &\rightarrow
    \int_K
        R_{z,N}(\overline D)\,
        \Pi(dN)
    \nonumber\\
    &=
    B_z(\overline D).
    \label{eq:integrated-risk-convergence}
\end{align}

Passing to the limit in
\eqref{eq:averaged-integrated-risk-bound} gives
\begin{equation}
    B_z(\overline D)
    \leq
    \sup_{u\in\bbC}B_u(D)
    \label{eq:limit-integrated-risk-bound}
\end{equation}
for every \(z\in\bbC\). Taking the supremum over \(z\),
\begin{equation}
    \sup_{z\in\bbC}B_z(\overline D)
    \leq
    \sup_{z\in\bbC}B_z(D).
    \label{eq:bayes-minimax-invariant-improvement}
\end{equation}
Thus every decision POVM has a displacement-invariant replacement with no
larger Bayes-minimax risk. It follows, exactly as in the minimax case, that
\begin{equation}
    \inf_D\sup_{z\in\bbC}B_z(D)
    =
    \inf_{D\in\cM_{\mathrm{inv}}^{(n)}}
    \sup_{z\in\bbC}B_z(D).
    \label{eq:bayes-minimax-restriction}
\end{equation}

Finally, if \(D\in\cM_{\mathrm{inv}}^{(n)}\), then
\eqref{eq:invariant-risk-independent-displacement} gives
\[
    R_{z,N}(D)=R_{0,N}(D)
\]
for every \(z\in\bbC\) and \(N\in K\). Therefore,
\begin{align}
    B_z(D)
    &=
    \int_K
        R_{z,N}(D)\,
        \Pi(dN)
    \nonumber\\
    &=
    \int_K
        R_{0,N}(D)\,
        \Pi(dN),
    \label{eq:invariant-integrated-risk}
\end{align}
which is independent of \(z\). Hence
\begin{equation}
    \sup_{z\in\bbC}B_z(D)
    =
    \int_K
        R_{0,N}(D)\,
        \Pi(dN).
    \label{eq:invariant-bayes-minimax-risk}
\end{equation}
Combining \eqref{eq:bayes-minimax-restriction} and
\eqref{eq:invariant-bayes-minimax-risk} proves
\eqref{eq:bayes-minimax-invariant}.
\end{proof}

\section{Proof of technical lemmas}{\label{sec:tech_lem}}

\begin{proof}[Proof of Lemma \ref{phi_z_ext}]

Let
\[
\mathcal{E}_G
:=
\{E\in\mathcal{A}_G:0\leq E\leq I\}
\]
denote the convex set of invariant effects. Uniqueness of the coefficients implies that $\varphi_z$ is affine on the
invariant effect space. Indeed, for invariant effects $E_1,E_2$ and
$t\in[0,1]$, under the identifiability condition, we have,
\[
    \varphi_z(tE_1+(1-t)E_2)
    =
    t\varphi_z(E_1)+(1-t)\varphi_z(E_2).
\]
Moreover,
\[
    \varphi_z(0)=0,
    \qquad
    \varphi_z(I)=1,
    \qquad
    0\leq \varphi_z(E)\leq1.
\]

 Since $\varphi_z$ is affine and
$\varphi_z(0)=0$, for every $E\in\mathcal{E}_G$ and $t\in[0,1]$ we have
\[
\varphi_z(tE)
=
\varphi_z\bigl(tE+(1-t)0\bigr)
=
t\varphi_z(E).
\]
Furthermore, if $E,F,E+F\in\mathcal{E}_G$, then
\[
\frac{1}{2}\varphi_z(E+F)
=
\varphi_z\left(\frac{E+F}{2}\right)
=
\frac{1}{2}\varphi_z(E)+\frac{1}{2}\varphi_z(F),
\]
and therefore
\[
\varphi_z(E+F)=\varphi_z(E)+\varphi_z(F).
\]

We first extend $\varphi_z$ to the positive cone
$(\mathcal{A}_G)_+$. For $A\in(\mathcal{A}_G)_+$, choose $r>0$ such that
$A\leq rI$, and define
\[
\widetilde{\varphi}_z(A)
:=
r\,\varphi_z\left(\frac{A}{r}\right).
\]
This definition is independent of the choice of $r$. Indeed, if $s\geq r$,
then
\[
\varphi_z\left(\frac{A}{s}\right)
=
\varphi_z\left(\frac{r}{s}\frac{A}{r}\right)
=
\frac{r}{s}\varphi_z\left(\frac{A}{r}\right),
\]
so
\[
s\,\varphi_z\left(\frac{A}{s}\right)
=
r\,\varphi_z\left(\frac{A}{r}\right).
\]

The map $\widetilde{\varphi}_z$ is additive on $(\mathcal{A}_G)_+$.
Indeed, for $A,B\in(\mathcal{A}_G)_+$, choose $r>0$ such that
$A+B\leq rI$. Then
\[
\begin{aligned}
\widetilde{\varphi}_z(A+B)
&=
r\,\varphi_z\left(\frac{A+B}{r}\right)\\
&=
r\left[
\varphi_z\left(\frac{A}{r}\right)
+
\varphi_z\left(\frac{B}{r}\right)
\right]\\
&=
\widetilde{\varphi}_z(A)+\widetilde{\varphi}_z(B).
\end{aligned}
\]
It is also positively homogeneous.

Every self-adjoint $X\in\mathcal{A}_G$ can be written as
\[
X=A-B
\]
for some $A,B\in(\mathcal{A}_G)_+$. Define
\[
\widetilde{\varphi}_z(X)
:=
\widetilde{\varphi}_z(A)-\widetilde{\varphi}_z(B).
\]
This is well defined: if $X=A-B=C-D$, then $A+D=B+C$, and additivity on
the positive cone gives
\[
\widetilde{\varphi}_z(A)+\widetilde{\varphi}_z(D)
=
\widetilde{\varphi}_z(B)+\widetilde{\varphi}_z(C).
\]
Thus
\[
\widetilde{\varphi}_z(A)-\widetilde{\varphi}_z(B)
=
\widetilde{\varphi}_z(C)-\widetilde{\varphi}_z(D).
\]

Finally, for $X,Y\in(\mathcal{A}_G)_{\mathrm{sa}}$, define
\[
\widetilde{\varphi}_z(X+iY)
:=
\widetilde{\varphi}_z(X)
+i\widetilde{\varphi}_z(Y).
\]
This yields a complex-linear functional on $\mathcal{A}_G$. Since
\[
\widetilde{\varphi}_z(A)\geq 0
\qquad
\text{for every } A\in(\mathcal{A}_G)_+,
\]
the extension is positive. Moreover,
\[
\widetilde{\varphi}_z(I)
=
\varphi_z(I)
=
1,
\]
so it is unital. By construction, it agrees with $\varphi_z$ on
$\mathcal{E}_G$.
\end{proof}

\begin{lemma}[Posterior covariance in the multinomial experiment]
\label{lem:multinomial-posterior-covariance}
Let $d\geq2$ be fixed, and define
\[
    \Theta_{\downarrow}
    :=
    \left\{
        \theta=(\theta_1,\ldots,\theta_d)^\top\in[0,1]^d:
        \theta_1\geq\cdots\geq\theta_d,\quad
        \sum_{i=1}^d\theta_i=1
    \right\}.
\]
Let
\[
    \Theta_{\mathrm{reg}}
    :=
    \left\{
        \theta\in\Theta_{\downarrow}:
        \theta_1>\cdots>\theta_d>0
    \right\}.
\]
Fix $\theta_0\in\Theta_{\mathrm{reg}}$. Write
\[
    \vartheta=(\theta_1,\ldots,\theta_{d-1})^\top,
    \qquad
    \theta_d=1-\sum_{i=1}^{d-1}\vartheta_i,
\]
and let $\vartheta_0$ denote the intrinsic coordinates of $\theta_0$.
Suppose that
\[
    N_n=(N_{n,1},\ldots,N_{n,d})
    \sim\operatorname{Mult}_d(n,\theta_0).
\]
Let $\Pi$ be a prior on $\Theta_{\downarrow}$ that is absolutely
continuous with respect to $(d-1)$-dimensional Lebesgue measure on the
affine hyperplane containing the simplex. Assume that its density has a
version that is continuous and strictly positive at $\theta_0$.
Define
\[
    V_{\vartheta_0}
    :=
    \operatorname{diag}(\theta_{0,1},\ldots,\theta_{0,d-1})
    -\vartheta_0\vartheta_0^\top.
\]
Then
\[
    \mathbb E_{\theta_0}
    \left[
        \left\|
            n\operatorname{Var}(\vartheta\mid N_n)
            -V_{\vartheta_0}
        \right\|_{\mathrm{op}}
    \right]
    \rightarrow0.
\]
\end{lemma}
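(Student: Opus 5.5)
The plan is a Laplace-type (moment-form Bernstein--von Mises) argument for the conditional-multinomial exponential family, carried out on a high-probability event and completed by dominated convergence. Write the posterior density of $\vartheta$ given $N_n$ as proportional to $\pi(\vartheta)\exp\{\ell_n(\vartheta)\}$, where
\[
\ell_n(\vartheta)=n\sum_{i=1}^d\widehat\theta_i\log\theta_i,\qquad \widehat\theta=N_n/n .
\]
Let $\widehat\vartheta$ denote the intrinsic coordinates of $\widehat\theta$. Since $\theta_0\in\Theta_{\mathrm{reg}}$, there is $\delta>0$ such that the closed ball of radius $3\delta$ about $\vartheta_0$ has three properties: it lies in the interior of $\Theta_{\downarrow}$ (the ordering constraint is locally inactive), all coordinates are bounded below by some $c_0>0$, and $\pi$ is bounded above and below there (continuity and positivity at $\theta_0$, after shrinking $\delta$). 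Let $A_n:=\{\|\widehat\vartheta-\vartheta_0\|\leq\delta\}$. By Hoeffding, $\Pp_{\theta_0}(A_n^c)\leq C e^{-cn\delta^2}$. On $A_n^c$ we use the trivial bound $\|n\Var(\vartheta\mid N_n)\|_{\mathrm{op}}\leq 2n$, since $\vartheta$ ranges in a set of diameter at most $\sqrt2$. Hence the contribution of $A_n^c$ to the expectation is $O(ne^{-cn\delta^2})\to0$, and it suffices to work on $A_n$.

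On $A_n$ we substitute $h=\sqrt n(\vartheta-\widehat\vartheta)$. The function $\ell_n$ is maximised at $\widehat\vartheta$, and
\[
\ell_n(\vartheta)-\ell_n(\widehat\vartheta)=-n\,\mathrm{KL}(\widehat\theta\,\|\,\theta).
\]
We split the $h$-integrals into a near region $\|h\|\leq\varepsilon\sqrt n$ and a far region.
\begin{itemize}
\item In the near region, a Taylor expansion gives
\[
\ell_n-\ell_n(\widehat\vartheta)=-\tfrac12 h^\top V_{\widehat\vartheta}^{-1}h+O\bigl(\|h\|^3/\sqrt n\bigr),
\]
with the constant uniform over $A_n$ because the coordinates are bounded below by $c_0$. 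Uniform strong concavity on the $3\delta$-ball then gives the Gaussian domination $\exp\{\ell_n-\ell_n(\widehat\vartheta)\}\leq e^{-c\|h\|^2}$. The prior factor satisfies $\pi(\widehat\vartheta+h/\sqrt n)\to\pi(\vartheta_0)$ and is bounded there.
\item In the far region, $\mathrm{KL}(\widehat\theta\|\theta)\geq c\varepsilon^2$ uniformly on $A_n$. Since $\pi$ integrates to one, the far region contributes at most $n^{(d-1)/2}e^{-cn\varepsilon^2}\cdot n$ to the unnormalised zeroth, first and second moments in $h$. This needs no boundedness of $\pi$ away from $\theta_0$, which is exactly what the weak prior hypothesis requires.
\end{itemize}

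With these bounds, dominated convergence applied along any sequence with $\widehat\vartheta\to\vartheta_0$ gives three limits: the normalising integral tends to $\pi(\vartheta_0)(2\pi)^{(d-1)/2}\det(V_{\vartheta_0})^{1/2}$, the posterior mean of $h$ tends to $0$, and the posterior second moment of $h$ tends to $V_{\vartheta_0}$. Since $n\Var(\vartheta\mid N_n)$ equals the second moment of $h$ minus the outer product of its mean, it follows that $n\Var(\vartheta\mid N_n)\to V_{\vartheta_0}$. The same bounds show that $\|n\Var(\vartheta\mid N_n)\|_{\mathrm{op}}\mathbf 1_{A_n}$ is bounded by a deterministic constant. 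Because $\widehat\vartheta\to\vartheta_0$ almost surely (strong law), combining the pointwise convergence with this bound through dominated convergence for the outer expectation yields the $L^1$ statement.

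The main obstacle is the uniform Gaussian domination in the near region: the curvature of $\ell_n/n$ must be bounded below uniformly over both $\widehat\vartheta$ in the $\delta$-ball and $\vartheta$ in the $\varepsilon$-ball. I would obtain this from the explicit Hessian $-\sum_i\widehat\theta_i\theta_i^{-2}e_ie_i^\top$ pulled back through $A$, whose eigenvalues are controlled by $c_0$ and by $\widehat\theta_i\geq\theta_{0,i}-\delta$. The second delicate point is choosing $\varepsilon\leq\delta$ so that both the cubic remainder and the prior-continuity window are controlled simultaneously.
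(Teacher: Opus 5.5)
Your proposal is correct and follows the paper's proof in all essentials: a Hoeffding good event with the trivial $O(n)$ bound off it, the exact identity $\ell_n(\vartheta)-\ell_n(\widehat\vartheta)=-n\,\mathrm{KL}(\widehat\theta\,\|\,\theta)$, a local Taylor/Laplace expansion around $\widehat\vartheta$ with a Gaussian envelope and no truncation by the ordering constraints, an exponentially small far-region contribution that uses only $\int\pi=1$, and a deterministic bound on the good event to pass to $L^1$. The differences are minor: the paper gets the global Gaussian envelope directly from Pinsker's inequality rather than from strong concavity of the explicit Hessian, and it organizes the limit through in-probability convergence of the local posterior density rather than through dominated convergence along deterministic sequences combined with the strong law.
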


\begin{proof}
Set $k:=d-1$ and define
\[
    \Omega_{\downarrow}
    :=
    \left\{
        \vartheta\in\mathbb R^k:
        \vartheta_1>\cdots>\vartheta_k>
        1-\sum_{i=1}^k\vartheta_i>0
    \right\}.
\]
The map
\[
    \theta(\vartheta)
    :=
    \left(
        \vartheta_1,\ldots,\vartheta_k,
        1-\sum_{i=1}^k\vartheta_i
    \right)^\top
\]
is an affine parametrization of $\Theta_{\mathrm{reg}}$.
Since $\Pi$ is absolutely continuous with respect to the Lebesgue measure, the complement $\Theta_{\downarrow}\setminus\Theta_{\mathrm{reg}}$
has $\Pi$-measure zero. Every posterior is absolutely
continuous with respect to $\Pi$; hence posterior integration may also be
restricted to $\Theta_{\mathrm{reg}}$.
Let $\widetilde\pi$ denote the induced prior density on
$\Omega_{\downarrow}$.

For $\vartheta\in\Omega_{\downarrow}$, the multinomial likelihood, up to
a factor independent of $\vartheta$, is
\[
    L_n(\vartheta)
    =
    \prod_{i=1}^k\vartheta_i^{N_{n,i}}
    \left(1-\sum_{i=1}^k\vartheta_i\right)^{N_{n,d}},
\]
with log-likelihood
\[
    \ell_n(\vartheta)
    =
    \sum_{i=1}^kN_{n,i}\log\vartheta_i
    +N_{n,d}\log\left(1-\sum_{i=1}^k\vartheta_i\right).
\]
Write
\[
    \widehat\theta_n:=N_n/n,
    \qquad
    \widehat\vartheta_n
    :=(N_{n,1}/n,\ldots,N_{n,k}/n)^\top.
\]

Since $\theta_0\in\Theta_{\mathrm{reg}}$,
\[
    \delta_0
    :=
    \min\left\{
        \theta_{0,d},
        \theta_{0,1}-\theta_{0,2},\ldots,
        \theta_{0,d-1}-\theta_{0,d}
    \right\}>0.
\]
Choose $r>0$ sufficiently small that
$\overline{B(\vartheta_0,3r)}\subset\Omega_{\downarrow}$ and
\[
    0<c_\pi\leq\widetilde\pi(\vartheta)\leq C_\pi<\infty,
    \qquad
    \vartheta\in\overline{B(\vartheta_0,3r)}.
\]
Set $U:=B(\vartheta_0,2r)$ and choose $\varepsilon_0>0$ such that
$4\varepsilon_0<\delta_0$ and $\sqrt{k}\,\varepsilon_0<r$.
Define
\[
    G_n
    :=
    \left\{
        \max_{1\leq i\leq d}
        |\widehat\theta_{n,i}-\theta_{0,i}|
        \leq\varepsilon_0
    \right\}.
\]
On $G_n$,
\[
    \widehat\theta_{n,d}
    \geq\theta_{0,d}-\varepsilon_0>3\varepsilon_0,
    \qquad
    \widehat\theta_{n,i}-\widehat\theta_{n,i+1}
    \geq\theta_{0,i}-\theta_{0,i+1}-2\varepsilon_0>0
\]
for $1\leq i<d$. Thus $\widehat\theta_n\in\Theta_{\mathrm{reg}}$ and
$\|\widehat\vartheta_n-\vartheta_0\|<r$ on $G_n$. In particular,
$\widehat\vartheta_n$ lies in a fixed compact subset of
$\Omega_{\downarrow}$. Since the empirical proportions maximize the
multinomial likelihood over the full simplex, they also maximize it over
$\Omega_{\downarrow}$ on this event. Hoeffding's inequality gives
\begin{equation}
    \mathbb P_{\theta_0}(G_n^c)
    \leq2d\exp(-2n\varepsilon_0^2).
    \label{eq:good-event-probability}
\end{equation}

All likelihood-ratio and information-matrix calculations at
$\widehat\vartheta_n$ below are performed on $G_n$. Auxiliary quantities
defined only on this event may be extended arbitrarily to $G_n^c$ when
stating convergence in probability, since
$\mathbb P_{\theta_0}(G_n^c)\to0$. The posterior distribution itself is
always the actual posterior, including on $G_n^c$.

For $\vartheta\in\Omega_{\downarrow}$, on $G_n$,
\[
\begin{aligned}
    \ell_n(\vartheta)-\ell_n(\widehat\vartheta_n)
    &=
    n\sum_{i=1}^d\widehat\theta_{n,i}
    \log\frac{\theta_i(\vartheta)}{\widehat\theta_{n,i}}\\
    &=-nD\!\left(
        \widehat\theta_n\,\middle\|\,\theta(\vartheta)
    \right),
\end{aligned}
\]
where $D(p\|q):=\sum_{i=1}^dp_i\log(p_i/q_i)$ uses natural logarithms.
Consequently,
\begin{equation}
    \frac{L_n(\vartheta)}{L_n(\widehat\vartheta_n)}
    =
    \exp\left\{
        -nD\!\left(
            \widehat\theta_n\,\middle\|\,\theta(\vartheta)
        \right)
    \right\}.
    \label{eq:likelihood-kl}
\end{equation}
This identity supplies both the local quadratic approximation and the
posterior tail bounds.

Introduce the local coordinate
$h:=\sqrt n(\vartheta-\widehat\vartheta_n)$ and its domain
\[
    \mathcal H_n
    :=
    \left\{
        h\in\mathbb R^k:
        \widehat\vartheta_n+h/\sqrt n\in\Omega_{\downarrow}
    \right\}.
\]
For every fixed $M<\infty$ and all sufficiently large $n$, on $G_n$,
\[
    \widehat\vartheta_n+h/\sqrt n\in U
    \quad\text{whenever }\|h\|\leq M.
\]
In particular, $\{h:\|h\|\leq M\}\subset\mathcal H_n$. Thus the
ordering constraints do not truncate the local limit around $\theta_0$.

Define the population Fisher information in intrinsic coordinates by
\[
    I(\vartheta)
    :=
    \operatorname{diag}\left(
        \frac1{\vartheta_1},\ldots,\frac1{\vartheta_k}
    \right)
    +\frac1{\theta_d(\vartheta)}\mathbf1_k\mathbf1_k^\top,
    \qquad
    \theta_d(\vartheta):=1-\sum_{i=1}^k\vartheta_i.
\]
The observed information at the empirical proportions satisfies
\begin{equation}
\begin{aligned}
    -\frac1n\nabla^2\ell_n(\widehat\vartheta_n)
    &=I(\widehat\vartheta_n)\\
    &=
    \operatorname{diag}\left(
        \frac1{\widehat\theta_{n,1}},\ldots,
        \frac1{\widehat\theta_{n,k}}
    \right)
    +\frac1{\widehat\theta_{n,d}}\mathbf1_k\mathbf1_k^\top.
\end{aligned}
    \label{eq:empirical-information}
\end{equation}
On the fixed neighborhood $B(\vartheta_0,3r)$, the third derivatives of
$n^{-1}\ell_n$ are uniformly bounded, independently of the sample.
Since $\nabla\ell_n(\widehat\vartheta_n)=0$ on $G_n$, Taylor's theorem
gives, for each fixed $M<\infty$ and all sufficiently large $n$,
\begin{equation}
    \sup_{\|h\|\leq M}
    \left|
        \ell_n(\widehat\vartheta_n+h/\sqrt n)
        -\ell_n(\widehat\vartheta_n)
        +\frac12h^\top I(\widehat\vartheta_n)h
    \right|
    \leq\frac{CM^3}{\sqrt n}
    \rightarrow0
    \label{eq:local-likelihood-expansion}
\end{equation}
on $G_n$. Moreover, $\widehat\vartheta_n\to\vartheta_0$ in
$\mathbb P_{\theta_0}$-probability, so
$I(\widehat\vartheta_n)\to I(\vartheta_0)$ in probability, where
\begin{equation}
    I(\vartheta_0)
    =
    \operatorname{diag}\left(
        \frac1{\theta_{0,1}},\ldots,\frac1{\theta_{0,k}}
    \right)
    +\frac1{\theta_{0,d}}\mathbf1_k\mathbf1_k^\top.
    \label{eq:fisher-information}
\end{equation}
Its inverse is
\begin{equation}
    I(\vartheta_0)^{-1}
    =V_{\vartheta_0}
    =\operatorname{diag}(\theta_{0,1},\ldots,\theta_{0,k})
     -\vartheta_0\vartheta_0^\top.
    \label{eq:inverse-information}
\end{equation}

For every sample, the posterior density on $\Omega_{\downarrow}$ is
\[
    \widetilde\pi_n(\vartheta\mid N_n)
    =
    \frac{L_n(\vartheta)\widetilde\pi(\vartheta)}{
        \displaystyle\int_{\Omega_{\downarrow}}
        L_n(u)\widetilde\pi(u)\,du}.
\]
Let $q_n(\cdot\mid N_n)$ denote the posterior density of
$H_n:=\sqrt n(\vartheta-\widehat\vartheta_n)$. On $G_n$, define
\[
    g_n(h)
    :=
    \exp\left\{
        \ell_n(\widehat\vartheta_n+h/\sqrt n)
        -\ell_n(\widehat\vartheta_n)
    \right\}
    \widetilde\pi(\widehat\vartheta_n+h/\sqrt n)
    1_{\mathcal H_n}(h),
\]
where the factors are evaluated only for $h\in\mathcal H_n$, and
$g_n(h)$ is defined to be zero otherwise. Then
\[
    q_n(h\mid N_n)=\frac{g_n(h)}{Z_n},
    \qquad
    Z_n:=\int_{\mathbb R^k}g_n(h)\,dh.
\]
The change of variables also gives, on $G_n$,
\begin{equation}
    \int_{\Omega_{\downarrow}}
    \frac{L_n(u)}{L_n(\widehat\vartheta_n)}
    \widetilde\pi(u)\,du
    =n^{-k/2}Z_n.
    \label{eq:intrinsic-posterior-normalizer}
\end{equation}
Write
\[
    \phi_{\vartheta_0}(h)
    :=
    \frac{\det\{I(\vartheta_0)\}^{1/2}}{(2\pi)^{k/2}}
    \exp\left\{-\frac12h^\top I(\vartheta_0)h\right\}
\]
for the density of $N_k(0,V_{\vartheta_0})$.

We first establish convergence on bounded sets. For every fixed
$M<\infty$, the likelihood expansion and convergence of the information
matrix yield
\begin{equation}
    \sup_{\|h\|\leq M}
    \left|
        e^{\ell_n(\widehat\vartheta_n+h/\sqrt n)
             -\ell_n(\widehat\vartheta_n)}
        -e^{-h^\top I(\vartheta_0)h/2}
    \right|
    \rightarrow0
    \label{local_expansion_exp}
\end{equation}
in $\mathbb P_{\theta_0}$-probability. Continuity of
$\widetilde\pi$ at $\vartheta_0$ also gives
\[
    \sup_{\|h\|\leq M}
    \left|
        \widetilde\pi(\widehat\vartheta_n+h/\sqrt n)
        -\widetilde\pi(\vartheta_0)
    \right|
    \rightarrow0
\]
in probability. Hence $g_n$ converges uniformly on every fixed bounded
set, in probability, to
\begin{equation}
    g(h):=\widetilde\pi(\vartheta_0)
    \exp\left\{-\frac12h^\top I(\vartheta_0)h\right\}.
    \label{eq:local-density-limit}
\end{equation}

We next control the posterior tails. Pinsker's inequality gives, for
$\vartheta\in\Omega_{\downarrow}$ and on $G_n$,
\[
    D\!\left(\widehat\theta_n\,\middle\|\,\theta(\vartheta)\right)
    \geq\frac12\|\widehat\theta_n-\theta(\vartheta)\|_1^2
    \geq\frac12\|\widehat\vartheta_n-\vartheta\|^2.
\]
Therefore
\begin{equation}
    \frac{L_n(\vartheta)}{L_n(\widehat\vartheta_n)}
    \leq
    \exp\left\{-\frac n2\|\vartheta-\widehat\vartheta_n\|^2\right\},
    \qquad \vartheta\in\Omega_{\downarrow},
    \label{eq:local-gaussian-bound}
\end{equation}
or, in local coordinates,
\begin{equation}
    \frac{L_n(\widehat\vartheta_n+h/\sqrt n)}{
          L_n(\widehat\vartheta_n)}
    \leq e^{-\|h\|^2/2},
    \qquad h\in\mathcal H_n.
    \label{eq:local-h-bound}
\end{equation}
Since $U=B(\vartheta_0,2r)$ and
$\|\widehat\vartheta_n-\vartheta_0\|<r$ on $G_n$,
\begin{equation}
    \inf_{\vartheta\in\Omega_{\downarrow}\setminus U}
    D\!\left(\widehat\theta_n\,\middle\|\,\theta(\vartheta)\right)
    \geq r^2/2=:c_U>0.
    \label{outside_U_bound}
\end{equation}
Consequently,
\begin{equation}
    \sup_{\vartheta\in\Omega_{\downarrow}\setminus U}
    \frac{L_n(\vartheta)}{L_n(\widehat\vartheta_n)}
    \leq e^{-nc_U}.
    \label{eq:outside-neighborhood-bound}
\end{equation}

A lower bound for $Z_n$ follows by restricting its integral to
$\|h\|\leq1$. For all sufficiently large $n$, on $G_n$, this ball is
mapped into $U$. The expansion \eqref{eq:local-likelihood-expansion},
the uniform upper bound for $I(\widehat\vartheta_n)$ on $G_n$, and the
local lower bound $c_\pi$ for the prior density therefore give a
deterministic constant $c_Z>0$ such that
\begin{equation}
    Z_n\geq c_Z
    \label{eq:normalizing-lower-bound}
\end{equation}
on $G_n$ for all sufficiently large $n$.

Split the posterior tail according to whether
$\widehat\vartheta_n+h/\sqrt n$ belongs to $U$. On the first part,
use $\widetilde\pi\leq C_\pi$ and \eqref{eq:local-h-bound}. On the
second part, use \eqref{eq:outside-neighborhood-bound}, boundedness of
the simplex, and
\[
    \int_{\mathcal H_n}
    \widetilde\pi(\widehat\vartheta_n+h/\sqrt n)\,dh
    =n^{k/2}.
\]
In particular, $1+\|h\|^2\leq Cn$ on $\mathcal H_n$. Together with
\eqref{eq:normalizing-lower-bound}, these estimates give, on $G_n$,
\begin{equation}
\begin{aligned}
    &\int_{\|h\|>M}(1+\|h\|^2)q_n(h\mid N_n)\,dh\\
    &\qquad\leq
    C\int_{\|h\|>M}(1+\|h\|^2)e^{-\|h\|^2/2}\,dh
    +Cn^{1+k/2}e^{-nc_U}
\end{aligned}
    \label{eq:posterior-tail-bound}
\end{equation}
for every $M>0$ and all sufficiently large $n$.
 The right-hand side
has the form $\varepsilon_M+r_n$, where $\varepsilon_M\downarrow0$
as $M\to\infty$ and $r_n\to0$ as $n\to\infty$. Thus the weighted
posterior tails vanish by first letting $n\to\infty$ and then
$M\to\infty$.

We now identify the normalization. Set
\[
    Z:=\int_{\mathbb R^k}g(h)\,dh
      =\widetilde\pi(\vartheta_0)(2\pi)^{k/2}
        \det\{I(\vartheta_0)\}^{-1/2}>0,
\]
so that $\phi_{\vartheta_0}=g/Z$. For fixed $M<\infty$, local uniform
convergence gives
\[
    A_{n,M}:=\int_{\|h\|\leq M}g_n(h)\,dh
    \rightarrow
    A_M:=\int_{\|h\|\leq M}g(h)\,dh
\]
in probability. By \eqref{eq:posterior-tail-bound}, on $G_n$,
\[
    \frac{A_{n,M}}{Z_n}
    =\int_{\|h\|\leq M}q_n(h\mid N_n)\,dh
    \geq1-\varepsilon_M-r_n.
\]
For $M$ sufficiently large that $\varepsilon_M<1/2$ and then $n$
sufficiently large, this yields
\[
    A_{n,M}\leq Z_n\leq\frac{A_{n,M}}{1-\varepsilon_M-r_n}.
\]
Since $A_M\uparrow Z$ and $\varepsilon_M\downarrow0$,
first letting $n\to\infty$ and then  $M\to\infty$, shows that
$Z_n\to Z$ in $\mathbb P_{\theta_0}$-probability.

It follows that, for every fixed $M$,
\[
    \int_{\|h\|\leq M}(1+\|h\|^2)
    |q_n(h\mid N_n)-\phi_{\vartheta_0}(h)|\,dh
    \rightarrow0
\]
in probability. On the complement,
\[
\begin{aligned}
    &\int_{\|h\|>M}(1+\|h\|^2)
      |q_n(h\mid N_n)-\phi_{\vartheta_0}(h)|\,dh\\
    &\qquad\leq
      \int_{\|h\|>M}(1+\|h\|^2)q_n(h\mid N_n)\,dh
      +\int_{\|h\|>M}(1+\|h\|^2)\phi_{\vartheta_0}(h)\,dh.
\end{aligned}
\]
The first term is at most $\varepsilon_M+r_n$ on $G_n$, and the second
tends to zero as $M\to\infty$. Since
$\mathbb P_{\theta_0}(G_n^c)\to0$, we conclude for the actual posterior
that
\begin{equation}
    \int_{\mathbb R^k}(1+\|h\|^2)
    |q_n(h\mid N_n)-\phi_{\vartheta_0}(h)|\,dh
    \rightarrow0
    \label{posterior-moment-conv}
\end{equation}
in $\mathbb P_{\theta_0}$-probability.

The weighted convergence \eqref{posterior-moment-conv} directly implies
convergence of the posterior first and second moments:
\[
    \mathbb E[H_n\mid N_n]\rightarrow0,
    \qquad
    \mathbb E[H_nH_n^\top\mid N_n]\rightarrow V_{\vartheta_0}
\]
in probability. Hence
\begin{equation}
    \operatorname{Var}(H_n\mid N_n)
    \rightarrow V_{\vartheta_0}
    \label{eq:posterior-covariance-probability}
\end{equation}
in probability.

To obtain convergence in expectation, the tail bound gives deterministic
constants $C<\infty$ and $n_0$ such that, for every $n\geq n_0$, on $G_n$,
\[
    \|\operatorname{Var}(H_n\mid N_n)\|_{\mathrm{op}}
    \leq\mathbb E[\|H_n\|^2\mid N_n]\leq C.
\]
On $G_n^c$, the posterior parameter $\vartheta$ lies in the bounded
ordered simplex and $\widehat\vartheta_n$ in the bounded full simplex.
Thus $\|H_n\|^2\leq Cn$ for every posterior draw, and
\[
    \mathbb E_{\theta_0}\left[
        \|\operatorname{Var}(H_n\mid N_n)\|_{\mathrm{op}}
        1_{G_n^c}
    \right]
    \leq Cn\,\mathbb P_{\theta_0}(G_n^c)
    \rightarrow0.
\]
Boundedness on $G_n$, convergence in probability, and the last estimate
therefore imply
\[
    \mathbb E_{\theta_0}\left[
        \|\operatorname{Var}(H_n\mid N_n)-V_{\vartheta_0}\|_{\mathrm{op}}
    \right]\rightarrow0.
\]
Finally, $\widehat\vartheta_n$ is fixed conditional on $N_n$, so
$\operatorname{Var}(H_n\mid N_n)=n\operatorname{Var}(\vartheta\mid N_n)$.
This proves the lemma.
\end{proof}

\begin{lemma}[Posterior variance of a smooth functional]
\label{lem:multinomial-functional-posterior-variance}
Under the assumptions of
Lemma~\ref{lem:multinomial-posterior-covariance}, let
$\psi:\Theta_{\downarrow}\to\mathbb R^q$ satisfy
Assumption~\ref{ass:psi}. Then
\[
    n\mathbb E_{\theta_0}\left[
        \operatorname{Tr}\operatorname{Var}\{\psi(\theta)\mid N_n\}
    \right]
    \rightarrow
    \operatorname{Tr}\left[
        D\psi(\theta_0)\Sigma(\theta_0)D\psi(\theta_0)^\top
    \right],
\]
where
\[
    \Sigma(\theta_0)
    :=\operatorname{diag}(\theta_0)-\theta_0\theta_0^\top.
\]
\end{lemma}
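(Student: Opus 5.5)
We want $n\,\E_{\theta_0}[\Tr\Var\{\psi(\theta)\mid N_n\}]\to\Tr[D\psi(\theta_0)\Sigma(\theta_0)D\psi(\theta_0)^\top]$. The plan is a delta-method argument on the posterior, built on Lemma~\ref{lem:multinomial-posterior-covariance} and the weighted posterior convergence \eqref{posterior-moment-conv} from its proof. We work in intrinsic coordinates and write $\psi(\theta)=\tilde\psi(\vartheta):=\psi(A\vartheta+e_d)$, so that $D\tilde\psi(\vartheta)=D\psi(\theta)A$.

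\textbf{Step 1 (linearization).} Linearize around the data-dependent center $\widehat\vartheta_n$. Set $J_n:=D\tilde\psi(\widehat\vartheta_n)$ on $G_n$ and write
\[
\sqrt n\{\tilde\psi(\vartheta)-\tilde\psi(\widehat\vartheta_n)\}=J_nH_n+\rho_n(H_n),
\qquad H_n=\sqrt n(\vartheta-\widehat\vartheta_n).
\]
Since $\psi$ is $C^1$ on a neighborhood of the compact simplex, $D\tilde\psi$ is uniformly continuous with modulus $\omega$. Hence $\|\rho_n(h)\|\le\|h\|\,\omega(\|h\|/\sqrt n)$, and globally $\|\rho_n(h)\|\le 2L_\psi\|A\|\,\|h\|$ by the Lipschitz bound \eqref{eq:psi-lipschitz-constant}.

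\textbf{Step 2 (variance decomposition).} Because $\tilde\psi(\widehat\vartheta_n)$ is fixed given $N_n$,
\[
n\Tr\Var\{\psi(\theta)\mid N_n\}=\Tr\Var(J_nH_n+\rho_n\mid N_n).
\]
Expand this as $\Tr J_n\Var(H_n\mid N_n)J_n^\top$ plus a cross term and a $\Var(\rho_n)$ term. The cross term is bounded by Cauchy--Schwarz by $2\{\E[\|J_nH_n\|^2\mid N_n]\,\E[\|\rho_n\|^2\mid N_n]\}^{1/2}+\E[\|\rho_n\|^2\mid N_n]$.

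\textbf{Step 3 (main term).} By Lemma~\ref{lem:multinomial-posterior-covariance}, $\Var(H_n\mid N_n)\to V_{\vartheta_0}$ in $L^1$. On $G_n$, $J_n\to D\tilde\psi(\vartheta_0)$ in probability and $J_n$ is uniformly bounded. Therefore
\[
\Tr J_n\Var(H_n\mid N_n)J_n^\top\to\Tr D\psi(\theta_0)AV_{\vartheta_0}A^\top D\psi(\theta_0)^\top=\cI_\psi(\theta_0)
\]
in $L^1$, using \eqref{eq:intrinsic-ambient-covariance}.

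\textbf{Step 4 (remainder; the main obstacle).} We need $\E_{\theta_0}\E[\|\rho_n(H_n)\|^2\mid N_n]\to0$. Split at $\|h\|\le M$ versus $\|h\|>M$.
\begin{itemize}
\item On $\|h\|\le M$, the remainder is at most $M^2\omega(M/\sqrt n)^2\to0$ deterministically.
\item On $\|h\|>M$, the remainder is at most $C\int_{\|h\|>M}\|h\|^2q_n(h\mid N_n)\,dh$. On $G_n$ this is $\le\varepsilon_M+r_n$ by \eqref{eq:posterior-tail-bound}.
\end{itemize}
On $G_n^c$, $\|H_n\|^2\le Cn$ while $\Pp(G_n^c)$ decays exponentially by \eqref{eq:good-event-probability}, so that contribution vanishes in expectation. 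Letting $n\to\infty$ and then $M\to\infty$ kills the remainder in $L^1$. Uniform boundedness of $\E[\|J_nH_n\|^2\mid N_n]$ on $G_n$ (also from the tail bound) then controls the cross term.

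Combining Steps 3 and 4 gives $L^1$ convergence of $n\Tr\Var\{\psi\mid N_n\}$ to $\cI_\psi(\theta_0)$, hence convergence of the expectation. Together with Lemma~\ref{lem:multinomial-posterior-covariance}, this yields Lemma~\ref{lem:mult-pointwise}.
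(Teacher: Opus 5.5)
Your proof is correct and is essentially the paper's argument: a posterior delta method in which Lemma~\ref{lem:multinomial-posterior-covariance} and $AV_{\vartheta_0}A^\top=\Sigma(\theta_0)$ handle the main term, the linearization remainder is shown to vanish in $L^1(\mathbb P_{\theta_0})$, and the cross term is controlled by Cauchy--Schwarz. The differences are technical. The paper linearizes at the fixed point $\vartheta_0$ through the two-draw identity $n\operatorname{Tr}\operatorname{Var}=\frac n2\mathbb E[\|g(\vartheta')-g(\vartheta'')\|^2\mid N_n]$, and it controls the remainder with an exponential posterior-concentration bound around $\vartheta_0$. You instead center at $\widehat\vartheta_n$ and reuse the tail bound \eqref{eq:posterior-tail-bound}, which costs you the easy extra step $J_n\to J$ in probability. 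Note also that on $G_n^c$ the empirical vector $\widehat\theta_n$ may leave the ordered simplex, so $J_n$ and the remainder are undefined there. On that event you should bound the whole quantity, $n\operatorname{Tr}\operatorname{Var}\{\psi(\theta)\mid N_n\}\le Cn$, rather than the remainder.
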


\begin{proof}
Use the intrinsic parametrization and posterior notation of the preceding
proof. Define
\[
    g(\vartheta):=\psi\{\theta(\vartheta)\},
    \qquad \vartheta\in\Omega_{\downarrow},
    \qquad
    A:=\begin{pmatrix}I_{d-1}\\-\mathbf1_{d-1}^\top\end{pmatrix}.
\]
By the chain rule,
\begin{equation}
    J:=Dg(\vartheta_0)=D\psi(\theta_0)A.
    \label{eq:functional-chain-rule}
\end{equation}
Assumption~\ref{ass:psi} gives a continuously differentiable extension of
$g$ to a neighborhood of the compact convex set
$\overline{\Omega_{\downarrow}}$. In particular, $g$ is Lipschitz on
this set.

For $u,v\in\Omega_{\downarrow}$, write
\[
    g(u)-g(v)=J(u-v)+r(u,v).
\]
There is a constant $C_r<\infty$ such that
$\|r(u,v)\|\leq C_r\|u-v\|$ throughout the domain. For sufficiently
small $\varepsilon>0$, put
\[
    \omega(\varepsilon)
    :=\sup_{\|u-\vartheta_0\|\leq\varepsilon}
      \|Dg(u)-J\|_{\mathrm{op}}.
\]
Then $\omega(\varepsilon)\to0$ as $\varepsilon\downarrow0$, and the
mean-value formula along the line segment from $v$ to $u$ gives
\[
    \|r(u,v)\|\leq\omega(\varepsilon)\|u-v\|
    \quad\text{for }u,v\in\overline{B(\vartheta_0,\varepsilon)}.
\]

Let $\vartheta'$ and $\vartheta''$ be conditionally independent draws
from the posterior given $N_n$. We have
\begin{equation}
    n\operatorname{Tr}\operatorname{Var}\{g(\vartheta)\mid N_n\}
    =\frac n2\mathbb E\left[
        \|g(\vartheta')-g(\vartheta'')\|^2\,\middle|\,N_n
    \right].
    \label{eq:variance-two-copy}
\end{equation}
Define
\[
    H_n':=\sqrt n(\vartheta'-\widehat\vartheta_n),
    \qquad
    H_n'':=\sqrt n(\vartheta''-\widehat\vartheta_n),
    \qquad
    \widetilde r_n:=\sqrt n\,r(\vartheta',\vartheta'').
\]
Then
\[
    \sqrt n\{g(\vartheta')-g(\vartheta'')\}
    =J(H_n'-H_n'')+\widetilde r_n.
\]
We shall show that
\begin{equation}
    \mathbb E_{\theta_0}R_n\rightarrow0,
    \quad \text {where }
    R_n:=\mathbb E[\|\widetilde r_n\|^2\mid N_n].
    \label{eq:multinomial-functional-remainder-L1}
\end{equation}

Fix $\varepsilon>0$ small enough that
$\overline{B(\vartheta_0,\varepsilon)}\subset\Omega_{\downarrow}$,
and set
\[
    E_{n,\varepsilon}
    :=G_n\cap
    \{\|\widehat\vartheta_n-\vartheta_0\|\leq\varepsilon/2\}.
\]
On this event, $\|\vartheta-\vartheta_0\|>\varepsilon$ implies
$\|\vartheta-\widehat\vartheta_n\|>\varepsilon/2$. Thus
\eqref{eq:local-gaussian-bound} gives
$L_n(\vartheta)/L_n(\widehat\vartheta_n)\leq e^{-n\varepsilon^2/8}$
on this posterior tail. Equations
\eqref{eq:intrinsic-posterior-normalizer} and
\eqref{eq:normalizing-lower-bound} also give, on $G_n$,
\[
    \int_{\Omega_{\downarrow}}
    \frac{L_n(u)}{L_n(\widehat\vartheta_n)}\widetilde\pi(u)\,du
    \geq c_Zn^{-k/2}
\]
for all sufficiently large $n$. Consequently, on $E_{n,\varepsilon}$,
\[
    \Pi\!\left(
        \|\vartheta-\vartheta_0\|>\varepsilon\,\middle|\,N_n
    \right)
    \leq c_Z^{-1}n^{k/2}e^{-n\varepsilon^2/8}.
\]
Hoeffding's inequality gives
$\mathbb P_{\theta_0}(E_{n,\varepsilon}^c)
 \leq C_\varepsilon e^{-c_\varepsilon n}$.
Thus we obtain
\begin{equation}
    \mathbb E_{\theta_0}\Pi\!\left(
        \|\vartheta-\vartheta_0\|>\varepsilon\,\middle|\,N_n
    \right)
    \leq C_\varepsilon n^{k/2}e^{-c_\varepsilon n},
    \label{eq:multinomial-posterior-exterior-probability}
\end{equation}
after adjusting the positive constants, which may depend on $\varepsilon$
but not on $n$.

When both posterior draws lie in
$\overline{B(\vartheta_0,\varepsilon)}$, use the local bound involving
$\omega(\varepsilon)$. On the complement, use
$\|r(u,v)\|\leq C_r\|u-v\|$ and boundedness of the parameter domain.
The union bound and the identity
\[
    n\mathbb E[\|\vartheta'-\vartheta''\|^2\mid N_n]
    =2\operatorname{Tr}\operatorname{Var}(H_n\mid N_n)
\]
then yield
\[
    R_n
    \leq
    2\omega(\varepsilon)^2
      \operatorname{Tr}\operatorname{Var}(H_n\mid N_n)
    +Cn\,\Pi\!\left(
        \|\vartheta-\vartheta_0\|>\varepsilon\,\middle|\,N_n
    \right).
\]
By Lemma~\ref{lem:multinomial-posterior-covariance},
$\mathbb E_{\theta_0}\operatorname{Tr}\operatorname{Var}(H_n\mid N_n)$
is bounded for all sufficiently large $n$. Taking expectations and using
\eqref{eq:multinomial-posterior-exterior-probability} gives
\[
    \mathbb E_{\theta_0}R_n
    \leq C\omega(\varepsilon)^2
       +C_\varepsilon n^{1+k/2}e^{-c_\varepsilon n}.
\]
Letting first $n\to\infty$ and then $\varepsilon\downarrow0$ proves
\eqref{eq:multinomial-functional-remainder-L1}. Since $R_n\geq0$, it
also follows that $R_n\to0$ in $\mathbb P_{\theta_0}$-probability.

Finally, let
\[
    T_n:=\operatorname{Tr}\left[
        J\operatorname{Var}(H_n\mid N_n)J^\top
    \right].
\]
Expanding the squared norm in \eqref{eq:variance-two-copy} and applying
conditional Cauchy-Schwarz gives
\[
    \left|
        n\operatorname{Tr}\operatorname{Var}\{g(\vartheta)\mid N_n\}
        -T_n
    \right|
    \leq\sqrt{2T_nR_n}+\frac12R_n.
\]
The preceding covariance lemma implies that
$\sup_n\mathbb E_{\theta_0}T_n<\infty$. Therefore
\[
\begin{aligned}
    &\mathbb E_{\theta_0}\left|
        n\operatorname{Tr}\operatorname{Var}\{g(\vartheta)\mid N_n\}
        -T_n
    \right|\\
    &\qquad\leq
    \left\{2\mathbb E_{\theta_0}T_n\,
              \mathbb E_{\theta_0}R_n\right\}^{1/2}
    +\frac12\mathbb E_{\theta_0}R_n
    \rightarrow0.
\end{aligned}
\]
Again by Lemma~\ref{lem:multinomial-posterior-covariance},
$T_n\to\operatorname{Tr}(JV_{\vartheta_0}J^\top)$ in
$L^1(\mathbb P_{\theta_0})$. Hence
\[
    n\mathbb E_{\theta_0}\left[
        \operatorname{Tr}\operatorname{Var}\{g(\vartheta)\mid N_n\}
    \right]
    \rightarrow\operatorname{Tr}(JV_{\vartheta_0}J^\top).
\]
Finally, a direct calculation gives
\[
    AV_{\vartheta_0}A^\top
    =\operatorname{diag}(\theta_0)-\theta_0\theta_0^\top
    =\Sigma(\theta_0).
\]
Combining this identity with \eqref{eq:functional-chain-rule}, we obtain
\[
    \operatorname{Tr}(JV_{\vartheta_0}J^\top)
    =\operatorname{Tr}\left[
        D\psi(\theta_0)\Sigma(\theta_0)D\psi(\theta_0)^\top
    \right],
\]
which proves the asserted limit.
\end{proof}

\begin{proof}[Proof of Lemma \ref{lem:uniform-wss-mult-comparison}]
For $\btheta\in\Theta_{\mathrm{reg}}$, define
\begin{equation}
\label{eq:delta-theta-definition}
\delta(\btheta)
:=
\min\left\{
    \theta_d,\,
    \theta_1-\theta_2,\,
    \ldots,\,
    \theta_{d-1}-\theta_d
\right\}.
\end{equation}
Since $\mathsf K$ is a compact subset of
$\Theta_{\mathrm{reg}}$, the function
$\btheta\mapsto\delta(\btheta)$ is continuous and strictly positive
on $\mathsf K$. Therefore
\begin{equation}
\label{eq:delta-K-definition}
\delta_{\mathsf K}
:=
\inf_{\btheta\in\mathsf K}\delta(\btheta)
=
\min_{\btheta\in\mathsf K}\delta(\btheta)
>0.
\end{equation}
Equivalently,
\[
\delta_{\mathsf K}
=
\min\left\{
    \inf_{\btheta\in\mathsf K}\theta_d,\,
    \min_{1\leq i<d}
    \inf_{\btheta\in\mathsf K}
        (\theta_i-\theta_{i+1})
\right\}.
\]

We apply the comparison developed in
\cite[Section~7.5]{Kahn&Guta}. In the notation used there, take the
central spectrum to be
$\boldsymbol\mu=\btheta,$ and 
take the local diagonal parameter to be
$\mathbf u=0,
$
and set the local-neighborhood exponent equal to
$\gamma=0.$
The condition $\alpha>\gamma+1/2$ then reduces to
$\alpha>1/2$, which holds by assumption. With this specialization, the block-weight distribution
$p_{\mathbf 0,n}$ of \cite{Kahn&Guta} is precisely the weak Schur
sampling law in our notation:
\[
    p_{\mathbf 0,n}(\lambda)
    =
    P_{\btheta}^{(n)}(\lambda),
    \qquad
    \lambda\in\mathbb Y_{n,d}.
\]
Likewise, their multinomial distribution $M_{n,\mathbf 0}$ is exactly
our $M_{\btheta}^{(n)}$, namely the law of
$\Mult_d(n,\btheta)$ on $\mathbb L_{n,d}$.
Their typical Young-diagram set $\Lambda_{n,\alpha}$, after setting
$\boldsymbol\mu=\btheta$ and $\mathbf u=0$, is the restriction to
$\mathbb Y_{n,d}$ of the set
$\mathcal T_{n,\alpha}(\btheta)$.

Equation~(7.36) of \cite{Kahn&Guta} states that
\[
\sup_{\|\mathbf u\|\leq n^\gamma}
\left\|
    p_{\mathbf u,n}-M_{n,\mathbf u}
\right\|_1
\leq
\frac{C}{\delta}
\left(
    n^{-1/2+\gamma}+n^{\alpha-1}
\right),
\]
where $C$ depends only on the dimension and $\delta$ is a positive
lower bound on the smallest component and the consecutive gaps of
the central spectrum.

Since $\mathbf u=0$ belongs to
$\{\|\mathbf u\|\leq1\}$, specializing this estimate to
$\gamma=0$, $\boldsymbol\mu=\btheta$, and $\mathbf u=0$ gives
\begin{equation}
\label{eq:pointwise-wss-mult-bound}
\left\|
    P_{\btheta}^{(n)}-M_{\btheta}^{(n)}
\right\|_1
\leq
\frac{C}{\delta(\btheta)}
\left(
    n^{-1/2}+n^{\alpha-1}
\right)
\end{equation}
for all sufficiently large $n$.

The proof of equation~(7.36) uses the condition
$n\delta(\btheta)>2dn^\alpha$
in the comparison on the typical set. This condition is equivalent
to
\begin{equation}
\label{eq:typical-set-sample-size}
n>
\left(
    \frac{2d}{\delta(\btheta)}
\right)^{1/(1-\alpha)}.
\end{equation}
The atypical-set estimate obtained from Lemma~6.2 and
equation~(7.34) of \cite{Kahn&Guta} is valid under the sufficient
sample-size condition
\begin{equation}
\label{eq:KG-full-sample-size}
n>
\left(
    \frac{2d}{\delta(\btheta)}
\right)^{1/(1-\alpha)}
+
(2d)^{1/(\alpha-1/2)}.
\end{equation}

Choose $n_{\mathsf K}$ so that
\begin{equation}
\label{eq:n-K-choice}
n_{\mathsf K}
>
\left(
    \frac{2d}{\delta_{\mathsf K}}
\right)^{1/(1-\alpha)}
+
(2d)^{1/(\alpha-1/2)}.
\end{equation}
Because
$\delta(\btheta)\geq\delta_{\mathsf K},
\btheta\in\mathsf K,
$
conditions \eqref{eq:typical-set-sample-size} and \eqref{eq:KG-full-sample-size} hold simultaneously for
every $\btheta\in\mathsf K$ and every
$n\geq n_{\mathsf K}$.

It follows from \eqref{eq:pointwise-wss-mult-bound} that
\begin{align*}
\sup_{\btheta\in\mathsf K}
\left\|
    P_{\btheta}^{(n)}-M_{\btheta}^{(n)}
\right\|_1
&\leq
\sup_{\btheta\in\mathsf K}
\frac{C}{\delta(\btheta)}
\left(
    n^{-1/2}+n^{\alpha-1}
\right)\\
&\leq
\frac{C}{\delta_{\mathsf K}}
\left(
    n^{-1/2}+n^{\alpha-1}
\right).
\end{align*}
Thus \eqref{eq:uniform-l1-comparison} holds after enlarging
$C_{\mathsf K}$, if necessary.

We now prove the tail estimate. In the notation of
\cite{Kahn&Guta}, Lemma~6.2 and equation~(7.34), specialized again
to $\boldsymbol\mu=\btheta$, $\mathbf u=0$, and $\gamma=0$, give
constants $C_1,C_2>0$, depending only on $d$, such that
\begin{equation}
\label{eq:KG-combined-tail}
P_{\btheta}^{(n)}
\bigl(\mathcal T_{n,\alpha}(\btheta)^c\bigr)
+
M_{\btheta}^{(n)}
\bigl(\mathcal T_{n,\alpha}(\btheta)^c\bigr)
\leq
C_1n^{d/2}
\exp\!\left\{
    -C_2n^{2\alpha-1}
\right\}
\end{equation}
whenever \eqref{eq:KG-full-sample-size} holds. Since
$n\geq n_{\mathsf K}$ makes that condition uniform over
$\btheta\in\mathsf K$, we obtain
\[
\sup_{\btheta\in\mathsf K}
\left\{
    P_{\btheta}^{(n)}
    \bigl(\mathcal T_{n,\alpha}(\btheta)^c\bigr)
    +
    M_{\btheta}^{(n)}
    \bigl(\mathcal T_{n,\alpha}(\btheta)^c\bigr)
\right\}
\leq
C_{\mathsf K}n^{d/2}
\exp\!\left\{
    -c_{\mathsf K}n^{2\alpha-1}
\right\},
\]
after taking, for example,
\[
    C_{\mathsf K}\geq C_1,
    \qquad
    0<c_{\mathsf K}\leq C_2.
\]
This proves \eqref{eq:uniform-typical-tail}.
\end{proof}

\begin{proof}[Proof of Lemma \ref{lem:uniform-wss-functional-risk}]
We first establish the required uniform second-moment expansion. Fix
\(\alpha\in(1/2,2/3)\), and for \(1\leq i,j\leq d\) define
\[
    F_{n,ij}^{\btheta}(k)
    :=n\left(\frac{k_i}{n}-\theta_i\right)
       \left(\frac{k_j}{n}-\theta_j\right).
\]
On \(\mathcal T_{n,\alpha}(\btheta)\),
\( |F_{n,ij}^{\btheta}(k)|\leq n^{2\alpha-1}\). Hence
Lemma~\ref{lem:uniform-wss-mult-comparison} gives
\[
\begin{aligned}
&\sup_{\btheta\in\mathsf K}
\left|
 \E_{P_{\btheta}^{(n)}}
 \bigl[F_{n,ij}^{\btheta}\mathbbm1_{\mathcal T_{n,\alpha}(\btheta)}\bigr]
 -\E_{M_{\btheta}^{(n)}}
 \bigl[F_{n,ij}^{\btheta}\mathbbm1_{\mathcal T_{n,\alpha}(\btheta)}\bigr]
\right|\\
&\qquad\leq
 C_{\mathsf K}n^{2\alpha-1}
 \bigl(n^{-1/2}+n^{\alpha-1}\bigr)\\
&\qquad=
 O\!\left(n^{2\alpha-3/2}+n^{3\alpha-2}\right)
 =o(1).
\end{aligned}
\]
For every \(k\in\mathbb L_{n,d}\),
\(|F_{n,ij}^{\btheta}(k)|\leq n\). Therefore
\eqref{eq:uniform-typical-tail} implies, for each of the two laws,
\[
    \sup_{\btheta\in\mathsf K}
    \E\!\left[
      |F_{n,ij}^{\btheta}|
      \mathbbm1_{\mathcal T_{n,\alpha}(\btheta)^c}
    \right]
    \leq
    C_{\mathsf K}n^{1+d/2}
      e^{-c_{\mathsf K}n^{2\alpha-1}}
    =o(1).
\]
Since
\[
    \E_{M_{\btheta}^{(n)}}F_{n,ij}^{\btheta}
    =\Sigma(\btheta)_{ij}
\]
exactly, and \(d\) is fixed, we obtain
\begin{equation}
    \sup_{\btheta\in\mathsf K}
    \left\|
      n\,\E_{\btheta}^{\WSS}
      \left[
        \left(\frac{\Lambda_n}{n}-\btheta\right)
        \left(\frac{\Lambda_n}{n}-\btheta\right)^\top
      \right]
      -\Sigma(\btheta)
    \right\|_{\mathrm{op}}
    \rightarrow0.
\label{eq:uniform-wss-covariance}
\end{equation}

We now pass from second moments to the risk of \(\psi\). Put
\[
    H_{n,\btheta}:=\frac{\Lambda_n}{n}-\btheta,
    \qquad
    D_{\btheta}:=D\psi(\btheta).
\]
Since \(D\psi\) is uniformly continuous on the compact simplex, its
modulus of continuity
\[
    \omega(t)
    :=\sup_{\substack{x,y\in\Theta_{\downarrow}\\
                      \|x-y\|\leq t}}
      \|D\psi(x)-D\psi(y)\|_{\mathrm{op}}
\]
satisfies \(\omega(t)\downarrow0\) as \(t\downarrow0\). Taylor's formula yields

\begin{align}
    \psi(\btheta+h)-\psi(\btheta)
      &=D_{\btheta}h+r_{\btheta}(h),\nonumber\\
    \|r_{\btheta}(h)\|
      &\leq\omega(\|h\|)\|h\|\label{eq:uniform-taylor-remainder}
\end{align}
whenever \(\btheta,\btheta+h\in\Theta_{\downarrow}\). We also have
\(\|r_{\btheta}(h)\|\leq2L_\psi\|h\|\).

Let
\[
A_{n,\btheta}
:=
n\,\E_{\btheta}^{\WSS}
[H_{n,\btheta}H_{n,\btheta}^\top]
-\Sigma(\btheta).
\]
Then
\[
n\,\E_{\btheta}^{\WSS}
\|D_{\btheta}H_{n,\btheta}\|^2
-\cI_\psi(\btheta)
=
\Tr\!\left(
D_{\btheta}A_{n,\btheta}D_{\btheta}^\top
\right).
\]
Since
$\|D_{\btheta}\|\le L_\psi$, 
by \eqref{eq:uniform-wss-covariance}, we obtain
\begin{align}
&\sup_{\btheta\in\mathsf K}
\left|
 n\,\E_{\btheta}^{\WSS}
   \|D_{\btheta}H_{n,\btheta}\|^2
 -\cI_\psi(\btheta)
\right|\nonumber\\
&\qquad\leq
 qL_\psi^2
 \sup_{\btheta\in\mathsf K}
 \left\|
 n\,\E_{\btheta}^{\WSS}
 [H_{n,\btheta}H_{n,\btheta}^\top]
 -\Sigma(\btheta)
 \right\|_{\mathrm{op}}
 \rightarrow 0.\label{eq:uniform-linear-risk}
\end{align}
Taking traces in \eqref{eq:uniform-wss-covariance} also gives
\begin{equation}
    \sup_n\sup_{\btheta\in\mathsf K}
      n\,\E_{\btheta}^{\WSS}\|H_{n,\btheta}\|^2<\infty.
\label{eq:uniform-second-moment-bound}
\end{equation}
Fix \(\delta>0\). From \eqref{eq:uniform-taylor-remainder},
\begin{equation}
n\,\E_{\btheta}^{\WSS}
  \|r_{\btheta}(H_{n,\btheta})\|^2
\leq
 \omega(\delta)^2n\,\E_{\btheta}^{\WSS}
   \|H_{n,\btheta}\|^2
 +4L_\psi^2n\,\E_{\btheta}^{\WSS}
 \left[
   \|H_{n,\btheta}\|^2
   \mathbbm1_{\{\|H_{n,\btheta}\|>\delta\}}
 \right]. \label{eq:uniform-remainder-split}
\end{equation}

Choose any \(\alpha\in(1/2,1)\). On
\(\mathcal T_{n,\alpha}(\btheta)\),
\[
    \|H_{n,\btheta}\|\leq\sqrt d\,n^{\alpha-1}.
\]
Thus, for all sufficiently large \(n\), uniformly over
\(\btheta\in\mathsf K\),
\[
    \{\|H_{n,\btheta}\|>\delta\}
    \subseteq\mathcal T_{n,\alpha}(\btheta)^c.
\]
Since the diameter of the simplex is at most \(\sqrt2\),
Lemma~\ref{lem:uniform-wss-mult-comparison} gives

\begin{align}
&\sup_{\btheta\in\mathsf K}
 n\,\E_{\btheta}^{\WSS}
 \left[
   \|H_{n,\btheta}\|^2
   \mathbbm1_{\{\|H_{n,\btheta}\|>\delta\}}
 \right]\nonumber\\
&\qquad\leq
 2C_{\mathsf K}n^{1+d/2}
 e^{-c_{\mathsf K}n^{2\alpha-1}}
 \rightarrow0. \label{eq:uniform-large-remainder}
\end{align}

Combining \eqref{eq:uniform-second-moment-bound},
\eqref{eq:uniform-remainder-split}, and
\eqref{eq:uniform-large-remainder}, and then letting
\(\delta\downarrow0\), yields
\begin{equation}
    \sup_{\btheta\in\mathsf K}
    n\,\E_{\btheta}^{\WSS}
      \|r_{\btheta}(H_{n,\btheta})\|^2
    \rightarrow0.
\label{eq:uniform-remainder-vanishes}
\end{equation}
Finally, Cauchy-Schwarz, \eqref{eq:uniform-linear-risk}, and
\eqref{eq:uniform-remainder-vanishes} imply
\[
    \sup_{\btheta\in\mathsf K}
    n\left|
      \E_{\btheta}^{\WSS}
      \langle D_{\btheta}H_{n,\btheta},
              r_{\btheta}(H_{n,\btheta})\rangle
    \right|
    \rightarrow0.
\]
Expanding the squared norm completes the proof.
\end{proof}

\begin{proof}[Proof of Lemma~\ref{lem:concentration}]
We use the passive two-mode interactions employed in the construction of
the concentrating operator by Kumagai and
Hayashi~\cite[Section~2]{KumagaiHayashi2013}.

For $j=1,\ldots,n$, let
\[
    a_j
    :=
    I^{\otimes(j-1)}
    \otimes a
    \otimes I^{\otimes(n-j)}
\]
denote the annihilation operator acting on the $j$th mode, and let
$a_j^\dagger$ denote its adjoint.  Define the total photon-number operator
\[
    \widehat N_{\mathrm{tot}}
    :=
    \sum_{\ell=1}^n a_\ell^\dagger a_\ell .
\]
For $r\geq0$, let
\[
    \mathcal K_r
    :=
    \operatorname{span}
    \left\{
        |k_1,\ldots,k_n\rangle:
        \sum_{\ell=1}^n k_\ell=r
    \right\},
\]
so that
\[
    \cH^{\otimes n}
    =
    \bigoplus_{r=0}^\infty\mathcal K_r .
\]
The finite-particle subspace is
\[
    \mathscr D_{\mathrm{fin}}
    :=
    \left\{
        \sum_{r=0}^{R}\psi_r:
        R<\infty,\quad
        \psi_r\in\mathcal K_r
    \right\},
\]
and, for $R\geq0$, we set
\[
    \cH_{\leq R}
    :=
    \bigoplus_{r=0}^{R}\mathcal K_r .
\]
Each $\mathcal K_r$ and $\cH_{\leq R}$ is finite dimensional.

For $j=1,\ldots,n-1$, consider on
$\mathscr D_{\mathrm{fin}}$ the two-mode Hamiltonian
\begin{equation}
    H_{j,j+1}
    :=
    i\left(
        a_{j+1}^\dagger a_j
        -
        a_j^\dagger a_{j+1}
    \right).
    \label{eq:beam-splitter-Hamiltonian}
\end{equation}
The operator $H_{j,j+1}$ preserves each eigenspace
$\mathcal K_r$ of $\widehat N_{\mathrm{tot}}$, and its restriction
$H_{j,j+1}^{[r]}$ to $\mathcal K_r$ is a finite-dimensional
self-adjoint operator. Define the self-adjoint direct sum
\[
H_{j,j+1}
:=
\bigoplus_{r=0}^{\infty}H_{j,j+1}^{[r]}.
\]
This direct sum gives the self-adjoint realization of
\eqref{eq:beam-splitter-Hamiltonian}, and
$\mathscr D_{\mathrm{fin}}$ is a core for it.
Define 
\begin{equation}
    V_j(t)
    :=
    e^{itH_{j,j+1}},
    \qquad t\in\mathbb R .
    \label{eq:beam-splitter-unitary}
\end{equation}
Since $H_{j,j+1}$ is block diagonal with respect to the decomposition
$\bigoplus_r\mathcal K_r$, each $\mathcal K_r$, each
$\cH_{\leq R}$, and $\mathscr D_{\mathrm{fin}}$ are invariant under
$V_j(t)$.

We first determine the action of $V_j(t)$ on the two mode operators.
The canonical commutation relations give, on
$\mathscr D_{\mathrm{fin}}$,
\begin{equation}
    [H_{j,j+1},a_j]
    =
    ia_{j+1},
    \qquad
    [H_{j,j+1},a_{j+1}]
    =
    -ia_j.
    \label{eq:beam-splitter-commutators}
\end{equation}

Next we study conjugation of the operators $a_j$ and $a_{j+1}$ by $V_j(t)$. To justify the conjugation formulas without differentiating unbounded
operator-valued maps, fix $R\geq0$ and work on the finite-dimensional
space $\cH_{\leq R}$.  Write
\[
    H^{(R)}
    :=
    H_{j,j+1}|_{\cH_{\leq R}},
    \qquad
    a_j^{(R)}
    :=
    a_j|_{\cH_{\leq R}},
    \qquad
    a_{j+1}^{(R)}
    :=
    a_{j+1}|_{\cH_{\leq R}},
\]
and
\[
    V^{(R)}(t)
    :=
    V_j(t)|_{\cH_{\leq R}}
    =
    e^{itH^{(R)}} .
\]
Since annihilation lowers total photon number, $a_j$ and $a_{j+1}$ map
$\cH_{\leq R}$ into itself.  Thus all the operators above are bounded
operators on the finite-dimensional space $\cH_{\leq R}$, and
\eqref{eq:beam-splitter-commutators} restricts to
\[
    [H^{(R)},a_j^{(R)}]
    =
    ia_{j+1}^{(R)},
    \qquad
    [H^{(R)},a_{j+1}^{(R)}]
    =
    -ia_j^{(R)}.
\]

Define
\[
    A_R(t)
    :=
    V^{(R)}(t)a_j^{(R)}V^{(R)}(t)^\dagger,
    \qquad
    B_R(t)
    :=
    V^{(R)}(t)a_{j+1}^{(R)}V^{(R)}(t)^\dagger .
\]
These maps are differentiable in operator norm.  Since
$H^{(R)}$ commutes with $V^{(R)}(t)$,
\begin{align*}
    A_R'(t)
    &=
    iV^{(R)}(t)
    [H^{(R)},a_j^{(R)}]
    V^{(R)}(t)^\dagger
    =
    -B_R(t),\\
    B_R'(t)
    &=
    iV^{(R)}(t)
    [H^{(R)},a_{j+1}^{(R)}]
    V^{(R)}(t)^\dagger
    =
    A_R(t).
\end{align*}

Fix $u,v\in\cH_{\leq R}$ and set
\[
    f(t)
    :=
    \langle u,A_R(t)v\rangle,
    \qquad
    g(t)
    :=
    \langle u,B_R(t)v\rangle .
\]
Then
\[
    f'(t)=-g(t),
    \qquad
    g'(t)=f(t),
\]
with
\[
    f(0)=\langle u,a_j^{(R)}v\rangle,
    \qquad
    g(0)=\langle u,a_{j+1}^{(R)}v\rangle.
\]
By uniqueness of the solution of this scalar linear system,
\begin{align*}
    f(t)
    &=
    \langle u,a_j^{(R)}v\rangle\cos t
    -
    \langle u,a_{j+1}^{(R)}v\rangle\sin t,\\
    g(t)
    &=
    \langle u,a_j^{(R)}v\rangle\sin t
    +
    \langle u,a_{j+1}^{(R)}v\rangle\cos t .
\end{align*}
Since $u$ and $v$ are arbitrary,
\[
    V^{(R)}(t)a_j^{(R)}V^{(R)}(t)^\dagger
    =
    a_j^{(R)}\cos t-a_{j+1}^{(R)}\sin t
\]
and
\[
    V^{(R)}(t)a_{j+1}^{(R)}V^{(R)}(t)^\dagger
    =
    a_j^{(R)}\sin t+a_{j+1}^{(R)}\cos t .
\]

Letting $R$ vary, we obtain on $\mathscr D_{\mathrm{fin}}$
\begin{align}
    V_j(t)a_jV_j(t)^\dagger
    &=
    a_j\cos t-a_{j+1}\sin t,
    \label{eq:mode-rotation-first}\\
    V_j(t)a_{j+1}V_j(t)^\dagger
    &=
    a_j\sin t+a_{j+1}\cos t.
    \label{eq:mode-rotation-second}
\end{align}
Since $\mathscr D_{\mathrm{fin}}$ is invariant under
$V_j(t)$ and $V_j(t)^\dagger$, and the creation and annihilation
operators map $\mathscr D_{\mathrm{fin}}$ into itself, the corresponding
creation-operator identities can be verified directly on this common
domain.

Indeed, let $\phi,\psi\in\mathscr D_{\mathrm{fin}}$. Then
\begin{align*}
\left\langle
    \phi,
    V_j(t)a_j^\dagger V_j(t)^\dagger\psi
\right\rangle
&=
\left\langle
    V_j(t)a_jV_j(t)^\dagger\phi,
    \psi
\right\rangle\\
&=
\left\langle
    (a_j\cos t-a_{j+1}\sin t)\phi,
    \psi
\right\rangle\\
&=
\left\langle
    \phi,
    (a_j^\dagger\cos t-a_{j+1}^\dagger\sin t)\psi
\right\rangle,
\end{align*}
where we used \eqref{eq:mode-rotation-first}. 
Since $\mathscr D_{\mathrm{fin}}$ is dense, this proves the first
creation-operator identity on $\mathscr D_{\mathrm{fin}}$.
Applying the same argument to \eqref{eq:mode-rotation-second} gives
\begin{align}
    V_j(t)a_j^\dagger V_j(t)^\dagger
    &=
    a_j^\dagger\cos t-a_{j+1}^\dagger\sin t,
    \label{eq:creation-rotation-first}\\
    V_j(t)a_{j+1}^\dagger V_j(t)^\dagger
    &=
    a_j^\dagger\sin t+a_{j+1}^\dagger\cos t.
    \label{eq:creation-rotation-second}
\end{align}
Both identities are understood on $\mathscr D_{\mathrm{fin}}$.

We next pass to the displacement operators.  For
$\alpha,\beta\in\bbC$, define on $\mathscr D_{\mathrm{fin}}$
\[
    X_{\alpha,\beta}
    :=
    \alpha a_j^\dagger-\overline{\alpha}a_j
    +
    \beta a_{j+1}^\dagger-\overline{\beta}a_{j+1}.
\]
Finite-particle vectors in $\mathscr D_{\mathrm{fin}}$ are analytic for
these field operators. Since $iX_{\alpha,\beta}$ is symmetric on
$\mathscr D_{\mathrm{fin}}$, Nelson's analytic-vector theorem
\cite[Theorem~X.39]{ReedSimonII} implies that
$X_{\alpha,\beta}$ is essentially skew-adjoint on
$\mathscr D_{\mathrm{fin}}$. Moreover,
\[
    e^{\overline{X_{\alpha,\beta}}}
    =
    D_j(\alpha)D_{j+1}(\beta),
\]
where $D_j(\alpha)$ denotes the displacement operator acting on the
$j$th mode.

By
\eqref{eq:mode-rotation-first}-\eqref{eq:creation-rotation-second},
\[
    V_j(t)X_{\alpha,\beta}V_j(t)^\dagger
    =
    X_{\alpha',\beta'}
\]
on $\mathscr D_{\mathrm{fin}}$, where
\[
    \alpha'
    =
    \alpha\cos t+\beta\sin t,
    \qquad
    \beta'
    =
    -\alpha\sin t+\beta\cos t.
\]
Since $\mathscr D_{\mathrm{fin}}$ is a common core for the corresponding
essentially skew-adjoint operators, their closures satisfy
\[
    V_j(t)\overline{X_{\alpha,\beta}}V_j(t)^\dagger
    =
    \overline{X_{\alpha',\beta'}}.
\]
Unitary covariance of the exponential therefore yields
\begin{equation}
\begin{aligned}
&
V_j(t)
D_j(\alpha)D_{j+1}(\beta)
V_j(t)^\dagger
\\
&\qquad=
D_j(\alpha\cos t+\beta\sin t)
D_{j+1}(-\alpha\sin t+\beta\cos t).
\end{aligned}
\label{eq:two-mode-displacement-rotation}
\end{equation}

For $j=1,\ldots,n-1$, choose $t_j\in[0,\pi/2)$ so that
$\tan t_j=\sqrt{n-j}.
    $
Then the following holds 
\begin{align}
    \begin{pmatrix}
        \cos t_j& \sin t_j\\
        -\sin t_j& \cos t_j
    \end{pmatrix}\begin{pmatrix}
        w\\
        w\sqrt{n-j} 
    \end{pmatrix}= \begin{pmatrix}
        w \sqrt{n-j+1} \\
        0 
    \end{pmatrix} \label{exp_rot}
\end{align}
i.e. the rotation sends
$\left(
        w,\sqrt{n-j}\,w
    \right)
$
to
$\left(
        \sqrt{n-j+1}\,w,0
    \right).
$

Define
\begin{equation}
    U_n
    :=
    V_1(t_1)V_2(t_2)\cdots V_{n-1}(t_{n-1}).
    \label{eq:concentration-unitary}
\end{equation}
Since the rightmost factor acts first under conjugation by $U_n$, a
backward induction gives
\begin{align}
&
V_j(t_j)\cdots V_{n-1}(t_{n-1})
D(w)^{\otimes n}
V_{n-1}(t_{n-1})^\dagger\cdots V_j(t_j)^\dagger
\nonumber\\
&\qquad=
D(w)^{\otimes(j-1)}
\otimes
D\!\left(
    \sqrt{n-j+1}\,w
\right)
\otimes
I^{\otimes(n-j)}.
\label{eq:partial-concentration}
\end{align}
Indeed for $j=n-1$, using \eqref{eq:two-mode-displacement-rotation} and \eqref{exp_rot} this follows from the transformation
$(w,w)\mapsto(\sqrt2\,w,0)$.  If it holds at $j+1$, then modes $j$
and $j+1$ carry displacements
$(w,\sqrt{n-j}\,w)$; the choice of $t_j$ transforms these into
$(\sqrt{n-j+1}\,w,0)$, proving the induction step.

Taking $j=1$ in \eqref{eq:partial-concentration} gives
\begin{equation}
    U_nD(w)^{\otimes n}U_n^\dagger
    =
    D(\sqrt n\,w)\otimes I^{\otimes(n-1)},
    \label{eq:appendix-concentrated-action}
\end{equation}
which is \eqref{eq:concentrated-action}.

It remains to prove the state identity.  Each $H_{j,j+1}$ preserves total
photon number, equivalently,
\[
    [H_{j,j+1},\widehat N_{\mathrm{tot}}]=0
\]
on $\mathscr D_{\mathrm{fin}}$, where 
\[\widehat N_{\mathrm{tot}} =\sum_{j=1}^n a_j^\dagger a_j.\]
This can also be read directly from
\eqref{eq:beam-splitter-Hamiltonian}: each term removes one excitation
from one of the two modes and creates one in the other.
It follows that
$V_j(t)$ commutes with every bounded Borel function of
$\widehat N_{\mathrm{tot}}$.

Set
$
    q_N:=\frac{N}{N+1}.
$
Since
\[
    \phi_N
    =
    \frac{1}{N+1}
    q_N^{\,a^\dagger a},
\]
we have
\begin{equation}
    \phi_N^{\otimes n}
    =
    \frac{1}{(N+1)^n}
    q_N^{\,\widehat N_{\mathrm{tot}}}.
    \label{eq:thermal-product-number-form}
\end{equation}
Consequently,
\[
    V_j(t)\phi_N^{\otimes n}V_j(t)^\dagger
    =
    \phi_N^{\otimes n}
\]
for every $j$ and $t$.  Applying this successively to the factors defining
$U_n$ gives
\begin{equation}
    U_n\phi_N^{\otimes n}U_n^\dagger
    =
    \phi_N^{\otimes n}.
    \label{eq:thermal-product-invariance}
\end{equation}

Finally,
\[
    \rho_{z,N}^{\otimes n}
    =
    D(z)^{\otimes n}
    \phi_N^{\otimes n}
    \bigl(D(z)^{\otimes n}\bigr)^\dagger.
\]
Using \eqref{eq:concentrated-action} and
\eqref{eq:thermal-product-invariance},
\begin{align*}
    U_n\rho_{z,N}^{\otimes n}U_n^\dagger
    &=
    \bigl(
        D(\sqrt n\,z)\otimes I^{\otimes(n-1)}
    \bigr)
    \phi_N^{\otimes n}
    \bigl(
        D(\sqrt n\,z)^\dagger\otimes I^{\otimes(n-1)}
    \bigr)
\\
    &=
    \bigl[
        D(\sqrt n\,z)
        \phi_N
        D(\sqrt n\,z)^\dagger
    \bigr]
    \otimes
    \phi_N^{\otimes(n-1)}
\\
    &=
    \rho_{\sqrt n\,z,N}
    \otimes
    \phi_N^{\otimes(n-1)}.
\end{align*}
This proves \eqref{eq:concentrated-state}.
\end{proof}

\begin{proof}[Proof of Lemma \ref{seq_conv_POVM}]
Since \(\mathsf A\) is
compact metric, \(C(\mathsf A)\) is separable in the supremum norm. Since
\(\mathcal K_n\) is separable, the trace-class space
\(\mathcal T_1(\mathcal K_n)\) is separable in the trace norm. Choose
countable dense subsets
\begin{equation}
    \{f_r:r\geq1\}
    \subset
    \left\{
        f\in C(\mathsf A):
        \|f\|_\infty\leq1
    \right\}
    \label{eq:dense-continuous-functions}
\end{equation}
and
\begin{equation}
    \{X_s:s\geq1\}
    \subset
    \left\{
        X\in\mathcal T_1(\mathcal K_n):
        \|X\|_1\leq1
    \right\}.
    \label{eq:dense-trace-class-operators}
\end{equation}

Let \((D_j)\) be any sequence of POVMs. For every \(j,r,s\),
\begin{align}
    \left|
        \Tr\!\left[
            X_s\Phi_{D_j}(f_r)
        \right]
    \right|
    &\leq
    \|X_s\|_1
    \|\Phi_{D_j}(f_r)\|
    \nonumber\\
    &\leq
    \|X_s\|_1
    \|f_r\|_\infty
    \leq1.
    \label{eq:bounded-matrix-elements}
\end{align}
A diagonal-subsequence argument therefore gives a subsequence, again denoted
by \((D_j)\), for which
$\Tr\!\left[
        X_s\Phi_{D_j}(f_r)
    \right]
    $
converges for every pair \(r,s\).

For arbitrary \(X\in\mathcal T_1(\mathcal K_n)\) and
\(f\in C(\mathsf A)\), we have the uniform bound
\begin{equation}
    \left|
        \Tr\!\left[
            X\Phi_{D_j}(f)
        \right]
    \right|
    \leq
    \|X\|_1\|f\|_\infty.
    \label{eq:uniform-pairing-bound}
\end{equation}
Using the density of the families in
\eqref{eq:dense-continuous-functions} and
\eqref{eq:dense-trace-class-operators}, together with
\eqref{eq:uniform-pairing-bound},$\Tr\!\left[
        X\Phi_{D_j}(f)
    \right]
    $ converges for every 
$
    X\in\mathcal T_1(\mathcal K_n),
    f\in C(\mathsf A).
$

For each \(f\in C(\mathsf A)\), the resulting limit is a bounded linear
functional of \(X\in\mathcal T_1(\mathcal K_n)\). Since
$\mathcal L(\mathcal K_n)
    =
    \mathcal T_1(\mathcal K_n)^*,
$
there is a unique operator \(\Phi(f)\in\mathcal L(\mathcal K_n)\) such that
\begin{equation}
    \Tr[X\Phi(f)]
    =
    \lim_{j\to\infty}
    \Tr\!\left[
        X\Phi_{D_j}(f)
    \right]
    \label{eq:limiting-positive-map}
\end{equation}
for every \(X\in\mathcal T_1(\mathcal K_n)\).

Linearity of \(\Phi\) follows directly by passing to the limit in
\eqref{eq:limiting-positive-map}. If \(f\geq0\), then
$\Phi_{D_j}(f)\geq0$ for every \(j\). Hence, for every \(\psi\in\mathcal K_n\),
\begin{align}
    \langle\psi,\Phi(f)\psi\rangle
    &=
    \lim_{j\to\infty}
    \langle\psi,\Phi_{D_j}(f)\psi\rangle
    \geq0.
    \label{eq:positivity-of-limit-map}
\end{align}
Thus \(\Phi(f)\geq0\). Moreover,
\begin{equation}
    \Phi(1)
    =
    \lim_{j\to\infty}\Phi_{D_j}(1)
    =
    I
    \label{eq:unitality-of-limit-map}
\end{equation}
in the ultraweak sense. Therefore \(\Phi\) is positive and unital.

By the operator-valued Riesz-Markov-Kakutani representation theorem (cf. Theorem 4.4 of \cite{Quantum_Measurements_2016}), there is a unique POVM
\(\overline D\) on \(\mathsf A\) such that
\begin{equation}
    \Phi(f)
    =
    \Phi_{\overline D}(f)
    =
    \int_{\mathsf A}f(a)\,\overline D(da).
    \label{eq:limiting-povm}
\end{equation}
We have therefore proved that every sequence of POVMs contains a subsequence
that converges in the topology \eqref{eq:povm-topology}.

Recall that for \(D\) an arbitrary decision POVM on \(\mathsf A\), $D_j$ is defined in \eqref{eq:asymptotically-averaged-povm} as
\begin{equation}
    D_j(B)
    :=
    \int_{\bbC}
        V_w^\dagger D(B)V_w\,
        \nu_j(dw),
    \qquad
    B\in\mathcal A,
\end{equation}
where the integral is understood ultraweakly. Equivalently,
\begin{equation}
    \Phi_{D_j}(f)
    =
    \int_{\bbC}
        V_w^\dagger\Phi_D(f)V_w\,
        \nu_j(dw),
    \qquad
    f\in C(\mathsf A).
    \label{eq:asymptotically-averaged-positive-map}
\end{equation}

We verify that \(D_j\) is a POVM. Positivity follows because
$V_w^\dagger D(B)V_w\geq0$
for every \(w\). Moreover,
\begin{align}
    D_j(\mathsf A)
    &=
    \int_{\bbC}
        V_w^\dagger D(\mathsf A)V_w\,
        \nu_j(dw)
    \nonumber\\
    &=
    \int_{\bbC}
        V_w^\dagger IV_w\,
        \nu_j(dw)
    =
    I.
    \label{eq:averaged-povm-normalization}
\end{align}

Let \(B_1,B_2,\ldots\) be pairwise disjoint Borel subsets of
\(\mathsf A\), and let \(X\geq0\) be trace class. Then
\begin{align}
&
    \Tr\!\left[
        X D_j\!\left(\bigcup_{\ell=1}^\infty B_\ell\right)
    \right]
    \nonumber\\
&\quad=
    \int_{\bbC}
        \Tr\!\left[
            X
            V_w^\dagger
            D\!\left(\bigcup_{\ell=1}^\infty B_\ell\right)
            V_w
        \right]
        \nu_j(dw)
    \nonumber\\
&\quad=
    \int_{\bbC}
        \sum_{\ell=1}^\infty
        \Tr\!\left[
            X V_w^\dagger D(B_\ell)V_w
        \right]
        \nu_j(dw)
    \nonumber\\
&\quad=
    \sum_{\ell=1}^\infty
        \int_{\bbC}
            \Tr\!\left[
                X V_w^\dagger D(B_\ell)V_w
            \right]
            \nu_j(dw)
    \nonumber\\
&\quad=
    \sum_{\ell=1}^\infty
        \Tr[X D_j(B_\ell)].
    \label{eq:averaged-povm-countable-additivity}
\end{align}
The interchange of the sum and integral follows from monotone convergence.
Since positive trace-class operators separate bounded operators,
\eqref{eq:averaged-povm-countable-additivity} proves countable additivity in
the weak operator topology. Thus \(D_j\) is a POVM.

By the compactness established above, there exists a subsequence
\((j_\ell)\) and a POVM \(\overline D\) such that
\begin{equation}
    \Tr\!\left[
        X\Phi_{D_{j_\ell}}(f)
    \right]
    \rightarrow
    \Tr\!\left[
        X\Phi_{\overline D}(f)
    \right]
    \label{eq:averaged-povm-convergence}
\end{equation}
for every \(X\in\mathcal T_1(\mathcal K_n)\) and every
\(f\in C(\mathsf A)\).

Next we prove invariance of $\bar D$. Fix \(v\in\bbC\), a density operator \(\sigma\) on \(\mathcal K_n\), and a
real-valued function \(f\in C(\mathsf A)\) satisfying
$0\leq f\leq1$. Define the scalar function
\begin{equation}
    q_{\sigma,f}(w)
    :=
    \Tr\!\left[
        \sigma
        V_w^\dagger\Phi_D(f)V_w
    \right],
    \qquad
    w\in\bbC.
    \label{eq:scalar-povm-orbit-function}
\end{equation}
Since
\[
    0\leq\Phi_D(f)\leq I,
\]
we have
\begin{equation}
    0\leq q_{\sigma,f}(w)\leq1.
    \label{eq:scalar-orbit-function-range}
\end{equation}
Strong continuity of \(w\mapsto V_w\) implies that
\(q_{\sigma,f}\) is Borel measurable.

By \eqref{eq:asymptotically-averaged-positive-map},
\begin{equation}
    \Tr\!\left[
        \sigma\Phi_{D_j}(f)
    \right]
    =
    \int_{\bbC}
        q_{\sigma,f}(w)\,
        \nu_j(dw).
    \label{eq:unshifted-scalar-average}
\end{equation}
On the other hand,
\begin{align}
\Tr\!\left[
        \sigma
        V_v^\dagger\Phi_{D_j}(f)V_v
    \right]
&=
    \int_{\bbC}
        \Tr\!\left[
            \sigma
            V_v^\dagger V_w^\dagger
            \Phi_D(f)
            V_wV_v
        \right]
        \nu_j(dw)\nonumber\\
&=\int_{\bbC}
        \Tr\!\left[
            \sigma
            V_{w+v}^\dagger\Phi_D(f)V_{w+v}
        \right]
        \nu_j(dw)\nonumber\\
&=
    \int_{\bbC}
        q_{\sigma,f}(w+v)\,
        \nu_j(dw).
    \label{eq:translated-scalar-average}
\end{align}

Applying the bounded-function asymptotic-invariance result
\eqref{eq:bounded-function-asymptotic-invariance} to
\(q_{\sigma,f}\), we obtain
\begin{equation}
    \Tr\!\left[
        \sigma
        V_v^\dagger\Phi_{D_j}(f)V_v
    \right]
    -
    \Tr\!\left[
        \sigma\Phi_{D_j}(f)
    \right]
    \rightarrow0.
    \label{eq:approximate-invariance-expectation}
\end{equation}

We now pass to the convergent subsequence \((j_\ell)\). From
\eqref{eq:averaged-povm-convergence},
\begin{equation}
    \Tr\!\left[
        \sigma\Phi_{D_{j_\ell}}(f)
    \right]
    \rightarrow
    \Tr\!\left[
        \sigma\Phi_{\overline D}(f)
    \right].
    \label{eq:unshifted-povm-limit}
\end{equation}
Furthermore, using the cyclicity of trace
\begin{equation}
\Tr\!\left[
        \sigma
        V_v^\dagger
        \Phi_{D_{j_\ell}}(f)
        V_v
    \right]
\rightarrow
    \Tr\!\left[
        \sigma
        V_v^\dagger
        \Phi_{\overline D}(f)
        V_v
    \right],
    \label{eq:translated-povm-limit}
\end{equation}
because \(V_v\sigma V_v^\dagger\) is trace class.

Combining
\eqref{eq:approximate-invariance-expectation},
\eqref{eq:unshifted-povm-limit}, and
\eqref{eq:translated-povm-limit}, we obtain
\begin{equation}
    \Tr\!\left[
        \sigma
        \left\{
            V_v^\dagger
            \Phi_{\overline D}(f)
            V_v
            -
            \Phi_{\overline D}(f)
        \right\}
    \right]
    =
    0
    \label{eq:invariance-against-density-operators}
\end{equation}
for every density operator \(\sigma\) on \(\mathcal K_n\).

Since density operators separate bounded self-adjoint operators,
the preceding identity implies
\begin{equation}
    V_v^\dagger
    \Phi_{\overline D}(f)
    V_v
    =
    \Phi_{\overline D}(f).
    \label{eq:operator-invariance-positive-functions}
\end{equation}
An affine rescaling extends
\eqref{eq:operator-invariance-positive-functions} from functions satisfying
\(0\leq f\leq1\) to all real-valued functions in \(C(\mathsf A)\). By
decomposing a complex-valued function into its real and imaginary parts, the
same identity holds for every \(f\in C(\mathsf A)\):
\begin{equation}
    V_v^\dagger
    \Phi_{\overline D}(f)
    V_v
    =
    \Phi_{\overline D}(f).
    \label{eq:operator-invariance-all-functions}
\end{equation}

Let \(v\cdot\overline D\) be the conjugated POVM defined by
$
    (v\cdot\overline D)(B)
    :=
    V_v^\dagger\overline D(B)V_v.
$
Its associated positive unital map satisfies
\begin{equation}
    \Phi_{v\cdot\overline D}(f)
    =
    V_v^\dagger
    \Phi_{\overline D}(f)
    V_v
    =
    \Phi_{\overline D}(f)
    \label{eq:equality-positive-unital-maps}
\end{equation}
for every \(f\in C(\mathsf A)\). Uniqueness in the operator-valued Riesz-Markov-Kakutani representation theorem (cf. Theorem 4.4 of \cite{Quantum_Measurements_2016}) now implies
\begin{equation}
    v\cdot\overline D=\overline D.
    \label{eq:limiting-povm-invariant-one-v}
\end{equation}
Since \(v\in\bbC\) was arbitrary,
$\overline D\in\cM_{\mathrm{inv}}^{(n)}.$
\end{proof}

\begin{proof}[Proof of Lemma \ref{lem:thermal-uniform-plug-in-risk}]
The geometric family has moments of every order.
Since \(K\) is a compact subset of \((0,\infty)\), its centered moments of any fixed order are
uniformly bounded over \(N\in K\). In particular,
\begin{equation}
    \sup_{N\in K}
    \E_N|X_1-N|^4
    <
    \infty.
    \label{eq:uniform-geometric-fourth-moment}
\end{equation}

Write
\[
    Y_j:=X_j-N.
\]
Then \(\E_NY_j=0\), and independence gives
\begin{align}
    \E_N
    \left(
        \sum_{j=1}^mY_j
    \right)^4
    &=
    m\E_NY_1^4
    +
    3m(m-1)
    \{\E_NY_1^2\}^2.
    \label{eq:fourth-moment-independent-sum}
\end{align}
It follows that
\begin{equation}
    \sup_{N\in K}
    \E_N|\widehat N_m-N|^4
    =
    O(m^{-2}).
    \label{eq:sample-mean-fourth-moment}
\end{equation}

By \eqref{eq:natural-thermal-estimator-variance},
\begin{equation}
    m\E_N(\widehat N_m-N)^2
    =
    N(N+1)
    \label{eq:thermal-exact-second-moment}
\end{equation}
for every \(m\) and \(N>0\).

Taylor's formula gives
\begin{equation}
    \psi(\widehat N_m)-\psi(N)
    =
    \psi'(N)(\widehat N_m-N)
    +
    r_{m,N},
    \label{eq:thermal-functional-taylor}
\end{equation}
where Assumption~\ref{ass:thermal-smooth-functional} implies
\begin{equation}
    |r_{m,N}|
    \leq
    \frac12
    \|\psi''\|_\infty
    |\widehat N_m-N|^2.
    \label{eq:thermal-taylor-remainder-bound}
\end{equation}
Therefore, by \eqref{eq:sample-mean-fourth-moment},
\begin{align}
    \sup_{N\in K}
    m\E_Nr_{m,N}^2
    &\leq
    \frac14
    \|\psi''\|_\infty^2
    \sup_{N\in K}
    m\E_N|\widehat N_m-N|^4
    \rightarrow0.
    \label{eq:thermal-taylor-remainder-L2}
\end{align}

By Cauchy-Schwarz,
\begin{align}
&
    m
    \left|
        \E_N
        \left[
            \psi'(N)
            (\widehat N_m-N)
            r_{m,N}
        \right]
    \right|
    \nonumber\\
&\quad\leq
    |\psi'(N)|
    \left\{
        m\E_N(\widehat N_m-N)^2
    \right\}^{1/2}
    \left\{
        m\E_Nr_{m,N}^2
    \right\}^{1/2}.
    \label{eq:thermal-cross-term-bound}
\end{align}
Since \(\psi'\) is bounded on the compact set \(K\), the first two
factors on the right are uniformly bounded over \(N\in K\), while the
last factor converges uniformly to zero by
\eqref{eq:thermal-taylor-remainder-L2}. Thus
\begin{equation}
    \sup_{N\in K}
    m
    \left|
        \E_N
        \left[
            \psi'(N)
            (\widehat N_m-N)
            r_{m,N}
        \right]
    \right|
    \rightarrow0.
    \label{eq:thermal-cross-term-vanishes}
\end{equation}

Expanding the square in \eqref{eq:thermal-functional-taylor}, and using
\eqref{eq:thermal-exact-second-moment},
\eqref{eq:thermal-taylor-remainder-L2}, and
\eqref{eq:thermal-cross-term-vanishes}, gives
\begin{align}
&
    m\E_N
    \left[
        \{\psi(\widehat N_m)-\psi(N)\}^2
    \right]
    \nonumber\\
&\quad=
    \{\psi'(N)\}^2
    m\E_N(\widehat N_m-N)^2
    +
    o(1)
    \nonumber\\
&\quad=
    \{\psi'(N)\}^2N(N+1)+o(1)
    \label{eq:thermal-plug-in-leading-term}
\end{align}
uniformly over \(N\in K\). This proves
\eqref{eq:uniform-thermal-plug-in-risk}.
\end{proof}

\begin{proof}[Proof of Lemma \ref{lem:nb-posterior-variance}]
The proof follows the successive steps of Lemmas~\ref{lem:multinomial-posterior-covariance} and~\ref{lem:multinomial-functional-posterior-variance}.
The principal identification is that, after the transformation
\[
    \vartheta=\frac{N}{N+1},
\]
the negative-binomial likelihood admits an exact Bernoulli
Kullback-Leibler representation. This supplies the posterior-tail
bound directly.

Define
\[
    T(N):=\frac{N}{N+1},
    \qquad
    F(\vartheta):=\frac{\vartheta}{1-\vartheta},
\]
so that \(F=T^{-1}\). Write
\[
    Q:=T(J)=[q_-,q_+],
    \qquad
    q_-:=\frac{a}{a+1},
    \qquad
    q_+:=\frac{b}{b+1}.
\]
Thus
\[
    0<q_-<q_+<1.
\]
Fix \(N_0\in(a,b)\), and put
\[
    \vartheta_0:=T(N_0)\in(q_-,q_+).
\]

In the parameter \(\vartheta\), the sampling distribution is
\[
    p_\vartheta^{(m)}(s)
    =
    \binom{s+m-1}{s}
    \vartheta^s(1-\vartheta)^m.
\]
Consequently, up to a factor independent of \(\vartheta\),
\[
    L_m(\vartheta)
    =
    \vartheta^S(1-\vartheta)^m,
\]
and
\begin{equation}
    \ell_m(\vartheta)
    :=
    \log L_m(\vartheta)
    =
    S\log\vartheta+m\log(1-\vartheta).
    \label{eq:nb-proof-loglik}
\end{equation}

The transformed prior density on \(Q\) is
\begin{equation}
    \widetilde\pi(\vartheta)
    =
    \pi\bigl(F(\vartheta)\bigr)F'(\vartheta)
    =
    \pi\left(\frac{\vartheta}{1-\vartheta}\right)
    \frac{1}{(1-\vartheta)^2}.
    \label{eq:nb-proof-transformed-prior}
\end{equation}
It is continuous on \(Q\), and there exist constants
\(0<\widetilde c_\pi\leq\widetilde C_\pi<\infty\) such that
\begin{equation}
    \widetilde c_\pi
    \leq
    \widetilde\pi(\vartheta)
    \leq
    \widetilde C_\pi,
    \qquad \vartheta\in Q.
    \label{eq:nb-proof-prior-bounds}
\end{equation}

Define $\widehat\vartheta_m:=S/(S+m)$.
This is the unrestricted maximum likelihood estimator over $[0,1)$.
On the event $G_m$ defined below, it lies in the interior of $Q$ and
therefore also maximizes the likelihood over $Q$. Note that
\[
    \widehat\vartheta_m
    =
    \frac{S}{S+m}
    =
    T(\widehat N_m),
    \qquad
    \widehat N_m:=\frac{S}{m}.
\]

Choose \(\varepsilon_0>0\) sufficiently small that
\[
    [\vartheta_0-4\varepsilon_0,\,
      \vartheta_0+4\varepsilon_0]
    \subset(q_-,q_+),
\]
and define
\begin{equation}
    G_m
    :=
    \left\{
        |\widehat\vartheta_m-\vartheta_0|
        \leq\varepsilon_0
    \right\}.
    \label{eq:nb-proof-good-event}
\end{equation}
On \(G_m\), the estimator \(\widehat\vartheta_m\) lies in a fixed
compact subinterval of \((q_-,q_+)\); in particular, \(S>0\).

Note that,
\[
    T'(x)=\frac{1}{(1+x)^2}\leq1,
    \qquad x\geq0,
\]
so that
\[
    |\widehat\vartheta_m-\vartheta_0|
    \leq
    |\widehat N_m-N_0|.
\]
The Chernoff bound therefore gives constants \(C_0,c_0>0\),
depending on \(N_0\) and \(\varepsilon_0\), such that
\begin{equation}
    \mathbb P_{N_0}(G_m^c)
    \leq
    \mathbb P_{N_0}
    \left(
        |\widehat N_m-N_0|>\varepsilon_0
    \right)
    \leq
    C_0e^{-c_0m}.
    \label{eq:nb-proof-good-event-tail}
\end{equation}

The final inequality follows by observing that
$S=X_1+\cdots+X_m,$
where \(X_1,\ldots,X_m\) are independent geometric random variables
with mean \(N_0\) and their common moment generating function
\[
    \mathbb E_{N_0}e^{tX_1}
    =
    \frac{1}{1+N_0-N_0e^t},
    \qquad
    t<\log\frac{1+N_0}{N_0}
\]
 is finite in a neighborhood of zero. 

On \(G_m\),
\[
    S=(S+m)\widehat\vartheta_m,
    \qquad
    m=(S+m)(1-\widehat\vartheta_m).
\]
Hence, for every \(\vartheta\in Q\),
\begin{align}
    \ell_m(\vartheta)
    -
    \ell_m(\widehat\vartheta_m)
    &=
    S\log\frac{\vartheta}{\widehat\vartheta_m}
    +
    m\log\frac{1-\vartheta}{1-\widehat\vartheta_m}
    \nonumber\\
    &=
    -(S+m)
    D_{\mathrm{Ber}}
    \bigl(\widehat\vartheta_m\Vert\vartheta\bigr),
    \label{eq:nb-proof-kl-identity}
\end{align}
where
\[
    D_{\mathrm{Ber}}(p\Vert q)
    :=
    p\log\frac pq
    +
    (1-p)\log\frac{1-p}{1-q}.
\]
Therefore,
\begin{equation}
    \frac{L_m(\vartheta)}
         {L_m(\widehat\vartheta_m)}
    =
    \exp\left\{
        -(S+m)
        D_{\mathrm{Ber}}
        \bigl(\widehat\vartheta_m\Vert\vartheta\bigr)
    \right\}.
    \label{eq:nb-proof-likelihood-ratio}
\end{equation}
This is the exact counterpart of the multinomial likelihood-ratio
identity used in Lemma~\ref{lem:multinomial-posterior-covariance} .

Introduce the local coordinate
\[
    h=\sqrt m\,(\vartheta-\widehat\vartheta_m).
\]

A direct computation shows that
\begin{equation}
    -\frac1m\ddot\ell_m(\widehat\vartheta_m)
    =
    \frac{S/m}{\widehat\vartheta_m^2}
    +
    \frac{1}{(1-\widehat\vartheta_m)^2}
    =
    \frac{1}
    {\widehat\vartheta_m(1-\widehat\vartheta_m)^2}.
    \label{eq:nb-proof-observed-information}
\end{equation}

Furthermore, the third derivative is
\[
    \ell_m^{(3)}(\vartheta)
    =
    \frac{2S}{\vartheta^3}
    -
    \frac{2m}{(1-\vartheta)^3}.
\]
On \(G_m\), the ratio \(S/m\) is uniformly bounded, and
\(\widehat\vartheta_m\) stays in a fixed compact subset of
\((0,1)\). Thus \(m^{-1}\ell_m^{(3)}\) is uniformly bounded in a
fixed neighborhood of \(\widehat\vartheta_m\).

Define
\[
    I_\vartheta(v)
    :=
    \frac{1}{v(1-v)^2},
    \qquad v\in(0,1).
\]

Taylor's theorem, together with
\(\dot\ell_m(\widehat\vartheta_m)=0\), now gives, for every
\(M<\infty\),
\begin{equation}
\begin{split}
    \sup_{|h|\leq M}
    \bigg|
        \ell_m\left(
            \widehat\vartheta_m+\frac{h}{\sqrt m}
        \right)
        -
        \ell_m(\widehat\vartheta_m)
        +
        \frac12
        I_\vartheta(\widehat\vartheta_m)h^2
    \bigg|
    \leq
    \frac{C_M}{\sqrt m}
\end{split}
    \label{eq:nb-proof-local-taylor}
\end{equation}
on \(G_m\), for all sufficiently large \(m\).

Since \(\widehat\vartheta_m\to\vartheta_0\) in probability,
\[
    I_\vartheta(\widehat\vartheta_m)
    \rightarrow
    I_\vartheta(\vartheta_0)
\]
in probability. Consequently,
\begin{equation}
\begin{split}
    \sup_{|h|\leq M}
    \bigg|
        \ell_m\left(
            \widehat\vartheta_m+\frac{h}{\sqrt m}
        \right)
        -
        \ell_m(\widehat\vartheta_m)
        +
        \frac12 I_\vartheta(\vartheta_0)h^2
    \bigg|
    \rightarrow0
\end{split}
    \label{eq:nb-proof-local-gaussian-expansion}
\end{equation}
in \(\mathbb P_{N_0}\)-probability.

Let \(\vartheta'\) denote the transformed posterior variable, and set
\[
    H_m
    :=
    \sqrt m\,(\vartheta'-\widehat\vartheta_m).
\]
Define the admissible local domain
\[
    \mathcal H_m
    :=
    \left\{
        h\in\mathbb R:
        \widehat\vartheta_m+\frac{h}{\sqrt m}\in Q
    \right\}.
\]
On \(G_m\), the posterior density of \(H_m\), conditional on \(S\),
is
\begin{equation}
    q_m(h\mid S)
    =
    \frac{g_m(h)}{Z_m},
    \qquad
    Z_m:=\int_{\mathbb R}g_m(h)\,dh,
    \label{eq:nb-proof-local-posterior}
\end{equation}
where
\begin{equation}
\begin{split}
    g_m(h)
    &:=
    \exp\left\{
        \ell_m\left(
            \widehat\vartheta_m+\frac{h}{\sqrt m}
        \right)
        -
        \ell_m(\widehat\vartheta_m)
    \right\}
    \\
    &\qquad\times
    \widetilde\pi\left(
        \widehat\vartheta_m+\frac{h}{\sqrt m}
    \right)
    1_{\mathcal H_m}(h).
\end{split}
    \label{eq:nb-proof-unnormalized-posterior}
\end{equation}

Define
\[
    g(h)
    :=
    \widetilde\pi(\vartheta_0)
    \exp\left\{
        -\frac12 I_\vartheta(\vartheta_0)h^2
    \right\}.
\]
Continuity of \(\widetilde\pi\), together with
\eqref{eq:nb-proof-local-gaussian-expansion}, yields
\begin{equation}
    \sup_{|h|\leq M}|g_m(h)-g(h)|
    \rightarrow0
    \label{eq:nb-proof-local-density-convergence}
\end{equation}
in probability, for every \(M<\infty\).

Pinsker's inequality for Bernoulli distributions gives
\[
    D_{\mathrm{Ber}}(p\Vert q)
    \geq
    2(p-q)^2.
\]
Combining this with~\eqref{eq:nb-proof-likelihood-ratio} and
\(S+m\geq m\), we obtain, on \(G_m\),
\begin{align}
    \frac{L_m(\vartheta)}
         {L_m(\widehat\vartheta_m)}
    &\leq
    \exp\left\{
        -2(S+m)
        (\vartheta-\widehat\vartheta_m)^2
    \right\}
    \nonumber\\
    &\leq
    \exp\left\{
        -2m(\vartheta-\widehat\vartheta_m)^2
    \right\}.
    \label{eq:nb-proof-global-likelihood-bound}
\end{align}
Therefore,
\begin{equation}
    \frac{
        L_m(\widehat\vartheta_m+h/\sqrt m)
    }{
        L_m(\widehat\vartheta_m)
    }
    \leq e^{-2h^2},
    \qquad h\in\mathcal H_m.
    \label{eq:nb-proof-local-likelihood-bound}
\end{equation}
The transformed prior is bounded above on \(Q\), so
\begin{equation}
    g_m(h)
    \leq
    \widetilde C_\pi e^{-2h^2}
    \label{eq:nb-proof-unnormalized-envelope}
\end{equation}
on \(G_m\).

Let
\[
    \varphi_{\vartheta_0}(h)
    :=
    \sqrt{\frac{I_\vartheta(\vartheta_0)}{2\pi}}
    \exp\left\{
        -\frac12 I_\vartheta(\vartheta_0)h^2
    \right\}.
\]

We now apply the normalization and posterior-moment argument used in
the proof of Lemma~\ref{lem:multinomial-posterior-covariance} that yielded \eqref{posterior-moment-conv}. In the present one-dimensional
setting, the required ingredients are the local convergence
\eqref{eq:nb-proof-local-density-convergence}, the bounds on the
transformed prior in \eqref{eq:nb-proof-prior-bounds}, and the
Gaussian envelope
\eqref{eq:nb-proof-unnormalized-envelope}. The same argument gives
\begin{equation}
    \int_{\mathbb R}
    (1+h^2)
    \left|
        q_m(h\mid S)-\varphi_{\vartheta_0}(h)
    \right|\,dh
    \rightarrow0
    \label{eq:nb-proof-weighted-bvm}
\end{equation}
in \(\mathbb P_{N_0}\)-probability.

More explicitly, consider the argument used in \eqref{eq:normalizing-lower-bound}. A similar argument gives a
constant \(c_Z>0\) such that, on \(G_m\) and for all sufficiently
large \(m\),
$Z_m\geq c_Z.$
Combining this lower bound with
\eqref{eq:nb-proof-unnormalized-envelope} yields
\begin{equation}
    q_m(h\mid S)
    \leq
    C e^{-2h^2},
    \qquad h\in\mathbb R,
    \label{eq:nb-proof-posterior-envelope}
\end{equation}
on \(G_m\), for a deterministic constant \(C<\infty\).
On $G_m$, the bounds
\begin{equation}
    \mathbb E\!\left[
        H_m^2\mid S
    \right]
    \leq C_2,
    \qquad
    \mathbb E\!\left[
        |H_m|^4\mid S
    \right]
    \leq C_4
    \label{eq:nb-proof-posterior-moment-bounds}
\end{equation}
hold for deterministic constants \(C_2,C_4<\infty\).

Equation~\eqref{eq:nb-proof-weighted-bvm} implies
\[
    \mathbb E[H_m\mid S]\rightarrow0,
\qquad
    \mathbb E[H_m^2\mid S]
    \rightarrow
    I_\vartheta(\vartheta_0)^{-1}
\]
in \(\mathbb P_{N_0}\)-probability. Consequently,
\begin{equation}
    \operatorname{Var}(H_m\mid S)
    \rightarrow
    I_\vartheta(\vartheta_0)^{-1}
    =
    \vartheta_0(1-\vartheta_0)^2
    \label{eq:nb-proof-posterior-variance-probability}
\end{equation}
in \(\mathbb P_{N_0}\)-probability.

It remains to pass from convergence in probability to convergence in
expectation. On \(G_m\),
\[
    \operatorname{Var}(H_m\mid S)
    \leq
    \mathbb E[H_m^2\mid S]
    \leq C_2
\]
by \eqref{eq:nb-proof-posterior-moment-bounds}. On \(G_m^c\), both
\(\vartheta'\) and \(\widehat\vartheta_m\) belong to \([0,1]\), and
therefore
\[
    |H_m|^2
    =
    m|\vartheta'-\widehat\vartheta_m|^2
    \leq m.
\]
It follows from \eqref{eq:nb-proof-good-event-tail} that
\begin{equation}
    \mathbb E_{N_0}
    \left[
        \operatorname{Var}(H_m\mid S)1_{G_m^c}
    \right]
    \leq
    m\,\mathbb P_{N_0}(G_m^c)\leq
    C_0m e^{-c_0m}
    \rightarrow0.
    \label{eq:nb-proof-bad-event-variance}
\end{equation}
Thus
\(\{\operatorname{Var}(H_m\mid S):m\geq1\}\) is uniformly
integrable. Combining this fact with
\eqref{eq:nb-proof-posterior-variance-probability} gives
\begin{equation}
    \mathbb E_{N_0}
    \left|
        \operatorname{Var}(H_m\mid S)
        -
        \vartheta_0(1-\vartheta_0)^2
    \right|
    \rightarrow0.
    \label{eq:nb-proof-posterior-variance-lone}
\end{equation}

Because \(\widehat\vartheta_m\) is fixed conditional on \(S\),
\[
    \operatorname{Var}(H_m\mid S)
    =
    m\operatorname{Var}(\vartheta'\mid S).
\]
Thus \eqref{eq:nb-proof-posterior-variance-lone} is precisely the
negative-binomial counterpart of
Lemma~\ref{lem:multinomial-posterior-covariance}.

We make the remainder argument in Lemma~\ref{lem:multinomial-functional-posterior-variance} explicit.
Note that by \eqref{eq:sample-mean-fourth-moment}
\[
   \mathbb E_{N_0}
    |\widehat\vartheta_m-\vartheta_0|^4\leq \mathbb E_{N_0}
    |\widehat N_m-N_0|^4
    =
    O(m^{-2}).
\]

On \(G_m\), the posterior envelope
\eqref{eq:nb-proof-posterior-envelope} gives, for all sufficiently
large \(m\),
\[
\begin{aligned}
    \mathbb E\!\left[|H_m|^4\mid S\right]
    &=
    \int_{\mathbb R}|h|^4 q_m(h\mid S)\,dh \\
    &\leq
    C\int_{\mathbb R}|h|^4e^{-2h^2}\,dh
    =:C_4<\infty.
\end{aligned}
\]
Thus the posterior fourth moment of \(H_m\) is uniformly bounded on
\(G_m\).
Using
\[
    \vartheta'-\vartheta_0
    =
    \frac{H_m}{\sqrt m}
    +
    (\widehat\vartheta_m-\vartheta_0)
\]
and \(|x+y|^4\leq8(|x|^4+|y|^4)\), we obtain
\begin{equation}
    \mathbb E_{N_0}
    \left[
        \mathbb E[
            |\vartheta'-\vartheta_0|^4
            \mid S
        ]1_{G_m}
    \right]
\leq
    \frac{8C}{m^2}
    +
    8\mathbb E_{N_0}
    |\widehat\vartheta_m-\vartheta_0|^4
    =
    O(m^{-2}). \label{fourth-moment-concentration}
\end{equation}
Since $|\vartheta'-\vartheta_0|^4\leq1$, \eqref{eq:nb-proof-good-event-tail} and \eqref{fourth-moment-concentration} therefore yield
\begin{equation}
    \mathbb E_{N_0}
    \mathbb E[
        |\vartheta'-\vartheta_0|^4
        \mid S
    ]
    =
    O(m^{-2}).
    \label{eq:nb-proof-posterior-fourth-moment}
\end{equation}

Define
\[
    g(\vartheta)
    :=
    \psi\bigl(F(\vartheta)\bigr)
    =
    \psi\left(\frac{\vartheta}{1-\vartheta}\right).
\]
The chain rule gives
\begin{equation}
    g'(\vartheta_0)
    =
    \frac{\psi'(N_0)}
         {(1-\vartheta_0)^2}.
    \label{eq:nb-proof-functional-derivative}
\end{equation}
Because \(Q\subset(0,1)\), Assumption~\ref{ass:thermal-smooth-functional} implies that \(g''\)
is bounded on \(Q\). Indeed,
\[
    g''(\vartheta)
    =
    \frac{\psi''(F(\vartheta))}
         {(1-\vartheta)^4}
    +
    \frac{2\psi'(F(\vartheta))}
         {(1-\vartheta)^3}.
\]
Write
\[
    B_g:=\sup_{\vartheta\in Q}|g''(\vartheta)|<\infty.
\]

Let \(\vartheta'\) and \(\vartheta''\) be conditionally independent
draws from the posterior given \(S\). Then
\begin{equation}
    m\operatorname{Var}\{g(\vartheta')\mid S\}
    =
    \frac m2
    \mathbb E
    \left[
        \{g(\vartheta')-g(\vartheta'')\}^2
        \mid S
    \right].
    \label{eq:nb-proof-two-draw-identity}
\end{equation}
Set
\[
    H_m'
    :=
    \sqrt m(\vartheta'-\widehat\vartheta_m),
    \qquad
    H_m''
    :=
    \sqrt m(\vartheta''-\widehat\vartheta_m).
\]
Taylor's theorem around \(\vartheta_0\) gives
\[
    g(u)
    =
    g(\vartheta_0)
    +
    g'(\vartheta_0)(u-\vartheta_0)
    +
    R(u),
\]
with
\[
    |R(u)|
    \leq
    \frac{B_g}{2}|u-\vartheta_0|^2,
    \qquad u\in Q.
\]
Subtracting the expansions for the two posterior draws,
\begin{equation}
    \sqrt m
    \{g(\vartheta')-g(\vartheta'')\}
    =
    g'(\vartheta_0)(H_m'-H_m'')
    +
    \widetilde r_m,
    \label{eq:nb-proof-two-draw-linearization}
\end{equation}
where
\[
    \widetilde r_m
    :=
    \sqrt m\{R(\vartheta')-R(\vartheta'')\}.
\]

By \(|x-y|^2\leq2x^2+2y^2\),
\[
    |\widetilde r_m|^2
    \leq
    \frac{mB_g^2}{2}
    \left(
        |\vartheta'-\vartheta_0|^4
        +
        |\vartheta''-\vartheta_0|^4
    \right).
\]
The two draws have the same posterior distribution, so
\eqref{eq:nb-proof-posterior-fourth-moment} implies
\begin{equation}
    \mathbb E_{N_0}
    \mathbb E[
        |\widetilde r_m|^2
        \mid S
    ]
    \leq
    mB_g^2
    \mathbb E_{N_0}
    \mathbb E[
        |\vartheta'-\vartheta_0|^4
        \mid S
    ]
    =
    O(m^{-1})
    \rightarrow0.
    \label{eq:nb-proof-functional-remainder}
\end{equation}

Let 
\[
    A_m:=H_m'-H_m'',
    \qquad
    d_0:=g'(\vartheta_0).
\]
Substitution of~\eqref{eq:nb-proof-two-draw-linearization} into
\eqref{eq:nb-proof-two-draw-identity} gives
\begin{equation}
    m\operatorname{Var}\{g(\vartheta')\mid S\}
    =
    \frac{d_0^2}{2}\mathbb E[A_m^2\mid S]
    +
    d_0\mathbb E[A_m\widetilde r_m\mid S]
    +
    \frac12\mathbb E[\widetilde r_m^2\mid S].
    \label{eq:nb-proof-functional-variance-expansion}
\end{equation}
Conditional independence gives
\begin{equation}
    \frac12\mathbb E[A_m^2\mid S]
    =
    \operatorname{Var}(H_m\mid S). \label{Am-Hm}
\end{equation}
Furthermore, conditional Cauchy-Schwarz followed by
Cauchy-Schwarz under \(\mathbb P_{N_0}\) yields
\begin{equation*}
    \mathbb E_{N_0}
    \left|\mathbb E[A_m\widetilde r_m\mid S]\right|\leq
    \left\{
        \mathbb E_{N_0}\mathbb E[A_m^2\mid S]
    \right\}^{1/2}
    \left\{
        \mathbb E_{N_0}
        \mathbb E[\widetilde r_m^2\mid S]
    \right\}^{1/2}.
\end{equation*}
The first factor on the RHS is bounded by
\eqref{eq:nb-proof-posterior-variance-lone} and \eqref{Am-Hm}, while the second tends
to zero by~\eqref{eq:nb-proof-functional-remainder}. Thus
\begin{equation}
\begin{split}
    \mathbb E_{N_0}
    \bigg|
        m\operatorname{Var}\{g(\vartheta')\mid S\}
        -
        \{g'(\vartheta_0)\}^2
        \operatorname{Var}(H_m\mid S)
    \bigg|
    \rightarrow0.
\end{split}
    \label{eq:nb-proof-functional-variance-linear}
\end{equation}
Together with~\eqref{eq:nb-proof-posterior-variance-lone}, this gives
\begin{equation}
\begin{split}
    \mathbb E_{N_0}
    \bigg|
        m\operatorname{Var}\{g(\vartheta')\mid S\}
        -
        \{g'(\vartheta_0)\}^2
        \vartheta_0(1-\vartheta_0)^2
    \bigg|
    \rightarrow0.
\end{split}
    \label{eq:nb-proof-functional-variance-transformed}
\end{equation}

Finally, \(g(\vartheta')=\psi(N')\), and by \eqref{eq:nb-proof-functional-derivative}
\begin{align*}
    \{g'(\vartheta_0)\}^2
    \vartheta_0(1-\vartheta_0)^2
    &=
    \{\psi'(N_0)\}^2
    \frac{\vartheta_0}{(1-\vartheta_0)^2}
    \\
    &=
    N_0(N_0+1)\{\psi'(N_0)\}^2
    \\
    &=
    V_\psi(N_0).
\end{align*}
This proves
\eqref{eq:nb-pointwise-posterior-functional-variance}.

Now define
\[
    B_m
    :=
    m\int_J
    \mathbb E_N
    \left[
        \operatorname{Var}\bigl(\psi(N')\mid S\bigr)
    \right]
    \Pi(dN).
\]
For every \(N\in(a,b)\), the pointwise result gives
\[
    m\mathbb E_N
    \left[
        \operatorname{Var}\bigl(\psi(N')\mid S\bigr)
    \right]
    \rightarrow
    V_\psi(N).
\]
Since \(\Pi\) has a density, its endpoints have zero prior mass.
The integrands are nonnegative, so Fatou's lemma gives
\begin{equation}
    \liminf_{m\to\infty}B_m
    \geq
    \int_J V_\psi(N)\,\Pi(dN).
    \label{eq:nb-proof-integrated-lower-bound}
\end{equation}

For the upper bound, let
\[
    \delta_m^{\mathrm B}(S)
    :=
    \mathbb E[\psi(N')\mid S]
\]
be the posterior mean. Under the joint distribution defined by the
prior and the sampling model,
\[
    \int_J
    \mathbb E_N
    \left[
        \operatorname{Var}\bigl(\psi(N')\mid S\bigr)
    \right]
    \Pi(dN)
\]
is the Bayes risk of \(\delta_m^{\mathrm B}\) under squared loss.

Next consider $\psi(S/m)$ as the estimator for $\psi(N)$. By the minimality of the Bayes risk we have 
\begin{align}
    \int_J
    \mathbb E_N
    \left[
        \operatorname{Var}\bigl(\psi(N')\mid S\bigr)
    \right]
    \Pi(dN)
    \leq
    \int_J
    \mathbb E_N
    \left[
        \{\psi(S/m)-\psi(N)\}^2
    \right]
    \Pi(dN).
    \label{eq:nb-proof-bayes-plug-in-comparison}
\end{align}
The RHS is finite
under Assumption~\ref{ass:thermal-smooth-functional}, since a bounded second derivative implies
at most quadratic growth of \(\psi\), while the negative-binomial
distribution has finite moments of every order.

Lemma~\ref{lem:thermal-uniform-plug-in-risk} gives
\[
    \sup_{N\in J}
    \left|
        m\mathbb E_N
        \left[
            \{\psi(S/m)-\psi(N)\}^2
        \right]
        -
        V_\psi(N)
    \right|
    \rightarrow0.
\]
Multiplying~\eqref{eq:nb-proof-bayes-plug-in-comparison} by \(m\)
and integrating this uniform expansion, we obtain
\begin{equation}
    \limsup_{m\to\infty}B_m
    \leq
    \int_J V_\psi(N)\,\Pi(dN).
    \label{eq:nb-proof-integrated-upper-bound}
\end{equation}
Combining~\eqref{eq:nb-proof-integrated-lower-bound} and
\eqref{eq:nb-proof-integrated-upper-bound} proves
\eqref{eq:nb-integrated-posterior-functional-variance}.

\end{proof}

\end{document}